\def\..{{,\dots ,}}
\def\env{{\rm env}}
\def\mon{{\rm mon}}
\def\rmwt{{\rm wt}}
\def\rmlog{{\rm log}}
\def\whOmega{{\wh\Omega}}
\def\whd{{\wh d}}
\def\nonexp{{\rm nonexp}}
\def\tf{{\rm tf}}
\def\di{{\diamond}}
\def\expchar{{\rm exp.char}}
\def\adic{{\rm adic}}
\begin{document}

\title[Metrization of differential pluriforms on Berkovich spaces]{Metrization of differential pluriforms on Berkovich analytic spaces}

\author{Michael Temkin}
\address{Einstein Institute of Mathematics, The Hebrew University of Jerusalem, Giv'at Ram, Jerusalem, 91904, Israel}
\email{temkin@math.huji.ac.il}
\thanks{This work was supported by the European Union Seventh Framework Programme (FP7/2007-2013) under grant agreement 268182.}
\keywords{Berkovich analytic spaces, K\"ahler seminorms, pluriforms, skeletons.}

\begin{abstract}
We introduce a general notion of a seminorm on sheaves of rings or modules and provide each sheaf of relative differential pluriforms on a Berkovich $k$-analytic space with a natural seminorm, called K\"ahler seminorm. If the residue field $\tilk$ is of characteristic zero and $X$ is a quasi-smooth $k$-analytic space, then we show that the maximality locus of any global pluricanonical form is a PL subspace of $X$ contained in the skeleton of any semistable formal model of $X$. This extends a result of Musta\c{t}\u{a} and Nicaise, because the K\"ahler seminorm on pluricanonical forms coincides with the weight norm defined by Musta\c{t}\u{a} and Nicaise when $k$ is discretely valued and of residue characteristic zero.
\end{abstract}

\maketitle

\section{Introduction}

\subsection{Motivation}
It often happens that a Berkovich space $X$ possesses natural skeletons, which are, in particular, deformational retracts of $X$ of finite topological type, see \cite[Section~4.3]{berbook} and \cite{bercontr}. In some special cases, such as the case of curves of positive genus or abelian varieties, there is a canonical (usually, minimal) skeleton. As a rule, skeletons are obtained from nice formal models, e.g. polystable or, more generally, log smooth ones, though we should mention for completeness that a different and very robust method of constructing skeletons was developed very recently by Hrushovski and Loeser, see \cite{HL}.

In \cite{KS}, Kontsevich and Soibelman constructed a canonical skeleton of analytic K3 surfaces over $k=\bfC((t))$ by use of a new method: the skeleton is detected as the extremality locus of the canonical form. In \cite{Mustata-Nicaise}, this method was extended by Musta\c{t}\u{a} and Nicaise as follows. If $k$ is discretely valued and $X$ is the analytification of a smooth and proper $k$-variety, they constructed norms on the pluricanonical sheaves $\omega^{\otimes n}_X$ and showed that the maximality locus of any non-zero pluricanonical form $\phi$ is contained in the skeleton associated with any semistable formal model of $X$. The union of the maximality loci of non-zero pluricanonical forms is called the essential skeleton of $X$ in \cite{Mustata-Nicaise}. It is an important ``combinatorial" subset of $X$, see \cite{Nicaise-Xu}, although it does not have to be a skeleton of $X$.

An advantage of the above approach is that it constructs a valuable combinatorial subset, the essential skeleton, in a canonical way. In particular, the only input is the metrization of pluricanonical sheaves, and no choice of a formal model is involved. Slightly ironically, one heavily exploits formal models to metrize $\omega_X^{\otimes n}$. On the one hand, existence of nice global formal models is not needed since it suffices for any so-called divisorial point $x$ to find a sufficiently small domain that contains $x$ and possesses a regular formal model. The latter problem is much easier and the construction works fine when $\cha(\tilk)>0$ and existence of nice global models is a dream. But on the other hand, this still leads to technical restrictions, including the assumption that $k$ is discretely valued. In addition, the construction of the norm is not so geometric: first one defines it at divisorial points by use of formal models, and then extends it to the whole $X$ by continuity.

The original aim of this project was to provide a natural local analytic construction of Musta\c{t}\u{a}-Nicaise norm, which applies to all points on equal footing and eliminates various technical restrictions of their method. The basic idea is very simple: provide $\Omega_X$ with the maximal seminorm making the differential $d\:\calO_X\to\Omega_X$ a non-expansive map, and induce from it a seminorm on $\omega_X^{\otimes n}=(\wedge^d\Omega_X)^{\otimes n}$. Moreover, the same definition makes sense for any morphism $f\:X\to S$, so the construction generalizes to the relative situation and no assumption on $k$ and $f$ is needed. This increases the flexibility even in the original setting; for example, one obtains a way to work with analytic families of proper smooth varieties. Unfortunately, implementation of the basic idea is not so simple due to lack of various foundations. So, a large part of this project is devoted to developing basic topics, including a theory of K\"ahler seminorms on modules of differentials, its application to real-valued fields, metrization of sheaves of modules, etc. On the positive side, we think that this foundational work will be useful for future research in non-archimedean geometry and related areas.

Once K\"ahler seminorms will have been defined, we will study the maximality locus of pluricanonical forms. Unfortunately, the assumption that $\cha(\tilk)=0$ seems unavoidable with current technique, but we manage to treat the non-discrete case as well. In particular, we only use a result \`a la de Jong (see \cite[Theorem~3.4.1]{altered}) instead of the existence of semistable model (a result \`a la Hironaka). Finally, under the assumptions of \cite{Mustata-Nicaise}, we compare the K\"ahler seminorm to the Musta\c{t}\u{a}-Nicaise norm on the pluricanonical sheaves. Surprisingly, they coincide only when $\cha(\tilk)=0$, and in general they are related by a factor which up to a constant coincides with the log different of $\calH(x)/k$.

\subsection{Methods}
At few places in the paper, including the definition of K\"ahler seminorms, we have to choose one method out of few possibilities. Let us discuss briefly what these choices are.

\subsubsection{An approach via unit balls}
One way to define a seminorm on a $k$-vector space $V$ is by using its unit ball $V^\di$, which is a $\kcirc$-module. Technically, this leads to the easiest way to metrize a coherent $\calO_X$-sheaf $\calF$: just choose an $\calOcirc_X$-submodule $\calF^\di_X$ (subject to simple restrictions). For example, if the valuation is not discrete one can simply define the K\"ahler seminorm $\lVert\ \rVert_\Omega$ on $\Omega_{X/S}$ as the seminorm associated with the sheaf $\calOcirc_Xd_{X/S}(\calOcirc_X)$, the minimal $\calOcirc_X$-submodule of $\Omega_{X/S}$ containing $d_{X/S}\calOcirc_X$. Unfortunately, this definition is problematic when the valuation is discrete or trivial (though, see Remark~\ref{unitballrem}). In order to consider ground fields with discrete or trivial valuations on the equal footing, we have to develop all basic constructions in terms of seminorms themselves. For example, $\lVert\ \rVert_\Omega$ can be characterized as the maximal seminorm such that the map $d\:\calO_X\to\Omega_{X/S}$ is non-expansive (see Lemma~\ref{univkahlem}). Still, some implicit use of unit balls is made in Section~\ref{Kahlersec}, e.g. see Theorem~\ref{unitballth}.

\subsubsection{Seminormed algebras versus Banach algebras}
Most seminormed rings in Berkovich geometry are Banach. Nevertheless, more general seminormed rings also show up, with the main example being the local rings $\calO_{X,x}$ and their residue fields $\kappa(x)$. For this reason, it is technically much more convenient to work with seminormed rings and modules rather than their Banach analogues throughout the paper. In addition, it turns out to be important to consider only non-expansive homomorphisms, while classical Banach categories contain all bounded homomorphisms.

\subsubsection{Metrization of sheaves}
There exist two ways to define seminorms on sheaves of rings or modules. In this paper we implement a sheaf theoretic approach, which applies to any site. We introduce the notions of (pre)sheaves of seminorms on a sheaf of abelian groups (resp. rings or modules) $\calA$. In fact, this is equivalent to introducing (pre)sheaves of seminormed abelian groups with the underlying sheaf $\calA$. Various operations, such as tensor products, are defined using sheafification. A slight technical complication of the method is that one has to consider unbounded seminorms.

A simpler ad hoc method to metrize sheaves on topological spaces or sites with enough points is to metrize the stalks in a semicontinuous way: for a section $s\in\calF(U)$ let $\lvert s\rvert \:U\to\bfR_{\ge 0}$ denote the function sending $x\in U$ to $\lvert s\rvert_x$, then all functions $\lvert s\rvert$ should be upper semicontinuous. All operations are then defined stalkwise. The main problem with applying this method to our case is that one has to work with all points of the $G$-topology site $X_G$, which is not a standard tool in Berkovich geometry. We describe these points in the end of the paper, but they are not used in our main constructions.

\begin{rem}
(i) It is more usual to consider only continuous metrics. For example, the definition of metrization in \cite{CLD} requires that all sections $\lvert s\rvert$ are continuous. Nevertheless, it is the semicontinuity that encodes the condition that $\lvert \ \rvert $ is a sheaf, see Theorem~\ref{conservlem}.

(ii) In the case of K\"ahler seminorms, non-continuous functions $\lVert \phi\rVert _\Omega$ arise in the simplest cases. For example, already the function $\lVert dt\rVert_\Omega$ on the disc $\calM(k\{t\})$ is not continuous; in fact, $\lVert dt\rVert_\Omega$ is the radius function, see \S\ref{discsec}.
\end{rem}

\subsection{Overview of the paper and main results}
In Section~\ref{adhocsec} we fix our notation and study seminorms on vector spaces over real-valued fields and modules over real valuation rings. Most of the material is probably known to experts but some of it is hard to find in the literature, especially the material on index of semilattices and content of torsion modules. Section~\ref{metrsheafsec} deals with metrization of sheaves. First, we study sheaves on arbitrary sites and then specialize to the case of $G$-sheaves on analytic spaces. In Section~\ref{difsec}, we extend the theory of K\"ahler differentials to seminormed rings. In particular, given a non-expansive homomorphism of seminormed rings $A\to B$ we show that $d_{B/A}\:B\to(\Omega_{B/A},\lVert \ \rVert_\Omega)$ is the universal non-expansive $A$-derivation, where the K\"ahler seminorm $\lVert \ \rVert_\Omega$ is defined as the maximal seminorm making the homomorphism $d_{B/A}$ non-expansive.

In Section \ref{Kahlersec} we study the K\"ahler seminorm on a vector space $\Omega_{K/A}$, where $K$ is a real-valued field. In Theorem~\ref{unitballth} we show that $\Omega^\rmlog_{\Kcirc/\Acirc}$ modulo its torsion is an almost unit ball of the K\"ahler seminorm on $\Omega_{K/A}$. Thus, the study of K\"ahler seminorms is tightly related to study of $\Omega^\rmlog_{\Kcirc/\Acirc}$ and ramification theory. Although this is classical in the discretely valued case, \cite[Chapter~6]{Gabber-Ramero} is the only reference for such material when $\lvert K^\times\rvert$ is dense. Unfortunately, even loc.cit. does not cover all our needs, so we have to dig into the theory of real-valued fields, that makes this section the most technical in the paper. One of our main results there is that if $K$ is dense in a real-valued field $L$ then $\hatOmega_{K/A}\toisom\hatOmega_{L/A}$, see Theorem~\ref{denseth} and its corollary.

In Section~\ref{metrsec} we define the K\"ahler seminorm $\lVert \ \rVert_\Omega$ on $\Omega_{X/S}$ for any morphism $f\:X\to S$. This is done by sheafifying the presheaf of K\"ahler seminorms on affinoid domains and it follows easily from the definition that $\lVert \ \rVert_\Omega$ is the maximal seminorm on $\Omega_{X/S}$ making the map $d\:\calO_{X_G}\to\Omega_{X/S}$ non-expansive. In Theorem~\ref{isometryth} we show that the completed stalk $\wh{\Omega_{X/S,x}}$ is isomorphic to $\hatOmega_{\calH(x)/\calH(s)}$, where $s=f(x)$. This provides the main tool for explicit work with the seminorm $\lVert \ \rVert_\Omega$ and its stalks. Note that the main ingredient in proving Theorem~\ref{isometryth} is that $\hatOmega_{\kappa(x)/\kappa(s)}=\hatOmega_{\calH(x)/\calH(s)}$ by Theorem~\ref{denseth}. Another fundamental property of K\"ahler seminorms is established in Theorem~\ref{analyticomega}: K\"ahler seminorms is determined by the usual points of $X$ (the $G$-analyticity condition from \S\ref{sheavesansec}) and the functions $\lVert s \rVert_\Omega$ are semicontinuous with respect to the usual topology of $X$ (and not only the $G$-topology). Some examples of K\"ahler seminorms are described in Section~\ref{examsec}, including the ones demonstrating that K\"ahler seminorms behave weirdly when $k$ possesses wildly ramified extensions. Also, we study the compatibility of $\lVert \ \rVert_\Omega$ with base change in Section~\ref{changesec}. In particular, we show that it is compatible with restriction to the fibers and tame extensions of the ground field (Theorem~\ref{compatth} and its corollaries), but is incompatible with wild extensions of the ground field. Finally, we use $\lVert \ \rVert_\Omega$ to metrize the sheaves $S^m(\Omega^n_{X/S})$ of relative pluriforms on $X$.

Section~\ref{PLsec} is devoted to recalling basic facts about formal models and skeletons of Berkovich spaces. Then we study in Section~\ref{lastsec} metrization of the sheaves $\omega_X^{\otimes m}$ of pluricanonical forms on a rig-smooth space $X$. In Corollary~\ref{monomnormcor2} we obtain a simple formula that evaluates K\"ahler seminorms at monomial points, and we deduce in Theorem~\ref{rplth} that the restrictions of geometric K\"ahler seminorms onto PL subspaces of $X$ are PL. The main ingredient here is Theorem~\ref{tamechart}. In Theorem~\ref{comparth} we establish the connection between K\"ahler seminorm on pluricanonical sheaves and the weight norm of Musta\c{t}\u{a} and Nicaise. Finally, in Section~\ref{maxsec} we study the maximality locus of a non-zero pluricanonical form with respect to the geometric K\"ahler seminorm. We prove that it is contained in the essential skeleton of $X$, see Theorem~\ref{semistableprop}, and, if $\cha(\tilk)=0$, it is a PL subspace of $X$, see Theorem~\ref{plmaxth}. When $\cha(\tilk)=0$ this extends the results of Musta\c{t}\u{a} and Nicaise to the case of non-discrete $\lvert k^\times\rvert$ and arbitrary quasi-smooth $X$, not necessarily algebraizable or even strictly analytic. (If $\cha(\tilk)>0$, our norm differs from the weight norm.)

Finally, Section~\ref{kansec} is devoted to study the topological realizations of the $G$-topologies on analytic spaces and PL spaces. Our description of the topological space $\lvert X_G\rvert$ seems to be new, see Section~\ref{Gsec} and Remark~\ref{Ocircrem}(ii) concerning the connection to adic and reified adic spaces. In particular, we interpret the points of $\lvert X_G\rvert$ in terms of the graded reductions of germs. Also, we interpret points of the PL topologies in terms of combinatorial valuations (or valuations on lattices), see Section~\ref{Psec}, and for a PL subspace $P\subseteq X$ we describe the embedding $\lvert P_G\rvert \into\lvert X_G\rvert$. This is used to prove a strong result on the structure of $P$: locally $P$ possesses a residually unramified chart, see Theorem~\ref{tamechart}. It seems that our usage of the space $\lvert P_G\rvert$ is more or less equivalent to the use of model theory in \cite{Ducpl2} and \cite{skeletons}, see Remark~\ref{PGrem}. Note also that Theorem~\ref{tamechart} is the only result of Section~\ref{kansec} used in the main part of the paper (in the proof of Theorem~\ref{rplth}).

\subsection{Acknowledgements}
This work originated at Simons Symposium on Non-Archimedean and Tropical Geometry in 2013, where I learned about the work of Musta\c{t}\u{a} and Nicaise and found an analytic approach to metrize pluriforms. I want to thank Simons Foundation and the organizers of the Symposium, Matt Baker and Sam Payne. I owe much to the anonymous referees for pointing out numerous inaccuracies and making various suggestions on improving the quality and readability of the paper. Also, I am grateful to J. Nicaise for answering questions related to \cite{Mustata-Nicaise}, to A. Ducros for showing his work with Thuillier \cite{skeletons} and answering various questions, and to V. Berkovich, I. Tyomkin and Y. Varshavsky for useful discussions.

\setcounter{tocdepth}{1}
\tableofcontents

\section{Real-valued fields}\label{adhocsec}
In this section we study seminormed vector spaces over a real-valued field $K$ and modules over the ring of integers $\Kcirc$.

\subsection{Conventions}\label{convsec}
First, let us fix basic terminology and notation on valued fields and analytic spaces.

\subsubsection{Valued fields}
By a {\em valued field} we mean a field $F$ provided with a non-archimedean valuation $\lvert \ \rvert \:F\to\{0\}\cup\Gamma$, where $\Gamma$ is an ordered group. The conditions $\lvert x\rvert \le 1$ and $\lvert x\rvert <1$ define the {\em ring of integers} $\Fcirc$ and its maximal ideal $\Fcirccirc$, respectively. In addition, $\tilF=\Fcirc/\Fcirccirc$ denotes the {\em residue field} of $F$.

\subsubsection{The real-valued case}\label{realvalsec}
Assume that a valued field $K$ is {\em real-valued}, i.e. $\Gamma=\bfR_{>0}$. Then $\lvert \ \rvert $ is a norm and hence defines a topology on $K$. We will use the notation $\lvert \Kcirccirc\rvert =\sup_{\pi\in\Kcirccirc}\lvert \pi\rvert $. Thus, $\lvert \Kcirccirc\rvert =0$ if the valuation is trivial, $\lvert \Kcirccirc\rvert =\lvert \pi\rvert $ if $K$ is discretely valued with uniformizer $\pi$, and $\lvert \Kcirccirc\rvert =1$ otherwise.

Let, now, $\pi$ be any element of $\Kcirccirc\setminus\{0\}$ if the valuation is non-trivial, and set $\pi=0$ otherwise. In particular, the induced topology on $\Kcirc$ is the $\pi$-adic one. Given a $\Kcirc$-module $M$ we say that an element $x\in M$ is {\em divisible} if it is infinitely $\pi$-divisible. In particular, if the valuation of $K$ is trivial then $0$ is the only divisible element. A $\Kcirc$-module is {\em divisible} if all its elements are divisible.

\subsubsection{Analytic spaces}
Throughout this paper, $k$ is a non-archimedean {\em analytic field}, i.e. a real-valued field which is complete with respect to its valuation. Trivial valuation is allowed. All analytic spaces we will consider are $k$-analytic spaces in the sense of \cite[\S1]{berihes}.

In addition, we fix a divisible subgroup $H\subseteq\bfR_{>0}$ such that $H\neq 1$ and $\lvert k^\times\rvert \subseteq H$ and consider only $H$-strict analytic spaces in the sense of \cite{descent}. For shortness, we will often call them {\em analytic spaces}.

\subsubsection{The $G$-topology}
The usual topology of an analytic space $X$ can be used for working with coherent sheaves only when $X$ is good. In general, one has to work with the $G$-topology of analytic domains whose coverings are the set-theoretical coverings $U=\cup_{i\in I} U_i$ such that $\{U_i\}$ is a quasi-net on $U$ in the sense of \cite[\S1.1]{berbook}. These coverings are usually called {\em $G$-coverings} or {\em admissible} coverings. By $X_G$ we denote the associated site: its objects are ($H$-strict) analytic domains and coverings are the $G$-admissible ones. The structure sheaf $\calO_{X_G}$ of $X$ is a sheaf on $X_G$, and by $\calO^\circ_{X_G}$ we denote the subsheaf of $\kcirc$-algebras whose sections have spectral seminorm bounded by 1, i.e. $\calO_{X_G}^\circ(V)=\calA_V^\circ$ for an affinoid domain $V=\calM(\calA_V)$.

\subsection{Seminormed rings and modules}\label{normringsec}
In Section \ref{normringsec} we recall well-known facts and definitions concerning seminorms. All seminorms we consider are non-archime\-dean. Ring seminorms will be denoted $\lvert \ \rvert$, $\lvert \ \rvert'$, etc., and module seminorms will be denoted $\lVert \ \rVert$, $\lVert \ \rVert'$, etc.

\subsubsection{Seminormed abelian groups}
Throughout this paper, a {\em seminorm} on an abelian group $A$ is a function $\lVert \ \rVert \:A\to\bfR_{\ge 0}$ such that
\begin{itemize}
\item[(0)] $\lVert 0\rVert =0$,
\item[(1)] the inequality $\lVert x-y\rVert \le\max(\lVert x\rVert ,\lVert y\rVert )$ holds for any $x,y\in A$.
\end{itemize}
Note that (1) is a short way to encode the more standard conditions that $\lVert x+y\rVert \le\max(\lVert x\rVert ,\lVert y\rVert )$ and $\lVert-x\rVert=\lVert x\rVert$. The pair $(A,\lVert \ \rVert )$ will be called a {\em seminormed group}. If $\lVert \ \rVert$ has trivial kernel then it is called a {\em norm}.

\subsubsection{Bounded and non-expansive homomorphisms}
A homomorphism $\phi\:A\to B$ is called {\em bounded} with respect to $\lVert \ \rVert_A$ and $\lVert \ \rVert_B$ if there exists $C=C(\phi)$ such that $\lVert a\rVert \le C\lVert \phi(a)\rVert$ for any $a\in A$. If $C=1$ then $\phi$ is called {\em non-expansive}.


\subsubsection{The non-expansive category}
As in \cite[Section~5.1]{BBK}, the category of seminormed abelian groups with non-expansive homomorphisms will be called the {\em non-expansive category} (of seminormed abelian groups).

\begin{rem}
(i) Often one works with the larger category whose morphisms are arbitrary bounded morphisms; let us call it the {\em bounded category}. In fact, it is equivalent to the localization of the non-expansive category by bounded maps that possess a bounded inverse.

(ii) Working with the bounded category is natural when one wants to study seminorms up to equivalence; for example, this is the case in the theory of Banach spaces. On the other side, working with the non-expansive category is natural when one distinguishes equivalent seminorms, so this fits the goals of the current paper.

(iii) A serious advantage of working with the non-expansive category is that one can describe limits and colimits in a simple way, see \cite[Section~5.1]{BBK}.
\end{rem}

\subsubsection{Seminormed rings}
A {\em seminorm} on a ring $A$ is a seminorm $\lvert \ \rvert$ on the underlying group $(A,+)$ that satisfies $\lvert 1\rvert =1$ and $\lvert xy\rvert \le\lvert x\rvert \lvert y\rvert$ for any $x,y$. A ring with a fixed seminorm is called a {\em seminormed ring}. Usually it will be denoted by calligraphic letters and the seminorm will be omitted from the notation, e.g. $\calA=(\calA,\lvert \ \rvert_\calA)$. An important example of a normed ring is a real-valued field.

\subsubsection{Seminormed modules}
A seminormed $\calA$-module $M$ is an $\calA$-module provided with a seminorm $\lVert \ \rVert_M$ such that $\lVert am\rVert_M\le\lvert a\rvert_\calA\lVert m\rVert_M$ for any $a\in\calA$ and $m\in M$. The notions of non-expansive homomorphisms of seminormed rings and modules are defined in the obvious way.

\subsubsection{Quotient seminorms and cokernels}
If $\phi\:A\to B$ is a surjective homomorphism of seminormed abelian groups, rings or modules then the {\em quotient seminorm} on $B$ is defined by $\lVert x\rVert_B=\inf_{y\in\phi^{-1}(x)}\lVert y\rVert_A$. It is the maximal seminorm such that $\phi$ is non-expansive, hence, in the case of groups and modules, $(B,\lVert \ \rVert_B)$ is the cokernel of any non-expansive homomorphism $C\to A$ whose image is $\Ker(\phi)$.

\subsubsection{Strictly admissible homomorphisms}
Recall that a homomorphism $\psi\:C\to D$ of seminormed abelian groups (resp. rings or modules) is called {\em admissible} if the quotient seminorm on $\psi(C)$ is equivalent to the seminorm induced from $D$. In the non-expansive category, it is natural to consider the following more restrictive notion: $\psi$ is {\em strictly admissible} if the quotient seminorm on $\psi(C)$ equals to the seminorm induced from $D$.

\subsubsection{Tensor products}\label{tensorsec}
Given a seminormed ring $\calA$, by {\em tensor product} of seminormed $\calA$-modules $M$ and $N$ we mean the module $L=M\otimes_\calA M$ provided with the {\em tensor seminorm} $\lVert \ \rVert_\otimes$ such that $\lVert l\rVert_\otimes=\inf(\max_i \lVert m_i\rVert\cdot\lVert n_i\rVert)$, where the infimum is taken over all representations $l=\sum_{i=1}^n m_i\otimes n_i$. Note that $\lVert \ \rVert_\otimes$ is the maximal seminorm such that the bilinear map $\phi\:M\times N\to L$ is {\em non-expansive}, i.e. satisfies $\lVert \phi(m,n)\rVert_\otimes\le\lVert m\rVert \cdot\lVert n\rVert$. Obviously, $M\times N\to(L,\lVert \ \rVert_\otimes)$ is the universal non-expansive bilinear map.

\subsubsection{Exterior and symmetric powers}\label{powersec}
If $M$ is a seminormed $\calA$-module then the modules $S^nM$ and $\bigwedge^n M$ acquire a natural seminorm as follows: both are quotients of $\otimes^n M$, so we consider the tensor seminorm on $\otimes^n M$ and endow $S^nM$ and $\bigwedge^n M$ with the quotient seminorms. The latter can be characterized as the maximal seminorms such that $\lVert m_1\otimes\dots\otimes m_n\rVert_{S^n}$ and $\lVert m_1\wedge\dots\wedge m_n\rVert_{\bigwedge^n}$ do not exceed $\prod_{i=1}^n\lVert m_i\rVert$ for any $m_1{,\dots ,}m_n\in M$.

\subsubsection{Filtered colimits}\label{normcolimsec}
Assume that $\{(\calA_\lam,\lVert \ \rVert_\lam)\}_{\lam\in\Lambda}$ is a filtered family of seminormed abelian groups (resp. rings or $\calA$-modules) with non-expansive transition homomorphisms, $\calA=\colim_\lam\calA_\lam$ is the filtered colimit, and $f_\lam\:\calA_\lam\to\calA$ are the natural maps. We endow $\calA$ with the {\em colimit seminorm} given by $\lVert a\rVert =\inf_{\lam\in\Lambda,b\in f^{-1}_\lam(a)}\lVert b\rVert_\lam$. Obviously, this is the maximal seminorm making each homomorphism $f_\lam$ non-expansive, and hence $\calA$ is the colimit of $\calA_\lam$ in the category of seminormed rings.

\begin{lem}\label{colimlem}
Filtered colimits of seminormed rings and modules are compatible with quotients, tensor products, symmetric and exterior powers.
\end{lem}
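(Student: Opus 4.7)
The plan is to treat each of the four operations by appealing to its universal characterization in the non-expansive category and then observing that these universal properties are preserved under filtered colimits. The colimit seminorm on $\calA=\colim_\lam\calA_\lam$ is, by construction, the maximal seminorm making each structure map $f_\lam\:\calA_\lam\to\calA$ non-expansive. Hence to compare it with another seminorm that is defined by a universal property on the same underlying algebraic object, it suffices to match the two universal properties.

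For quotients, suppose one has a compatible system of strictly admissible surjections $\psi_\lam\:\calA_\lam\twoheadrightarrow\calB_\lam$ with kernels $K_\lam$. Algebraically, $\colim_\lam\calB_\lam=\calA/K$, where $K=\colim_\lam K_\lam\subseteq\calA$. A non-expansive map from $\colim_\lam\calB_\lam$ (with the colimit seminorm) to a seminormed target $P$ is the same as a compatible family of non-expansive maps $\calB_\lam\to P$, which is the same as a compatible family of non-expansive maps $\calA_\lam\to P$ vanishing on $K_\lam$, i.e.\ a single non-expansive map $\calA\to P$ vanishing on $K$. This is precisely the universal property of the quotient seminorm on $\calA/K$.

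For tensor products, given filtered systems $\{M_\lam\}$ and $\{N_\lam\}$ of seminormed $\calA_\lam$-modules, one has $\colim_\lam(M_\lam\otimes_{\calA_\lam}N_\lam)=M\otimes_\calA N$ as $\calA$-modules, where $M,N$ are the respective colimits. A non-expansive linear map from the left-hand side to $P$ corresponds to a compatible family of non-expansive linear maps $M_\lam\otimes_{\calA_\lam}N_\lam\to P$; by the universal property of each $M_\lam\otimes_{\calA_\lam}N_\lam$ recalled in \S\ref{tensorsec}, this is in turn a compatible family of non-expansive bilinear maps $M_\lam\times N_\lam\to P$, hence a single non-expansive bilinear map $M\times N\to P$, hence (again by the universal property) a non-expansive linear map $M\otimes_\calA N\to P$. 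Matching the two universal properties yields equality of the seminorms.

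Finally, $S^nM$ and $\bigwedge^nM$ are defined in \S\ref{powersec} as quotients of $\otimes^nM$ by the standard symmetrization and antisymmetrization relations, endowed with their quotient seminorms. Hence they are built from $M$ by combining tensor powers and quotients, each of which commutes with filtered colimits by the previous two steps; therefore so do $S^n$ and $\bigwedge^n$. I expect the main technical point to lie in the tensor product step when carried out by a direct computation of seminorms rather than by universal properties: for $l\in M\otimes_\calA N$ with a representation $l=\sum_{i=1}^n m_i\otimes n_i$, one must use filteredness to find a single index $\lam$ in which all the $m_i$ and $n_i$ lift, so that the two infima appearing in $\lVert l\rVert_\otimes$ (the one defining the tensor seminorm and the one defining the colimit seminorm) can be interchanged. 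The universal-property formulation above packages this interchange cleanly.
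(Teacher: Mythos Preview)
Your argument is correct. Both you and the paper rely on the universal characterizations of the colimit seminorm and the tensor seminorm, but you package the proof differently: the paper establishes one direction (that the canonical map $\colim_\lam(M_\lam\otimes N_\lam)\to M\otimes N$ is non-expansive) via universal properties and then proves the reverse inequality by the direct lifting computation you anticipate in your last paragraph, whereas you run a clean Yoneda-style argument, identifying $\Hom$-sets out of both objects with the same set of compatible non-expansive bilinear data. Your route is more conceptual and handles all four operations uniformly; the paper's route is more hands-on and makes the role of filteredness (finding a common index for all tensor factors) explicit. Either way the content is the same, and your reduction of symmetric and exterior powers to the quotient and tensor cases matches the paper's implicit strategy.
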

\begin{proof}
For concreteness, consider the case of tensor products. Assume that $\{M_i\}$ and $\{N_i\}$ are two filtered families and set $L_i=M_i\otimes N_i$, $M=\colim_i M_i$, $N=\colim_i N_i$ and $L=\colim_i L_i$. By the universal properties of tensor and colimit seminorms we obtain a non-expansive homomorphism $\phi\:L\to M\otimes N$, whose underlying homomorphism is the classical isomorphism. It remains to show that $\phi^{-1}$ is non-expansive too. If $l\in M\otimes N$ satisfies $\lVert l\rVert _{M\otimes N}<r$ then $l=\sum_{j=1}^r m_j\otimes n_j$ with $\max_j(\lVert m_j\rVert _M\cdot\lVert n_j\rVert _N)<r$. Choosing $i$ large enough we can achieve that $m_j$ and $n_j$ come from elements $m'_j\in M_i$ and $n'_j\in N_i$ whose norms are so close to the norms of $m_j$ and $n_j$ that the inequality $\max_j(\lVert m'_j\rVert _{M_i}\cdot\lVert n'_j\rVert _{N_i})<r$ holds. Then $l'=\sum_j m'_j\otimes n'_j\in L_i$ is a lifting of $l$ satisfying $\lVert l'\rVert _{L_i}<r$ and hence $\lVert \phi^{-1}(l)\rVert _L<r$, as required.
\end{proof}

\subsubsection{The completion along a seminorm}
Throughout this paper ``complete" means what one sometimes calls ``Hausdorff and complete". Similarly, by ``completion" we mean what one sometimes calls ``separated completion".

The completion of $A$ with respect to the semimetric $d(x,y)=\lVert x-y\rVert$ is denoted $\hatA$. Note that $\lVert \ \rVert$ extends to $\hatA$ by continuity and the completion map $\alp\:A\to\hatA$ is an {\em isometry} (i.e. $\lVert x\rVert =\lVert \alp(x)\rVert$) whose kernel is the kernel of $\lVert \ \rVert$. Completion is functorial and, in the case of groups or modules, it takes exact sequences to {\em semiexact} sequences, i.e. sequences in which $\Im(d_{i+1})$ is dense in $\Ker(d_i)$. Moreover, if all morphisms of an exact sequence are strictly admissible then the completion is exact and strictly admissible.

\subsubsection{Banach rings and modules}
A seminormed ring or module is called {\em Banach} if it is complete; in particular, the seminorm is a norm. Note that a Banach $\calA$-module $M$ is automatically a Banach $\hatcalA$-module.

\subsubsection{Unit balls}
The unit ball of a seminormed ring $\calA$ will be denoted $\calA^\di$; it is a subring of $\calA$. The unit ball of a seminormed $\calA$-module $M$ will be denoted $M^\di$; it is an $\calA^\di$-module.

\begin{rem}
(i) Perhaps, $\Mcirc$ would be a better notation, but the sign $\circ$ is traditionally reserved for the unit ball of the spectral seminorm of a Banach algebra. So, we use the diamond sign $\di$ instead.

(ii) If $k$ is a real-valued field, $0\neq\pi\in\kcirccirc$, and $V$ is a seminormed $k$-space then the induced topology on $V^\di$ is the $\pi$-adic one. In this case, $V$ is Banach if and only $V^\di$ is $\pi$-adic.
\end{rem}

\subsubsection{Bounded categories}
For the sake of completeness, we make some remarks on the categories of seminormed abelian groups (resp. rings or $\calA$-modules) with bounded homomorphisms. Usually, seminormed $\calA$-modules in our sense are called non-expansive and a general seminormed $\calA$-module $M$ is defined to be an $\calA$-module provided with a seminorm $\lVert \ \rVert_M$ such that there exists $C=C(M)$ satisfying $\lVert am\rVert_M\le C\lvert a\rvert_\calA\lVert m\rVert_M$ for any $a\in\calA$ and $m\in M$. Some results below can be extended to bounded homomorphisms using the following lemma, but we will not pursue this direction in the sequel.

\begin{lem}\label{shrinkublem}
Assume that $\phi\:S\to R$ is a bounded homomorphism of seminormed abelian groups, rings or modules. Then replacing the norm of $S$ with an equivalent norm one can make $\phi$ non-expansive.
\end{lem}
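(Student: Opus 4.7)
The plan is to replace the original seminorm $\lVert\ \rVert_S$ by the pointwise maximum with the pulled-back seminorm from $R$. Concretely, I would set
\[
\lVert s\rVert'_S \;:=\; \max\bigl(\lVert s\rVert_S,\ \lVert\phi(s)\rVert_R\bigr),
\]
and verify the three things: (a) it is a seminorm of the correct flavor (group, ring, or module), (b) it is equivalent to the original, and (c) $\phi$ is non-expansive with respect to it. Step (c) is immediate from the definition, since $\lVert\phi(s)\rVert_R\le\max(\lVert s\rVert_S,\lVert\phi(s)\rVert_R)=\lVert s\rVert'_S$, so the real content is (a) and (b).

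For (a), axioms (0) and (1) of~\S\ref{normringsec} are preserved under taking the pointwise maximum of two seminorms, using that $\phi$ is a group homomorphism. In the ring case, $\phi$ is unital, so $\lvert 1\rvert'_S=\max(1,1)=1$, and submultiplicativity follows from submultiplicativity of the two constituent seminorms together with the elementary inequality $\max(xy,uv)\le\max(x,u)\max(y,v)$ for nonnegative reals. In the module case, $\calA$-linearity gives $\lVert\phi(as)\rVert_R=\lVert a\,\phi(s)\rVert_R\le\lvert a\rvert_\calA\lVert\phi(s)\rVert_R$, which combined with the $\calA$-module bound in $S$ yields $\lVert as\rVert'_S\le\lvert a\rvert_\calA\lVert s\rVert'_S$.

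For (b), the inequality $\lVert s\rVert_S\le\lVert s\rVert'_S$ is obvious, and boundedness of $\phi$ with constant $C$ gives $\lVert\phi(s)\rVert_R\le C\lVert s\rVert_S$, so $\lVert s\rVert'_S\le\max(1,C)\lVert s\rVert_S$. This is the (multiplicative, two-sided) equivalence of seminorms used throughout the paper.

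There is no real obstacle here, but it is worth noting why the naive trick of rescaling $\lVert\ \rVert_S$ by $C$ is insufficient: in the ring case, any rescaling breaks the normalization $\lvert 1\rvert_S=1$. The $\max$-construction circumvents this uniformly across all three settings (abelian groups, rings, modules) without any case analysis beyond checking submultiplicativity or the module inequality.
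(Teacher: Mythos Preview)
Your proposal is correct and takes exactly the same approach as the paper: the paper's proof consists of the single line defining $\lVert a\rVert'_S=\max(\lVert a\rVert_S,\lVert\phi(a)\rVert_R)$, and you have simply filled in the routine verifications that this works.
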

\begin{proof}
Define a new seminorm $\lVert \ \rVert'_S$ by $\lVert a\rVert'_S=\max(\lVert a\rVert_S,\lVert \phi(a)\rVert_R)$.
\end{proof}

\subsection{$\Kcirc$-modules}
In this section we study $\Kcirc$-modules for a real-valued field $K$. This material will be heavily used later, in particular, because such modules appear as unit balls of seminormed $K$-vector spaces.

\subsubsection{Almost isomorphisms}
Let $K$ be a real-valued field and let $M$ be a $\Kcirc$-module. We say that $M$ is a {\em torsion} module if any its element has a non-zero annihilator. By $M_\tor$ and $M_\tf=M/M_\tor$ we denote the (maximal) torsion submodule and the (maximal) torsion free quotient, respectively. We say that an element $x\in M$ is {\em almost zero} if for any $r<1$ there exists $\pi\in\Kcirc$ such that $r<\lvert \pi\rvert$ and $\pi x=0$, and $x$ is called {\em essential} otherwise. If $\lvert K^\times\rvert$ is discrete then any non-zero element is essential. As in \cite{Gabber-Ramero}, we say that a module is almost zero if all its elements are so, and a homomorphism is an {\em almost isomorphism} if its kernel and cokernel almost vanish.

\subsubsection{Almost isomorphic envelope}\label{almissec}
Let $N$ be a $\Kcirc$-module with submodules $M$ and $M'$. We say that $M$ and $M'$ are {\em almost isomorphic} as submodules if the embeddings $M\into M+M'$ and $M'\into M+M'$ are almost isomorphisms. By the {\em almost isomorphic envelope}, or just {\em envelope}, $M^\env$ of $M$ in $N$ we mean the maximal submodule $M'\subseteq N$ which is almost isomorphic to $M$. Obviously, $M'$ consists of all elements $x\in N$ such that for any $r<1$ there exists $\pi\in\Kcirc$ satisfying $r<\lvert \pi\rvert$ and $\pi x\in M$. Thus, if $\lvert K^\times\rvert$ is discrete then $M^\env=M$ for any $M$ and $N$, and if $\lvert K^\times\rvert $ is dense then $M^\env$ is the maximal submodule $M'\subseteq N$ such that $\Kcirccirc M'\subseteq M$. For example, if $N=K$ and $\lvert K^\times\rvert $ is dense then $(\Kcirccirc)^\env=\Kcirc$.

\subsubsection{Adic seminorm}
Given a $\Kcirc$-module $M$ we define the {\em adic seminorm} as $$\lVert x\rVert_\adic=\inf_{a\in\Kcirc\vert \ x\in aM}\lvert a\rvert .$$

\begin{lem}\label{adiclem}
Let $M$ be a $\Kcirc$-module and $x\in M$ an element.

(i) The adic seminorm is the maximal $\Kcirc$-seminorm on $M$ bounded by 1.

(ii) $\lVert x\rVert_\adic=0$ if and only if $x$ is divisible (see \S\ref{realvalsec}).
\end{lem}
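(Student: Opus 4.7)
My plan is to check the three axioms of a $\Kcirc$-seminorm, verify the bound $\le 1$, prove maximality by a direct computation, and then handle (ii) by separating the trivial- and non-trivial-valuation cases.

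For (i), I would first verify that $\lVert \ \rVert_\adic$ is indeed a $\Kcirc$-seminorm. The vanishing $\lVert 0\rVert_\adic=0$ is immediate since $0\in 0\cdot M$ and $\lvert 0\rvert=0$. For the ultrametric inequality, given $\eps>0$ I would choose $a,b\in\Kcirc$ with $x\in aM$, $y\in bM$, and $\lvert a\rvert$, $\lvert b\rvert$ within $\eps$ of $\lVert x\rVert_\adic$ and $\lVert y\rVert_\adic$ respectively. Since $\Kcirc$ is a valuation ring its principal ideals are totally ordered, so (say) $\lvert b\rvert\le\lvert a\rvert$ forces $b\in a\Kcirc$ and hence $bM\subseteq aM$; then $x-y\in aM$, giving $\lVert x-y\rVert_\adic\le\lvert a\rvert\le\max(\lVert x\rVert_\adic,\lVert y\rVert_\adic)+\eps$. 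Letting $\eps\to 0$ yields the ultrametric axiom. Scaling $\lVert ax\rVert_\adic\le\lvert a\rvert\lVert x\rVert_\adic$ for $a\in\Kcirc$ is immediate from $x\in bM\Rightarrow ax\in abM$. Boundedness $\lVert x\rVert_\adic\le 1$ follows from $x\in 1\cdot M$.

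For maximality, let $\lVert\ \rVert'$ be any $\Kcirc$-seminorm on $M$ with $\lVert y\rVert'\le 1$ for all $y\in M$. For $a\in\Kcirc$ with $x\in aM$, write $x=ay$; then $\lVert x\rVert'=\lVert ay\rVert'\le\lvert a\rvert\lVert y\rVert'\le\lvert a\rvert$. Taking the infimum over all such $a$ gives $\lVert x\rVert'\le\lVert x\rVert_\adic$, so $\lVert\ \rVert_\adic$ is maximal.

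For (ii), I would split on whether the valuation of $K$ is trivial. In the trivial case $\Kcirc=K$ and the only element of $\Kcirc$ of absolute value strictly less than $1$ is $0$; hence $\lVert x\rVert_\adic=0$ forces $x\in 0\cdot M$, i.e.\ $x=0$, which is precisely the divisibility condition (with $\pi=0$). In the non-trivial case fix $\pi$ as in \S\ref{realvalsec}. If $x$ is divisible then $x\in\pi^nM$ for all $n$, so $\lVert x\rVert_\adic\le\lvert\pi\rvert^n\to 0$. Conversely, if $\lVert x\rVert_\adic=0$, then for every $n$ there exists $a\in\Kcirc$ with $x\in aM$ and $\lvert a\rvert<\lvert\pi\rvert^n$; then $b:=a/\pi^n$ lies in $\Kcirc$ (since $\lvert b\rvert<1$) and $x\in aM=\pi^n bM\subseteq\pi^nM$, so $x$ is infinitely $\pi$-divisible.

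There is no serious obstacle here; the only mildly delicate point is the ultrametric inequality, where infima need not be attained and one must invoke the total ordering of ideals in $\Kcirc$ via an $\eps$-approximation.
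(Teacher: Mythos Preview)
Your proof is correct and follows the same straightforward approach the paper takes; you have simply spelled out in full the verifications the paper dismisses as ``almost obvious,'' including the $\eps$-argument for the ultrametric inequality and the case split on whether the valuation is trivial. The paper's own proof only remarks that the trivially valued case requires a little care, which you handle exactly as they do.
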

\begin{proof}
The claim is almost obvious and the only case that requires a little care is when the valuation is trivial. In this case, $\lVert \ \rVert_\adic$ is trivial, i.e. $\lVert x\rVert_\adic=1$ whenever $x\neq 0$, and so $\lVert x\rVert_\adic=0$ if and only if $x=0$, i.e. $x$ is divisible.
\end{proof}

\begin{lem}\label{cotorlem}
(i) Any homomorphism $\phi\:M\to N$ between $\Kcirc$-modules is non-expansive with respect to the adic seminorms.

(ii) Assume that $M$ and $N$ are torsion free. Then $\phi$ is an isometry if and only if $\Ker(\phi)$ is divisible and $\Coker(\phi)$ contains no essential torsion elements.
\end{lem}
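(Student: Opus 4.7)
For part (i), I would simply observe that if $x=am$ for some $a\in\Kcirc$, then $\phi(x)=a\phi(m)\in aN$ and hence $\lVert\phi(x)\rVert_\adic\le\lvert a\rvert$; taking the infimum over all such $a$ gives $\lVert\phi(x)\rVert_\adic\le\lVert x\rVert_\adic$.

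For the forward implication of (ii), suppose $\phi$ is an isometry. Any $x\in\Ker(\phi)$ has $\lVert x\rVert_\adic=\lVert\phi(x)\rVert_\adic=0$, so by Lemma~\ref{adiclem}(ii) I may write $x=\pi^ny_n$ in $M$ for every $n$. Since $N$ is torsion free and $\pi^n\phi(y_n)=0$, each $y_n$ lies in $\Ker(\phi)$, so $\Ker(\phi)$ is divisible. To handle the cokernel, take a torsion class $[n]\in\Coker(\phi)$ with $an=\phi(x)$ for some $0\ne a\in\Kcirc$. Isometry gives $\lVert x\rVert_\adic\le\lvert a\rvert$, so for any $\epsilon>0$ there exist $b\in\Kcirc$ and $m\in M$ with $x=bm$ and $\lvert b\rvert<\lvert a\rvert+\epsilon$. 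The identity $b\phi(m)=an$ then allows torsion-free cancellation in $N$: either $\lvert b\rvert\le\lvert a\rvert$, whence $n=(b/a)\phi(m)\in\phi(M)$ and $[n]=0$, or $\lvert b\rvert>\lvert a\rvert$, in which case $c:=a/b$ lies in $\Kcirc$ with $cn=\phi(m)\in\phi(M)$ and $\lvert c\rvert>\lvert a\rvert/(\lvert a\rvert+\epsilon)\to 1$. Either way $[n]$ is almost zero.

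For the converse, by (i) I already have $\lVert\phi(x)\rVert_\adic\le\lVert x\rVert_\adic$, so I only need the reverse inequality $\lVert x\rVert_\adic\le s:=\lVert\phi(x)\rVert_\adic$. The case $s=1$ is automatic, so assume $s<1$ and fix thresholds $s_1,s_2$ with $s<s_1<s_2<1$. Choose $a\in\Kcirc$ and $n\in N$ with $\phi(x)=an$ and $\lvert a\rvert<s_1$; since $a[n]=0$ in $\Coker(\phi)$ and $\Coker(\phi)$ has no essential torsion, there is $c\in\Kcirc$ with $\lvert c\rvert>s_2$ and $cn=\phi(m)$ for some $m\in M$. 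Then $cx-am\in\Ker(\phi)$, and divisibility of $\Ker(\phi)$ furnishes $k'\in\Ker(\phi)$ with $cx-am=ck'$. The strict inequality $\lvert a\rvert<\lvert c\rvert$ places $a/c$ in $\Kcirc$, so torsion-freeness of $M$ permits cancelling $c$ from $c(x-k')=c(a/c)m$ to obtain $x-k'=(a/c)m$; applying divisibility once more to write $k'=(a/c)k''$ gives $x=(a/c)(m+k'')\in(a/c)M$, hence $\lVert x\rVert_\adic\le\lvert a/c\rvert<s_1/s_2$. Sending $s_2\to 1$ and then $s_1\to s$ finishes the proof.

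The delicate step is the converse: I must coordinate three approximations—$\lvert a\rvert$ near $s$ from above, $\lvert c\rvert$ near $1$ from below, and the divisibility witness $k'$ inside $\Ker(\phi)$—and ensure the \emph{strict} inequality $\lvert a\rvert<\lvert c\rvert$ so that torsion-free cancellation in $M$ is actually available; this is why I introduce two auxiliary thresholds $s_1<s_2$ rather than collapsing to a single parameter. The trivially valued case is a degenerate sanity check that I would dispose of at the outset: the adic seminorm only takes values $0$ and $1$, ``divisible'' reduces to zero, no nonzero element of a $\Kcirc$-module is torsion, and both implications of (ii) collapse to the statement that $\phi$ is injective.
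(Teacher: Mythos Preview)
Your proof is correct. The main difference from the paper's argument is organizational: the paper first shows that non-divisibility of $\Ker(\phi)$ forces a failure of isometry, then observes that when $Q=\Ker(\phi)$ is divisible the quotient map $M\to M/Q$ is itself an isometry (and $M/Q$ is torsion free because $N$ is), thereby reducing (ii) to the injective case, which it declares ``obvious.'' You instead carry the kernel along throughout and do the whole computation directly, which is why you need the extra divisibility step producing $k'$ and $k''$ and the two-threshold bookkeeping $s_1<s_2$ to guarantee the strict inequality $\lvert a\rvert<\lvert c\rvert$. Your approach has the virtue of making explicit exactly what the paper sweeps into ``obvious'' (the $\epsilon$-matching between $\lvert a\rvert$ near $s$ and $\lvert c\rvert$ near $1$), while the paper's reduction is cleaner and isolates the single nontrivial fact that $M\to M/Q$ is an isometry once $Q$ is divisible. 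One small point worth stating explicitly in your write-up: when you invoke ``divisibility of $\Ker(\phi)$ furnishes $k'$ with $cx-am=ck'$,'' you are using not just $\pi$-divisibility but divisibility by the specific element $c$; this follows because $\lvert c\rvert>0$ lets you choose $n$ with $\lvert\pi^n\rvert<\lvert c\rvert$, write $cx-am=\pi^n q_n$ with $q_n\in\Ker(\phi)$ (using torsion-freeness of $N$), and set $k'=(\pi^n/c)q_n$.
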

\begin{proof}
The first claim is obvious, so let us prove (ii). If $Q=\Ker(\phi)$ is not divisible, then it contains an element $x$ which is not infinitely divisible in $Q$ and hence also not infinitely divisible in $M$. Then $\lVert x\rVert_\adic\neq 0$, and hence $\phi$ is not an isometry. So, we can assume that $Q$ is divisible and we should prove that in this case $\phi$ is an isometry if and only if $\Coker(\phi)$ contains no essential torsion elements. Since $Q$ is divisible, $M/Q$ is torsion free and the map $M\to M/Q$ is an isometry. Thus, replacing $M$ with $M/Q$ we can assume that $\phi$ is injective. In this case the assertion is obvious.
\end{proof}

\subsubsection{Finitely presented modules}\label{fpsec}
The following result is proved as its classical analogue over DVR by reducing a matrix to a diagonal one by elementary operations.

\begin{lem}\label{twolattices}
Assume that $L\subseteq M$ are free $\Kcirc$-modules of ranks $l$ and $m$, respectively. Then there exists a basis $e_1\..e_m$ of $M$ and elements $\pi_1\..\pi_l\in\Kcirc$ such that $\pi_1 e_1\..\pi_le_l$ is a basis of $L$.
\end{lem}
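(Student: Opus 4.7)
The plan is to mimic the classical Smith Normal Form argument. Choose bases $f_1\..f_l$ of $L$ and $g_1\..g_m$ of $M$, and form the $l\times m$ matrix $A=(a_{ij})$ over $\Kcirc$ defined by $f_i=\sum_j a_{ij}g_j$. A change of basis of $L$ (resp.\ of $M$) corresponds to multiplying $A$ on the left (resp.\ on the right) by an invertible matrix over $\Kcirc$; concretely, we are allowed to perform the following elementary row/column operations on $A$: swap two rows or two columns, multiply a row or a column by a unit of $\Kcirc$, and add a $\Kcirc$-multiple of one row (resp.\ column) to another. The goal is to reduce $A$ to a diagonal form whose diagonal gives the sought-for $\pi_1\..\pi_l$.

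The crucial input from the valuation is that $\Kcirc$ is totally ordered by divisibility: for any $a,b\in\Kcirc$, either $a\mid b$ or $b\mid a$, according as $\lvert a\rvert\ge\lvert b\rvert$ or vice versa. Thus we may pick a non-zero entry $a_{i_0 j_0}$ of maximal absolute value (such an entry exists since $A$ has only finitely many entries), and after swapping rows and columns we may assume it sits in position $(1,1)$. For each $j\ge 2$, the ratio $c_j=a_{1j}/a_{11}$ lies in $\Kcirc$, so subtracting $c_j$ times column $1$ from column $j$ kills $a_{1j}$ by a legitimate elementary operation; symmetrically, one clears the rest of the first column. Setting $\pi_1=a_{11}$, the matrix acquires the block form with $\pi_1$ in position $(1,1)$, zeros throughout the rest of the first row and column, and an $(l-1)\times(m-1)$ block $A'$ in the lower right. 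Further elementary operations on rows $2\..l$ and columns $2\..m$ do not disturb the first row or column, so we may recurse on $A'$.

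By induction on $l$, after finitely many steps $A$ is brought to diagonal form with diagonal entries $\pi_1\..\pi_l\in\Kcirc$; reading off the resulting bases gives a basis $e_1\..e_m$ of $M$ such that $\pi_1 e_1\..\pi_l e_l$ is a basis of $L$. Each $\pi_i$ is nonzero because $L$ has rank $l$ and therefore $A$ has rank $l$ over $K$. The only point requiring care, compared with the DVR case, is that $\Kcirc$ need not be a PID, so one cannot invoke ideals generated by matrix minors; the total divisibility relation on $\Kcirc$ applied to the entries of the current matrix at each pivot step replaces the use of the uniformizer in the classical proof.
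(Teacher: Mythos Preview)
Your proof is correct and is precisely the approach the paper has in mind: the paper states only that the result ``is proved as its classical analogue over DVR by reducing a matrix to a diagonal one by elementary operations,'' and you have carried out exactly that reduction, correctly noting that total divisibility in the valuation ring $\Kcirc$ is what replaces the uniformizer in the DVR argument.
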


\begin{cor}\label{fpcor}
Any finitely presented $\Kcirc$-module $M$ is of the form $\oplus_{i=1}^n M_i$ with each $M_i$ non-zero cyclic, say $M_i=\Kcirc/\pi_i\Kcirc$, and $1>\lvert \pi_1\rvert \ge\dots\ge\lvert \pi_n\rvert \ge 0$. In addition, the sequence $\lvert \pi_1\rvert \..\lvert \pi_n\rvert $ is determined by $M$ uniquely.
\end{cor}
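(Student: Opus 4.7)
The plan is to bootstrap from Lemma~\ref{twolattices} for the existence part, and then use Fitting ideals to recover uniqueness of the invariant factors.

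First I would use finite presentation to write $M=F/N$, where $F=\Kcirc^m$ is free of finite rank and $N\subseteq F$ is a finitely generated $\Kcirc$-submodule. Since $\Kcirc$ is a valuation ring and $N$ is torsion free (as a submodule of $F$) and finitely generated, $N$ is itself free, of some rank $l\le m$. Lemma~\ref{twolattices} then yields a basis $e_1\..e_m$ of $F$ and non-zero elements $\pi_1\..\pi_l\in\Kcirc$ with $\pi_1 e_1\..\pi_l e_l$ a basis of $N$. Setting $\pi_{l+1}=\dots=\pi_m=0$ and quotienting out $N$ gives $M\cong\bigoplus_{i=1}^m\Kcirc/\pi_i\Kcirc$. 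Discarding the indices with $\lvert\pi_i\rvert=1$ (whose summands vanish) and reordering the rest so that $1>\lvert\pi_1\rvert\ge\dots\ge\lvert\pi_n\rvert\ge 0$ produces the stated decomposition.

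For uniqueness of the sequence $\lvert\pi_1\rvert\..\lvert\pi_n\rvert$, I would invoke the Fitting ideals $F_k(M)$, which are intrinsic invariants of a finitely presented module. From the diagonal presentation above, $F_k(M)$ is generated by the $(n-k)\times(n-k)$ minors of $\mathrm{diag}(\pi_1\..\pi_n)$, i.e., by the products $\prod_{i\in S}\pi_i$ ranging over subsets $S\subseteq\{1\..n\}$ of cardinality $n-k$. Because the principal ideals of the valuation ring $\Kcirc$ are totally ordered by inclusion, the sum of these principal ideals equals the largest one; and since the $\lvert\pi_i\rvert$ are weakly decreasing, the maximum is attained at $S=\{1\..n-k\}$, giving $F_k(M)=(\pi_1\cdots\pi_{n-k})$. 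The torsion-free rank $r$ of $M$, equal to the number of indices $i$ with $\pi_i=0$, is then read off as the smallest $k$ for which $F_k(M)\neq 0$, and the remaining $\lvert\pi_i\rvert$ are recovered successively from the ratios of absolute values of generators of consecutive Fitting ideals, which forces the sequence to be canonical.

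The main obstacle is the uniqueness step. Existence follows routinely from Lemma~\ref{twolattices} together with the standard fact that finitely generated torsion-free modules over a valuation ring are free. Uniqueness, by contrast, requires a presentation-independent invariant; Fitting ideals work cleanly thanks to the order-theoretic structure of $\Kcirc$, but some care is needed to treat the zero $\pi_i$ coming from torsion-free summands, since these force the lowest Fitting ideals to vanish and must be accounted for before reading off the nontrivial invariant factors.
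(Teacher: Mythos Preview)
Your proposal is correct and matches the paper's intent: the paper states the corollary without proof, implicitly deducing existence from Lemma~\ref{twolattices} exactly as you do, and leaving uniqueness to the reader as the classical argument. Your Fitting-ideal computation for uniqueness is a clean and standard way to fill this in over a valuation ring (finitely generated ideals are principal, so the $F_k(M)$ are determined by the absolute value of any generator), and your handling of the zero $\pi_i$ via the vanishing range of the Fitting ideals is correct.
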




\subsection{Seminorms on $K$-vectors spaces}\label{semivectsec}
Now, let us study seminorms on vector spaces over a real-valued field $K$.

\subsubsection{Orthogonal bases and cartesian spaces}
We recall some results from \cite[Chapter~2]{bgr}. For simplicity we only consider the finite-dimensional case; generalizations to the case of infinite dimension can be found in loc.cit. So, assume that $(V,\lVert \ \rVert )$ is a finite-dimensional seminormed $K$-vector space. A basis $e_1\..e_n$ of $V$ is called {\em $r$-orthogonal}, where $r\in(0,1]$, if for any $v=\sum_i a_ie_i$ the inequality $\lVert v\rVert \ge r\max_i(\lvert a_i\rvert \cdot\lVert e_i\rVert )$ holds. If $r=1$ then the basis is called orthogonal, and if in addition $\lVert e_i\rVert =1$ for $1\le i\le n$ then the basis is called {\em orthonormal}. One says that the seminormed space $V$ and the seminorm $\lVert \ \rVert $ are {\em weakly cartesian} (resp. {\em cartesian}, resp. {\em strictly cartesian}) if $V$ possesses an $r$-orthogonal basis for some $r$ (resp. an orthogonal basis, resp. an orthonormal basis).

\begin{rem}\label{cartsec}
(i) If $V$ is weakly cartesian then it possesses an $r$-orthogonal basis for any $r<1$, see \cite[Proposition~2.6.2/3]{bgr}.

(ii) If $K$ is complete then $V$ is weakly cartesian if and only if its seminorm is a norm, see \cite[Proposition~2.3.3/4]{bgr}. Conversely, if $K$ is not complete, say $x\in \hatK\setminus K$ then it is easy to see that $K+Kx$ with the norm induced from $\hatK$ is a normed vector space of dimension two which is not weakly cartesian (e.g. its completion is the one-dimensional $\hatK$-vector space $\hatK$).

(iii) If $K$ is spherically complete then any normed vector space is cartesian by \cite[Proposition~2.4.4/2]{bgr}. Conversely, if $K$ is not spherically complete one can easily construct two-dimensional normed vector spaces which are not cartesian.
\end{rem}

\subsubsection{Index of norms}\label{indexnormsec}
If $U$ is one-dimensional with basis $e$ then sending a seminorm to its value on $e$ provides a one-to-one correspondence between seminorms (resp. norms) on $V$ and the half-line $\bfR_{\ge 0}$ (resp. $\bfR_{>0}$). In particular, if $\lVert \ \rVert $ is a norm and $\lVert \ \rVert '$ is a seminorm on $U$ then the index $[\lVert \ \rVert ':\lVert \ \rVert ]=\lVert e\rVert '/\lVert e\rVert $ is a well-defined number independent of  the choice of $e$.

We can extend this construction using the top exterior powers. Assume that $\dim(V)=d$ and set $U=\det(V)=\bigwedge^dV$. Any seminorm $\lVert \ \rVert $ on $V$ induces the {\em determinant seminorm} $\lVert \ \rVert _{\det}=\lVert \ \rVert _{\bigwedge^d}$ on $U$, see \S\ref{powersec}. Moreover, if $\lVert \ \rVert $ is weakly cartesian then it is easy to see that $\lVert \ \rVert _{\det}$ is a norm, hence for any pair of weakly cartesian norms we can define the {\em index} $[\lVert \ \rVert ':\lVert \ \rVert ]$ to be $[\lVert \ \rVert '_{\det}:\lVert \ \rVert _{\det}]$. Obviously, the index is transitive, i.e. $$[\lVert \ \rVert ':\lVert \ \rVert ]\cdot[\lVert \ \rVert '':\lVert \ \rVert ']=[\lVert \ \rVert '':\lVert \ \rVert ].$$

\begin{rem}
The intuitive meaning of the index is that it measures the inverse ratio of volumes of the unit balls of the norms, at least when $|K^\times|$ is dense. This will be made precise in Lemma~\ref{contentindex} below.
\end{rem}

\subsection{Unit balls}\label{unitballsec}
Next, we study seminorms in terms of their unit balls. In this section, the trivially valued case will be uninteresting: we will often have to exclude it and in the remaining cases it will reduce to a triviality.

\subsubsection{Semilattices}
Assume that $V$ is a $K$-vector space and $M\subseteq V$ is a $\Kcirc$-submodule. If $M\otimes_{\Kcirc}K=V$ then we say that $M$ is a {\em semilattice of} $V$. If a semilattice $M$ is a free $\Kcirc$-module then we say that $M$ is a {\em lattice}. Note that $M$ is a semilattice if and only if it contains a basis of $V$ and hence contains a lattice.

\subsubsection{Almost unit balls}
For any $K$-seminorm $\lVert\ \rVert$ on a $K$-vector space $V$ the unit ball $V^\di$ is a $\Kcirc$-module. More generally, we say that a $\Kcirc$-submodule $M\subseteq V$ is an {\em almost unit ball} of ${\lVert\ \rVert}$ if it is almost isomorphic to $V^\di$ in the sense of \S\ref{almissec}. In particular, $V^\di$ itself is an almost unit ball too. If the valuation on $K$ is non-trivial then any almost unit ball $M$ is a semilattice.

\subsubsection{Seminorm determined by a semilattice}\label{semilatsec}
Any semilattice $M\subseteq V$ determines a $K$-seminorm on $V$ as follows: $M$ possesses the canonical adic seminorm and there is a unique way to extend it to a $K$-seminorm $\lVert\ \rVert_M$ on $V=KM$. By Lemma~\ref{adiclem}, $\lVert\ \rVert_M$ is the maximal seminorm whose unit ball contains $M$. The following lemma shows to which extent these correspondences between seminorms and semilattices are inverse one to another. The proof is simple so we omit it.

\begin{lem}\label{semilatticelem}
Let $V$ be a vector space over a real-valued field $K$.

(i) Assume that $M\subseteq V$ is a semilattice and let $\lVert\ \rVert$ be the maximal seminorm such that $\lVert M\rVert\le 1$. Then $M^\env$ is the unit ball of $\lVert\ \rVert$. In particular, $M$ is an almost unit ball of $V$.

(ii) Assume that the valuation on $K$ is non-trivial, $V$ is provided with a $K$-seminorm $\lVert\ \rVert$ and $V^\di$ is the unit ball. Let $\lVert\ \rVert'$ be the maximal seminorm bounded by 1 on $V^\di$. Then $\lVert\ \rVert'$ is the minimal seminorm that dominates $\lVert\ \rVert$ and takes values in the closure of $\lvert K\rvert$ in $\bfR_{\ge 0}$.
\end{lem}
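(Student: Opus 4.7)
The plan is to handle (i) first by giving an explicit formula for the maximal seminorm, and then to deduce (ii) by applying (i) with $M=V^\di$.

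For (i), I would define
\[ \lVert v\rVert := \inf\{\lvert a\rvert : a\in K,\ v\in aM\} \]
(with the convention that $v\in 0\cdot M$ iff $v=0$) and verify in turn: (a) this is a $K$-seminorm, where the non-archimedean triangle inequality reduces to the observation that $aM\subseteq bM$ whenever $\lvert a\rvert\le \lvert b\rvert$ (since then $b^{-1}a\in\Kcirc$), and $K$-homogeneity is automatic; (b) $\lVert M\rVert\le 1$ is immediate from $m=1\cdot m$; and (c) any seminorm $\lVert\ \rVert''$ with $\lVert M\rVert''\le 1$ satisfies $\lVert v\rVert''\le\lvert a\rvert\lVert m\rVert''\le\lvert a\rvert$ for every decomposition $v=am\in aM$, so $\lVert\ \rVert''\le\lVert\ \rVert$. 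I would then identify the unit ball with $M^\env$ by unfolding the infimum: $\lVert v\rVert\le 1$ means that for every $r<1$ one can find $a\in K$ with $\lvert a\rvert<1/r$ and $v\in aM$; substituting $\pi=a^{-1}$, this becomes the requirement that for every $r<1$ there is $\pi\in K$ with $\lvert\pi\rvert>r$ and $\pi v\in M$. The one subtlety is that the definition of $M^\env$ in \S\ref{almissec} restricts $\pi$ to $\Kcirc$, but this is harmless: if $\lvert\pi\rvert>1$ then $\pi v\in M$ forces $v=\pi^{-1}(\pi v)\in\Kcirccirc M\subseteq M$, so $v\in M^\env$ tautologically. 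The ``in particular'' clause then follows from the defining property of $M^\env$.

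For (ii), non-triviality of the valuation ensures that every basis of $V$ can be scaled into $V^\di$, so $V^\di$ is a semilattice. Applying (i) with $M=V^\di$ rewrites the formula as
\[ \lVert v\rVert' = \inf\{\lvert a\rvert : a\in K,\ \lvert a\rvert\ge\lVert v\rVert\}, \]
i.e.\ the minimum of the closed set $\overline{\lvert K\rvert}\cap[\lVert v\rVert,\infty)$ in $\bfR_{\ge 0}$. From this explicit formula one reads off at once that $\lVert\ \rVert'\ge\lVert\ \rVert$ and that $\lVert\ \rVert'$ takes values in $\overline{\lvert K\rvert}$. For minimality, any seminorm $\lVert\ \rVert''$ with $\lVert\ \rVert''\ge\lVert\ \rVert$ and with values in $\overline{\lvert K\rvert}$ places $\lVert v\rVert''$ into the same closed set $\overline{\lvert K\rvert}\cap[\lVert v\rVert,\infty)$, hence it cannot undercut its minimum $\lVert v\rVert'$.

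I do not anticipate any serious obstacle; once the explicit adic-extension formula is in hand, the rest is careful bookkeeping. The only mildly delicate point is the reduction in (i) from ``$\pi\in K$'' to ``$\pi\in\Kcirc$'' required to match the definition of $M^\env$ verbatim.
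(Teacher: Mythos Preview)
Your proposal is correct. The paper actually omits the proof entirely, stating only ``The proof is simple so we omit it.'' Your argument is the natural one and is fully consistent with the paper's setup: the formula you write down for $\lVert\ \rVert$ in (i) is precisely the extension of the adic seminorm of $M$ to $V=KM$ described in \S\ref{semilatsec}, and your verification that it is the maximal seminorm bounded by $1$ on $M$ recovers Lemma~\ref{adiclem}(i) at the level of $V$. The reduction from $\pi\in K$ to $\pi\in\Kcirc$ that you flag is handled correctly, and the deduction of (ii) from (i) via the explicit description $\lVert v\rVert'=\min\bigl(\overline{\lvert K\rvert}\cap[\lVert v\rVert,\infty)\bigr)$ is clean. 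There is nothing to add.
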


\begin{cor}
Assume that $K$ is a real-valued field whose valuation is non-trivial and $V$ is a $K$-vector space. Then the above constructions establish a one-to-one correspondence between almost isomorphism classes of semilattices $M\subseteq V$ and $K$-seminorms on $V$ taking values in the closure of $\lvert K\rvert$ in $\bfR_{\ge 0}$.
\end{cor}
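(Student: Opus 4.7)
The plan is to verify that the two constructions introduced in Lemma~\ref{semilatticelem} are mutually inverse between the stated sets. In one direction we send the almost-isomorphism class $[M]$ of a semilattice $M\subseteq V$ to the $K$-seminorm $\lVert\ \rVert_M$ of \S\ref{semilatsec}; in the opposite direction we send a seminorm $\lVert\ \rVert$ taking values in $\overline{\lvert K\rvert}$ to the almost-isomorphism class $[V^\di]$ of its unit ball.

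First I would check that both maps are well-defined. The seminorm $\lVert\ \rVert_M$ takes values in $\overline{\lvert K\rvert}$ because, by its explicit description as an infimum of absolute values of elements of $K$, every value is such an infimum. The unit ball $V^\di$ is a semilattice because the valuation is non-trivial, so any $v\in V$ can be scaled into $V^\di$. The essential well-definedness point is that $\lVert\ \rVert_M$ depends only on $[M]$: if $M\sim M'$ and $x\in M'$, then by the definition of almost isomorphism, for each $r<1$ there exists $\pi\in\Kcirc$ with $\lvert\pi\rvert>r$ and $\pi x\in M$; since $\lVert\ \rVert_M$ is a $K$-seminorm bounded by $1$ on $M$, we get $\lvert\pi\rvert\cdot\lVert x\rVert_M=\lVert\pi x\rVert_M\le 1$. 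Letting $r\to 1$ yields $\lVert M'\rVert_M\le 1$, so by maximality $\lVert\ \rVert_{M'}\ge\lVert\ \rVert_M$; by symmetry we get equality.

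Next I would perform the two round trips. Starting from a class $[M]$, Lemma~\ref{semilatticelem}(i) identifies the unit ball of $\lVert\ \rVert_M$ with $M^\env$, which is almost isomorphic to $M$ by the very definition of the envelope, so the class $[M]$ is recovered. Starting from a seminorm $\lVert\ \rVert$ whose values lie in $\overline{\lvert K\rvert}$, Lemma~\ref{semilatticelem}(ii) identifies $\lVert\ \rVert_{V^\di}$ as the minimal seminorm dominating $\lVert\ \rVert$ that takes values in $\overline{\lvert K\rvert}$; since $\lVert\ \rVert$ itself has this property, minimality forces $\lVert\ \rVert_{V^\di}=\lVert\ \rVert$.

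No serious obstacle is expected: all the substantive work is already packaged into Lemma~\ref{semilatticelem}, and the corollary is essentially a book-keeping statement. The only mildly delicate point is the well-definedness of $[M]\mapsto\lVert\ \rVert_M$, which rests on the ``for every $r<1$'' density clause built into the definition of almost isomorphism.
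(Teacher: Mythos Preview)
Your proposal is correct and follows exactly the approach the paper has in mind: the paper states this as an immediate corollary of Lemma~\ref{semilatticelem} and gives no explicit proof, and your argument unpacks precisely the two round trips that parts~(i) and~(ii) of that lemma encode. The only step you supply beyond the lemma itself is the well-definedness of $[M]\mapsto\lVert\ \rVert_M$ on almost-isomorphism classes, which is straightforward and correctly handled.
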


\subsubsection{Bounded semilattices}
Assume now that $V$ is finite-dimensional and let us interpret the results of Section~\ref{semivectsec} in terms of semilattices. A semilattice $M$ of $V$ is called {\em bounded} if it is contained in a lattice. Equivalently, for some (and then any) sublattice $L\subseteq M$ one has that $\pi M\subseteq L$ for some $0\neq\pi\in\Kcirc$.

\begin{lem}\label{bddsemilem}
Let $M$ be a semilattice in a finite-dimensional $K$-vector space $V$ with the associated seminorm $\lVert \ \rVert _M$.

(i) The following conditions are equivalent: (a) $M$ is almost isomorphic to a lattice of $V$ in the sense of \S\ref{almissec}, (b) $M^\env$ is a lattice, (c) $\lVert \ \rVert _M$ is strictly cartesian.

(ii) The following conditions are equivalent: (d) $M$ is bounded, (e) for any $\pi\in\Kcirccirc$ there exists a lattice $L\subseteq M$ such that $\pi M\subseteq L$, (f) $\lVert \ \rVert _M$ is weakly cartesian.
\end{lem}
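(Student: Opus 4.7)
My plan is to treat (i) and (ii) by routing both through the identification of the unit ball of $\lVert \ \rVert _M$ with the envelope $M^\env$ provided by Lemma~\ref{semilatticelem}(i), dispatching the trivially valued case at the outset: then $M=V$ and $\Kcirccirc=0$, so all six conditions hold vacuously. Throughout the rest I assume the valuation is non-trivial.

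For part (i), (b) $\Rightarrow$ (a) is tautological, and for the converse I split by the value group. In the discrete case ``almost zero'' collapses to ``zero'', so $M$ almost isomorphic to a lattice $L$ forces $M=L$ and $M^\env=M$. In the dense case a direct computation in a basis shows $L^\env=L$ for any lattice $L$; combined with the monotonicity of the envelope (clear from $M^\env=\{x\in V:\Kcirccirc x\subseteq M\}$), if $M$ is almost isomorphic to $L$ then $L\subseteq M^\env$ and $M\subseteq L^\env=L$, whence $M^\env\subseteq L^\env=L$ and so $M^\env=L$. For (b) $\Leftrightarrow$ (c), if $M^\env=\bigoplus \Kcirc e_i$ then the unit-ball description of $\lVert \ \rVert _M$ gives $\lVert \sum a_i e_i\rVert _M=\max_i\lvert a_i\rvert$, exhibiting $\{e_i\}$ as orthonormal; conversely an orthonormal basis recovers the unit ball as the lattice $\bigoplus \Kcirc e_i$.

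For part (ii), (e) $\Rightarrow$ (d) is immediate because $M\subseteq\pi^{-1}L$ is a lattice for any nonzero $\pi\in\Kcirccirc$. For (d) $\Rightarrow$ (f), I sandwich $L_0\subseteq M\subseteq L'$ and use Lemma~\ref{twolattices} to produce a basis $e_1,\dots,e_n$ of $L'$ with $L_0=\bigoplus \Kcirc \pi_i e_i$. The inclusions give $\lVert \ \rVert _{L'}\le\lVert \ \rVert _M\le\lVert \ \rVert _{L_0}$, and the explicit formulas $\lVert v\rVert _{L'}=\max_i\lvert a_i\rvert$ and $\lVert v\rVert _{L_0}=\max_i\lvert a_i\rvert/\lvert \pi_i\rvert$ yield that $\{e_i\}$ is $r$-orthogonal for $r=\min_i\lvert \pi_i\rvert$; the left inequality also forces $\lVert e_i\rVert _M\ge 1$, so $\lVert \ \rVert _M$ is a norm.

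The substantive step is (f) $\Rightarrow$ (e). Given $\pi\in\Kcirccirc$, I pick $r\in(\lvert \pi\rvert,1)$ and, by Remark~\ref{cartsec}(i), an $r$-orthogonal basis $\{e_i\}$ of $V$; set $c_i=\lVert e_i\rVert _M>0$. The $\Kcirc$-submodule $J_i:=\{\alpha\in K:\alpha e_i\in M\}\subseteq K$ satisfies $\sup\lvert J_i\rvert=1/c_i$ directly from $c_i=\inf\{\lvert c\rvert:c^{-1}e_i\in M\}$, so I can choose $\alpha_i\in J_i$ with $\lvert \alpha_i\rvert c_i>\lvert \pi\rvert/r$ (a nonvacuous condition since $\lvert \pi\rvert/r<1$). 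Then $L:=\bigoplus \Kcirc \alpha_i e_i$ is a lattice contained in $M$; for any $m=\sum a_i e_i\in M$ the $r$-orthogonality gives $\lvert a_i\rvert\le 1/(rc_i)$, hence $\lvert \pi a_i\rvert\le\lvert \pi\rvert/(rc_i)<\lvert \alpha_i\rvert$, so $\pi m\in L$. I expect the main obstacle to be the coordinated choice of $r$ and the $\alpha_i$: one needs $r$ strictly between $\lvert \pi\rvert$ and $1$ so that $r$-orthogonality buys enough slack, together with $\lvert \alpha_i\rvert c_i$ close enough to $1$; both rely on the flexibility furnished by Remark~\ref{cartsec}(i) and on the supremum characterization of $\lvert J_i\rvert$.
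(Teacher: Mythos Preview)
Your proof is correct and follows essentially the same route as the paper: both parts hinge on Lemma~\ref{semilatticelem}(i) identifying $M^\env$ with the unit ball of $\lVert\ \rVert_M$, and the mechanics of (i) and of (d)$\Rightarrow$(f) are the same up to cosmetic choices (you invoke Lemma~\ref{twolattices} explicitly where the paper uses the equivalent formulation of boundedness).

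The one genuine difference is in (f)$\Rightarrow$(e). The paper first deduces boundedness from weak cartesianity, then splits: in the discretely valued case a bounded semilattice over a DVR is automatically a lattice, while in the densely valued case one rescales an $s$-orthogonal basis so that $s\le\lVert e_i\rVert_M\le 1$ and shows $\pi M\subseteq\bigoplus\Kcirc e_i$ for $\lvert\pi\rvert<s^2$. Your argument is more uniform: for a given nonzero $\pi\in\Kcirccirc$ you pick $r\in(\lvert\pi\rvert,1)$, take an $r$-orthogonal basis, and choose $\alpha_i\in J_i$ with $\lvert\alpha_i\rvert c_i>\lvert\pi\rvert/r$ directly from $\sup\lvert J_i\rvert=1/c_i$. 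This avoids the case distinction entirely and is a bit slicker; the paper's split buys nothing extra here except perhaps the side observation that in the discrete case $M$ itself is a lattice. Both arguments tacitly use that weak cartesianity (in the sense of \cite{bgr}) forces $\lVert\ \rVert_M$ to be a norm, so that $c_i>0$; this is implicit in the paper as well via Remark~\ref{cartsec}.
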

\begin{proof}
Since any lattice is an almost isomorphic envelope of itself, the equivalence (a)$\Longleftrightarrow$(b) is clear. By Lemma~\ref{semilatticelem}(i), $M^\env$ is the unit ball of $\lVert \ \rVert _M$, hence a basis of $V$ is orthonormal if and only if it is also a basis of $M^\env$. This shows that (b)$\Longleftrightarrow$(c).

Obviously, (e) implies (d). If $\pi M\subseteq L\subseteq M$ and $\lvert \pi\rvert =r$ then any basis of $L$ is $r$-orthogonal with respect to $\lVert \ \rVert _M$ and we obtain that (d)$\implies$(f). Finally, let us prove that (f) implies (e). Assume that $\lVert \ \rVert _M$ is weakly cartesian and choose an $r$-orthogonal basis $e_1\..e_n$ for some $r\in(0,1]$. If $0<\lvert \pi\rvert <r$ then $M$ is contained in the lattice $\oplus_{i=1}^n\pi^{-1}\Kcirc e_i$. It follows that if $\lvert K^\times\rvert $ is discrete then $M$ is a lattice, so we can assume that $\lvert K^\times\rvert $ is dense. Choose any number $s\in(0,1)$. By Remark~\ref{cartsec}(i), there exists an $s$-orthogonal basis $e_1\..e_n$. Multiplying $e_i$ by elements of $K$ we can also achieve that $e_i\in M$ and $e_i\notin\pi M$ for any $\pi\in K$ with $\lvert \pi\rvert <s$. Then $e_i$ generate a lattice $L\subseteq M$ such that $\pi M\subseteq L$ for any $\pi\in K$ with $\lvert \pi\rvert <s^2$.
\end{proof}

\begin{rem}
(i) By definition, $m\in M$ is divisible if and only if $\lVert m\rVert _M=0$. So, Remark~\ref{cartsec}(ii) implies that $K$ is complete if and only if any semilattice without non-zero divisible elements is bounded.

(ii) One can introduce a class of almost lattices using property (e) of Lemma~\ref{bddsemilem} as the definition. We do not use this terminology since an almost lattice is the same as a bounded semilattice. Using Lemma~\ref{bddsemilem} and Remark~\ref{cartsec}(iii) one can easily check that if $K$ is spherically complete and $\lvert K^\times\rvert =\bfR_{>0}$ then $M$ is an almost lattice if and only if it is almost isomorphic to a lattice, but in general there exist almost lattices which are not almost isomorphic to a lattice.
\end{rem}

\subsubsection{Index of bounded semilattices}
If $M$ is a semilattice in $V$ then $\bigwedge^i M$ is a semilattice in $\bigwedge^i V$. Furthermore, if $M$ is bounded then it is contained in a lattice $L$ and hence $\bigwedge^i M$ is contained in the lattice $\bigwedge^i L$. In particular, $\bigwedge^i M$ is bounded. If $d=\dim(V)$ then we call $\det(M)=\bigwedge^dM$ the {\em determinant} of $M$. As in the case of seminorms, we define the {\em index} of two bounded semilattices $M,N$ of $V$ to be the ratio of their determinants:
$$[M:N]=\rvert \det(M):\det(N)\rvert=\sup\left\{\rvert \pi\rvert \colon\pi\in K,\pi\det(N)\subseteq\det(M)\right\}.$$

\begin{lem}\label{indexlem}
Let $V$ be a finite-dimensional vector space over a real-valued field $K$, let $M,M'$ be two bounded semilattices of $V$ and let $\lVert \ \rVert $ and $\lVert \ \rVert'$ be the associated norms on $V$. Then,

(i) For any $i$, $\lVert \ \rVert _{\bigwedge^i}$ is the norm associated with $\bigwedge^i M$.

(ii) $[\lVert \ \rVert :\lVert \ \rVert ']=[M:M']^{-1}$.
\end{lem}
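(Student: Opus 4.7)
Plan: For (i), we compare the exterior-power seminorm $\lVert\ \rVert_{\bigwedge^i}$ on $\bigwedge^i V$ with the seminorm associated to the semilattice $\bigwedge^i M \subseteq \bigwedge^i V$ (itself a bounded semilattice, since $M$ sits inside a lattice). The easy inclusion $\lVert\ \rVert_{\bigwedge^i} \le \lVert\ \rVert_{\bigwedge^i M}$ follows from the maximality characterization in \S\ref{powersec}: for $m_1,\dots,m_i \in M$ one has $\lVert m_1 \wedge \cdots \wedge m_i\rVert_{\bigwedge^i} \le \prod_j \lVert m_j\rVert \le 1$, so $\bigwedge^i M$ lies in the unit ball of the exterior-power seminorm, and Lemma~\ref{semilatticelem}(i) applied to $\bigwedge^i M$ yields the bound.

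For the reverse inequality I would first dispatch the lattice case. If $M = L = \bigoplus_{j=1}^n \Kcirc e_j$, then $\lVert\ \rVert_L$ is the maximum norm in the basis $(e_j)$, and expanding a simple wedge $v_1\wedge\cdots\wedge v_i$ in the basis vectors $e_I = e_{j_1}\wedge\cdots\wedge e_{j_i}$ of $\bigwedge^i V$ yields coefficients that are $i\times i$ minors of the coordinate matrix; the Hadamard-type bound on minors shows that the maximum norm in $(e_I)$---which is precisely $\lVert\ \rVert_{\bigwedge^i L}$---already satisfies the defining inequality of the exterior-power seminorm, so the two norms coincide.

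For a general bounded $M$ I would approximate by lattices. Lemma~\ref{bddsemilem}(ii) supplies, for any $\pi \in \Kcirccirc$ with $|\pi|$ arbitrarily close to $1$, a lattice $L \subseteq M$ with $\pi M \subseteq L$. Chasing the $\Kcirc$-module inclusions---using that any $v \in M^\env$ satisfies $\pi\pi' v \in L$ for some $\pi'\in\Kcirc$ with $|\pi'|$ approaching $1$, forcing $\lVert v\rVert_L \le |\pi|^{-1}$ and hence $\pi v \in L$ since a lattice is its own envelope---one obtains $L \subseteq M^\env \subseteq \pi^{-1}L$, whence $|\pi|\lVert\ \rVert_L \le \lVert\ \rVert_M \le \lVert\ \rVert_L$. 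Taking $i$-th exterior powers and applying the lattice case yields $|\pi|^i \lVert\ \rVert_{\bigwedge^i L} \le \lVert\ \rVert_{\bigwedge^i} \le \lVert\ \rVert_{\bigwedge^i L}$, while the analogous chain $\pi^i \bigwedge^i M \subseteq \bigwedge^i L \subseteq \bigwedge^i M$ gives the matching estimate for $\lVert\ \rVert_{\bigwedge^i M}$. The two seminorms thus differ by a factor at most $|\pi|^{-i}$, and letting $|\pi|\to 1$ (the trivially valued case being vacuous) gives equality.

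Part (ii) then follows by applying (i) with $i = d = \dim V$ on the one-dimensional $U = \det V$. Fixing a basis $e$ of $U$ and writing $\det M = N \cdot e$, $\det M' = N' \cdot e$ with $\Kcirc$-submodules $N, N' \subseteq K$, a direct computation gives $\lVert e\rVert_{\det M} = 1/\sup|N|$ and $\lVert e\rVert_{\det M'} = 1/\sup|N'|$, while an elementary case analysis on whether these suprema are attained shows that $[M:M'] = \sup\{|\pi|:\pi N'\subseteq N\} = \sup|N|/\sup|N'|$. Combining with $[\lVert\ \rVert:\lVert\ \rVert'] = \lVert e\rVert_{\det M}/\lVert e\rVert_{\det M'}$ yields the claimed identity $[\lVert\ \rVert:\lVert\ \rVert'] = [M:M']^{-1}$. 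The main technical step is the envelope inclusion $M^\env \subseteq \pi^{-1}L$ used in (i); the rest is routine manipulation with semilattices and their unit balls.
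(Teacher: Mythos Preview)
Your argument is correct but considerably more laborious than the paper's. The paper dispatches (i) in two lines: since $M$ is an almost unit ball of $\lVert\ \rVert$, the exterior-power seminorm $\lVert\ \rVert_{\bigwedge^i}$---by definition the maximal seminorm with $\lVert v_1\wedge\cdots\wedge v_i\rVert\le\prod_j\lVert v_j\rVert$---is equally well the maximal seminorm with $\lVert m_1\wedge\cdots\wedge m_i\rVert\le 1$ for all $m_j\in M$ (scale each $v_j$ into $M$ via the adic description of $\lVert\ \rVert$ and pass to the infimum). Hence $\bigwedge^i M$ is an almost unit ball of $\lVert\ \rVert_{\bigwedge^i}$, which by Lemma~\ref{semilatticelem}(i) is exactly (i). No separate lattice case, no approximation. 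Your route---an explicit orthonormal computation for lattices plus a squeeze by Lemma~\ref{bddsemilem}(ii)---works, but it reconstructs through sandwiching what the maximality characterization gives directly.

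One small gap: the limiting step ``letting $|\pi|\to 1$'' is only available when $|K^\times|$ is dense. You note the trivially valued case is vacuous, but you should also remark that when the valuation is discrete a bounded semilattice is automatically a lattice (it is a finitely generated torsion-free module over the DVR $\Kcirc$), so your lattice case already covers it. For (ii), the paper simply says ``reduce to the one-dimensional case, which is clear''; your explicit computation with $\sup|N|$ and $\sup|N'|$ is fine and makes that reduction honest.
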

\begin{proof}
Since $M$ is an almost unit ball of $\lVert \ \rVert $, it is easy to see that $\lVert \ \rVert _{\bigwedge^i}$ is the maximal norm such that $\lVert v_1\wedge\dots\wedge v_i\rVert _{\bigwedge^i}\le 1$ for any $v_1\..v_i\in M$. Thus the module $\bigwedge^i M$, which is generated by the elements $v_1\wedge\dots\wedge v_i$ with $v_1\..v_i\in M$, is an almost unit ball of $\lVert \ \rVert _{\bigwedge^i}$. This implies (i), and taking $i=\dim(V)$ we reduce (ii) to the one-dimensional case, which is clear.
\end{proof}

\subsection{Content of $\Kcirc$-modules}\label{contsec}

\subsubsection{The definition}
Given a finitely presented $\Kcirc$-module $M$ we represent it as in Corollary~\ref{fpcor} and define the {\em content} of $M$ to be $\cont(M)=\prod_{i=1}^n\lvert\pi_i\rvert$. In general, we set $\cont(M)=\inf_\alp\cont(M_\alp)$, where $M_\alp$ run through all finitely presented subquotients of $M$. Obviously, this is compatible with the definition in the finitely presented case. Note that $\cont(M)=0$ if $M$ is not a torsion module.

\begin{rem}
The content invariant adequately measures the ``size" of a torsion module $M$. In particular, $\cont(M)=1$ if and only if $M$ almost vanishes. Also, if the valuation of $K$ is discrete and $\pi_K$ is a uniformizer then $\cont(M)=\lvert \pi_K\rvert^{{\rm length}(M)}$.
\end{rem}

\subsubsection{Relation to the index}
We will study the content by relating it to index of semilattices.

\begin{lem}\label{contentindex}
Assume that $V$ is a finite-dimensional vector space over a real-valued field $K$ and $L\subseteq M$ are two bounded semilattices of $V$. Then $[M:L]=\cont(M/L)^{-1}$.
\end{lem}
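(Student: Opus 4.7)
The plan is to first verify the identity when both $L$ and $M$ are lattices, and then bootstrap to the general bounded-semilattice case by a sandwich argument combined with multiplicativity on both sides.

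In the lattice case, apply Lemma~\ref{twolattices} to find a basis $e_1,\dots,e_d$ of $M$ and elements $\pi_i\in\Kcirc$ (which may be assumed to lie in $\Kcirccirc$ after discarding those that are units) such that $\pi_ie_i$ is a basis of $L$. Setting $\omega=e_1\wedge\dots\wedge e_d$, one has $\det(M)=\Kcirc\omega$ and $\det(L)=(\prod_i\pi_i)\Kcirc\omega$, so $[M:L]=\lvert\prod_i\pi_i\rvert^{-1}$. Meanwhile $M/L\cong\bigoplus_i\Kcirc/\pi_i\Kcirc$ is finitely presented with content $\prod_i\lvert\pi_i\rvert$ directly by the definition in \S\ref{contsec}, giving the desired identity. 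If the valuation of $K$ is trivial then $L=M=V$ is the only bounded semilattice and the claim is vacuous; if it is discrete then every bounded semilattice is already a lattice (over a DVR any submodule of a lattice is free), so this case is covered. Henceforth I assume $\lvert K^\times\rvert$ is dense.

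Before treating the general case, I would establish two multiplicativity statements. \emph{(a) Multiplicativity of the index:} for any chain $L\subseteq L'\subseteq M$ of bounded semilattices, $[M:L]=[M:L']\cdot[L':L]$; this reduces via $\det$ to an elementary computation on semilattices in the one-dimensional space $\det(V)$, where the index of two such semilattices is the ratio of the suprema of their ``scales''. \emph{(b) Multiplicativity of content:} for any short exact sequence $0\to A\to B\to C\to 0$ of torsion $\Kcirc$-modules, $\cont(B)=\cont(A)\cdot\cont(C)$. The finitely presented case follows from Corollary~\ref{fpcor} together with the cyclic computation $\cont(\Kcirc/\pi\Kcirc)=\lvert\pi\rvert$; the general case requires a careful compatibility argument showing that every finitely presented subquotient of $B$ can be refined so as to produce corresponding finitely presented subquotients of $A$ and $C$, and conversely, after which the infimum defining $\cont$ is well-behaved under the SES.

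With (a) and (b) in hand, the sandwich argument proceeds as follows. By Lemma~\ref{bddsemilem}(e), for each $\pi\in\Kcirccirc$ there exist lattices $L_\pi\subseteq L$ with $\pi L\subseteq L_\pi$ and $M'_\pi\subseteq M$ with $\pi M\subseteq M'_\pi$; set $M_\pi=\pi^{-1}M'_\pi$, a lattice with $M\subseteq M_\pi$ and $\pi M_\pi\subseteq M$, and after enlarging $M_\pi$ by a lattice if necessary to ensure $L_\pi\subseteq M_\pi$, we obtain
$$L_\pi\subseteq L\subseteq M\subseteq M_\pi.$$
Combining the lattice case with (a) and (b) yields
$$[M:L]\cdot\cont(M/L)=\bigl([M_\pi:M]\cont(M_\pi/M)\bigr)^{-1}\cdot\bigl([L:L_\pi]\cont(L/L_\pi)\bigr)^{-1}.$$
An explicit computation in $\det(V)$ using $\pi M_\pi\subseteq M$ and $\pi L\subseteq L_\pi$ shows $[M_\pi:M],[L:L_\pi]\in[1,\lvert\pi\rvert^{-d}]$, and applying (b) to $0\to M/\pi M_\pi\to M_\pi/\pi M_\pi\to M_\pi/M\to 0$ together with $\cont(M_\pi/\pi M_\pi)=\lvert\pi\rvert^d$ (and the analogue for $L$) gives $\cont(M_\pi/M),\cont(L/L_\pi)\in[\lvert\pi\rvert^d,1]$. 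Hence the right-hand side above lies in $[\lvert\pi\rvert^{2d},\lvert\pi\rvert^{-2d}]$. Since the left-hand side is independent of $\pi$ and $\lvert\pi\rvert$ can be taken arbitrarily close to $1$ by density of $\lvert K^\times\rvert$, we conclude $[M:L]\cdot\cont(M/L)=1$.

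I expect step (b) to be the principal obstacle: the infimum definition of content does not interact transparently with short exact sequences, and producing compatible finitely presented subquotients of $A$, $B$ and $C$ from an arbitrary one of $B$ requires some care, especially when $\lvert K^\times\rvert$ is dense and non-principal ideals of $\Kcirc$ appear.
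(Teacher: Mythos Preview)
Your sandwich strategy and your lattice case match the paper's. The substantive divergence is what you feed into the sandwich: you invoke multiplicativity of content for short exact sequences (your step (b)). In the paper's logical order that is Theorem~\ref{contlem}, which is proved \emph{after} Lemma~\ref{contentindex} and \emph{using} it (the finitely generated case of Theorem~\ref{contlem} passes through index multiplicativity and then converts via this very lemma). So your argument is circular relative to the paper, and an independent proof of (b) --- which you rightly flag as the principal obstacle --- must already handle non-finitely-presented pieces such as $\Kcirccirc/\pi\Kcirc\subseteq\Kcirc/\pi\Kcirc$ when $\lvert K^\times\rvert$ is dense; the short exact sequences arising from your chain $L_\pi\subseteq L\subseteq M\subseteq M_\pi$ are exactly of this kind.

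The paper avoids (b) altogether. After reducing to $L\subseteq\pi M$, it places the approximating lattices \emph{between} $L$ and $M$: one finds $\pi_iL_i\subseteq L\subseteq L_i\subseteq M_i\subseteq M\subseteq\pi_i^{-1}M_i$ with $\lvert\pi_i\rvert\to 1$. Then $M_i/L_i$ is a subquotient of $M/L$, which in turn is a subquotient of $\pi_i^{-1}M_i/\pi_iL_i$. Content is trivially \emph{monotone} under passage to subquotients (immediate from its definition as an infimum), so
\[
\cont(M_i/L_i)\ \ge\ \cont(M/L)\ \ge\ \cont(\pi_i^{-1}M_i/\pi_iL_i)=\lvert\pi_i\rvert^{2d}\cont(M_i/L_i),
\]
the last equality by the already-proved lattice case. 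The analogous sandwich for the index is elementary. Letting $\lvert\pi_i\rvert\to1$ gives $\cont(M/L)=\lim_i\cont(M_i/L_i)$ and $[M:L]=\lim_i[M_i:L_i]$, and the lattice case finishes. The point is that monotonicity replaces your hard step (b); the price is choosing the lattices inside the chain rather than outside.
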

\begin{proof}
If $M$ and $L$ are lattices then the assertion follows by use of Lemma~\ref{twolattices}. In particular, this covers the case of DVR's, so in the sequel we assume that $\rvert K^\times\rvert $ is dense. Choose $0\neq\pi\in\Kcirccirc$. Then it is easy to see that $[M:\pi L]=\pi^{-d}[M:L]$ and $\cont(M/\pi L)=\pi^d\cont(M/L)$, where $d=\dim(V)$. Therefore, it suffices to prove the lemma for $M$ and $\pi L$, and we can assume in the sequel that $L\subseteq\pi M$ for some $\pi$ with $|\pi|<1$.

Let $\pi_1,\pi_2,\dots$ be elements of $\Kcirc$ such that the sequence $\rvert \pi_i\rvert $ strictly increases and tends to 1. By Lemma~\ref{bddsemilem}(d)$\Longleftrightarrow$(e), for each $i\in\bfN$ there exist lattices $L_i$ and $M_i$ such that $$\pi_i L_i\subseteq L\subseteq L_i\subseteq M_i\subseteq M\subseteq\pi_i^{-1}M_i.$$ Since the case of lattices was established, it suffices to check the equalities $$\lim_i\cont(M_i/L_i)=\cont(M/L),\ \ \lim_i[M_i:L_i]=[M:L].$$ The latter follow from the observation that $$\cont(M_i/L_i)\ge\cont(M/L)\ge\cont(\pi^{-1}_iM_i/\pi_i L_i)=\pi_i^{2d}\cont(M_i/L_i)$$
and $$[M_i:L_i]\le [M:L]\le[\pi^{-1}_iM_i:\pi_i L_i]=\pi_i^{-2d}[M_i:L_i].$$
\end{proof}

\subsubsection{Properties}
The following continuity result reduces computation of contents to the finitely generated case.

\begin{lem}\label{contentlem}
If a $\Kcirc$-module $M$ is a filtered union of submodules $M_i$ then $$\cont(M)=\inf_i\cont(M_i)=\lim_i\cont(M_i).$$
\end{lem}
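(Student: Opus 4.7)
\medskip

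The plan is to show both equalities by reducing to the observation that $\cont$ is monotone along the inclusion ordering on subquotients, and then exploiting that finitely presented subquotients involve only finitely many generators, which must already appear in some $M_i$ by the filtered union hypothesis.

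First I would check monotonicity: if $M' \subseteq M''$, then every subquotient $N/P$ of $M'$ (i.e. $P \subseteq N \subseteq M'$) is also a subquotient of $M''$, so the infimum defining $\cont(M'')$ is taken over a larger family than the one defining $\cont(M')$. Hence $\cont(M'') \le \cont(M')$. Applied to $M_i \subseteq M$ and to any pair $M_i \subseteq M_k$ in the filtered system (for $k$ dominating $i$), this yields $\cont(M) \le \cont(M_i)$ for every $i$, and it exhibits $\{\cont(M_i)\}$ as a monotonically non-increasing net along the cofinal upward direction. Consequently $\lim_i \cont(M_i) = \inf_i \cont(M_i) \ge \cont(M)$.

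It then remains to establish the reverse inequality $\cont(M) \ge \inf_i \cont(M_i)$. Fix any finitely presented subquotient $Q$ of $M$, realized as $Q = N/P$ with $P \subseteq N \subseteq M$. Since $Q$ is finitely generated, I may lift finitely many generators of $Q$ to elements $y_1, \ldots, y_n \in N$ and replace $N$ by the submodule $N' \subseteq M$ generated by $y_1, \ldots, y_n$, and $P$ by $P' = P \cap N'$; then $N'/P' \cong Q$ and $N'$ is finitely generated. Because $M = \bigcup_i M_i$ is a filtered union, the finitely many generators $y_1, \ldots, y_n$ all lie in a common $M_{i_0}$, so $N' \subseteq M_{i_0}$ and therefore $P' \subseteq N' \subseteq M_{i_0}$. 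Hence $Q$ is a finitely presented subquotient of $M_{i_0}$, giving $\cont(M_{i_0}) \le \cont(Q)$, and a fortiori $\inf_i \cont(M_i) \le \cont(Q)$. Taking the infimum over all finitely presented subquotients $Q$ of $M$ yields $\inf_i \cont(M_i) \le \cont(M)$, which combined with the previous paragraph gives the desired equality $\cont(M) = \inf_i \cont(M_i) = \lim_i \cont(M_i)$.

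No step poses a real obstacle; the only point requiring a little care is the reduction of an arbitrary finitely presented subquotient to one whose numerator is finitely generated (so that one can push it into some $M_i$), since a priori the presentation $Q = N/P$ need not have $N$ finitely generated even though $Q$ is.
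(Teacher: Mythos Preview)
Your proof is correct and follows essentially the same approach as the paper's own proof. The paper condenses your second paragraph into the phrase ``any finitely presented subquotient of $M$ is a subquotient of each $M_i$ with a large enough $i$,'' and you have simply made explicit the reduction to a finitely generated numerator $N'$ that justifies this.
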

\begin{proof}
Any subquotient of $M_i$ is a subquotient of $M$ hence $\cont(M_i)\ge\cont(M)$. Also, it is easy to see that any finitely presented subquotient of $M$ is a subquotient of each $M_i$ with a large enough $i$. Hence the equalities hold.
\end{proof}

Now we can establish the main property of content, the multiplicativity.

\begin{theor}\label{contlem}
If $0\to M'\to M\to M''\to 0$ is a short exact sequence of $\Kcirc$-modules then
$\cont(M)=\cont(M')\cdot\cont(M'').$
\end{theor}
\begin{proof}
If $M$ is not torsion then either $M'$ or $M''$ is not torsion and hence both $\cont(M)$ and $\cont(M')\cont(M'')$ vanish. So assume that $M$ is torsion. First, we consider the case when $M$ is finitely generated. Fix an epimorphism $L=(\Kcirc)^n\onto M$ and denote its kernel $Q$. Then $L$ is a lattice in $V=K^n$ and since $M=L/Q$ is torsion, $Q$ is a semilattice of $V$. Let $Q'$ be the kernel of the composition $L\to M\to M''$ then $M''=L/Q'$ and $M'=Q'/Q$. Hence the claim follows from the multiplicativity of the index and Lemma~\ref{contentindex}.

Assume now that $M$ is a general torsion module and let $\{M_i\}$ be the family of finitely generated submodules of $M$. Then $M'$ is the filtered union of the submodules $M'_i=M'\cap M_i$ and $M''$ is the filtered union of the modules $M''_i=M_i/M'_i$. By the above case, $\cont(M_i)=\cont(M'_i)\cdot\cont(M''_i)$ and it remains to pass to the limit and use Lemma~\ref{contentlem}.
\end{proof}

\section{Metrization of sheaves}\label{metrsheafsec}

\subsection{Seminormed sheaves}\label{sheavessec}
Throughout Section \ref{sheavessec}, $\calC$ is a site, i.e. a category provided with a Grothendieck topology. In our applications, $\calC$ will be the category associated with a $G$-topological space, more concretely, it will be of the form $X_G$ for an analytic space $X$.

\subsubsection{Quasi-norms}\label{semimetric}
The sup seminorm can be infinite on a non-compact set, so it is technically convenient to introduce the following notion. Let $\obfR_{\ge 0}=\bfR_{\ge 0}\cup\{\infty\}$ be the one-pointed compactification of $\bfR_{\ge 0}$ with addition and multiplication satisfying all natural rules and the rule $0\cdot\infty=0$. {\em Quasi-norms} on abelian groups, rings and modules are defined similarly to seminorms but with the target $\obfR_{\ge 0}$. For example, a quasi-norm $\lvert \ \rvert $ on a ring $A$ is a map $\lvert \ \rvert \:A\to\obfR_{\ge 0}$ such that $\lvert 0\rvert =0$, $\lvert 1\rvert=1$, $\lvert a-b\rvert \le\max(\lvert a\rvert ,\lvert b\rvert )$ and $\lvert ab\rvert \le\lvert a\rvert\cdot\lvert b\rvert$. The material of Section~\ref{normringsec}, including the constructions of quotient and tensor product seminorms, extends to quasi-norms straightforwardly.

\subsubsection{Seminorms on sheaves}\label{sheavesseminormedsec}
Let $\calA$ be a sheaf of abelian groups on $\calC$. By a {\em pre-quasi-norm} $\lVert \ \rVert $ on $\calA$ we mean a family of quasi-norms $\lVert \ \rVert _U$ on $\calA(U)$, where $U$ runs through the objects of $\calC$, such that the restriction maps $\calA(U)\to\calA(V)$ are non-expansive. In the same way one defines quasi-norms on sheaves of rings and quasi-norms on sheaves of modules over a sheaf of rings provided with a pre-quasi-norm. The constructions we describe below for pre-quasi-norms on sheaves of abelian groups hold also for pre-quasi-norms on sheaves of rings and modules.

A pre-quasi-norm is called a {\em quasi-norm} if it satisfies the following locality condition: for any covering $\{U_i\to U\}$ in $\calC$ the equality $\lVert s\rVert_U=\sup_i\lVert s_i\rVert_{U_i}$ holds. A pre-quasi-norm is called {\em locally bounded} if for any $U$ in $\calC$ and $s\in\calA(U)$ there exists a covering $\{U_i\to U\}$ such that $\lVert s_i\rVert_{U_i}<\infty$ for any $i$, where $s_i$ denotes $s\vert_{U_i}$. A locally bounded quasi-norm will be called a {\em seminorm}, and once a seminorm $\lVert \ \rVert $ is fixed we call the pair $\calA=(\calA,\lVert \ \rVert )$ a {\em seminormed sheaf of abelian groups}.

\begin{rem}
(i) Our ad hoc definitions have the following categorical interpretation. If $\lVert \ \rVert $ is a pre-quasi-norm on $\calA$ then the pair $(\calA,\lVert \ \rVert )$ can also be viewed as a presheaf of quasi-normed abelian groups on $\calC$, and this presheaf is a sheaf if and only if $\lVert \ \rVert $ is a quasi-norm.

(ii) If an object $U$ is quasi-compact (i.e. any of its coverings possesses a finite refinement) and $\lVert \ \rVert $ is locally bounded then $\lVert \ \rVert _U$ is a seminorm. In particular, if the subcategory of quasi-compact objects $\calC_c$ is cofinal in $\calC$ then seminorms on $\calA$ can be viewed as sheaves of seminormed abelian groups on $\calC_c$ with the underlying sheaf of abelian groups $\calA$.
\end{rem}

\subsubsection{Sheafification}
For any pre-quasi-norm $\lVert \ \rVert '$ on a sheaf of abelian groups $\calA$ we define the {\em sheafification} $\lVert \ \rVert =\alp(\lVert \ \rVert ')$ by the rule $$\lVert s\rVert _U=\inf_{\{U_i\to U\}}\sup_{i\in I}\lVert s_i\rVert '_{U_i},$$ where $U$ is an object of $\calC$, $s\in\calA(U)$ and the infimum is over all coverings of $U$.

\begin{lem}\label{sheafiflem}
Keep the above notation. Then $\lVert \ \rVert $ is a quasi-norm and it is the maximal quasi-norm dominated by $\lVert \ \rVert '$. In addition, if $\lVert \ \rVert '$ is locally bounded then $\lVert \ \rVert $ is a seminorm.
\end{lem}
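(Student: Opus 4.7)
The plan is to verify in order that (a) $\lVert \ \rVert$ is a pre-quasi-norm on $\calA$, (b) it satisfies the locality condition, (c) it is the maximal quasi-norm dominated by $\lVert \ \rVert'$, and (d) it is locally bounded whenever $\lVert \ \rVert'$ is. For (a), the axiom $\lVert 0\rVert_U = 0$ is immediate by using the trivial one-element cover $\{U\to U\}$ in the defining infimum. For the ultrametric inequality on $\calA(U)$, given $\varepsilon>0$ and $x,y\in\calA(U)$ I would pick covers $\{U_i\to U\}$ and $\{V_j\to U\}$ whose associated suprema of $\lVert \ \rVert'$-values lie within $\varepsilon$ of $\lVert x\rVert_U$ and $\lVert y\rVert_U$ respectively, and then pass to a common refinement $\{W_k\to U\}$. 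On each $W_k$ the pre-quasi-norm $\lVert \ \rVert'$ already satisfies the ultrametric law, so taking $\sup_k$ and letting $\varepsilon\to 0$ gives the claim; the cases where one of the norms is $\infty$ are handled separately. Restriction compatibility $\lVert s|_V\rVert_V \le \lVert s\rVert_U$ follows because any cover of $U$ pulls back to a cover of $V$ along $V\to U$ combined with the assumed restriction-non-expansiveness of $\lVert \ \rVert'$.

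The key step is the sheaf-theoretic locality $\lVert s\rVert_U = \sup_i\lVert s|_{U_i}\rVert_{U_i}$ for a cover $\{U_i\to U\}$. The direction ``$\ge$'' is just the restriction non-expansiveness established in (a). For the reverse, I would fix $\varepsilon>0$ and for each $i$ choose a cover $\{V_{ij}\to U_i\}$ with $\sup_j\lVert s|_{V_{ij}}\rVert'_{V_{ij}} \le \lVert s|_{U_i}\rVert_{U_i} + \varepsilon$; composability of covers in a site yields a cover $\{V_{ij}\to U\}_{i,j}$ of $U$, which by the defining infimum gives
\[
\lVert s\rVert_U \;\le\; \sup_{i,j}\lVert s|_{V_{ij}}\rVert'_{V_{ij}} \;\le\; \sup_i\lVert s|_{U_i}\rVert_{U_i} + \varepsilon,
\]
and letting $\varepsilon\to 0$ closes the inequality (the case where the right-hand side is $\infty$ is vacuous).

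For maximality, plugging the trivial cover into the defining infimum shows $\lVert \ \rVert \le \lVert \ \rVert'$, so $\lVert \ \rVert$ is dominated by $\lVert \ \rVert'$. Conversely, if $\lVert \ \rVert''$ is any quasi-norm with $\lVert \ \rVert'' \le \lVert \ \rVert'$, then for any cover $\{U_i\to U\}$ of $s\in\calA(U)$, locality of $\lVert \ \rVert''$ together with pointwise domination gives
\[
\lVert s\rVert''_U \;=\; \sup_i\lVert s|_{U_i}\rVert''_{U_i} \;\le\; \sup_i\lVert s|_{U_i}\rVert'_{U_i},
\]
and taking infimum over covers yields $\lVert s\rVert''_U \le \lVert s\rVert_U$. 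Finally, local boundedness transfers along ``$\le$'': any cover of $U$ witnessing finiteness of the $\lVert \ \rVert'$-values on restrictions of $s$ witnesses the same for $\lVert \ \rVert$.

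The only step with real content is the locality verification in the second paragraph; its substantive input is composability of covers in a site (and, for the ultrametric check in (a), the existence of common refinements of two covers). I do not expect any obstacle beyond careful $\varepsilon$-bookkeeping and a separate treatment of the $\infty$-valued cases.
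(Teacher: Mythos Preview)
Your proposal is correct and follows the same approach as the paper, which merely says ``The fact that $\lVert\ \rVert$ is a quasi-norm follows easily from the transitivity of coverings. The other assertions are obvious.'' You have simply supplied the routine details the paper omits; the only substantive input in both cases is composability (transitivity) of covers for the locality check, together with the base-change axiom to produce common refinements for the ultrametric verification.
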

\begin{proof}
The fact that $\lVert \ \rVert $ is a quasi-norm follows easily from the transitivity of coverings. The other assertions are obvious.
\end{proof}

\begin{rem}
The universal property from the first part of the lemma justifies the notion ``sheafification". In fact, the same argument shows that $(\calA,\lVert \ \rVert ')\to(\calA,\lVert \ \rVert )$ is the universal (non-expansive) map from $(\calA,\lVert \ \rVert ')$ to a sheaf of abelian groups with a quasi-norm. Thus, $(\calA,\lVert \ \rVert )$ is even the sheafification of $(\calA,\lVert \ \rVert ')$ as a sheaf of quasi-normed abelian groups.
\end{rem}

\subsubsection{Operations on seminormed sheaves}\label{opersec}
Let us extend various operations, including quotients, tensor products, symmetric powers and exterior powers, to seminormed sheaves. This is done in two stages. First one works with sections over each $U$ separately. This produces a sheaf with a pre-quasi-norm, which is easily seen to be locally bounded. Then one sheafifies this pre-quasi-norm if needed. As in the case of usual sheaves of rings and modules, the second step is needed in the case of constructions that are not compatible with limits, such as colimits (including quotients), tensor products, etc.

For the sake of illustration, let us work this out in the case of quotients. Assume that $\phi\:\calA\to\calB$ is an epimorphism of sheaves of abelian groups and $\calA$ is provided with a seminorm $\lVert \ \rVert$. First, we endow $\calB$ with the quotient pre-quasi-norm $\lVert\ \rVert''$, i.e. for each $U$ in $\calC$ the quasi-norm $\lVert \ \rVert ''_U$ on $\calB(U)$ is the maximal quasi-norm making the map $(\calA(U),\lVert \ \rVert _U)\to(\calB(U),\lVert \ \rVert ''_U)$ non-expansive. Then the quotient seminorm $\lVert \ \rVert '$ on $\calB$ is defined to be the sheafification of $\lVert \ \rVert ''$. In particular, $\lVert \ \rVert '$ is the maximal seminorm on $\calB$ making the homomorphism $(\calA,\lVert \ \rVert )\stackrel\phi\to(\calB,\lVert \ \rVert ')$ non-expansive.

\begin{rem}
Provide $\calK=\Ker(\phi)$ with any seminorm $\lVert \ \rVert _\calK$ making the embedding $\calK\into\calA$ non-expansive. For example, one can take the restriction of $\lVert \ \rVert $ on $\calK$. Then the universal property characterizing $\phi$ implies that $(\calB,\lVert \ \rVert ')$ is the cokernel of the map of the seminormed sheaves of abelian groups $(\calK,\lVert \ \rVert _\calK)\to(\calA,\lVert \ \rVert )$.
\end{rem}

\subsubsection{Pushforwards}\label{pushsec}
Recall that a morphism $f\:\calC'\to\calC$ of sites is a functor $\calF\:\calC\to\calC'$ that satisfies certain properties, see \cite[Tag:00X0]{stacks}. If $\calA'$ is a sheaf of abelian groups on $\calC'$ with a pre-quasi-norm $\lVert \ \rVert '$ then we endow $\calA=f_*(\calA')$ with the {\em pushforward pre-quasi-norm} $\lVert \ \rVert $ as follows: for any $U$ in $\calC$ and $U'=\calF(U)$ we have that $\calA(U)=\calA'(U')$ and we set $\lVert \ \rVert _U=\lVert \ \rVert _{U'}$. Clearly, if $\lVert \ \rVert '$ is a quasi-norm then $\lVert \ \rVert $ is also a quasi-norm, but the pushforward of a seminorm can be unbounded.

\begin{rem}
An important case when the pushforward of a seminorm is a seminorm is when $f$ corresponds to a proper map of topological spaces. This fact will not be used, so we do not check it here.
\end{rem}

\subsubsection{Pullbacks}
Assume, now, that $\calA$ is a sheaf of abelian groups on $\calC$ with a pre-quasi-norm $\lVert \ \rVert $ and $\calA'=f^{-1}(\calA)$. Then $\calA'(U')=\colim_{U'\to\calF(U)}\calA(U)$ and we define $\lVert \ \rVert '_{U'}$ to be the colimit of the quasi-norms $\lVert \ \rVert _U$. If $\lVert \ \rVert $ is a quasi-norm, it still may happen that the pre-quasi-norm $\lVert \ \rVert '$ is not a quasi-norm, so we define $f^{-1}(\lVert \ \rVert )$ to be the sheafification of $\lVert \ \rVert '$. On the positive side we note that local boundedness is preserved by the pullback, so if $\lVert \ \rVert $ is a seminorm then $f^{-1}(\lVert \ \rVert )$ is a seminorm too.

\subsection{Points and stalks of seminorms}
In this section we study stalks of seminorms at points and describe seminorms that are fully controlled by stalks. For concreteness, we usually consider sheaves of abelian groups but the cases of rings and modules are similar.

\subsubsection{Points of sites}
We use the terminology of \cite[Tag:00Y3]{stacks} when working with points of sites. Recall that a point $x$ of $\calC$ is a functor $x\:\calC\to\mathrm{Sets}$ that satisfies certain properties and given an object $U$ of $\calC$ an elements $f\in x(U)$ is interpreted as a morphism $f\:x\to U$. The set of isomorphism classes of points of $\calC$ will be denoted $\lvert \calC\rvert $.

\subsubsection{Stalks}
Assume that $\lVert \ \rVert $ is a pre-quasi-norm on a sheaf of abelian groups $\calA$. Given a point $x\in\lvert \calC\rvert $, we endow the stalk $\calA_x=\colim_{x\to U}\calA(U)$ at $x$ with the {\em stalk quasi-norm} $\lVert \ \rVert _x$ defined as follows: if $x\to V$ and $s\in\calA(V)$ then $\lVert s\rVert _x=\inf_{x\to U\to V}\lVert s\rVert _U$. The following result follows by unveiling the definitions.

\begin{lem}
Assume that $\lVert \ \rVert $ is a pre-quasi-norm on a sheaf of abelian groups $\calA$. Then,

(i) The stalks of $\lVert \ \rVert $ and of the sheafification $\alp(\lVert \ \rVert )$ coincide.

(ii) If $\lVert \ \rVert $ is locally bounded then the stalks $\lVert \ \rVert _x$ are seminorms.
\end{lem}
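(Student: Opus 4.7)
The plan is to prove both parts directly from the definitions by invoking one axiom for points of sites: if a point $x$ factors through $U$ (i.e.\ $U$ gives an element in $x(U)$) and $\{U_i\to U\}$ is a covering, then $x$ factors through some $U_i$; equivalently, the category of pairs $(U,x\to U)$ is filtered with respect to refinements by coverings. This is all that is needed.

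For part (i), write $\lVert\ \rVert'=\alp(\lVert\ \rVert)$ and fix $s\in\calA(V)$ with $x\to V$. Since $\lVert\ \rVert'\le\lVert\ \rVert$ section-wise by Lemma~\ref{sheafiflem}, the inequality $\lVert s\rVert'_x\le\lVert s\rVert_x$ follows at once from the definition of stalk quasi-norms. For the reverse, suppose $\lVert s\rVert'_x<r$. By definition of the stalk of $\lVert\ \rVert'$, there exists a factorization $x\to U\to V$ with $\lVert s|_U\rVert'_U<r$, and by the definition of sheafification a covering $\{U_i\to U\}$ with $\sup_i\lVert s|_{U_i}\rVert_{U_i}<r$. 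The point axiom produces $i$ and a factorization $x\to U_i\to U$, hence $\lVert s\rVert_x\le\lVert s|_{U_i}\rVert_{U_i}<r$. Letting $r\downarrow\lVert s\rVert'_x$ gives $\lVert s\rVert_x\le\lVert s\rVert'_x$.

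For part (ii), the claim reduces to showing $\lVert s\rVert_x<\infty$ for every $s\in\calA_x$. Pick a representative $s\in\calA(V)$ with $x\to V$. Local boundedness of $\lVert\ \rVert$ yields a covering $\{V_i\to V\}$ with each $\lVert s|_{V_i}\rVert_{V_i}<\infty$; applying the point axiom again, $x$ factors through some $V_i$, and thus $\lVert s\rVert_x\le\lVert s|_{V_i}\rVert_{V_i}<\infty$. The remaining seminorm axioms (non-negativity, vanishing on $0$, the ultrametric inequality, and, in the ring/module cases, submultiplicativity) are inherited from the quasi-norms $\lVert\ \rVert_U$ via the infimum in the definition of the stalk, exactly as for any filtered colimit of seminorms.

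There is no real obstacle here; the only point to be careful about is not to conflate the two infima in part (i) (over factorizations $x\to U\to V$ versus over coverings $\{U_i\to U\}$), which is why one applies the point axiom precisely to collapse the second into the first. No new ideas beyond the definitions and the fundamental exactness property of stalks at points of a site are required.
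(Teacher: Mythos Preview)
Your proof is correct and is precisely the ``unveiling of definitions'' that the paper alludes to without writing out; the paper gives no argument beyond that phrase. Your explicit invocation of the covering-surjectivity axiom for points (condition (2) in \cite[Tag:00Y3]{stacks}) is exactly the right ingredient, and your care in separating the two infima in part (i) is well placed.
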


Also, Lemma~\ref{colimlem} implies the following result.

\begin{lem}\label{stalkcompat}
Stalks of seminormed sheaves of rings or modules are compatible with quotients, tensor products, symmetric and exterior powers. For example, given a seminormed sheaf of rings $\calA$, seminormed $\calA$-modules $\calM$, $\calN$ and $\calL=\calM\otimes_\calA\calN$, and a point $x\in\lvert \calC\rvert $, there is a natural isomorphism of seminormed $\calA_x$-modules $\calM_x\otimes_{\calA_x}\calN_x=\calL_x$.
\end{lem}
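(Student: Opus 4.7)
The plan is to reduce the claim to Lemma~\ref{colimlem} via the observation that, for any of the operations in question, the sheafification step does not affect stalks. Concretely, each construction in \S\ref{opersec} is defined in two stages: first one equips the sectionwise construction with its pre-quasi-norm, then one sheafifies. By part~(i) of the preceding lemma, the stalks of a pre-quasi-norm and of its sheafification coincide, so when computing the stalk $\calL_x$ of a sheaf obtained this way one may work directly with the unsheafified presheaf.

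For the tensor product case, this gives
\[
\calL_x \;=\; \colim_{x\to U}\bigl(\calM(U)\otimes_{\calA(U)}\calN(U)\bigr),
\]
where the colimit is taken in the non-expansive category and is endowed with the colimit seminorm from \S\ref{normcolimsec}. On the other hand, by definition
\[
\calM_x\otimes_{\calA_x}\calN_x \;=\; \Bigl(\colim_{x\to U}\calM(U)\Bigr)\otimes_{\colim_{x\to U}\calA(U)}\Bigl(\colim_{x\to U}\calN(U)\Bigr),
\]
again as seminormed modules. The required isometry is now precisely the content of Lemma~\ref{colimlem}, applied to the filtered families $\{\calA(U)\}$, $\{\calM(U)\}$, $\{\calN(U)\}$ indexed by morphisms $x\to U$ in $\calC$. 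The cases of quotients, symmetric powers and exterior powers proceed in exactly the same way, invoking the corresponding assertions of Lemma~\ref{colimlem}.

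There is no genuine obstacle; the proof is a formal matching of universal properties. The one point requiring a moment of care is that the sectionwise construction defining $\calL$ carries only a pre-quasi-norm (locality along coverings of $U$ may fail before sheafification), but this is harmless because stalks at a point $x\in\lvert\calC\rvert$ see only the filtered system of neighborhoods $x\to U$, and we need not check any sheaf-theoretic gluing condition on $\calL$ itself to identify $\calL_x$ with the colimit above. Once this reduction is made, Lemma~\ref{colimlem} supplies both the underlying module isomorphism and the fact that it is an isometry of seminorms.
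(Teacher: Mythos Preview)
Your proposal is correct and follows the same route as the paper, which records the lemma as an immediate consequence of Lemma~\ref{colimlem}. You have simply made explicit the two ingredients the paper leaves implicit: that sheafification does not change stalks (both for the underlying presheaf of modules and, by part~(i) of the preceding lemma, for the pre-quasi-norm), so that the stalk computation reduces to a filtered colimit of sectionwise constructions, to which Lemma~\ref{colimlem} applies directly.
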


\subsubsection{Semicontinuity}\label{semicontsec}
Let $\calP\subseteq\lvert \calC\rvert $ be a subset. A seminorm $\lVert \ \rVert $ on $\calA$ induces the set of stalk seminorms $\{\lVert \ \rVert _x\}_{x\in\calP}$ which satisfies the following semicontinuity condition: if $x\in\calP$ and $s_x\in\calA_x$ then for any $\veps>0$ there exists a morphism $x\to U$ such that $s_x$ is induced from $s\in\calA(U)$ and $\lVert s\rVert _U<\lVert s\rVert _x+\veps$, in particular, $\lVert s\rVert _y<\lVert s\rVert _x+\veps$ for any $y\to U$. Any family of seminorms satisfying this condition will be called {\em upper semicontinuous}.

\begin{exam}\label{semicontexam}
If $\calC$ is the site of open subsets of a topological space $X$ then a family of seminorms $\{\lVert \ \rVert_x\}_{x\in X}$ is upper semicontinuous if and only if for any open $U$ and a section $s\in\calA(U)$ the function $\lVert s\rVert \:U\to\bfR$ sending $x$ to $\lVert s\rVert _x$ is upper semicontinuous.
\end{exam}

\subsubsection{$\calP$-seminorms}\label{Psemisec}
Let $\calP\subseteq\lvert \calC\rvert $. Given a seminorm $\lVert \ \rVert $ on $\calA$ set $\lVert s\rVert _{\calP,U}=\sup_{x\in\calP,x\to U}\lVert s_x\rVert _x$. Clearly, $\lVert \ \rVert _\calP$ is a seminorm on $\calA$, and we say that $\lVert \ \rVert $ is a {\em $\calP$-seminorm} if $\lVert \ \rVert =\lVert \ \rVert _\calP$.

\begin{lem}\label{Psemilem}
Keep the above notation then,

(i) $\lVert \ \rVert _\calP$ is the maximal $\calP$-seminorm on $\calA$ that is dominated by $\lVert \ \rVert $. The stalks of $\lVert \ \rVert _\calP$ and $\lVert \ \rVert $ at any point of $\calP$ coincide.

(ii) There is a natural bijection between $\calP$-seminorms on $\calA$ and upper semicontinuous families of seminorms on the stalks of $\calA$ at the points of $\calP$.
\end{lem}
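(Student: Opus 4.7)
The plan is to establish (i) in three substeps — verifying that $\lVert\ \rVert_\calP$ is itself a seminorm, computing its stalks on $\calP$, and proving maximality — and then derive (ii) by exhibiting mutually inverse constructions between $\calP$-seminorms and upper semicontinuous stalk families.

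For the first substep, the pre-quasi-norm axioms (0) and (1) for $\lVert\ \rVert_\calP$ are inherited pointwise from the stalks $\lVert\ \rVert_x$, since a supremum of quasi-norms is a quasi-norm. Non-expansiveness of restriction along $V\to U$ is immediate because every $x\to V$ composes with $V\to U$, so the supremum defining $\lVert s|_V\rVert_{\calP,V}$ runs over a subset of that defining $\lVert s\rVert_{\calP,U}$ and the stalk element is the same. The locality condition for a covering $\{U_i\to U\}$ reduces to the observation — built into the definition of a point of a site — that any $x\to U$ factors through some $U_i$, yielding the equality $\sup_{x\to U}\lVert s_x\rVert_x=\sup_i\sup_{x\to U_i}\lVert (s_i)_x\rVert_x$. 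Local boundedness of $\lVert\ \rVert_\calP$ is inherited from $\lVert\ \rVert$ through the obvious inequality $\lVert s\rVert_{\calP,U}\le\lVert s\rVert_U$, which follows by bounding each stalk at $y\to U$ via the tautological choice $V=U$ in the infimum defining $\lVert s_y\rVert_y$.

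To compute stalks at $x\in\calP$, one direction is $(\lVert\ \rVert_\calP)_x\le\lVert\ \rVert_x$ from $\lVert\ \rVert_\calP\le\lVert\ \rVert$; the reverse uses upper semicontinuity of $\lVert\ \rVert$ as recalled in \S\ref{semicontsec}, which supplies a factorization $x\to V\to U$ with $\lVert s\rVert_V<\lVert s_x\rVert_x+\veps$, and then $\lVert s\rVert_{\calP,V}\le\lVert s\rVert_V$ closes the argument. Substituting this equality of stalks back into the definition of $\lVert\ \rVert_\calP$ shows $(\lVert\ \rVert_\calP)_\calP=\lVert\ \rVert_\calP$, so $\lVert\ \rVert_\calP$ really is a $\calP$-seminorm. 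Maximality is now formal: any $\calP$-seminorm $\lVert\ \rVert'\le\lVert\ \rVert$ satisfies $\lVert\ \rVert'_x\le\lVert\ \rVert_x$ stalkwise on $\calP$, so the $\calP$-seminorm formula gives $\lVert s\rVert'_U=\sup_{x\in\calP,x\to U}\lVert s_x\rVert'_x\le\sup_{x\in\calP,x\to U}\lVert s_x\rVert_x=\lVert s\rVert_{\calP,U}$.

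For (ii), the map in one direction sends a $\calP$-seminorm to its family of stalks on $\calP$, which is upper semicontinuous by the general discussion in \S\ref{semicontsec}. Conversely, given an upper semicontinuous family $\{\nu_x\}_{x\in\calP}$, define $\lVert s\rVert_U=\sup_{x\in\calP,x\to U}\nu_x(s_x)$; the same arguments as in the first substep show this is a seminorm (local boundedness comes directly from the u.s.c.\ condition, which yields a neighborhood over which all $\nu_y(s_y)$ are finite), and the stalk computation mirrors the second substep to give $\lVert\ \rVert_x=\nu_x$ on $\calP$. By construction the two assignments are mutually inverse. The only nontrivial input in the whole argument is the locality axiom in the first substep, which crucially invokes the defining property of points of a site that every morphism $x\to U$ lifts through any covering of $U$; everything else is formal manipulation of definitions combined with upper semicontinuity.
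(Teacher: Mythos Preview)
Your approach matches the paper's (the paper dismisses (i) as ``clear'' and proves (ii) exactly as you do), and the overall structure is correct. There is, however, a genuine slip in your stalk computation for (i).

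You write that $(\lVert\ \rVert_\calP)_x\le\lVert\ \rVert_x$ follows from $\lVert\ \rVert_\calP\le\lVert\ \rVert$, and then claim ``the reverse'' follows from upper semicontinuity. But your u.s.c.\ argument produces $x\to V$ with $\lVert s\rVert_{\calP,V}\le\lVert s\rVert_V<\lVert s_x\rVert_x+\veps$, which after taking the infimum over $V$ yields $(\lVert s\rVert_\calP)_x\le\lVert s_x\rVert_x$ again --- the same direction, not its reverse. The inequality you actually need, $(\lVert s\rVert_\calP)_x\ge\lVert s_x\rVert_x$, is the trivial one: for every $x\to U$ the point $x$ itself appears in the supremum defining $\lVert s\rVert_{\calP,U}$, so $\lVert s\rVert_{\calP,U}\ge\lVert s_x\rVert_x$, and the infimum over $U$ preserves this. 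No semicontinuity is needed here. (By contrast, in (ii) the analogous $\ge$ direction for the constructed seminorm versus the given family $\{\nu_x\}$ is again trivial, and it is the $\le$ direction that genuinely requires the u.s.c.\ hypothesis --- which is exactly what the paper checks.)

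So the fix is a one-line replacement: drop the u.s.c.\ argument in (i) and observe directly that $x\in\calP$ contributes to every supremum $\lVert s\rVert_{\calP,U}$ with $x\to U$. Everything else stands.
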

\begin{proof}
The first claim is clear. We noticed earlier that any seminorm gives rise to an upper semicontinuous family. Conversely, to any family of seminorms $\{\lVert \ \rVert _x\}$ on the stalks at the points $x\in\calP$ we assign the sup seminorm $\lVert s\rVert _{\calP,U}=\sup_{x\in\calP,x\to U}\lVert s_x\rVert _x$. To prove (ii) we should prove that if the family is upper semicontinuous then the stalk $\lVert \ \rVert _{\calP,x}$ coincides with the original seminorm $\lVert \ \rVert _x$. Clearly, $\lVert \ \rVert _x\le\lVert \ \rVert _{\calP,x}$. In addition, if $\lVert s\rVert _x<r$ then there exists a morphism $x\to U$ such that $s$ is induced from an element $s_U\in\calA(U)$ and $\lVert s_U\rVert _y<r$ for any $y\to U$. Then $\lVert s\rVert _{\calP,x}\le\lVert s_U\rVert _{\calP,U}<r$, thus proving (ii).
\end{proof}

\begin{rem}\label{prem}
The class of $\calP$-seminorms is not preserved under quotients and other operations. One can easily construct such examples already when $\calC$ is associated to the topological space $X=\{\eta,s\}$ consisting of an open point $\eta$ and a closed point $s$, and $\calP=\{\eta\}$.
\end{rem}

\subsubsection{Conservative families}
As one might expect, if $\calC$ possesses enough points then the theory of seminorms can be developed in terms of stalks. More concretely, assume that $\calC$ possesses a conservative family of points $\calP$. Then seminorms and operations on them are completely controlled by the $\calP$-stalks. In particular, one can define all operations on seminormed sheaves stalkwise. This follows from the following result.

\begin{theor}\label{conservlem}
Assume that $\calP$ is a conservative family of points of a site $\calC$ and $\calA$ is a sheaf of abelian groups (resp. rings, resp. modules over a seminormed sheaf of rings) on $\calC$. Then any seminorm $\lVert \ \rVert $ on $\calA$ is a $\calP$-seminorm. In particular, there is a natural bijective correspondence between seminorms on $\calA$ and upper semicontinuous families of seminorms on the stalks of $\calA$ at the points of $\calP$.
\end{theor}
\begin{proof}
All cases are proved similarly, so assume that $\calA$ is a sheaf of abelian groups. Fix $r>0$ and let $\calA^\di_r\subseteq\calA$ denote the ball of radius $r$; it is the presheaf such that $\calA^\di_r(U)$ is the set of all elements $s\in\calA(U)$ with $\lVert s\rVert _U\le r$. The locality condition satisfied by $\lVert \ \rVert $ implies that $\calA^\di_r$ is, in fact, a sheaf. Note that the stalk of $\calA^\di_r$ at a point $x$ coincides with the ball $(\calA_x)^\di_r$.

Define a subsheaf $\calA^\di_{\calP,r}\stackrel {i_r}\into\calA^\di_r$ by the condition that $s\in\calA^\di_{\calP,r}(U)$ if for any $x\to U$ with $x\in\calP$ the inequality $\lVert s\rVert _x\le r$ holds. Note that $\calA^\di_{\calP,r}$ is the $r$-ball of the seminorm $\lVert \ \rVert _\calP$ and the stalk of $\calA^\di_{\calP,r}$ at a point $x\in\calP$ equals to $(\calA_x)^\di_r$. So, the embedding of sheaves $i_r$ induces isomorphisms of stalks at the points of $\calP$, and using that $\calP$ is conservative we obtain that $i_r$ is an equality. Thus, the balls of the seminorms $\lVert \ \rVert $ and $\lVert \ \rVert _\calP$ coincide and hence the seminorms coincide. We proved that $\lVert \ \rVert $ is a $\calP$-seminorm, and the second claim follows from Lemma~\ref{Psemilem}(ii).
\end{proof}

\subsection{Sheaves on analytic spaces}\label{sheavesansec}
In this section we study seminorms on $\calO_{X_G}$-modules, where $X$ is an $H$-strict $k$-analytic space and $X_G$ denotes the site of $H$-strict analytic domains in $X$.

\subsubsection{Analytic points}
Note that any point $x\in X$ defines a point of $X_G$, so we can view $\lvert X\rvert $ as a subset of $\lvert X_G\rvert $. A point of $X_G$ is called {\em analytic} if its isomorphism class lies in $\lvert X\rvert $. A seminorm on a sheaf of abelian group on $X_G$ is called {\em $G$-analytic} if it is an $\lvert X\rvert $-seminorm in the sense of \S\ref{Psemisec}. For example, the spectral seminorm $\lvert \ \rvert $ on $\calO_{X_G}$ satisfies $\lvert f\rvert _U=\sup_{x\in U}\lvert f(x)\rvert $, i.e. it is $G$-analytic.

\begin{exam}\label{nonanalyticexam}
(i) A typical example of a non-analytic point $z$ on a unit disc $E=\calM(k\{t\})$ is as follows: the family of neighborhoods of $z$ is the set of all domains $U\subseteq E$ that contain an open annulus $r<\lvert t\rvert <1$ for some $r$. One can view $z$ as the maximal point with $\lvert t\rvert _z<1$. In Huber adic geometry (see \cite{hub}) it corresponds to a valuation of height two such that $\lvert t\rvert _z<1$ and $r<\lvert t\rvert _z$ for any real $r<1$.

(ii) Using $z$ one easily constructs a seminorm $\lVert \ \rVert $ on $\calO_{X_G}$ which is not $G$-analytic. For example, set $\lVert f\rVert _U=0$ if $z\notin U$ and $\lVert f\rVert _U=\lVert f(p)\rVert $ otherwise, where $p$ is the maximal point of $E$. In fact, all stalks of $\lVert \ \rVert $, excluding the stalk at $z$, are zero.
\end{exam}

For the sake of simplicity, we only use analytic points in Section~\ref{sheavesansec} (and in the main part of this paper). Arbitrary points of $X_G$ will be described in Section~\ref{Gsec}, and then we will extend to them some results of this section.

\subsubsection{Analytic seminorms}
Let $\calA$ be a sheaf of abelian groups on $X_G$ with a seminorm $\lVert \ \rVert $. For any domain $U$ and a section $s\in\calA(U)$ consider the function $\lVert s\rVert \:U\to\bfR_{\ge 0}$ that sends $x\in U$ to $\lVert s\rVert _x$. As we saw in \S\ref{semicontsec}, this function is upper $G$-semicontinuous, i.e. if $x\in X$ satisfies $\lVert s\rVert _x<r$ then there exists an analytic domain $V$ such that $x\in V\subseteq U$ and $\lVert s\rVert _y<r$ for any $y\in V$. We say that the seminorm $\lVert \ \rVert $ is {\em analytic} if it is $G$-analytic and all functions $\lVert s\rVert $ are upper semicontinuous with respect to the usual topology of $X$, i.e. in the above situation $V$ can be chosen to be a neighborhood of $x$.

\begin{rem}\label{ansemrem}
(i) Both conditions are essential in the definition of analytic seminorms. For example, the seminorm $\lVert \ \rVert $ in Example~\ref{nonanalyticexam} has zero stalks at all analytic points, hence all functions $\lVert s\rVert $ vanish though the seminorm is not analytic.

(ii) Assume that $X=\cup_i X_i$ is an admissible covering. Then a function $X\to\bfR$ is upper semicontinuous if and only if its restrictions to $X_i$ are upper semicontinuous. Therefore, a seminorm on $\calA$ is analytic if and only if its restrictions onto $\calA\vert_{X_i}$ are analytic, i.e. analyticity is a $G$-local condition.

(iii) The following simple observation will not be used, so we omit a justification. A seminorm $\lVert \ \rVert $ is analytic if and only if for any affinoid domain $U\subseteq X$, a section $s\in\calA(U)$, a point $x\in U$ and $r>\lVert s\rVert _x$ there exists a neighborhood $V$ of $x$ in $U$ such that $\lVert s\rVert _V<r$. This can also be reformulated using stalks of $\calA$ in the usual topology: $\lVert \ \rVert $ is analytic if and only if for any affinoid domain $U\subseteq X$ and a point $x\in U$ the map $\calA_{\lvert U\rvert ,x}\to\calA_x$ is an isometry with respect to the stalk seminorms of $\lVert \ \rVert $, where $\calA_{\lvert U\rvert }$ denotes the restriction of $\calA$ onto the topological space $\lvert U\rvert $.
\end{rem}

\subsubsection{Local rings of $\calO_{X_G}$}\label{localringsec}
In Berkovich geometry, one usually works with local rings $\calO_{X,x}$ of good spaces and their residue fields $\kappa(x)=\calO_{X,x}/m_x$. We will also need the local rings $\calO_{X_G,x}$ and their residue fields $\kappa_G(x)=\calO_{X_G,x}/m_{G,x}$.

\begin{lem}\label{stalklem}
Let $X$ be an analytic space with a point $x\in X$. Then

(i) $(\calO_{X_G,x},\lvert \ \rvert _x)$ is a seminormed local ring whose maximal ideal $m_{G,x}$ is the kernel of $\lvert \ \rvert _x$.

(ii) The residue norm on $\kappa_G(x)$ is a valuation and the completion coincides with $\calH(x)$. In particular, $\calH(x)$ is the completion of $(\calO_{X_G,x},\lvert \ \rvert _x)$.
\end{lem}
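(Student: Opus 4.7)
The plan is to build a canonical evaluation character $\chi\:\calO_{X_G,x}\to\calH(x)$ as the filtered colimit, over affinoid domains $U\ni x$, of the characters $\chi_U\:\calA_U\to\calH(x)$ corresponding to $x\in\calM(\calA_U)$, and then to show that $\chi$ controls both the stalk seminorm and the unique maximal ideal of $\calO_{X_G,x}$. Both conclusions then fall out almost formally.

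The central computation is the identity $\lvert f_x\rvert_x=\lvert\chi(f_x)\rvert$ for every $f_x\in\calO_{X_G,x}$. The inequality $\lvert f_x\rvert_x\ge\lvert\chi(f_x)\rvert$ is immediate, since $\lvert f\rvert_U=\sup_{y\in U}\lvert f(y)\rvert\ge\lvert f(x)\rvert$ for any representative $f\in\calA_U$. For the reverse inequality I would invoke the continuity of $y\mapsto\lvert f(y)\rvert$ on any affinoid $U_0$ carrying $f$: for $\veps>0$ this produces an open neighborhood of $x$ in $U_0$ on which $\lvert f(y)\rvert<\lvert f(x)\rvert+\veps$, and the standard fact that rational (in particular, affinoid) domains form a basis of the topology of $U_0$ at $x$ yields an affinoid domain $U\ni x$ contained in that neighborhood. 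Then $\lvert f\rvert_U\le\lvert f(x)\rvert+\veps$, and letting $\veps\to 0$ gives the identity.

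With this identity in hand, (i) reduces to showing that $\calO_{X_G,x}$ is local with maximal ideal $\ker(\chi)$. If $\chi(f_x)\neq 0$ then by the same continuity argument $\lvert f\rvert$ is bounded below by a positive constant on some affinoid $W\ni x$; by the standard description of units in affinoid algebras (an element is invertible if and only if it does not vanish on the Berkovich spectrum) $f$ is a unit in $\calA_W$, whence $f_x$ is a unit in $\calO_{X_G,x}$. This forces $m_{G,x}=\ker(\chi)$, and combined with the identity above we conclude $m_{G,x}=\ker(\lvert\ \rvert_x)$.

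For (ii), the induced injection $\overline\chi\:\kappa_G(x)\hookrightarrow\calH(x)$ is an isometric embedding of fields for the residue norm coming from $\lvert\ \rvert_x$, hence that residue norm is a valuation. Density of the image follows at once from the defining property of $\calH(x)$ as the completion of $\calA_U/m_x$ with respect to its residue spectral seminorm for any fixed affinoid $U\ni x$, since the image of $\calA_U/m_x$ already factors through $\kappa_G(x)$. Taking completions along the seminorm therefore produces $\calH(x)$ as claimed. No step is really substantial; the only mildly delicate point is the passage between usual-topology neighborhoods of $x$ and affinoid domains containing $x$, which is handled by the local basis property of rational domains on affinoid spaces.
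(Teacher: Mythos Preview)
Your approach is sound and close in spirit to the paper's, though the paper packages things slightly differently: it observes that $\calO_{X_G,x}$ is the filtered colimit of the \emph{local} rings $\calO_{V,x}$ (for $V$ affinoid containing $x$) with local transition maps, and then (i) follows because each $\calO_{V,x}$ is already known to be local with maximal ideal equal to the kernel of $\lvert\ \rvert_x$. Your version unwinds this by working directly with the affinoid algebras $\calA_U$ and the evaluation character $\chi$, which is a perfectly good alternative; the identity $\lvert f_x\rvert_x=\lvert\chi(f_x)\rvert$ and the unit argument are correct.

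There is, however, a genuine slip in your density argument for (ii). You assert that $\calH(x)$ is ``the completion of $\calA_U/m_x$'' for a fixed affinoid $U\ni x$. This is false in general: by definition $\calH(x)$ is the completion of the \emph{fraction field} $\mathrm{Frac}(\calA_U/\mathfrak{p}_x)$, and $\calA_U/\mathfrak{p}_x$ need not be dense in it. For instance, at the Gauss point of $\calM(k\{t\})$ one has $\mathfrak{p}_x=0$, and $1/t\in\calH(x)$ satisfies $\lvert 1/t - f\rvert\ge 1$ for every $f\in k\{t\}$. The fix is already implicit in your proof of (i): since every element outside $\ker(\chi)$ is a unit in $\calO_{X_G,x}$, the image of $\overline\chi$ is a sub\emph{field} of $\calH(x)$, hence contains $\mathrm{Frac}(\calA_U/\mathfrak{p}_x)$ for any affinoid $U\ni x$, and the latter is dense in $\calH(x)$ by definition. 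This is exactly how the paper argues: it identifies $\kappa_G(x)$ as the filtered union of the residue fields $\kappa_V(x)=\mathrm{Frac}(\calA_V/\mathfrak{p}_x)$, each of which is dense in $\calH(x)$.
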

\begin{proof}
Note that $\calO_{X_G,x}$ is the filtered colimit of the rings $\calO_{V,x}$, where $V$ is an affinoid domain containing $x$, and the transition maps are local. Hence (i) follows from the observation that the maximal ideal of $\calO_{V,x}$ is the kernel of the restriction of $\lvert \ \rvert _x$. Furthermore, this implies that the residue field $\kappa_G(x)$ of $\calO_{X_G,x}$ is the filtered union of the residue fields of $\calO_{V,x}$. The latter are dense subfields of $\calH(x)$ hence the same is true for $\kappa_G(x)$. It remains to note that the completion of $(\calO_{X_G,x},\lvert \ \rvert _x)$ is isomorphic to the completion of its quotient by the kernel of the seminorm.
\end{proof}

\subsubsection{Seminormed $\calO_{X_G}$-modules}
Assume that $(\calF,\lVert \ \rVert )$ is a seminormed $\calO_{X_G}$-module. For any analytic point $x$ the kernel of $\lVert \ \rVert_x$ contains $m_{G,x}\calF_x$ and hence $\lVert \ \rVert_x$ is induced from the residue seminorm on the fiber $\calF(x)=\calF_x/m_{G,x}\calF_x$. We call the latter the {\em fiber seminorm} and denote it $\lVert \ \rVert_{(x)}$. The completion of $\calF(x)$ with respect to $\lVert \ \rVert_{(x)}$ is a Banach $\calH(x)$-space that will be called the {\em completed fiber} and denoted $\wh{\calF(x)}$.

\subsubsection{Pullbacks}\label{pullsec}
If $f\:Y\to X$ is morphism of Berkovich spaces and $\calF$ is a seminormed $\calO_{X_G}$-module then we define the pullback as $f^*\calF=f^{-1}\calF\otimes_{f^{-1}\calO_{X_G}}\calO_{Y_G}$, where both $f^{-1}$ and the tensor product are taken in the sense of seminormed sheaves.

\subsubsection{The case of invertible sheaves}
For illustration, let us describe $G$-analytic $\calO_{X_G}$-seminorms on an invertible module $\calF$. Such a seminorm $\lVert \ \rVert $ is determined by a $G$-semicontinuous family of seminorms $\lVert \ \rVert _x$ for $x\in X$. Sending a seminorm to its fiber establishes a bijection between $\calO_{X_G,x}$-seminorms on $\calF_x$ and $\kappa_G(x)$-seminorms on the one-dimensional vector space $\calF(x)$. Finally, if $s_x$ is a basis of $\calF(x)$ then sending a seminorm to its value on $s_x$ provides a parametrization of $\kappa_G(x)$-seminorms on $\calF(x)$ by numbers $r\in\bfR_{\ge 0}$.

\begin{lem}\label{rankonelem}
Assume that $X$ is an analytic space and $\calF$ is a free $\calO_{X_G}$-module of rank one with basis $s$. Then the correspondence $\lVert \ \rVert \mapsto \lVert s\rVert$ establishes a bijection between $G$-analytic $\calO_{X_G}$-seminorms on $\calF$ and upper $G$-semicontinuous functions $r\:X\to\bfR_{\ge 0}$. Furthermore, a seminorm is analytic if and only if the function $\lVert s\rVert$ is upper semicontinuous.
\end{lem}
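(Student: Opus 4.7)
The plan is to reduce the statement to a stalkwise description of seminorms via Lemma~\ref{Psemilem}(ii), applied to the subset $\calP=\lvert X\rvert\subseteq\lvert X_G\rvert$ of analytic points. By definition, $G$-analytic seminorms on $\calF$ are exactly the $\calP$-seminorms, so that lemma gives a bijection between $G$-analytic $\calO_{X_G}$-seminorms on $\calF$ and upper semicontinuous families of $\calO_{X_G,x}$-seminorms on the stalks $\calF_x$ for $x\in X$. It therefore suffices to describe such families in terms of upper $G$-semicontinuous functions $r\:X\to\bfR_{\ge 0}$.

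Fix $x\in X$. By Lemma~\ref{stalklem}, $(\calO_{X_G,x},\lvert\ \rvert_x)$ is a seminormed local ring whose maximal ideal is the kernel of $\lvert\ \rvert_x$; in particular, an element $a\in\calO_{X_G,x}$ is a unit precisely when $\lvert a\rvert_x>0$. Since $s_x$ is a basis of the free rank-one module $\calF_x$, we have $\lVert as_x\rVert_x\le\lvert a\rvert_x\lVert s_x\rVert_x$, and inverting $a$ when $\lvert a\rvert_x>0$ yields the opposite inequality; when $\lvert a\rvert_x=0$ both sides vanish. Thus any $\calO_{X_G,x}$-seminorm on $\calF_x$ is completely determined by its value $r(x):=\lVert s_x\rVert_x\in\bfR_{\ge 0}$ via $\lVert as_x\rVert_x=\lvert a\rvert_x\cdot r(x)$.

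Next, translate upper semicontinuity of the family into a property of $r$. For the forward direction, apply upper semicontinuity at a point $y$ to the element $s_y\in\calF_y$: given $\veps>0$ there is an analytic domain $U\ni y$ and $t\in\calF(U)$ with $t_y=s_y$ and $\lVert t\rVert_U<r(y)+\veps$. Writing $t=a\cdot s\vert_U$ with $a_y=1$, we may shrink $U$ so that $a=1$ on $U$; then $r(x)=\lVert s_x\rVert_x\le\lVert t\rVert_U<r(y)+\veps$ for all $x\in U$, giving upper $G$-semicontinuity of $r$. Conversely, if $r$ is upper $G$-semicontinuous and $\sigma_y=a_ys_y\in\calF_y$, lift $a_y$ to $a\in\calO_{X_G}(V)$ on some $V\ni y$ and set $t=a\cdot s\vert_V$; since the spectral seminorm $\lvert a\rvert$ is upper $G$-semicontinuous, the product $\lvert a\rvert\cdot r$ is upper $G$-semicontinuous at $y$, so a further shrinking $U\subseteq V$ yields $\lVert t\rVert_U=\sup_{x\in U}\lvert a(x)\rvert r(x)<\lvert a_y\rvert r(y)+\veps=\lVert\sigma_y\rVert_y+\veps$, verifying the required upper semicontinuity condition.

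For the refined statement on analyticity: since $s$ is a global basis, the function $\lVert s\rVert\:X\to\bfR_{\ge 0}$ coincides with $r$, so analyticity of the seminorm (upper semicontinuity of $\lVert s\rVert$ in the usual topology, cf.\ Remark~\ref{ansemrem}) forces $r$ to be upper semicontinuous. Conversely, if $r$ is upper semicontinuous in the usual topology, then for any $\sigma=a\cdot s\vert_U\in\calF(U)$ the function $\lVert\sigma\rVert\:U\to\bfR_{\ge 0}$ equals $\lvert a\rvert\cdot r$, which is upper semicontinuous on $U$ as the product of a continuous non-negative function and an upper semicontinuous non-negative function. The main subtlety to be careful with is the passage from $a_y=1$ in the stalk to $a=1$ on a $G$-neighborhood, and the bookkeeping between the two flavors of semicontinuity; once handled, the rest is a direct application of Lemmas~\ref{Psemilem} and~\ref{stalklem}.
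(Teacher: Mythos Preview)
Your proof is correct and follows essentially the same approach as the paper: both reduce to Lemma~\ref{Psemilem}(ii) for $\calP=\lvert X\rvert$, identify $\calO_{X_G,x}$-seminorms on the rank-one stalk with a single value $r(x)$, and for analyticity use that $\lvert a\rvert$ is continuous so $\lVert a s\rVert=\lvert a\rvert\cdot r$ is upper semicontinuous. Your forward direction is slightly more elaborate than needed (the upper $G$-semicontinuity of $\lVert s\rVert$ is immediate from \S\ref{semicontsec} without the lifting/shrinking argument), but the logic is sound.
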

\begin{proof}
For any seminorm $\lVert\ \rVert$ the function $\lVert s\rVert$ is upper $G$-semicontinuous and the argument above the lemma shows that it determines a $G$-analytic seminorm uniquely. Conversely, given a $G$-semicontinuous function $r$ consider the seminorm $\lVert\ \rVert_x$ on $\calF_x$ such that $\lVert s\rVert_x=r(x)$. It is easy to see that the family $\{\lVert \ \rVert _x\}_{x\in X}$ is $G$-semicontinuous and hence gives rise to a $G$-analytic seminorm by Lemma~\ref{Psemilem}(ii). It remains to show that if $\lVert s\rVert $ is
upper semicontinuous then the seminorm is analytic. Indeed, a section $t\in\calF(U)$ is of the form $fs$ with $f\in\calO_{X_G}(U)$, and using that the function $\lvert f\rvert $ is continuous we obtain that $\lVert t\rVert =\lvert f\rvert \cdot\lVert s\rVert $ is upper semicontinuous.
\end{proof}

\subsubsection{Analytic $\calO_{X_G}$-seminorms on coherent sheaves}
For coherent sheaves one can describe analyticity in terms of fiber seminorms of $X$ and $X_G$.

\begin{lem}\label{cohlem}
Assume that $X$ is a good analytic space with a coherent $\calO_X$-module $\calF$ and let $\calF_G$ denote the associated coherent $\calO_{X_G}$-module on $X_G$. Let $\lVert \ \rVert $ be an $\calO_{X_G}$-seminorm on $\calF_G$ and for any $x\in X$ endow stalks $\calF_x$ and $\calF_{G,x}$ and the fibers $\calF(x)$ and $\calF_G(x)$ with the stalk and the fiber seminorms of $\lVert \ \rVert $. Then the following conditions are equivalent:

(i) $\lVert \ \rVert $ is analytic.

(ii) The map $\calF_x\to\calF_{G,x}$ is an isometry for any $x\in X$.

(iii) The map $\calF(x)\to\calF_G(x)$ is an isometry for any $x\in X$

(iv) The map $\calF(x)\otimes_{\kappa(x)}\kappa_G(x)\to\calF_G(x)$ is an isometric isomorphism for any $x\in X$.
\end{lem}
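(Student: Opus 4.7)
The plan is to establish the cycle (i)$\Leftrightarrow$(ii)$\Leftrightarrow$(iii)$\Leftrightarrow$(iv). The equivalence (i)$\Leftrightarrow$(ii) follows directly from Remark~\ref{ansemrem}(iii): on a good space $\calF$ is the restriction of $\calF_G$ to open subsets, so its usual-topology stalk $\calF_x$ is the subcolimit of the $G$-stalk $\calF_{G,x}$ indexed over open neighborhoods, and analyticity is exactly the statement that the induced non-expansive map is an isometry.

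For (ii)$\Leftrightarrow$(iii), the key observation is that by Lemma~\ref{stalklem} the maximal ideals $m_x$ and $m_{G,x}$ coincide with the kernels of $\lvert\,\cdot\,\rvert_x$, so for $m\in m_x$ and $e\in\calF_x$ one has $\lVert me\rVert_x\le\lvert m\rvert_x\lVert e\rVert_x=0$, and similarly for $m_{G,x}\calF_{G,x}$. By the ultrametric inequality the stalk seminorm is then constant on cosets modulo the respective ideal-multiple, i.e.\ $\lVert s\rVert_x=\lVert [s]\rVert_{\calF(x)}$ for every $s\in\calF_x$ (and similarly on the $G$-side). Consequently the isometry condition for $\calF_x\to\calF_{G,x}$ is literally the same as the isometry condition for $\calF(x)\to\calF_G(x)$.

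For (iii)$\Leftrightarrow$(iv), coherence on the good space gives $\calF_{G,x}=\calF_x\otimes_{\calO_{X,x}}\calO_{X_G,x}$ algebraically, whence the underlying vector-space isomorphism $g\colon\calF(x)\otimes_{\kappa(x)}\kappa_G(x)\to\calF_G(x)$. Factor the map of (iii) as $\calF(x)\stackrel{f}{\to}\calF(x)\otimes_{\kappa(x)}\kappa_G(x)\stackrel{g}{\to}\calF_G(x)$. I first claim that $f$ is always isometric. Indeed, the $\calH(x)$-module structure on the Banach space $\wh{\calF(x)}$ (finite-dimensional, since $\calF(x)$ is) induces a map $h\colon\calF(x)\otimes_{\kappa(x)}\kappa_G(x)\to\wh{\calF(x)}$, $[e]\otimes\alpha\mapsto\alpha[e]$. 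For any representation $\sum[e_i]\otimes\alpha_i$ the image $\sum\alpha_i[e_i]$ has Banach norm bounded by $\max\lvert\alpha_i\rvert\lVert[e_i]\rVert$ (ultrametric and module inequality), so $h$ is non-expansive; but $h\circ f$ is just the canonical isometric embedding $\calF(x)\into\wh{\calF(x)}$, which forces $f$ to be isometric. Next, both the tensor seminorm on $\calF(x)\otimes_{\kappa(x)}\kappa_G(x)$ and the fiber seminorm on $\calF_G(x)$ satisfy $\lVert\alpha v\rVert=\lvert\alpha\rvert\lVert v\rVert$ for $\alpha\in\kappa_G(x)$ (for the tensor seminorm one checks both $\le$ and $\ge$ using $\alpha$ and $\alpha^{-1}$). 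Hence $g$, being $\kappa_G(x)$-linear, is isometric iff it is isometric on the $\kappa_G(x)$-spanning image of $f$, iff $g\circ f$ is isometric (using $f$ isometric). This is the equivalence (iv)$\Leftrightarrow$(iii).

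The main obstacle is the isometry of $f$, i.e.\ the inequality $\lVert[e]\otimes 1\rVert_{\otimes}\ge\lVert[e]\rVert_{\calF(x)}$, which must hold for arbitrary representations $[e]\otimes 1=\sum[e_i]\otimes\alpha_i$ in the tensor product. This is where finite-dimensionality of $\calF(x)$ and Lemma~\ref{stalklem} enter crucially: passing to the common finite-dimensional Banach $\calH(x)$-completion $\wh{\calF(x)}$ collapses every representation to a single element whose Banach norm is already $\lVert[e]\rVert$, bounding the tensor seminorm below and giving the desired equality.
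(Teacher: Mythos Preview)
Your treatment of (ii)$\Leftrightarrow$(iii) is correct and matches the paper. Your argument for (iii)$\Leftrightarrow$(iv) is mostly fine --- in particular, your completion trick showing that $f\colon\calF(x)\to\calF(x)\otimes_{\kappa(x)}\kappa_G(x)$ is an isometry is a valid alternative to the paper's citation of Poineau's lemma. However, the final step ``$g$ is isometric iff it is isometric on the $\kappa_G(x)$-spanning image of $f$'' is not justified by $\kappa_G(x)$-linearity and the multiplicative property $\lVert\alpha v\rVert=\lvert\alpha\rvert\lVert v\rVert$ alone: spanning over $\kappa_G(x)$ does not let you read off the norm of an arbitrary linear combination from the norms of the spanning vectors. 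What makes the step work is that $f(\calF(x))$ is \emph{dense} in the tensor product (approximate each $\alpha_i\in\kappa_G(x)$ by elements of $\kappa(x)$) and $g$ is non-expansive; this is exactly how the paper argues.

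The real gap is in (i)$\Leftrightarrow$(ii). First, Remark~\ref{ansemrem}(iii) is explicitly left unproven in the paper, so you cannot cite it. Second, even granting it, the remark characterizes analyticity by the isometry of $\calA_{\lvert U\rvert,x}\to\calA_{G,x}$ for \emph{every affinoid subdomain} $U$ containing $x$, not just for the ambient good space $X$. Your condition (ii) only controls the usual-topology stalk on $X$, i.e.\ the case where $U$ is a neighborhood of $x$; it says nothing about affinoid domains $U$ that are not neighborhoods. Bridging this gap is precisely the content of the paper's proof of (ii)$\Rightarrow$(i): one first uses coherence (via the already-established (ii)$\Leftrightarrow$(iv)) to propagate the isometry condition from $X$ to every good subdomain $U$, and then verifies $G$-analyticity and usual upper semicontinuity separately by a covering argument. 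Note in particular that your argument for (i)$\Leftrightarrow$(ii) never invokes coherence, whereas the paper's (ii)$\Rightarrow$(i) uses it essentially --- this is a sign that something has been skipped.
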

\begin{proof}
(iii)$\Longleftrightarrow$(iv) Set $K=\kappa(x)$, $K'=\kappa_G(x)$, $V=\calF(x)$ and endow $V'=V\otimes_KK'$ with the tensor seminorm. The map $h\:V'\to\calF_G(x)$ is an isomorphism since $\calF$ is coherent, so (iv) is satisfied if and only if $h$ is an isometry. Note also that the inclusion $K\into K'$ is an isometry, hence $V\into V'$ is an isometry by \cite[Lemma~3.1]{Poineau@angelique}. In particular, if $h$ is an isometry then $g\:V\to\calF_G(x)$ is an isometry, i.e. (iv)$\implies$(iii). Conversely, if $g$ is an isometry then $h$ is an isometry because $h$ is non-expansive and $V$ is dense in $V'$.

(ii)$\Longleftrightarrow$(iii) This follows from the fact that the seminorm of $\calF_x$ is induced from the seminorm of $\calF(x)$, and similarly for $\calF_{G,x}$ and $\calF_G(x)$.

(i)$\implies$(ii) We should prove that if $x\in X$ and $s\in\calF_x$ then the image  $s_G\in\calF_{G,x}$ of $s$ satisfies $\lVert s_G\rVert _x\ge\lVert s\rVert $. Choose a neighborhood $U$ of $x$ such that $s$ is defined on $U$. Since the seminorm is analytic, for any $r>\lVert s_G\rVert _x$ there exists a neighborhood $V\subseteq U$ of $x$ such that $\lVert s\rVert _y<r$ for any $y\in V$. Since $\lVert \ \rVert $ is $G$-analytic, this implies that $\lVert s\rVert _V<r$ and hence $\lVert s\rVert _x<r$.

(ii)$\implies$(i) First, we claim that (ii) holds for the restriction of $\calF_G$ onto any good domain $U\subseteq X$. Let $\calG$ denote the coherent $\calO_U$-module $\calF_G\vert_U$. Choose any $x\in U$ and let $\kappa_U(x)$ be the residue field of $x$ in $U$, in particular, $\kappa(x)\subseteq\kappa_U(x)\subseteq\kappa_G(x)$. Since (ii) and (iii) are equivalent, we should check that the  map $h\:\calG(x)\to\calG_G(x)=\calF_G(x)$ is an isometry. This follows easily from the fact that $\calG(x)=\calF(x)\otimes_{\kappa(x)}\kappa_U(x)$ as vector spaces, $h$ is non-expansive and $\calF(x)\to\calF_G(x)$ is an isometry.

Now, let us prove that $\lVert \ \rVert $ is analytic. Let $U\subseteq X$ be an affinoid domain and $s\in\calF_G(U)$. We claim that the function $\lVert s\rVert $ is upper semicontinous. It suffices to check this for affinoid domains in $U$ hence we can assume that $U$ is affinoid. Fix $x\in U$ and let $r>\lVert s\rVert _x$. We showed that $\calG_x\to\calF_{G,x}$ is an isometry, where $\calG=\calF_G\vert_U$, hence there exists a neighborhood $V\subseteq U$ of $x$ with $\lVert s\rVert _V<r$. In particular, $\lVert s\rVert _y<r$ for any $y\in V$ and hence $\lVert s\rVert$ is upper semicontinous at $x$.

It remains to show that $\lVert \ \rVert $ is $G$-analytic. If this is not so then $\lVert s\rVert _U>r>\sup_{x\in U}\lVert s\rVert _x$ for some choice of $U$, $s\in\calA(U)$ and $r$. If $U=\cup_i U_i$ is an admissible affinoid covering then $\lVert s\rVert _{U_i}>r$ for some $i$, and replacing $U$ with $U_i$ we can assume that $U$ is affinoid. By the same argument as above, for any point $x\in U$ there exists an open neighborhood $V_x\subseteq U$ of $x$ with $\lVert s\rVert _{V_x}<r$. Since $U=\cup_x V_x$ is an admissible covering, we obtain that $\lVert s\rVert _U\le\sup_x\lVert s\rVert _{V_x}<r$. The contradiction concludes the proof.
\end{proof}

\subsubsection{Operations}
Finally, let us study when the property of a seminorm to be analytic is preserved by various operations. Certainly, some restrictions should be imposed (see Remark~\ref{prem}), so we only consider the coherent case, which will be used later.

\begin{lem}\label{ancohlem}
Analyticity of $\calO_{X_G}$-seminorms on coherent sheaves is preserved under the following operations: quotients, tensor products, symmetric and exterior powers.
\end{lem}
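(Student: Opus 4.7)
The plan is to reduce analyticity to an isometric base-change property of fibers via Lemma~\ref{cohlem}(iv), and then to exploit the compatibility of the relevant operations with fibers and base change in the non-expansive category. Since analyticity is a $G$-local condition by Remark~\ref{ansemrem}(ii), I may assume $X$ is affinoid (hence good), so that Lemma~\ref{cohlem} applies. Writing $\calF_G$ for a coherent $\calO_{X_G}$-module and $\calF$ for the corresponding coherent $\calO_X$-module, analyticity of an $\calO_{X_G}$-seminorm on $\calF_G$ is equivalent to the natural map $h_\calF\:\calF(x)\otimes_{\kappa(x)}\kappa_G(x)\toisom\calF_G(x)$ being an isometric isomorphism for every $x\in X$.

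For tensor products, let $\calF_G$ and $\calG_G$ carry analytic seminorms and set $\calH_G=\calF_G\otimes_{\calO_{X_G}}\calG_G$ with the tensor seminorm. By Lemma~\ref{stalkcompat} applied to stalks and then to the fibers (which are quotients of stalks), the fiber $\calH_G(x)$ is isometrically $\calF_G(x)\otimes_{\kappa_G(x)}\calG_G(x)$, and likewise $\calH(x)=\calF(x)\otimes_{\kappa(x)}\calG(x)$ isometrically on the $\calO_X$-side. Combining this with $h_\calF\otimes h_\calG$ and the associativity/base-change isometry
\begin{equation*}
\bigl(\calF(x)\otimes_{\kappa(x)}\calG(x)\bigr)\otimes_{\kappa(x)}\kappa_G(x)\toisom\bigl(\calF(x)\otimes_{\kappa(x)}\kappa_G(x)\bigr)\otimes_{\kappa_G(x)}\bigl(\calG(x)\otimes_{\kappa(x)}\kappa_G(x)\bigr),
\end{equation*}
which holds in the non-expansive category because tensor products there are categorical colimits, yields the required isometry $h_\calH$. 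For symmetric and exterior powers the argument is identical: these are quotients of tensor powers in the non-expansive category (\S\ref{powersec}), so Lemma~\ref{stalkcompat} identifies their fibers with the corresponding quotients of tensor-power fibers, and base change $-\otimes_{\kappa(x)}\kappa_G(x)$ commutes with these quotients by the same colimit argument.

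For quotients, if $\phi\:\calF_G\to\calG_G$ is a surjection with $\calF_G$ analytic and $\calG_G$ given the quotient seminorm, Lemma~\ref{stalkcompat} again identifies $\calG_G(x)$ with the quotient of $\calF_G(x)$ by the image of the fiber kernel (with quotient seminorm), and similarly on the $\calO_X$-side. Commuting $-\otimes_{\kappa(x)}\kappa_G(x)$ with this quotient, together with $h_\calF$, supplies the isometry $h_\calG$, so the quotient seminorm on $\calG_G$ is analytic. The main technical point throughout is that the natural identifications must be \emph{isometric} and not merely non-expansive; this is handled uniformly by interpreting all constructions as colimits in the non-expansive category, where the relevant universal properties force isometry automatically.
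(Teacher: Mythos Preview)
Your approach is essentially the same as the paper's: reduce to the good case via Remark~\ref{ansemrem}(ii), use the characterization (i)$\Longleftrightarrow$(iv) of Lemma~\ref{cohlem}, and then show that the relevant operation commutes isometrically with the base change $-\otimes_{\kappa(x)}\kappa_G(x)$. The only difference is in how this last point is justified. The paper treats quotients explicitly: writing $U=\calF(x)$, $V=\calG(x)$, $U'=\calF_G(x)$, $V'=\calG_G(x)$, it unpacks the maximal-seminorm universal property on each side and checks directly that the inequalities defining the quotient seminorm on $V\otimes_KK'$ coincide with those defining the seminorm on $V'$. You instead invoke a blanket principle that ``all constructions are colimits in the non-expansive category, so the universal properties force isometry''. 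That principle is morally correct, but the paper does not establish it in this generality (Lemma~\ref{colimlem} covers only \emph{filtered} colimits, and \S\ref{tensorsec} gives the bilinear universal property of $\otimes$, not a colimit description). So your appeal to general nonsense is standing in for exactly the two-line verification the paper writes out; it would be cleaner to do that verification once (for quotients and for tensor products) rather than assert it.
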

\begin{proof}
We consider the case of quotients. The tensor products are dealt with similarly and the other cases follow. So, let $\calM\onto\calN$ be a surjection of coherent $\calO_{X_G}$-modules and let $\lVert \ \rVert _\calM$ be an analytic $\calO_{X_G}$-seminorm on $\calM$. We should prove that the quotient seminorm $\lVert \ \rVert _\calN$ is analytic. By Remark~\ref{ansemrem}(ii), the question is $G$-local on $X$ hence we can assume that $X$ is good.

In the sequel, $\calM$ and $\calN$ denote the $\calO_X$-sheaves, while $\calM_G$ and $\calN_G$ denote the associated $\calO_{X_G}$-sheaves. Set $K=\kappa(x)$ and $K'=\kappa_G(x)$, and endow $U=\calM(x)$, $U'=\calM_G(x)$, $V=\calN(x)$ and $V'=\calN_G(x)$ with the fiber seminorms of $\lVert \ \rVert _\calM$ and $\lVert \ \rVert _\calN$. In particular, we have surjections $f\:U\onto V$ and $f'=f\otimes_KK'\:U'\onto V'$ such that the seminorms on the targets are the quotient seminorms.

By the equivalence (i)$\Longleftrightarrow$(iv) in Lemma~\ref{cohlem}, the isomorphism $U\otimes_KK'\toisom U'$ is an isometry and it suffices to prove that the isomorphism $h\:V\otimes_KK'\toisom V'$ of seminormed vector spaces is an isometry. By definition, $\lVert \ \rVert _V$ is the maximal seminorm such that $\lVert f(u)\rVert _V\le\lVert u\rVert _U$ for any $u\in U$, hence the seminorm on $V\otimes_KK'$ is the maximal one such that $\lVert f(u)\otimes a\rVert \le\lvert a\rvert \cdot\lVert u\rVert _U$ for any $u\in U$ and $a\in K'$. In the same way, the seminorm on $V'$ is the maximal one such that $\lVert f'(u\otimes a)\rVert \le\lvert a\rvert \cdot\lVert u\rVert _U$. Since $h(f(u)\otimes a)=f'(u\otimes a)$, the seminorms match and $h$ is an isometry.
\end{proof}

\section{Differentials of seminormed rings}\label{difsec}

\subsection{K\"ahler seminorms}\label{banachdifsec}

\subsubsection{The definition}\label{omeganormsec}
Given a seminormed ring $\calB$ and a homomorphism of rings $\phi\:A\to\calB$ we equip $\Omega_{\calB/A}$ with the {\em K\"ahler seminorm} $\lVert \ \rVert_\Omega$ given by the formula $$\lVert x\rVert_\Omega=\inf_{x=\sum c_idb_i}\max_i\lvert c_i\rvert_\calB\lvert b_i\rvert_\calB,$$ where $x\in\Omega_{\calB/A}$ and the infimum is over all representations of $x$ as $\sum c_id_{\calB/A}(b_i)$ with $c_i,b_i\in\calB$. In fact, we will always work with $\|\ \|_\Omega$ when $\phi$ is a non-expansive homomorphism of seminormed rings, but its independence of the seminorm of $A$ will be used.

\subsubsection{Universal properties}
Both the K\"ahler seminorm and the seminormed module $(\Omega_{\calB/\calA},\lVert \ \rVert_\Omega)$ can be characterized by appropriate universal properties.

\begin{lem}\label{universemilem}
Assume that $\calA\to\calB$ is a non-expansive homomorphism of seminormed rings. Then,

(i) $\lVert \ \rVert_\Omega$ is the maximal $\calB$-seminorm that makes $d_{\calB/\calA}\:\calB\to\Omega_{\calB/\calA}$ a non-expansive $\calA$-homomorphism.

(ii) $d_{\calB/\calA}$ is the universal non-expansive $\calA$-derivation of $\calB$ with values in a seminormed $\calB$-module:
$$\Hom_{\calB,\nonexp}(\Omega_{\calB/\calA},M)\toisom \Der_{\calA,\nonexp}(\calB,M)$$
for any seminormed $\calB$-module $M$.
\end{lem}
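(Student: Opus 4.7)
The plan is to verify part (i) directly from the defining infimum and then deduce part (ii) from the classical algebraic universal property plus the characterization from (i).

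For (i), I will first check that the formula $\lVert x\rVert_\Omega=\inf_{x=\sum c_idb_i}\max_i\lvert c_i\rvert_\calB\lvert b_i\rvert_\calB$ does define a $\calB$-seminorm. The normalization $\lVert 0\rVert_\Omega=0$ comes from the trivial representation; the ultrametric inequality follows because given representations $x=\sum c_idb_i$ and $y=\sum c'_jdb'_j$, concatenating (with a sign flip on the second sum) yields a representation of $x-y$ whose associated maximum is $\max\bigl(\max_i\lvert c_i\rvert\lvert b_i\rvert,\max_j\lvert c'_j\rvert\lvert b'_j\rvert\bigr)$; the $\calB$-module axiom $\lVert c x\rVert_\Omega\le\lvert c\rvert_\calB\lVert x\rVert_\Omega$ is immediate from the submultiplicativity of $\lvert\ \rvert_\calB$ applied to the scaled representation $cx=\sum(cc_i)db_i$. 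Taking the representation $db=1\cdot db$ shows that $d_{\calB/\calA}$ is non-expansive, and the $\calA$-linearity (i.e.\ $d(a)=0$ for $a\in A$) holds by construction of $\Omega_{\calB/\calA}$.

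Next I will prove the maximality. Let $\lVert\ \rVert'$ be any $\calB$-seminorm for which $d_{\calB/\calA}$ is non-expansive. For an arbitrary representation $x=\sum c_idb_i$, the ultrametric and module axioms give
\[
\lVert x\rVert'\ \le\ \max_i\lVert c_idb_i\rVert'\ \le\ \max_i\lvert c_i\rvert_\calB\lVert db_i\rVert'\ \le\ \max_i\lvert c_i\rvert_\calB\lvert b_i\rvert_\calB,
\]
and taking the infimum over all representations yields $\lVert x\rVert'\le\lVert x\rVert_\Omega$, proving (i).

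For (ii), I will combine (i) with the classical universal property of $\Omega_{\calB/\calA}$ as an $A$-module of K\"ahler differentials. Given any seminormed $\calB$-module $M$ and a non-expansive $\calA$-derivation $\delta\:\calB\to M$, there exists a unique $\calB$-linear map $h\:\Omega_{\calB/\calA}\to M$ with $\delta=h\circ d_{\calB/\calA}$ by the underlying algebraic universal property; I only have to verify the non-expansiveness of $h$. For $x=\sum c_idb_i$ one has $h(x)=\sum c_i\delta(b_i)$, so
\[
\lVert h(x)\rVert_M\ \le\ \max_i\lvert c_i\rvert_\calB\lVert\delta(b_i)\rVert_M\ \le\ \max_i\lvert c_i\rvert_\calB\lvert b_i\rvert_\calB,
\]
and taking the infimum gives $\lVert h(x)\rVert_M\le\lVert x\rVert_\Omega$. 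Conversely, if $h$ is non-expansive, then $\delta=h\circ d_{\calB/\calA}$ is the composition of two non-expansive maps, hence non-expansive. This shows that the classical bijection restricts to a bijection between non-expansive derivations and non-expansive $\calB$-linear maps, as required.

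No step here poses a serious difficulty; the only thing to be careful about is that the argument in (ii) uses non-expansiveness of $d_{\calB/\calA}$ from (i), so (i) must be established first. All other verifications reduce to bookkeeping with representations of differentials and the submultiplicativity/ultrametric axioms.
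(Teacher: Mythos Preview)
Your proof is correct. Part (i) matches the paper's treatment (the paper simply calls it ``obvious''), and your explicit verification is just a fuller version of the same idea.

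For part (ii) there is a minor but genuine difference in route. You estimate $\lVert h(x)\rVert_M$ directly from an arbitrary representation $x=\sum c_i\,db_i$ and pass to the infimum. The paper instead argues by constructing the auxiliary seminorm $\lVert x\rVert'_\Omega=\max(\lVert x\rVert_\Omega,\lVert h(x)\rVert_M)$, checking that it too makes $d_{\calB/\calA}$ non-expansive, and then invoking the maximality from (i) to force $\lVert\ \rVert'_\Omega=\lVert\ \rVert_\Omega$, whence $\lVert h(x)\rVert_M\le\lVert x\rVert_\Omega$. Your direct computation is more elementary and transparent; the paper's trick is slightly slicker in that it reuses (i) as a black box rather than unwinding the infimum a second time. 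The underlying content is identical---both arguments ultimately rest on the inequality $\lVert c_i\,\delta(b_i)\rVert_M\le\lvert c_i\rvert_\calB\lvert b_i\rvert_\calB$.
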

\begin{proof}
The first claim is obvious. In (ii), we should only prove that any non-expansive $\calA$-derivation $d\:\calB\to M$ with values in a seminormed $\calB$-module $M$ factors into a composition of $d_{\calB/\calA}$ and a non-expansive homomorphism $h\:\Omega_{\calB/\calA}\to M$. By the usual universal property, we have a unique such factoring with $h$ being a homomorphism of modules, and it remains to show that $h$ is non-expansive. Let $\lVert \ \rVert '_\Omega$ be defined by $\lVert x\rVert '_\Omega=\max(\lVert x\rVert_\Omega,\lVert h(x)\rVert_M)$. Then it immediately follows that $(\Omega_{\calB/\calA},\lVert \ \rVert '_\Omega)$ is a seminormed $\calB$-module and $d_{\calB/\calA}$ is non-expansive with respect to $\lVert \ \rVert '_\Omega$. So, $\lVert \ \rVert_\Omega=\lVert \ \rVert '_\Omega$ by (i), and hence $h$ is non-expansive.
\end{proof}

\subsubsection{An alternative definition}
Similarly to the case of usual rings, one can define the seminormed module $\Omega_{B/A}$ in terms of the kernel of $B\otimes_AB\to B$. The proof reduces to repeating the classical argument (see \cite[Section 26.C]{Matsumura}) and checking that all relevant maps are non-expansive.

\begin{lem}\label{omegaalter}
Assume that $f\:\calA\to\calB$ is a non-expansive homomorphism of seminormed rings and $I$ is the kernel of the induced homomorphism of seminormed rings $\calB\otimes_\calA\calB\to\calB$. Then the classical isomorphism $\phi\:\Omega_{\calB/\calA}\toisom I/I^2$ is an isometry.
\end{lem}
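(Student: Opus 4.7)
\medskip
\noindent\textbf{Proof plan.} The plan is to verify that both $\phi$ and $\phi^{-1}$ are non-expansive; each direction is handled by a universal property, so the only work is to put appropriate seminorms on the auxiliary objects and check that the classical constructions are compatible with them.

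\smallskip
\noindent\emph{Non-expansiveness of $\phi$.} Consider the composed map $d'\:\calB\to I/I^2$ sending $b\mapsto 1\otimes b-b\otimes 1\bmod I^2$, where $I\subseteq\calB\otimes_\calA\calB$ is given the subspace seminorm from the tensor seminorm on $\calB\otimes_\calA\calB$ and $I/I^2$ is given the quotient seminorm. The classical identity shows $d'$ is an $\calA$-derivation, and it is non-expansive because
$$\lVert 1\otimes b-b\otimes 1\rVert_{\otimes}\ \le\ \max\bigl(\lVert 1\otimes b\rVert_\otimes,\lVert b\otimes 1\rVert_\otimes\bigr)\ \le\ \lvert b\rvert_\calB,$$
using the definition of the tensor seminorm and that $\lvert 1\rvert_\calB\le 1$. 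By Lemma~\ref{universemilem}(ii) (the universal property of the K\"ahler seminorm in the non-expansive category), the factorization $d'=\phi\circ d_{\calB/\calA}$ exhibits $\phi$ as a non-expansive $\calB$-module homomorphism.

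\smallskip
\noindent\emph{Non-expansiveness of $\phi^{-1}$.} Mimicking the classical argument, introduce the ``dual numbers'' ring $\calR=\calB\oplus\Omega_{\calB/\calA}$ with multiplication $(b_1,\omega_1)(b_2,\omega_2)=(b_1b_2,\,b_1\omega_2+b_2\omega_1)$ and seminorm $\lvert(b,\omega)\rvert_\calR=\max(\lvert b\rvert_\calB,\lVert\omega\rVert_\Omega)$. A direct check using $\lVert b\omega\rVert_\Omega\le\lvert b\rvert_\calB\lVert\omega\rVert_\Omega$ shows that $\calR$ is a seminormed $\calA$-algebra. The two $\calA$-algebra homomorphisms $\iota_0,\iota_1\:\calB\to\calR$ defined by $\iota_0(b)=(b,0)$ and $\iota_1(b)=(b,db)$ are both non-expansive, the latter because $\lVert db\rVert_\Omega\le\lvert b\rvert_\calB$ by Lemma~\ref{universemilem}(i). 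By the universal property of $\calB\otimes_\calA\calB$ as a coproduct in the non-expansive category (\S\ref{tensorsec}), the map $\psi\:\calB\otimes_\calA\calB\to\calR$, $b_1\otimes b_2\mapsto(b_1b_2,b_1db_2)$, is a non-expansive $\calA$-algebra homomorphism. Its restriction to $I$ lands in $\{0\}\oplus\Omega_{\calB/\calA}$, sends $1\otimes b-b\otimes 1$ to $(0,db)$, and vanishes on $I^2$ because $\Omega_{\calB/\calA}$ has square zero in $\calR$. Composing with the non-expansive projection $\calR\to\Omega_{\calB/\calA}$, $(b,\omega)\mapsto\omega$, and passing to the quotient $I/I^2$ (which is permissible since quotient seminorms are universal in the non-expansive category), yields precisely $\phi^{-1}\:I/I^2\to\Omega_{\calB/\calA}$ as a non-expansive map.

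\smallskip
\noindent\emph{Main obstacle.} The only genuinely non-routine point is verifying that the dual numbers construction is well-behaved seminormatically: namely, that $(\calR,\lvert\ \rvert_\calR)$ really is a seminormed ring (the submultiplicativity requires writing out the product and applying the $\calB$-module seminorm axiom for $\Omega_{\calB/\calA}$), and that the universal property of $\calB\otimes_\calA\calB$ in the non-expansive category applies to coproducts of non-commutative shape $\iota_0,\iota_1$. Everything else is classical and amounts to tracking that the stated constructions respect the inequalities defining the various seminorms.
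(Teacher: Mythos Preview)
Your proof is correct and follows essentially the same approach as the paper: both hinge on the dual-numbers ring $\calB\oplus N$ (you take $N=\Omega_{\calB/\calA}$, the paper takes a general target module $M$) to produce a non-expansive map out of $\calB\otimes_\calA\calB$ via the universal property of the tensor seminorm. The only cosmetic difference is that the paper packages this as ``$I/I^2$ satisfies the same universal property as $\Omega_{\calB/\calA}$'' rather than checking $\phi$ and $\phi^{-1}$ separately, and it invokes the bilinear universal property of $\otimes$ directly rather than the coproduct formulation---which incidentally dissolves your worry about ``non-commutative shape,'' since $\calR$ is commutative and the map $(b_1,b_2)\mapsto (b_1b_2,b_1\,db_2)$ is simply a non-expansive $\calA$-bilinear map.
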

\begin{proof}
Recall that $\phi$ is induced by the derivation $d\:\calB\to I/I^2$ given by $db=b\otimes 1-1\otimes b$. By Lemma~\ref{universemilem}(ii), it suffices to show that any non-expansive $\calA$-derivation $\partial\:\calB\to M$ factors uniquely into a composition of $d$ and a non-expansive homomorphism of $\calB$-modules $h\:I/I^2\to M$. In fact, a homomorphism $h$ exists and is unique by the classical theory, so we should only check that it is non-expansive.

Let $\calC=\calB*M$ denote the $\calB$-module $\calB\oplus M$ provided with the multiplication $(b,m)(b',m')=(bb',bm'+b'm)$ and the seminorm $|(b,m)|_\calC=\max(|b|_\calB,\|m\|_M)$. The $\calA$-bilinear map $\calB\times \calB\to\calB*M$ sending $(b,b')$ to $(bb',b\partial(b'))$ is non-expansive, hence it induces a non-expansive homomorphism $\lam\:\calB\otimes_\calA\calB\to\calB*M$. By the classical argument, $\lam$ vanishes on $I^2$ and takes $I$ to $M$, in particular, it induces a non-expansive homomorphism $h\:I/I^2\to M$. It remains to notice that $h\circ d=\partial$.
\end{proof}

\subsection{Basic properties of K\"ahler seminorms}

\subsubsection{Fundamental sequences}
First and second fundamental sequences extend to the context of seminormed rings.

\begin{lem}\label{fundseclem}
Assume that $\calA\to\calB\to\calC$ are non-expansive homomorphisms of seminormed rings. Then,

(i) The maps of the first fundamental sequence $$\Omega_{\calB/\calA}\otimes_\calB\calC\stackrel g\to\Omega_{\calC/\calA}\stackrel f\to\Omega_{\calC/\calB}\to 0$$ are non-expansive and $f$ is strictly admissible.

(ii) If the homomorphism $\phi\:\calB\to\calC$ is onto and $J$ is its kernel then the maps of the second fundamental sequence
$$J/J^2\to\Omega_{\calB/\calA}\otimes_\calB\calC\stackrel g\to\Omega_{\calC/\calA}\to 0$$
are non-expansive. Furthermore, if $\calB\to\calC$ is strictly admissible then $g$ is strictly admissible.
\end{lem}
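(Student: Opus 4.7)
The plan is to derive everything from the universal property established in Lemma~\ref{universemilem} together with direct inspection of the infimum defining the K\"ahler seminorm.

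For the non-expansiveness statements in (i), observe that $d_{\calC/\calA}\circ\phi\colon\calB\to\Omega_{\calC/\calA}$ is a non-expansive $\calA$-derivation, being the composition of two non-expansive maps. By Lemma~\ref{universemilem}(ii) it factors through a non-expansive $\calB$-linear map $\Omega_{\calB/\calA}\to\Omega_{\calC/\calA}$; extending scalars along the non-expansive homomorphism $\calB\to\calC$ and invoking the universal property of the tensor seminorm (\S\ref{tensorsec}) yields that $g$ is non-expansive. Similarly, $d_{\calC/\calB}$ is automatically a non-expansive $\calA$-derivation on $\calC$, so the universal property produces $f$ non-expansively.

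For strict admissibility of $f$, since $f$ is surjective it suffices to show that the K\"ahler seminorm on $\Omega_{\calC/\calB}$ equals the quotient seminorm induced via $f$ from $\Omega_{\calC/\calA}$. The inequality ``K\"ahler $\geq$ quotient'' follows from Lemma~\ref{universemilem}(i) applied to the quotient seminorm, which makes $d_{\calC/\calB}$ non-expansive. Conversely, given any representation $x=\sum c_i\,d_{\calC/\calB}(c'_i)$ with $\max\lvert c_i\rvert_\calC\lvert c'_i\rvert_\calC$ close to $\lVert x\rVert_\Omega$, the element $y=\sum c_i\,d_{\calC/\calA}(c'_i)\in\Omega_{\calC/\calA}$ satisfies $f(y)=x$ and $\lVert y\rVert_\Omega\leq\max\lvert c_i\rvert_\calC\lvert c'_i\rvert_\calC$, bounding the quotient seminorm of $x$ by $\lVert x\rVert_\Omega$.

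For (ii), equip $J\subseteq\calB$ with the restricted seminorm and $J/J^2$ with the resulting quotient seminorm. The map $\bar j\mapsto d_{\calB/\calA}(j)\otimes 1$ is non-expansive because $\lVert d_{\calB/\calA}(j)\otimes 1\rVert_\otimes\leq\lVert d_{\calB/\calA}(j)\rVert_\Omega\leq\lvert j\rvert_\calB$, using non-expansiveness of $d_{\calB/\calA}$. The middle map coincides with $g$ from (i), hence is already non-expansive. For strict admissibility of $g$ when $\phi$ is strictly admissible, fix $x\in\Omega_{\calC/\calA}$ and $r>\lVert x\rVert_\Omega$, and choose a representation $x=\sum c_i\,d_{\calC/\calA}(c'_i)$ with $\max\lvert c_i\rvert_\calC\lvert c'_i\rvert_\calC<r$. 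Strict admissibility of $\phi$ means $\lvert c'_i\rvert_\calC=\inf_{\phi(b)=c'_i}\lvert b\rvert_\calB$, so one may lift $c'_i=\phi(b'_i)$ with $\lvert b'_i\rvert_\calB$ arbitrarily close to $\lvert c'_i\rvert_\calC$, in particular with $\max\lvert c_i\rvert_\calC\lvert b'_i\rvert_\calB<r$. Then $y=\sum d_{\calB/\calA}(b'_i)\otimes c_i\in\Omega_{\calB/\calA}\otimes_\calB\calC$ satisfies $g(y)=x$ and $\lVert y\rVert_\otimes<r$, giving the required control on the quotient seminorm.

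The whole argument is largely formal, reducing to manipulations with universal properties and the infimum defining $\lVert\ \rVert_\Omega$. The single delicate point is the lifting step in (ii): one must exploit the infimum characterization of the quotient seminorm on $\calC$ guaranteed by strict admissibility of $\phi$ in order to choose the $b'_i$ with norms close enough to $\lvert c'_i\rvert_\calC$ that the estimate $\max\lvert c_i\rvert_\calC\lvert b'_i\rvert_\calB<r$ still holds.
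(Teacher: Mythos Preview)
Your proof is correct and follows essentially the same approach as the paper: both derive everything directly from the definition of the K\"ahler seminorm. The only stylistic difference is that for the strict admissibility of $g$ in (ii), the paper works with the characterization of $\lVert\ \rVert_\Omega$ as the \emph{maximal} seminorm with $\lVert dc\rVert\le\lvert c\rvert_\calC$ (so it suffices to verify this inequality for the quotient seminorm by lifting a single $c$), whereas you unwind the explicit infimum formula and lift an entire representation $\sum c_i\,dc'_i$; these are two equivalent readings of Lemma~\ref{universemilem}(i).
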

\begin{proof}
This directly follows from the definition of K\"ahler seminorms. For example, let us check the second assertion in (ii). We should prove that the quotient seminorm does not exceed the K\"ahler seminorm of $\Omega_{\calC/\calA}$. The latter is the maximal seminorm satisfying the inequalities $\lVert dc\rVert \le\lvert c\rvert _\calC$. For any $r>\lvert c\rvert _\calC$ we can find $b\in\phi^{-1}(c)$ such that $\lvert b\rvert _\calB<r$. Therefore $\lVert db\rVert _\Omega<r$ and we obtain that the quotient seminorm of $dc$ does not exceed $r$.
\end{proof}

\begin{rem}
Even if the map $g\:\Omega_{\calB/\calA}\otimes_\calB\calC\to\Omega_{\calC/\calA}$ is an isomorphism, it does not have to be an isometry. Simple examples of this type are obtained when $\calB\to\calC$ is an isomorphism but not an isometry.
\end{rem}

\begin{cor}\label{fundcor}
Let $\calA\to\calB$ be a local homomorphism of seminormed local rings and let $k_\calA$ and $k_\calB$ be the residue fields provided with the quotient seminorms. If $\lvert m_\calB\rvert _\calB=0$ then the natural map $\Omega_{\calB/\calA}\to\Omega_{k_\calB/k_\calA}$ is an isometry.
\end{cor}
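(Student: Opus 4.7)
The strategy is to establish the two inequalities $\lVert\phi(x)\rVert_\Omega\le\lVert x\rVert_\Omega$ and $\lVert x\rVert_\Omega\le\lVert\phi(x)\rVert_\Omega$ for the natural map $\phi\colon\Omega_{\calB/\calA}\to\Omega_{k_\calB/k_\calA}$. The starting observation is that, since $\lvert m_\calB\rvert_\calB=0$, the ultrametric inequality forces $\lvert b+m\rvert_\calB=\lvert b\rvert_\calB$ for every $b\in\calB$ and $m\in m_\calB$; hence the quotient seminorm on $k_\calB$ satisfies $\lvert\bar b\rvert_{k_\calB}=\lvert b\rvert_\calB$ for every lift $b$ of $\bar b$.

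Non-expansiveness is then immediate: feeding any representation $x=\sum_i c_i\,db_i$ into $\phi$ produces a representation $\phi(x)=\sum_i\bar c_i\,d\bar b_i$ with $\lvert\bar c_i\rvert_{k_\calB}\lvert\bar b_i\rvert_{k_\calB}\le\lvert c_i\rvert_\calB\lvert b_i\rvert_\calB$, and taking the infimum yields $\lVert\phi(x)\rVert_\Omega\le\lVert x\rVert_\Omega$. For the reverse direction, I would fix $r>\lVert\phi(x)\rVert_\Omega$ and choose a representation $\phi(x)=\sum_j\bar\gamma_j\,d\bar\delta_j$ with $\max_j\lvert\bar\gamma_j\rvert_{k_\calB}\lvert\bar\delta_j\rvert_{k_\calB}<r$. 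The preliminary observation lets me lift $\bar\gamma_j,\bar\delta_j$ to $\gamma_j,\delta_j\in\calB$ with equal seminorms, so that $y:=\sum_j\gamma_j\,d\delta_j$ satisfies $\phi(y)=\phi(x)$ and $\lVert y\rVert_\Omega<r$. Thus it suffices to show $\lVert x-y\rVert_\Omega=0$, after which the ultrametric inequality gives $\lVert x\rVert_\Omega\le\max(\lVert y\rVert_\Omega,\lVert x-y\rVert_\Omega)<r$.

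The main technical step is therefore to prove that every $\eta\in\ker\phi$ has K\"ahler seminorm zero, and I would do this by combining the two fundamental sequences of Lemma~\ref{fundseclem}. The second fundamental sequence for $\calA\to\calA\to k_\calA$, together with $\Omega_{\calA/\calA}=0$, yields $\Omega_{k_\calA/\calA}=0$; the first fundamental sequence for $\calA\to k_\calA\to k_\calB$ then identifies $\Omega_{k_\calB/k_\calA}$ with $\Omega_{k_\calB/\calA}$. Applying the second fundamental sequence to $\calA\to\calB\to k_\calB$ identifies $\ker\phi$ as the $\calB$-submodule of $\Omega_{\calB/\calA}$ generated by $m_\calB\cdot\Omega_{\calB/\calA}$ together with $d_{\calB/\calA}(m_\calB)$. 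A generator of the former type has seminorm $\le\lvert m\rvert_\calB\cdot\lVert\omega\rVert_\Omega=0$, and a generator $db$ with $b\in m_\calB$ has seminorm $\le\lvert b\rvert_\calB=0$; by ultrametricity every element of $\ker\phi$ has K\"ahler seminorm zero. The delicate part is extracting the generators of $\ker\phi$ cleanly from the fundamental sequences; once that is in hand, everything else is a direct application of the universal formula for $\lVert\,\cdot\,\rVert_\Omega$.
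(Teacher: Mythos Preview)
Your proof is correct and follows essentially the same route as the paper: both factor the map through $\Omega_{k_\calB/\calA}$ (using $\Omega_{k_\calA/\calA}=0$ to identify $\Omega_{k_\calB/\calA}\cong\Omega_{k_\calB/k_\calA}$) and then show that the map $\Omega_{\calB/\calA}\to\Omega_{k_\calB/\calA}$ is an isometry because its kernel has seminorm zero. The only difference is packaging: the paper dispatches the isometry in one line by invoking Lemma~\ref{fundseclem}(ii) (strict admissibility of $g$ when $\calB\to k_\calB$ is strictly admissible) together with the observation that $\Omega_{\calB/\calA}\to\Omega_{\calB/\calA}/m_\calB\Omega_{\calB/\calA}$ is an isometry, whereas you unpack this by hand---lifting representations and explicitly exhibiting the generators $m_\calB\Omega_{\calB/\calA}$ and $d(m_\calB)$ of $\ker\phi$ and checking each has seminorm zero.
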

\begin{proof}
By our assumption, the map $\Omega_{\calB/\calA}\to \Omega_{\calB/\calA}/m_\calB\Omega_{\calB/\calA}$ is an isometry. Hence the map $\Omega_{\calB/\calA}\to\Omega_{k_\calB/\calA}$ is an isometry by Lemma~\ref{fundseclem}(ii). Since the map $\calA\to k_\calB$ factors through $k_\calA$ there is also an isometry $\Omega_{k_\calB/\calA}\to\Omega_{k_\calB/k_\calA}$.
\end{proof}

\subsubsection{Base change}
Similarly to the classical case, K\"ahler differentials of seminormed modules are compatible with base changes.

\begin{lem}\label{basechangelem}
Assume that $\calA\to\calB$ and $\calA\to\calA'$ are non-expansive homomorphism of seminormed rings and set $\calB'=\calB\otimes_\calA\calA'$. Then $\phi\:\Omega_{\calB/\calA}\otimes_{\calA'}\calB'\to\Omega_{\calB'/\calA'}$ is an isomorphism of seminormed modules.
\end{lem}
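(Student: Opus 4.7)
The plan is to construct $\phi$ and an explicit inverse $\psi$, both non-expansive, by applying the universal property of Lemma~\ref{universemilem}(ii) in each direction. Since a bijection of seminormed modules whose inverse is also non-expansive is automatically an isometry, this will give the desired isomorphism of seminormed modules.

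First I would construct $\phi$. The composition $\calB\to\calB'\stackrel{d_{\calB'/\calA'}}\to\Omega_{\calB'/\calA'}$ is a non-expansive $\calA$-derivation because both $\calB\to\calB'$ (as one of the structural maps into the seminormed tensor product, see \S\ref{tensorsec}) and $d_{\calB'/\calA'}$ are non-expansive. By Lemma~\ref{universemilem}(ii) it factors through a non-expansive $\calB$-linear map $\Omega_{\calB/\calA}\to\Omega_{\calB'/\calA'}$, and its $\calB'$-linear extension is a non-expansive map $\phi\:\Omega_{\calB/\calA}\otimes_\calB\calB'\to\Omega_{\calB'/\calA'}$ (on the source I take the seminormed tensor product). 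Underlying $\phi$ is the classical base-change isomorphism.

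For the inverse, I would construct a non-expansive $\calA'$-derivation $D'\:\calB'\to\Omega_{\calB/\calA}\otimes_\calB\calB'$ by the formula $D'(b\otimes a')=(d_{\calB/\calA}b)\otimes(1\otimes a')$. The $\calA$-bilinear map $\calB\times\calA'\to\Omega_{\calB/\calA}\otimes_\calB\calB'$ sending $(b,a')$ to this element is non-expansive: for any $\epsilon>0$ and any element of $\Omega_{\calB/\calA}\otimes_\calB\calB'$ of the form $(db)\otimes(1\otimes a')$, the tensor-product seminorm is at most $\lVert db\rVert_\Omega\cdot|1\otimes a'|_{\calB'}\le|b|_\calB|a'|_{\calA'}$. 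By the universal property of the tensor seminorm on $\calB'$ (\S\ref{tensorsec}) we get a well-defined non-expansive $\calA'$-linear map $D'$, and a direct check on generators shows it is a derivation. Lemma~\ref{universemilem}(ii) now produces a non-expansive $\calB'$-linear map $\psi\:\Omega_{\calB'/\calA'}\to\Omega_{\calB/\calA}\otimes_\calB\calB'$. The compositions $\phi\circ\psi$ and $\psi\circ\phi$ agree with the identity on the generators $d_{\calB'/\calA'}(b\otimes a')$ and $(d_{\calB/\calA}b)\otimes(1\otimes a')$ respectively, hence are the identities.

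The only non-formal step is checking that $D'$ is non-expansive, which is exactly the computation above with the tensor seminorm; I expect this to be the main technical point but not a serious obstacle, since the universal characterization of the tensor seminorm in \S\ref{tensorsec} makes it essentially a one-line estimate. Everything else reduces to invoking the universal properties of $\Omega_{\calB/\calA}$ and $\Omega_{\calB'/\calA'}$ in the non-expansive category.
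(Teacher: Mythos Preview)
Your argument is correct. It differs from the paper's proof mainly in packaging: the paper invokes Lemma~\ref{universemilem}(i) (the maximality characterization of the K\"ahler seminorm) and checks directly that the defining families of inequalities for the two seminorms imply one another, carrying out the estimate $\lVert\phi^{-1}(x\,dy)\rVert_s\le|x|_{\calB'}|y|_{\calB'}$ by hand via a representation $y=\sum b_i\otimes a'_i$. You instead appeal to Lemma~\ref{universemilem}(ii) twice, building both $\phi$ and $\psi$ as the maps induced by non-expansive derivations, and you absorb the same estimate into the universal property of the tensor seminorm on $\calB'$ (\S\ref{tensorsec}). The underlying computation is the same; your route is slightly more categorical and avoids naming the two seminorms separately, while the paper's route makes the key inequality explicit. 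Either way, the only substantive step is the bound $\lVert (d_{\calB/\calA}b)\otimes(1\otimes a')\rVert\le|b|_\calB|a'|_{\calA'}$, which you identify correctly.
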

\begin{proof}
Since $\phi$ is an isomorphism of modules we should prove that it is an isometry. Let $\lVert \ \rVert_s$ and $\lVert \ \rVert_t$ denote the seminorms on the source and on the target, respectively. Recall that $\lVert\ \rVert_s$ is the maximal seminorm making the bilinear map $\Omega_{\calB/\calA}\times\calB'\to\Omega_{\calB/\calA}\otimes_{\calA'}\calB'$ non-expansive (see \S\ref{tensorsec}). This fact and Lemma~\ref{universemilem}(i) imply that $\lVert \ \rVert_s$ is the maximal seminorm for which $\lVert ad(b)\otimes c\rVert_s\le \lvert a\rvert_\calB\lvert b\rvert_\calB\lvert c\rvert_{\calB'}$ for any $a,b\in\calB$ and $c\in\calB'$. In addition, Lemma~\ref{universemilem}(i) implies that $\lVert \ \rVert_t$ is the maximal seminorm such that $\lVert xd(y)\rVert_t\le \lvert x\rvert_{\calB'}\lvert y\rvert_{\calB'}$ for any choice of $x,y\in\calB'$. Since $ad(b)\otimes c$ goes to $acd(b)$, it follows that any inequality defining $\lVert \ \rVert_s$ holds also for $\lVert \ \rVert_t$, and so $\phi$ is non-expansive.

It remains to check that $\phi^{-1}$ is non-expansive and for this we will check that any inequality defining $\lVert \ \rVert_t$ holds for $\lVert \ \rVert_s$ too. As we noted above, $\lVert \ \rVert_t$ is defined by the inequalities $\|z\|_t\le\lvert x\rvert_{\calB'}\lvert y\rvert_{\calB'}$, where $x,y\in\calB'$ and $z=xdy$. Fix $r>\lvert y\rvert_{\calB'}$ and find a representation $y=\sum_{i=1}^n b_i\otimes a'_i$ with $b_i\in\calB$, $a'_i\in\calA'$, and $\lvert b_i\rvert_\calB\lvert a'_i\rvert_{\calA'}\le r$ for any $1\le i\le n$. Then $\phi^{-1}(z)=\sum_{i=1}^n d(b_i)\otimes a'_ix$ and hence $$\lVert \phi^{-1}(z)\rVert_s\le\max_i\lvert b_i\rvert_\calB\lvert a'_i\rvert_{\calA'}\lvert x\rvert_{\calB'}\le r\lvert x\rvert_{\calB'}.$$ Thus $\lVert \phi^{-1}(xdy)\rVert_s\le\lvert y\rvert_{\calB'}\lvert x\rvert_{\calB'}$ for any $x,y\in\calB'$, and we are done.
\end{proof}

\subsubsection{Density}

\begin{lem}\label{denselem}
If $\phi\:\calB_0\to\calB$ is a non-expansive homomorphism of seminormed rings with a dense image then the K\"ahler seminorm of $\Omega_{\calB/\calB_0}$ vanishes.
\end{lem}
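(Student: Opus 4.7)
The plan is to exploit the universal characterization of $\lVert\ \rVert_\Omega$ from Lemma~\ref{universemilem}(i): it is the maximal $\calB$-seminorm making the $\calB_0$-derivation $d=d_{\calB/\calB_0}\:\calB\to\Omega_{\calB/\calB_0}$ non-expansive. Since $d$ kills the image of $\phi$ (being $\calB_0$-linear and $\phi$ coming from $\calB_0$), density will let us approximate any element of $\calB$ by an element on which $d$ vanishes, forcing $\lVert db\rVert_\Omega$ to be arbitrarily small.

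First I would show that $\lVert db\rVert_\Omega=0$ for every $b\in\calB$. Fix $\veps>0$. By density of $\phi$, choose $b_0\in\calB_0$ with $\lvert b-\phi(b_0)\rvert_\calB<\veps$. Since $d\circ\phi=0$, we have $db=d(b-\phi(b_0))$, and non-expansiveness of $d$ gives
\[
\lVert db\rVert_\Omega \;=\; \lVert d(b-\phi(b_0))\rVert_\Omega \;\le\; \lvert b-\phi(b_0)\rvert_\calB \;<\;\veps.
\]
As $\veps$ was arbitrary, $\lVert db\rVert_\Omega=0$.

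Next I would extend this to an arbitrary element $x\in\Omega_{\calB/\calB_0}$. Write $x=\sum_{i=1}^n c_i\,db_i$ with $c_i,b_i\in\calB$. Using that $\Omega_{\calB/\calB_0}$ is a seminormed $\calB$-module we get $\lVert c_i\,db_i\rVert_\Omega\le\lvert c_i\rvert_\calB\lVert db_i\rVert_\Omega=0$, and the non-archimedean triangle inequality yields $\lVert x\rVert_\Omega\le\max_i\lVert c_i\,db_i\rVert_\Omega=0$. Hence $\lVert\ \rVert_\Omega$ vanishes identically.

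There is no real obstacle; the only point that needs a moment's care is that the formula defining $\lVert\ \rVert_\Omega$ via representations $\sum c_idb_i$ could a priori be re-derived directly (approximating each $b_i$ by $\phi(b_{0,i})$ and replacing $b_i$ by $b_i-\phi(b_{0,i})$ in the representation), but routing the argument through the universal property and the module-seminorm inequality keeps the proof cleanest.
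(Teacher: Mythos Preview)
Your proof is correct and follows exactly the same idea as the paper's: approximate $b$ by an element from $\calB_0$, use that $d$ kills such elements, and invoke non-expansiveness of $d$ to bound $\lVert db\rVert_\Omega$ by $\veps$. The paper's proof is simply terser, leaving the extension from $\lVert db\rVert_\Omega=0$ to $\lVert x\rVert_\Omega=0$ implicit, whereas you spell it out.
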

\begin{proof}
For any $b\in\calB$ and $\veps>0$ there exists $b_0\in\calB_0$ such that $\lvert b-b_0\rvert <\veps$. Hence $\lVert db\rVert =\lVert d(b-b_0)\rVert \le\veps$.
\end{proof}

\begin{cor}\label{densecor}
If $\calA\to\calB_0\to\calB$ are homomorphisms of seminormed rings and the image of $\calB_0\to\calB$ is dense then the homomorphism $\phi\:\Omega_{\calB_0/\calA}\otimes_{\calB_0}\calB\to\Omega_{\calB/\calA}$ has a dense image.
\end{cor}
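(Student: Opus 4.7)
The plan is to combine the first fundamental exact sequence from Lemma~\ref{fundseclem}(i) with the density statement of Lemma~\ref{denselem} to force the cokernel of $\phi$ to have trivial seminorm.

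First, I would apply Lemma~\ref{fundseclem}(i) to the chain $\calA\to\calB_0\to\calB$. This gives the first fundamental sequence
\[
\Omega_{\calB_0/\calA}\otimes_{\calB_0}\calB\stackrel{\phi}{\longrightarrow}\Omega_{\calB/\calA}\stackrel{f}{\longrightarrow}\Omega_{\calB/\calB_0}\to 0,
\]
where (by the lemma) the map $f$ is \emph{strictly admissible}. At the level of underlying modules this is the classical exact sequence, so $\Im(\phi)=\Ker(f)$.

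Next, because $\calB_0\to\calB$ has dense image, Lemma~\ref{denselem} tells us that the K\"ahler seminorm on $\Omega_{\calB/\calB_0}$ is identically zero. Since $f$ is strictly admissible and surjective, the quotient seminorm on $\Omega_{\calB/\calA}/\Ker(f)$ agrees with the seminorm on $\Omega_{\calB/\calB_0}$, and hence is also identically zero.

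Finally, I would translate this back into density: for any $x\in\Omega_{\calB/\calA}$, the vanishing of the quotient seminorm at the class of $x$ means
\[
\inf_{y\in\Ker(f)}\lVert x-y\rVert_\Omega=0,
\]
so $x$ lies in the closure of $\Ker(f)=\Im(\phi)$. This is precisely the claim that $\phi$ has dense image. There is no real obstacle here; the only thing one must not overlook is that ``strictly admissible'' (and not merely admissible) is what allows one to equate vanishing of the K\"ahler seminorm on $\Omega_{\calB/\calB_0}$ with vanishing of the quotient seminorm on $\Omega_{\calB/\calA}/\Im(\phi)$ --- that is exactly what Lemma~\ref{fundseclem}(i) was set up to provide.
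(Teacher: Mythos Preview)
Your proof is correct and follows essentially the same route as the paper: apply the first fundamental sequence to $\calA\to\calB_0\to\calB$, use Lemma~\ref{denselem} to see that the K\"ahler seminorm on $\Omega_{\calB/\calB_0}$ vanishes, and deduce density of $\Im(\phi)=\Ker(f)$ from strict admissibility of $f$. The paper's proof cites Lemma~\ref{fundseclem}(ii), but the content used is precisely the strict admissibility of $f$ in the first fundamental sequence, which is part~(i); your reference to~(i) is the accurate one.
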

\begin{proof}
By Lemma~\ref{fundseclem}(ii), $K=\Im(\phi)$ is the kernel of the admissible surjection $\Omega_{\calB/\calA}\to\Omega_{\calB/\calB_0}$. Since the seminorm on $\Omega_{\calB/\calB_0}$ is trivial by Lemma~\ref{denselem}, it follows that any element of $\Omega_{\calB/\calA}$ can be approximated by an element of $K$ with any precision, i.e. $K$ is dense.
\end{proof}

\subsubsection{Filtered colimits}
We will also need that K\"ahler seminorms are compatible with filtered colimits.

\begin{lem}\label{normcolimlem}
Assume that $\calA$ is a seminormed ring and $\{\calB_\lam\}_{\lam\in\Lam}$ is a filtered family of seminormed $\calA$-algebras with non-expansive transition homomorphisms. Let $\calB$ be the colimit seminormed algebra, see \S\ref{normcolimsec}. Then $\Omega_{\calB/\calA}=\colim_\lam\Omega_{\calB_\lam/\calA}$ as seminormed $\calA$-modules.
\end{lem}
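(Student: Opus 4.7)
The plan is to combine functoriality of the K\"ahler construction with the universal property of Lemma~\ref{universemilem} to identify $\colim_\lam\Omega_{\calB_\lam/\calA}$, endowed with the colimit seminorm of \S\ref{normcolimsec}, with $\Omega_{\calB/\calA}$. On underlying modules the isomorphism is classical, so everything reduces to checking that both directions of the comparison map are non-expansive.

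For one direction, functoriality of $\Omega$ (which is immediate from Lemma~\ref{universemilem}) turns each transition map $\calB_\lam\to\calB_\mu$ and each structural map $f_\lam\:\calB_\lam\to\calB$ into a non-expansive $\calA$-homomorphism of seminormed $\calA$-modules. By the universal property of the colimit seminorm the maps $\Omega_{\calB_\lam/\calA}\to\Omega_{\calB/\calA}$ assemble into a non-expansive $\calA$-linear map
$$\phi\:\colim_\lam\Omega_{\calB_\lam/\calA}\longrightarrow\Omega_{\calB/\calA},$$
which realises the classical module isomorphism.

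For the other direction I would construct an $\calA$-derivation $\delta\:\calB\to\colim_\lam\Omega_{\calB_\lam/\calA}$ by choosing, for each $b\in\calB$, an index $\lam$ and an element $b_\lam\in\calB_\lam$ with $f_\lam(b_\lam)=b$, and setting $\delta(b)$ equal to the image of $d_{\calB_\lam/\calA}(b_\lam)$ in the colimit. Well-definedness, $\calA$-linearity, and the Leibniz rule are the usual filtered-colimit verifications: any two choices of representative agree in some common $\calB_\mu$, so their images under $d_{\calB_\mu/\calA}$ already coincide in $\Omega_{\calB_\mu/\calA}$. For non-expansivity, fix $b\in\calB$ and $r>\lvert b\rvert_\calB$; the definition of the colimit seminorm on $\calB$ provides $\lam$ and $b_\lam$ with $f_\lam(b_\lam)=b$ and $\lvert b_\lam\rvert_{\calB_\lam}<r$. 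Since $d_{\calB_\lam/\calA}$ is non-expansive, $\lVert d_{\calB_\lam/\calA}(b_\lam)\rVert_\Omega<r$, and by the definition of the colimit seminorm on $\colim_\lam\Omega_{\calB_\lam/\calA}$ this forces $\lVert\delta(b)\rVert<r$. Applying Lemma~\ref{universemilem}(ii) to $\delta$ produces a unique non-expansive $\calB$-linear map $\psi\:\Omega_{\calB/\calA}\to\colim_\lam\Omega_{\calB_\lam/\calA}$ with $\psi\circ d_{\calB/\calA}=\delta$, and tracing definitions shows $\psi=\phi^{-1}$.

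The only genuine point of care is the well-definedness of $\delta$ as a derivation; this is the standard cofinality/filtering argument and poses no real obstacle. Once $\delta$ is in hand, its non-expansivity is forced by the very shape of the colimit seminorm on $\calB$, and the whole statement falls out of the universal property of K\"ahler seminorms. Conceptually the argument is essentially dual to the tensor-product case of Lemma~\ref{colimlem}.
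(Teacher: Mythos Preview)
Your argument is correct and rests on the same universal property (Lemma~\ref{universemilem}) that the paper uses, but the paper packages things more economically. Rather than building $\phi$ and $\psi$ separately, the paper observes that on the common underlying module $\Omega_{\calB/\calA}$ the colimit seminorm is, by Lemma~\ref{universemilem}(i) and \S\ref{normcolimsec}, the maximal seminorm making each composite $\calB_\lam\xrightarrow{d_\lam}\Omega_{\calB_\lam/\calA}\to\Omega_{\calB/\calA}$ non-expansive; since these composites factor as $\calB_\lam\to\calB\xrightarrow{d}\Omega_{\calB/\calA}$ and $\lvert\ \rvert_\calB$ is itself a colimit seminorm, this maximality condition is exactly the condition that $d$ be non-expansive, i.e.\ the defining property of the K\"ahler seminorm. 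So the two seminorms coincide without ever writing down an inverse map.

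Your route has the virtue of being completely explicit, but note one tacit point you are using when invoking Lemma~\ref{universemilem}(ii): the colimit $\colim_\lam\Omega_{\calB_\lam/\calA}$ must be a seminormed $\calB$-module, not just a $\calB_\lam$-module for each $\lam$. This is true (lift $b\in\calB$ and $m$ in the colimit to a common $\calB_\mu$ with norms arbitrarily close to the colimit norms and use that $\Omega_{\calB_\mu/\calA}$ is a seminormed $\calB_\mu$-module), but it is worth saying; the paper's formulation sidesteps this check entirely.
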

\begin{proof}
It is a classical result that the modules are isomorphic, so we should compare the seminorms. Let $\lVert \ \rVert$ be the colimit seminorm on $\Omega_{\calB/\calA}$. By Lemma \ref{universemilem}(i) and \S\ref{normcolimsec}, $\lVert \ \rVert$ is the maximal seminorm such that the composed $\calA$-homomorphisms $\calB_\lam\stackrel{d_\lam}{\to}\Omega_{\calB_\lam/\calA}\to\Omega_{\calB/\calA}$ are non-expansive. The latter decompose as $\calB_\lam\stackrel{\phi_\lam}{\to}\calB\stackrel{d}{\to}\Omega_{\calB/\calA}$ and it follows from the definition of $\lvert \ \rvert_\calB$ that $d\:\calB\to\Omega_{\calB/\calA}$ is non-expansive and $\lVert \ \rVert$ is the maximal seminorm for which this happens. Thus, $\lVert \ \rVert$ coincides with the K\"ahler seminorm.
\end{proof}

\subsection{Completed differentials}

\subsubsection{The module $\whOmega_{\calB/\calA}$}\label{modomegasec}
By $\whOmega_{\calB/\calA}$ we denote the completion of $\Omega_{\calB/\calA}$. It is a Banach $\calB$-module and we call its norm the {\em K\"ahler norm}. The corresponding $\calA$-derivation will be denoted $\whd_{\calB/\calA}\:\calB\to\whOmega_{\calB/\calA}$.

\subsubsection{The universal property}
Lemma~\ref{universemilem} and the universal property of the completions imply the following result.

\begin{lem}\label{banachuniversallem}
If $\phi\:\calA\to\calB$ is a non-expansive homomorphism of seminormed rings then $\whd_{\calB/\calA}$ is the universal non-expansive $\calA$-derivation of $\calB$ with values in Banach $\calB$-modules. Namely,
$$\Hom_{\calB,\nonexp}(\whOmega_{\calB/\calA},M)\toisom \Der_{\calA,\nonexp}(\calB,M)$$
for any Banach $\calB$-module $M$.
\end{lem}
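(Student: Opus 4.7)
The plan is to obtain the claimed adjunction by composing the two universal properties already established: Lemma~\ref{universemilem}(ii), which identifies $\Omega_{\calB/\calA}$ as representing non-expansive $\calA$-derivations into seminormed $\calB$-modules, together with the fact that the completion map $\Omega_{\calB/\calA}\to\whOmega_{\calB/\calA}$ is the universal non-expansive map into a Banach seminormed abelian group.

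Concretely, let $M$ be a Banach $\calB$-module and let $d\:\calB\to M$ be a non-expansive $\calA$-derivation. First I would invoke Lemma~\ref{universemilem}(ii) to get a unique non-expansive $\calB$-linear homomorphism $h\:\Omega_{\calB/\calA}\to M$ with $d=h\circ d_{\calB/\calA}$. Second, since $M$ is complete and $h$ is non-expansive, the standard universal property of completion along a seminorm (see the paragraph on completions in \S\ref{normringsec}) produces a unique non-expansive extension $\hat h\:\whOmega_{\calB/\calA}\to M$ with $\hat h\circ\alp=h$, where $\alp\:\Omega_{\calB/\calA}\to\whOmega_{\calB/\calA}$ is the completion map. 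By construction $\hat h\circ\whd_{\calB/\calA}=\hat h\circ\alp\circ d_{\calB/\calA}=h\circ d_{\calB/\calA}=d$.

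Two small verifications remain: that $\hat h$ is $\calB$-linear (not merely additive), and uniqueness. For the first, the $\calB$-action on $\whOmega_{\calB/\calA}$ is induced from the $\calB$-action on $\Omega_{\calB/\calA}$ by continuity, and $\hat h$ is continuous and agrees with the $\calB$-linear map $h$ on the dense subset $\alp(\Omega_{\calB/\calA})$; hence $\hat h(b\omega)=b\hat h(\omega)$ holds on a dense subset and therefore everywhere. (One may also observe that $M$ is automatically a Banach $\hat\calB$-module by the remark in \S\ref{normringsec}, so the resulting $\hat h$ is in fact $\hat\calB$-linear, but we only need the weaker $\calB$-linearity here.) For uniqueness, any non-expansive $\calB$-linear $g\:\whOmega_{\calB/\calA}\to M$ with $g\circ\whd_{\calB/\calA}=d$ restricts to a non-expansive $\calB$-homomorphism $g\circ\alp\:\Omega_{\calB/\calA}\to M$ satisfying the universal property of Lemma~\ref{universemilem}(ii), so $g\circ\alp=h$; by density of $\alp(\Omega_{\calB/\calA})$ in $\whOmega_{\calB/\calA}$ and continuity, $g=\hat h$.

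Neither step presents a real obstacle: the only point requiring minor care is that the completion universal property is stated in the paper for the underlying seminormed abelian group, so one must check that the module structure is preserved in passing to the extension; this is handled by the density argument sketched above. In particular, no new analysis of the K\"ahler seminorm itself is needed beyond Lemma~\ref{universemilem}.
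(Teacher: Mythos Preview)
Your proof is correct and follows exactly the approach the paper indicates: the paper's proof is the single sentence that the result follows from Lemma~\ref{universemilem} and the universal property of completions, and you have simply spelled out the routine details (the density argument for $\calB$-linearity and uniqueness) that the paper leaves implicit.
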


\begin{rem}\label{Omegarem}
(i) In the case of $k$-affinoid algebras, Berkovich defines $\whOmega_{\calB/\calA}$ in \cite[\S3.3]{berihes} as $J/J^2$, where $J=\Ker(\calB\to\calB\wtimes_\calA\calB)$. Note that the notation in \cite{berihes} does not use hat because uncompleted modules of K\"ahler differentials are never considered there, but we have to distinguish them in our paper. It follows from \cite[Prop. 3.3.1(ii)]{berihes} and Lemma~\ref{banachuniversallem} that Berkovich's definition is equivalent to ours. In particular, if $X=\calM(\calB)$ and $S=\calM(\calA)$ then $\Gamma(\Omega_{X/S})=\whOmega_{\calB/\calA}$.

(ii) Alternatively, one could deduce the equivalence of the two definitions from Lemma~\ref{omegaalter}. Even more generally, Lemma~\ref{omegaalter} implies that if $\calA\to\calB$ is a homomorphism of Banach rings, $J=\Ker(\calB\to\calB\wtimes_\calA\calB)$ and $\ol{J^2}$ is the closure of $J^2$ in $J$, then $\whOmega_{\calB/\calA}=\wh{J/J^2}=J/\ol{J^2}$. In the affinoid case, all ideals are automatically closed, so $\ol{J^2}=J^2$.
\end{rem}

\section{K\"ahler seminorms of real-valued fields}\label{Kahlersec}
Our next aim is to study K\"ahler seminorms on the vector spaces $\Omega_{K/A}$, where $K$ is a real-valued field and $\phi\:A\to K$ is a homomorphism of rings. In this case we will use the notation $\Acirc=\phi^{-1}(\Kcirc)$. In fact, we are mainly interested in the cases when $A=\bfZ$ or $A$ is a field, but we will consider an arbitrary $A$ when possible.

\subsection{Log differentials}
The aim of this section is to express the K\"ahler seminorm on $\Omega_{K/A}$ in terms of modules of log differentials, so we start with recalling some basic facts about the latter.

\subsubsection{Log rings}
A {\em log structure} on a ring $A$ is a homomorphism of monoids $\alp_A\:M_A\to(A,\cdot)$ inducing an isomorphism $M_A^\times\toisom A^\times$. The triple $(A,M_A,\alp_A)$ is called a {\em log ring}. Usually we will denote it as $(A,M_A)$ and, when this cannot cause to a confusion, denote elements $\alp_A(m)$ simply by $m$. Homomorphisms of log rings are defined in the natural way.

\subsubsection{Log differentials}
If $(A,M_A)\to(B,M_B)$ is a homomorphism of log rings then we denote by $\Omega_{(B,M_B)/(A,M_A)}$ the module of log differentials. Recall that the latter is defined in \cite[\S1.7]{Kato-log} as the quotient of $\Omega_{B/A}\oplus (B\otimes M_B^\gp)$ by the relations of the form $(0,1\otimes a)$, where $a\in M_A$, and $(d_{B/A}(b),-b\otimes b)$, where $b\in M_B$. The full form of the latter relation is $(d_{B/A}(\alp_B(b)),-\alp_B(b)\otimes b)$, and we used our convention to present it in a more compact form. For any $b\in M_B$ we denote by $\delta_{B/A}(b)$ the image of $1\otimes b\in B\otimes M_B^\gp$ in $\Omega_{(B,M_B)/(A,M_A)}$. It satisfies the equality $b\delta_{B/A}(b)=d_{B/A}(b)$. Intuitively, one may view $\delta b$ as the log differential $d\log b$ and $\Omega_{(B,M_B)/(A,M_A)}$ is the universal module obtained from $\Omega_{B/A}$ by adjoining the log differentials of the elements of $M_B$.

\subsubsection{The universal log derivative}
A {\em log $(A,M_A)$-derivation} of $(B,M_B)$ with values in a $B$-module $N$ consists of an $A$-derivation $d\:B\to N$ and a homomorphism $\delta\:M_B\to N$ such that $dm=m\delta m$ for any $m\in M_B$ and $\delta m=0$ for any $m$ coming from $M_A$. The $B$-module of all log derivations is denoted $\Der_{(A,M_A)}((B,M_B),N)$.

\begin{lem}\label{logderlem}
If $(A,M_A)\to(B,M_B)$ is a homomorphism of log rings then $$\left(d_{B/A}\:B\to\Omega_{(B,M_B)/(A,M_A)},\ \delta_{B/A}\:M_B\to\Omega_{(B,M_B)/(A,M_A)}\right)$$ is a universal log $(A,M_A)$-derivation of $(B,M_B)$. Namely, for any $B$-module $N$ the induced homomorphism
$$\Hom_B(\Omega_{(B,M_B)/(A,M_A)},N)\toisom\Der_{(A,M_A)}((B,M_B),N)$$
is an isomorphism.
\end{lem}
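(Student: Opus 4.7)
The plan is to verify the universal property directly from the construction of $\Omega_{(B,M_B)/(A,M_A)}$ as a quotient; the lemma is essentially a formal consequence of how the module was defined, so the ``hard part'' is really only organizing the bookkeeping about which relations correspond to which axiom of a log derivation.

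For the forward direction, given a $B$-module homomorphism $h\:\Omega_{(B,M_B)/(A,M_A)}\to N$, set $d=h\circ d_{B/A}$ and $\delta=h\circ\delta_{B/A}$. Then $d$ is an $A$-derivation because $d_{B/A}$ is, while $\delta$ is a homomorphism $M_B\to(N,+)$ because it factors through $M_B^\gp\to\Omega_{(B,M_B)/(A,M_A)}$. The identity $b\delta_{B/A}(b)=d_{B/A}(b)$ noted in the text gives $m\delta m=dm$ for $m\in M_B$, and the fact that elements $(0,1\otimes a)$ with $a\in M_A$ vanish in the quotient gives $\delta m=0$ for $m$ coming from $M_A$. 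Hence $(d,\delta)$ is a log $(A,M_A)$-derivation, and this assignment is clearly functorial and injective because $d_{B/A}(B)$ together with $\delta_{B/A}(M_B)$ generate $\Omega_{(B,M_B)/(A,M_A)}$ as a $B$-module.

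For the reverse direction, start with a log derivation $(d,\delta)$ into $N$. By the classical universal property of $\Omega_{B/A}$, the derivation $d$ extends uniquely to a $B$-linear map $\phi_1\:\Omega_{B/A}\to N$. The homomorphism $\delta\:M_B\to(N,+)$ extends uniquely to $M_B^\gp\to N$ and then, by freeness, to a $B$-linear map $\phi_2\:B\otimes M_B^\gp\to N$ sending $b\otimes m$ to $b\delta(m)$. Take $\phi=\phi_1\oplus\phi_2$ on the direct sum $\Omega_{B/A}\oplus(B\otimes M_B^\gp)$. The two defining relations of the quotient are killed by $\phi$ precisely because of the two axioms of a log derivation: the relation $(0,1\otimes a)$ for $a\in M_A$ gives $\delta(a)=0$, and the relation $(d_{B/A}(\alpha_B(b)),-\alpha_B(b)\otimes b)$ for $b\in M_B$ gives $d(\alpha_B(b))=\alpha_B(b)\delta(b)$, i.e.\ $dm=m\delta m$. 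Hence $\phi$ descends to a $B$-linear map $h\:\Omega_{(B,M_B)/(A,M_A)}\to N$.

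Finally, one checks that the two constructions are mutually inverse. Starting from $h$, extracting $(d,\delta)$, and then rebuilding a map yields back $h$ because the two summands $\Omega_{B/A}$ and $B\otimes M_B^\gp$ surject onto a generating set of $\Omega_{(B,M_B)/(A,M_A)}$; starting from $(d,\delta)$ and applying $h$ to $d_{B/A}$ and $\delta_{B/A}$ manifestly recovers $d$ and $\delta$. Naturality in $N$ is immediate, giving the claimed $B$-linear isomorphism. As anticipated, there is no substantive obstacle: the only care required is to verify that the two quotient relations match the two log-derivation axioms, which is exactly how $\Omega_{(B,M_B)/(A,M_A)}$ was designed.
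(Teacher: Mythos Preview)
Your proof is correct and is precisely the direct verification the paper has in mind; note that the paper omits the proof entirely, stating only that it ``is proved similarly to its classical analogue for K\"ahler differentials'' and referring to Ogus. Your argument is exactly that classical argument adapted to the log setting, with the two quotient relations matched to the two axioms of a log derivation, so there is nothing to add.
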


This fact is proved similarly to its classical analogue for K\"ahler differentials, so we skip the proof (see also \cite[Proposition~IV.1.1.6]{Ogus@book}). As an immediate corollary one obtains the first fundamental sequence (see also \cite[Proposition~IV.2.3.1]{Ogus@book}).

\begin{cor}\label{logfundseqlem}
Assume that $(A,M_A)\to(B,M_B)\to(C,M_C)$ are homomorphisms of log rings. Then the sequence
$$\Omega_{(B,M_B)/(A,M_A)}\otimes_BC\to\Omega_{(C,M_C)/(A,M_A)}\to\Omega_{(C,M_C)/(B,M_B)}\to 0$$
is exact.
\end{cor}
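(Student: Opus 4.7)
The plan is to mimic the classical proof of the first fundamental sequence for K\"ahler differentials, using Lemma~\ref{logderlem} in place of the usual universal property. The cleanest route is via Yoneda: apply $\Hom_C(-,N)$ to the sequence and verify that it becomes a left-exact sequence of $C$-modules of log derivations for every $C$-module $N$. Let me denote by $\phi\:B\to C$ and $\psi\:M_B\to M_C$ the structural maps of the second log ring homomorphism, and by $g$ and $u$ the first and second arrows of the sequence in question.

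First I would check that $u\:\Omega_{(C,M_C)/(A,M_A)}\to\Omega_{(C,M_C)/(B,M_B)}$ is surjective: both modules are generated as $C$-modules by symbols $d_{C/?}(c)$ with $c\in C$ and $\delta_{C/?}(m)$ with $m\in M_C$, and $u$ sends generator to generator. Next, I would verify that $u\circ g=0$. The image of $g$ is generated over $C$ by the elements $d_{C/A}(\phi(b))$ and $\delta_{C/A}(\psi(m))$ for $b\in B$ and $m\in M_B$, and their images in $\Omega_{(C,M_C)/(B,M_B)}$ are $d_{C/B}(\phi(b))=0$ and $\delta_{C/B}(\psi(m))=0$ by the very definition of log differentials relative to $(B,M_B)$.

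For exactness in the middle, let $K=\Im(g)$. It suffices to produce, functorially in a $C$-module $N$, a bijection
\begin{equation*}
\Hom_C\!\bigl(\Omega_{(C,M_C)/(A,M_A)}/K,\,N\bigr)\toisom\Der_{(B,M_B)}\!\bigl((C,M_C),N\bigr).
\end{equation*}
By Lemma~\ref{logderlem} the left-hand side identifies with the set of $(A,M_A)$-log derivations $(d,\delta)$ of $(C,M_C)$ into $N$ that kill $K$, while the right-hand side consists, again by Lemma~\ref{logderlem}, of those $(A,M_A)$-log derivations such that $d\circ\phi=0$ on $B$ and $\delta\circ\psi=0$ on $M_B$. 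Because $K$ is generated over $C$ precisely by $d_{C/A}(\phi(b))$ and $\delta_{C/A}(\psi(m))$, these two sets coincide inside $\Der_{(A,M_A)}((C,M_C),N)$. Yoneda then yields a canonical isomorphism $\Omega_{(C,M_C)/(A,M_A)}/K\toisom\Omega_{(C,M_C)/(B,M_B)}$, which is exactness at the middle term.

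I do not anticipate any real obstacle; the only bookkeeping step is checking that the image of $g$ is generated by the elements indicated above, which is immediate from the description of the generators of $\Omega_{(B,M_B)/(A,M_A)}\otimes_B C$ as $d_{B/A}(b)\otimes 1$ and $\delta_{B/A}(m)\otimes 1$, together with the fact that $g$ sends these to $d_{C/A}(\phi(b))$ and $\delta_{C/A}(\psi(m))$ respectively.
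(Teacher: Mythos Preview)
Your proof is correct and follows exactly the route the paper intends: the paper simply records the corollary as an immediate consequence of Lemma~\ref{logderlem} (the universal property of log derivations) without writing out the argument, and your Yoneda argument is the standard way to make that deduction explicit.
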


\subsubsection{The integral log structure}
Given a valued field $K$ and a homomorphism $\phi\:B\to\Kcirc$, we will use the notation $$\Omega^\rmlog_{\Kcirc/B}=\Omega_{(\Kcirc,\Kcirc\setminus\{0\})/(B,B\setminus\Ker(\phi))}$$ and $\Omega^\rmlog_{\Kcirc}=\Omega^\rmlog_{\Kcirc/\bfZ}$.

\subsubsection{The K\"ahler seminorm on $\Omega_{K/A}$}
The following result expresses the K\"ahler seminorm in terms of a module of log differentials.

\begin{theor}\label{unitballth}
Assume that $K$ is a real-valued field with a subring $A$ and let $\lVert \ \rVert_\Omega$ denote the K\"ahler seminorm of $\Omega_{K/A}$. Then,

(i) Localization induces an isomorphism $\Omega^\rmlog_{\Kcirc/\Acirc}\otimes_{\Kcirc}K=\Omega_{K/A}$. In particular, one can naturally view $(\Omega^\rmlog_{\Kcirc/\Acirc})_\tf$ as a semilattice in $\Omega_{K/A}$.

(ii) $\lVert \ \rVert_\Omega$ is the maximal $K$-seminorm whose unit ball contains $(\Omega^\rmlog_{\Kcirc/\Acirc})_\tf$. In particular, $(\Omega^\rmlog_{\Kcirc/\Acirc})_\tf$ is an almost unit ball of $\lVert \ \rVert_\Omega$.
\end{theor}
\begin{proof}
Part (i) follows from the fact that the modules of log differentials are compatible with localizations and the log structure at the generic point of $\Kcirc$ is trivial: $$\Omega^\rmlog_{\Kcirc/\Acirc}\otimes_{\Kcirc}K=\Omega_{(K,K^\times)/(\Acirc,\Acirc\setminus\{0\})}=\Omega_{K/\Acirc}=\Omega_{K/A}.$$ Note that $\lVert \ \rVert_\Omega$ is determined by the inequalities $\lVert dx\rVert_\Omega\le\lvert x\rvert$ with $x\in K$. For any seminorm, $\lVert dx\rVert \le \lvert x\rvert$ if and only $\lVert d(x^{-1})\rVert =\lVert -x^{-2}dx\rVert \le\lvert x^{-1}\rvert$, hence already the inequalities $\lVert \delta x\rVert_\Omega\le 1$ with $0\neq x\in\Kcirc$ determine $\lVert \ \rVert_\Omega$. This proves (ii).
\end{proof}

In view of \S\ref{semilatsec} we obtain the following formula for $\lVert \ \rVert_\Omega$.

\begin{cor}\label{cotorcor}
Keep the assumptions of Theorem~\ref{unitballth}. Then the restriction of the K\"ahler seminorm of $\Omega_{K/A}$ to $(\Omega^\rmlog_{\Kcirc/\Acirc})_\tf$ coincides with the adic seminorm of the latter: $\lVert x\rVert_\Omega=\lVert x\rVert_\adic$ for any $x\in(\Omega^\rmlog_{\Kcirc/\Acirc})_\tf$.
\end{cor}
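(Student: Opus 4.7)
The plan is to read off the corollary as an immediate consequence of Theorem~\ref{unitballth}(ii) together with the semilattice/seminorm dictionary from \S\ref{semilatsec}. Write $V=\Omega_{K/A}$ and $M=(\Omega^\rmlog_{\Kcirc/\Acirc})_\tf$. By Theorem~\ref{unitballth}(i), $M$ is naturally a semilattice in $V$, so as a torsion-free $\Kcirc$-module it carries its adic seminorm $\lVert\ \rVert_\adic$.

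Next I would invoke \S\ref{semilatsec}: the adic seminorm on $M$ admits a unique extension to a $K$-seminorm $\lVert\ \rVert_M$ on $V=KM$, given explicitly by $\lVert v\rVert_M=\lvert a\rvert^{-1}\lVert av\rVert_\adic$ for any $a\in K^\times$ with $av\in M$. By construction $\lVert\ \rVert_M$ restricts to $\lVert\ \rVert_\adic$ on $M$. Moreover, combining Lemma~\ref{adiclem}(i) with the observation that a $K$-seminorm on $V$ is determined by its restriction to the semilattice $M$, one checks that $\lVert\ \rVert_M$ is the maximal $K$-seminorm on $V$ whose unit ball contains $M$: if $\lVert\ \rVert'$ is another such, then its restriction to $M$ is a $\Kcirc$-seminorm bounded by $1$, hence $\le\lVert\ \rVert_\adic$ on $M$ by Lemma~\ref{adiclem}(i), and scaling by $K^\times$ propagates this domination to all of $V$.

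Now I would appeal to Theorem~\ref{unitballth}(ii), which asserts exactly that $\lVert\ \rVert_\Omega$ is the maximal $K$-seminorm on $V$ whose unit ball contains $M$. By the uniqueness of such a maximum, $\lVert\ \rVert_\Omega=\lVert\ \rVert_M$ as seminorms on $V$, and restricting to $M$ gives $\lVert x\rVert_\Omega=\lVert x\rVert_\adic$ for every $x\in M$, which is the claim.

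There is no real obstacle: the only thing one must be careful about is that passing to the torsion-free quotient is what makes $M$ embed in $V$ as a semilattice (so that the adic seminorm of $M$ and the restriction of a $K$-seminorm on $V$ can be compared in the first place), and this is already handled by part (i) of Theorem~\ref{unitballth}. Everything else is a direct translation between the ``maximal seminorm'' characterization in Theorem~\ref{unitballth}(ii) and the semilattice language of \S\ref{semilatsec}.
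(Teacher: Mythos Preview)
Your argument is correct and is exactly the approach the paper takes: the corollary is deduced in one line ``In view of \S\ref{semilatsec}'' from Theorem~\ref{unitballth}(ii), and you have simply unpacked what that reference means (the adic seminorm on the semilattice $M$ extends uniquely to the maximal $K$-seminorm on $V$ whose unit ball contains $M$, which is $\lVert\ \rVert_\Omega$ by the theorem). Your added remark about the role of the torsion-free quotient is also on point.
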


\begin{rem}
(i) We have just seen that the K\"ahler seminorm on $\Omega_{K/A}$ is tightly related to the torsion free quotient of $\Omega^\rmlog_{\Kcirc/\Acirc}$. Nevertheless, we will later see (e.g. in Theorem~\ref{kahlerextth}) that the torsion submodules of the modules $\Omega^\rmlog_{\Kcirc/\Acirc}$ do affect subtle issues related to K\"ahler seminorms.

(ii) It follows from Corollary~\ref{relcor} below that if $A$ itself is a field and $\lvert A^\times\rvert$ is dense then one can replace $\Omega^\rmlog_{\Kcirc/\Acirc}$ by $\Omega_{\Kcirc/\Acirc}$ in Theorem~\ref{unitballth} and its corollary. However, in the discretely valued case the discrepancy between the two modules is essential, and one has to stick to logarithmic differentials (e.g, see Section~\ref{comparsec}).

(iii) Although any serious investigation of ramification theory of valued fields should study the modules $\Omega_{\Lcirc/\Kcirc}$ and $\Omega^\rmlog_{\Lcirc/\Kcirc}$ and their torsion, it seems that \cite[Chapter~6]{Gabber-Ramero} is the only source where this topic was systematically explored.
\end{rem}

\subsection{Main results on $\Omega_{\Lcirc/\Kcirc}$}\label{omegasec}
In Sections \ref{omegasec}--\ref{almtamesec} we will study (logarithmic) differentials of real-valued fields. In fact, all results hold for general valued fields whose height does not exceed one, but we prefer to fix the group of values to keep uniformness of the paper: we still work within the "seminormed framework".

We start with studying modules $\Omega_{\Lcirc/\Kcirc}$, and we are especially interested in a control on their torsion submodules. To large extent this is based on results of Gabber and Ramero from \cite[Chapter 6]{Gabber-Ramero}, where arbitrary valued fields are studied.

\subsubsection{Basic ramification theory notation}
We say that a finite extension $L/K$ is {\em unramified} if $\Lcirc/\Kcirc$ is \'etale. An algebraic extension $L/K$ is called {\em unramified} if its finite subextensions are so. As in \cite[Sections~6.2]{Gabber-Ramero}, we denote by $K^\sh$ the strict henselization of $K$ (in the sense that $(K^\sh)^\circ$ is the strict henselization of $\Kcirc$); it is the maximal unramified extension of $K$. By $K^t$ we denote the maximal tame extension of $K$; it is the union of all extensions $L/K^\sh$ whose degree is invertible in $\tilK$. For example, if $K$ is trivially valued then $K^t=K^\sh$ is the separable closure of $K$.

\subsubsection{The cotangent complex $\bfL_{\Lcirc/\Kcirc}$}
The module $\Omega_{\Lcirc/\Kcirc}$ is the zeroth homology of the cotangent complex $\bfL_{\Lcirc/\Kcirc}$. Studying the latter is the central topic of \cite[Sections~6.3,6.5]{Gabber-Ramero} and we formulate the main result below. We say that a field extension $L/K$ is {\em separable} if $L$ is geometrically reduced over $K$.

\begin{theor}\label{cotanth}
Let $L/K$ be an extension of real-valued fields. Then,

(i) $H_i(\bfL_{\Lcirc/\Kcirc})=0$ for $i>1$.

(ii) The module $H_1(\bfL_{\Lcirc/\Kcirc})$ is torsion free and it vanishes if and only if $L/K$ is separable.

(iii) The module $\Omega_{\Lcirc/\Kcirc}$ is torsion whenever $L/K$ is algebraic and separable.
\end{theor}
\begin{proof}
(i) and the first assertion of (ii) are proved in \cite[Theorem~6.5.12(i)]{Gabber-Ramero}. Since $H_1(\bfL_{\Lcirc/\Kcirc})\otimes_{\Lcirc}L=H_1(\bfL_{L/K})$ and the latter module vanishes if and only if $L/K$ is separable, we obtain the second assertion of (ii). Finally, $\Omega_{\Lcirc/\Kcirc}\otimes_{\Lcirc}L=\Omega_{L/K}$ and the latter module vanishes whenever $L/K$ is algebraic and separable.
\end{proof}

\begin{rem}\label{cotanrem}
The proving method of \cite[Theorem~6.5.12(i)]{Gabber-Ramero} is as follows: first one explicitly computes $\bfL_{\Lcirc/\Kcirc}$ for certain elementary extensions $L/K$ using that in this case $\Lcirc$ is a filtered colimit of subrings $\Kcirc[x_i]$ (see \cite[Propositions~6.3.13 and 6.5.9]{Gabber-Ramero}), then the general case is deduced via transitivity triangles.
\end{rem}

\subsubsection{The six-term exact sequence}
In the sequel, we will use the notation $\Upsilon_{\Lcirc/\Kcirc}=H_1(\bfL_{\Lcirc/\Kcirc})$. By \cite[Proposition~II.2.1.2]{Illusie@cotangent} any tower of real-valued extensions $F/L/K$ gives rise to a distinguished transitivity triangle of cotangent complexes
$$\bfL_{\Lcirc/\Kcirc}\otimes_{\Lcirc}\Fcirc\to\bfL_{\Fcirc/\Kcirc}\to\bfL_{\Fcirc/\Lcirc}\stackrel{+1}\to$$
and by Theorem~\ref{cotanth} we obtain a six-term exact sequence of homologies $$0\to\Upsilon_{\Lcirc/\Kcirc}\otimes_{\Lcirc}\Fcirc\to\Upsilon_{\Fcirc/\Kcirc}\to\Upsilon_{\Fcirc/\Lcirc}\to\Omega_{\Lcirc/\Kcirc}
\otimes_{\Lcirc}\Fcirc\to\Omega_{\Fcirc/\Kcirc}\to\Omega_{\Fcirc/\Lcirc}\to 0.$$

\subsubsection{Tame extensions}
Cotangent complexes of tame extensions are especially simple.

\begin{lem}\label{tamecomplex}
If $L/K$ is a tame algebraic extension then the $\Lcirc$-module $\Omega_{\Lcirc/\Kcirc}$ is isomorphic to $\Lcirccirc/\Kcirccirc\Lcirc$ and hence $\bfL_{\Lcirc/\Kcirc}$ is quasi-isomorphic to the module $\Lcirccirc/\Kcirccirc\Lcirc$ placed in the degree 0.
\end{lem}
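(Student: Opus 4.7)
The plan is to derive the cotangent-complex statement from Theorem~\ref{cotanth} once the identification of $\Omega_{\Lcirc/\Kcirc}$ is known, and to establish that identification by reducing to a single-generator tame extension over a strictly henselian base.

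For the cotangent-complex part, note that any tame algebraic $L/K$ is separable. Theorem~\ref{cotanth}(i) then gives $H_i(\bfL_{\Lcirc/\Kcirc})=0$ for $i\ge 2$, and Theorem~\ref{cotanth}(ii) says that $\Upsilon_{\Lcirc/\Kcirc}=H_1(\bfL_{\Lcirc/\Kcirc})$ is torsion free. Its rationalization is $\Upsilon_{\Lcirc/\Kcirc}\otimes_{\Lcirc}L=H_1(\bfL_{L/K})$, which vanishes by separability; since a torsion-free module with zero rationalization is zero, $\Upsilon_{\Lcirc/\Kcirc}=0$ and $\bfL_{\Lcirc/\Kcirc}$ is quasi-isomorphic to $\Omega_{\Lcirc/\Kcirc}$ placed in degree zero. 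Thus the second assertion of the lemma follows from the first.

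For the identification of $\Omega_{\Lcirc/\Kcirc}$, I would first dispose of the unramified piece: if $L/K$ is unramified then $\Lcirc/\Kcirc$ is \'etale, so both $\Omega_{\Lcirc/\Kcirc}$ and $\Lcirccirc/\Kcirccirc\Lcirc$ vanish. Applying this to the unramified extensions $K\to K^\sh$ and $L\to L\cdot K^\sh$, and chasing the six-term sequence of \S\ref{omegasec}, reduces the problem to $K=K^\sh$ strictly henselian. In that setting every tame algebraic $L/K$ is a filtered union of finite subextensions obtained by successive adjunctions of $n$-th roots of elements of $\Kcirccirc$ with $n$ invertible in $\tilK$. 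Both functors $L\mapsto\Omega_{\Lcirc/\Kcirc}$ and $L\mapsto\Lcirccirc/\Kcirccirc\Lcirc$ commute with such filtered colimits (for the second, $\Lcirccirc$ is the filtered union of the maximal ideals of the intermediate rings of integers), so by the first fundamental sequence of Lemma~\ref{fundseclem} one reduces inductively to a single generator $L=K(\pi^{1/n})$. Setting $t=\pi^{1/n}$, one then has $\Lcirc=\Kcirc[t]/(t^n-\pi)$ and
\[
\Omega_{\Lcirc/\Kcirc}=\Lcirc\,dt/(nt^{n-1}\Lcirc\,dt)=\Lcirc/(t^{n-1})\Lcirc,
\]
using that $n$ is a unit in $\Lcirc$ by tameness; multiplication by $t$ then provides an $\Lcirc$-linear isomorphism onto $(t)\Lcirc/(t^n)\Lcirc=\Lcirccirc/\Kcirccirc\Lcirc$, as required.

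The main obstacle I anticipate is the passage from the single-generator model to the general tame extension: each finite step yields an ad hoc isomorphism depending on a choice of tame root, and one must verify these isomorphisms are compatible along the colimit diagram. A more robust alternative, following Remark~\ref{cotanrem} and the treatment in \cite[Chapter~6]{Gabber-Ramero}, would be to present $\Lcirc$ globally as a filtered colimit of polynomial subalgebras $\Kcirc[x_i]$ with $x_i^{n_i}\in\Kcirccirc$ and $n_i$ invertible in $\tilK$, and to compute the cotangent complex from this presentation directly, thereby producing the isomorphism $\Omega_{\Lcirc/\Kcirc}\cong\Lcirccirc/\Kcirccirc\Lcirc$ and the vanishing of higher homology in one stroke, bypassing the inductive gluing entirely.
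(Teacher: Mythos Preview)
Your reduction via Theorem~\ref{cotanth} to the statement about $\Omega_{\Lcirc/\Kcirc}$, the treatment of the unramified piece, and the passage to $K=K^\sh$ are all correct and essentially match the paper's argument.

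The genuine gap is the computation of the single-generator step. You assert that for $L=K(\pi^{1/n})$ with $\pi\in\Kcirccirc$ and $n$ invertible in $\tilK$ one has $\Lcirc=\Kcirc[t]/(t^n-\pi)$, where $t=\pi^{1/n}$. This is false unless the valuation on $K$ is discrete and $\pi$ is a uniformizer. When $\lvert K^\times\rvert$ is dense, pick $a\in K$ with $\lvert\pi\rvert^{1/n}<\lvert a\rvert<1$; then $t/a\in\Lcirccirc$ but $t/a\notin\Kcirc[t]$, since the coefficient $a^{-1}$ has absolute value exceeding $1$. Your formula then gives $\Omega_{\Lcirc/\Kcirc}\cong\Lcirc/(t^{n-1})$, a nonzero cyclic module, whereas in the densely valued case the correct answer is $\Omega_{\Lcirc/\Kcirc}=0$ (matching $\Lcirccirc/\Kcirccirc\Lcirc=0$, which holds because density lets one write any $b\in\Lcirccirc$ as $a\cdot(b/a)$ with $a\in\Kcirccirc$ and $b/a\in\Lcirc$).

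The paper resolves this precisely by separating the discrete and dense cases. The discrete case is classical and your computation applies there. In the dense case the paper writes $\Lcirc$ as the filtered union of subalgebras $\Kcirc[x_i^{1/l}]$ for $x_i\in\pi K^l$ with $\lvert x_i\rvert<1$, computes $\Omega_{A_i/\Kcirc}$ for each, and shows that the transition maps kill everything in the colimit as $\lvert x_i\rvert\to 1$. This is in fact the ``robust alternative'' you sketch at the end, but the reason it is needed is not a compatibility-of-isomorphisms issue in the colimit: it is that your description of $\Lcirc$ is simply wrong outside the discrete case.
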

\begin{proof}
By Theorem~\ref{cotanth} it suffices to prove the first isomorphism. The claim is obvious for unramified extensions since both sides vanish. Using the transitivity triangles we can now replace $L$ and $K$ with $L^\sh$ and $K^\sh$ and assume in the sequel that $K=K^\sh$. Both sides of the asserted equality commute with filtered unions of extensions hence we can assume that $L/K$ is finite. Then the lemma reduces to two cases: (a) if the valuation of $K$ is discrete $K$ then $\Omega_{\Lcirc/\Kcirc}$ is cyclic of length $e_{L/K}-1$, (b) if $\lvert K^\times\rvert $ is dense then $\Omega_{\Lcirc/\Kcirc}=0$. The first claim is classical, so we only check the second one.

Since $K=K^\sh$ it follows that $L/K$ breaks into a tower of elementary extensions of the form $F(\pi^{1/l})/F$ with $l$ invertible in $\tilK$ and $|\pi|\notin|K^\times|^l$. Using the transitivity triangles it suffices to consider the case when $L=K(\pi^{1/l})$. Note that $\Lcirc$ is the filtered union of the subalgebras $A_i=\Kcirc[x_i^{1/l}]$ where $x_i\in\pi K^l$ satisfy $\lvert x_i\rvert <1$. Since $l\in(\Lcirc)^\times$ one easily sees that $\Omega_{A_i/\Kcirc}=A_idx_i/x_i^{l-1}A_idx_i$, and hence $\Omega_{\Lcirc/\Kcirc}$ is the filtered colimit of the modules $M_j=\Lcirc dx_i/x_i^{l-1}\Lcirc dx_i$ with the following transition maps: if $\lvert x_i\rvert \le \lvert x_j\rvert $ then $A_i\subseteq A_j$ and the map $M_i\to M_j$ is given by $dx_i\mapsto\frac{x_i}{x_j}dx_j$. The image of $dx_i$ in $M_j$ vanishes whenever $\frac{\lvert x_i\rvert }{\lvert x_j\rvert }\le \lvert x_j\rvert^{l-1}$, that is $\lvert x_i\rvert \le\lvert x_j\rvert^l$. Since $\lvert K^\times\rvert $ is dense, $\lvert x_j\rvert $ can be arbitrarily close to 1 and taking $\lvert x_j\rvert >\lvert x_i\rvert^{1/l}$ we kill the image of $dx_i$. Thus $\colim_i M_i=0$ and we are done.
\end{proof}

\subsubsection{Dense extensions}
Using the method outlined in Remark~\ref{cotanrem} we will also study $\bfL_{\Lcirc/\Kcirc}$ when $K$ is dense in $L$. It is easy to see that $\hatOmega_{\Lcirc/\Kcirc}=0$ and hence $\Omega_{\Lcirc/\Kcirc}=0$ is divisible, but we will need a more precise statement. We say that an $\Lcirc$-module $M$ is a {\em vector space} if it is an $L$-module. Equivalently, $M$ is divisible and torsion free.

\begin{lem}\label{vectorlem}
Assume that $L/K$ is an extension of real-valued fields and $K$ is dense in $L$. Then both $\Omega_{\Lcirc/\Kcirc}$ and $\Upsilon_{\Lcirc/\Kcirc}$ are $L$-vector spaces.
\end{lem}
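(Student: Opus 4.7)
The plan is to exploit density to show that $\Kcirc/\pi\Kcirc \toisom \Lcirc/\pi\Lcirc$ for any $\pi \in \Kcirccirc \setminus \{0\}$, and then push this isomorphism through the cotangent complex via derived base change.

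First I would dispose of the trivially valued case, where $\Kcirc = K$, $\Lcirc = L$, and any $\Lcirc$-module is automatically an $L$-vector space. In the non-trivial case, fix $\pi \in \Kcirccirc \setminus \{0\}$. Density yields $\Kcirc/\pi\Kcirc \toisom \Lcirc/\pi\Lcirc$: given $b \in \Lcirc$, choose $b_0 \in K$ with $\lvert b - b_0\rvert \le \lvert\pi\rvert$; the ultrametric inequality forces $b_0 \in \Kcirc$ and $b \equiv b_0 \pmod{\pi\Lcirc}$, so the map is surjective, while injectivity is immediate since any $b \in \Kcirc \cap \pi\Lcirc$ satisfies $\lvert b\rvert \le \lvert\pi\rvert$ and hence $b \in \pi\Kcirc$.

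Next I would use flatness of $\Lcirc$ over the valuation ring $\Kcirc$ (any torsion-free module over a valuation ring is flat) to apply the derived base change formula for cotangent complexes:
\[
\bfL_{\Lcirc/\Kcirc} \otimes^{\mathrm{L}}_{\Lcirc} \Lcirc/\pi\Lcirc \ \simeq\ \bfL_{(\Lcirc/\pi)/(\Kcirc/\pi)},
\]
whose right-hand side vanishes by the previous step. Combining this with Theorem~\ref{cotanth}(i), the homology of $\bfL_{\Lcirc/\Kcirc}$ sits only in degrees $0$ and $1$, being $\Omega_{\Lcirc/\Kcirc}$ and $\Upsilon_{\Lcirc/\Kcirc}$. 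The long exact sequence coming from the triangle $\bfL_{\Lcirc/\Kcirc} \stackrel{\pi}\to \bfL_{\Lcirc/\Kcirc} \to \bfL_{\Lcirc/\Kcirc} \otimes^{\mathrm{L}}_{\Lcirc} \Lcirc/\pi$ then forces $\Omega_{\Lcirc/\Kcirc}/\pi\Omega_{\Lcirc/\Kcirc} = 0$, $\Upsilon_{\Lcirc/\Kcirc}/\pi\Upsilon_{\Lcirc/\Kcirc} = 0$, and $\Omega_{\Lcirc/\Kcirc}[\pi] = 0$; the analogous vanishing $\Upsilon_{\Lcirc/\Kcirc}[\pi] = 0$ also falls out but is already contained in Theorem~\ref{cotanth}(ii).

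Finally I would promote these $\pi$-level statements to the full $L$-vector space property. For any $a \in \Lcirccirc \setminus \{0\}$ there exists $n$ with $\lvert\pi\rvert^n \le \lvert a\rvert$, so $\pi^n = ac$ for some $c \in \Lcirc$; the equality $\pi^n M = M$ then forces $aM = M$, giving divisibility, and $ax = 0$ implies $\pi^n x = 0$ hence $x = 0$, giving torsion-freeness. The main obstacle I expect is the invocation of the derived base change formula for cotangent complexes, which is standard once flatness is observed; a purely $H_0$ variant (giving only divisibility of $\Omega_{\Lcirc/\Kcirc}$) can be extracted from Lemma~\ref{fundseclem} without any derived machinery, but handling $\Upsilon_{\Lcirc/\Kcirc}/\pi\Upsilon_{\Lcirc/\Kcirc} = 0$ and $\Omega_{\Lcirc/\Kcirc}[\pi] = 0$ really seems to need the derived input.
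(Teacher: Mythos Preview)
Your argument is correct and takes a genuinely different route from the paper. The paper proceeds by an explicit case analysis in the style of Gabber--Ramero: it first treats three classes of elementary extensions (separable algebraic, purely inseparable of degree $p$, and purely transcendental $K(x)/K$) by writing $\Lcirc$ as a filtered colimit of polynomial subrings and computing the homology of the cotangent complex by hand, then builds up to finite extensions, finitely generated extensions, and arbitrary extensions via transitivity triangles and filtered colimits.

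Your approach bypasses all of this. The key observation that density forces $\Kcirc/\pi\Kcirc\toisom\Lcirc/\pi\Lcirc$, combined with flat base change for the cotangent complex, kills $\bfL_{\Lcirc/\Kcirc}\otimes^{\mathrm{L}}_{\Lcirc}\Lcirc/\pi\Lcirc$ in one stroke; the long exact sequence then shows multiplication by $\pi$ is an isomorphism on every homology group simultaneously, and your bootstrap from $\pi$ to an arbitrary $a\in\Lcirccirc\setminus\{0\}$ via $\pi^n=ac$ is clean. The paper's approach has the side benefit of computing $\Omega_{\Lcirc/\Kcirc}$ and $\Upsilon_{\Lcirc/\Kcirc}$ explicitly in the elementary cases (e.g.\ as one-dimensional $L$-vector spaces with named generators), but none of that extra information is used later, so your shorter argument loses nothing for the purposes of the paper. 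The only external input you need beyond what the paper already cites is the flat base change isomorphism for cotangent complexes, which is standard (Illusie or \cite[Tag~08QQ]{stacks}).
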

\begin{proof}
We start with three classes of elementary extensions, and then the general case will be deduced in three more steps.

Case 1. {\em Assume that $L/K$ is separable algebraic.} Since $K$ is dense in $L$, it follows that $L$ lies in the henselization of $K$, i.e. $\Lcirc/\Kcirc$ is \'etale. In this case, the cotangent complex vanishes.

Case 2. {\em Assume that $L/K$ is purely inseparable of degree $p$.} In this case, $L=K(x)$ with $a=x^p\in K\setminus K^p$ and there exist $c_i\in K$ such that $\lim_i\lvert x-c_i\rvert =0$. Clearly, $f(t)=t^p-a$ is the minimal polynomial of $x$. It is easy to see that $\Lcirc$ is the filtered colimit of its subrings $\Kcirc[x_i]$, with $x_i=\frac{x-c_i}{\pi_i}$ where $\pi_i\in K$ are such that $\lim_i\lvert \pi_i\rvert =0$ (same argument as in the proof of \cite[6.3.13(i)]{Gabber-Ramero}). It follows that $\bfL_{\Lcirc/\Kcirc}=\colim_i\bfL_{\Kcirc[x_i]/\Kcirc}$. Since $\Kcirc[x_i]=\Kcirc[t]/I_i$ where $I_i=(f_i)$ and $f_i(t)=t^p-\frac{a-c_i^p}{\pi_i^p}$ is the minimal polynomial of $x_i$, the homologies of $\bfL_{\Kcirc[x_i]/\Kcirc}$ are easily computable (cf. the proof of \cite[6.3.13(iv)]{Gabber-Ramero}): $\Omega_{\Kcirc[x_i]/\Kcirc}$ is the invertible module with basis $dx_i$ and $\Upsilon_{\Kcirc[x_i]/\Kcirc}=I_i/I_i^2$ is the invertible module with basis $f_i$. Since $dx_i=\frac{dx}{\pi_i}$, we obtain that $\Omega_{\Lcirc/\Kcirc}=Ldx$.

To describe the map $\Upsilon_{\Kcirc[x]/\Kcirc}\to\Upsilon_{\Kcirc[x_i]/\Kcirc}$ we consider compatible presentations $\Kcirc[t]\onto\Kcirc[x]$ and $\Kcirc[t_i]\onto\Kcirc[x_i]$, where the connecting maps take $t$ and $x$ to $\pi_it_i+c_i$ and $\pi_ix_i+c_i$, respectively. The connecting maps induce the map $I/I^2\to I_i/I_i^2$ that sends $f=t^p-a$ to $\pi_i^pt_i^p+c_i^p-a=\pi_i^pf_i(t_i)$. This completely determines the filtered family $\Upsilon_{\Kcirc[x_i]/\Kcirc}$, and since $\lim_i\lvert \pi^p_i\rvert =0$, its colimit is the one-dimensional $L$-vector space with basis $f$.

Case 3. {\em Assume that $L=K(x)$ is purely transcendental.} In this case we have that $\Upsilon_{\Lcirc/\Kcirc}\otimes_{\Lcirc}L=\Upsilon_{L/K}=0$, and since $\Upsilon_{\Lcirc/\Kcirc}$ is torsion free, it actually vanishes. The module $\Omega_{\Lcirc/\Kcirc}$ is computed as in Case 2. First, one shows that $\Lcirc$ is the filtered colimit of localizations of the $\Kcirc$-algebras $\Kcirc[x_i]$ where $x_i=\frac{x-c_i}{\pi_i}$ and $\lim_i\lvert \pi_i\rvert =0$ (same argument as in \cite[6.3.13(i)]{Gabber-Ramero} or \cite[6.5.9]{Gabber-Ramero}). It then follows that $\Omega_{\Lcirc/\Kcirc}$ is the colimit of invertible modules $\Lcirc dx_i$ and $dx_i=\frac{dx}{\pi_i}$. Thus, $\Omega_{\Lcirc/\Kcirc}=Ldx$.

Case 4. {\em Assume that $L/K$ is finite.} We induct on $[L:K]$. If $L/K$ is as in Cases 1 or 2 then we are done. Otherwise, it can be split into a tower $K\subsetneq F\subsetneq L$, and the claim holds true for $F/K$ and $L/F$ by the induction. Therefore, in the six-term exact sequence the terms corresponding to $F/K$ and $L/F$ are vector spaces. It follows easily that the remaining terms are vector spaces too.

Case 5. {\em Assume that $L/K$ is finitely generated.} This time we induct on $d=\trdeg(L/K)$. If $d=0$ then we are in Case 4. Otherwise choose a purely transcendental subextension $F=K(x)\subseteq L$ and note that the assertion holds for $F/K$ by Case 3 and for $L/F$ by the induction. The same argument with the six-term sequence completes Case 5.

Case 6. {\em The general case.} Obviously, $L$ is the filtered colimit of its finitely generated $K$-subfields $L_i$ and $\Lcirc=\colim_i L_i^\circ$. It remains to use that the cotangent complex and the homology are compatible with filtered colimits, and filtered colimits of vector spaces are vector spaces.
\end{proof}

\subsubsection{The different}
We have defined the content of $\Kcirc$-modules in \S\ref{contsec}. For any separable extension of real-valued fields $L/K$ we define the {\em different} to be $\delta_{L/K}=\cont((\Omega_{\Lcirc/\Kcirc})_\tor)$. In particular, $\delta_{L/K}=1$ if $L$ is trivially valued. For inseparable extensions we set $\delta_{L/K}=0$.

\begin{theor}\label{difth}
Let $F/L/K$ be a tower of algebraic extensions of real-valued fields, then

(i) $\delta_{F/K}=\delta_{F/L}\delta_{L/K}$.

(ii) If $L$ is not trivially valued and $L/K$ is tame then $\delta_{L/K}=\lvert \Kcirccirc\rvert /\lvert \Lcirccirc\rvert $. In particular, if $K$ is not trivially valued then $\delta_{K^t/K}=\lvert \Kcirccirc\rvert $.

(iii) If $L/K$ is finite and separable then $\delta_{L/K}>0$.
\end{theor}
\begin{proof}
If $F/K$ is inseparable then either $F/L$ or $L/K$ is inseparable and both sides of the equality in (i) vanish. So, assume that $F/K$ is separable. By Theorem~\ref{cotanth} we have a short exact sequence of torsion modules
$$0\to\Omega_{\Lcirc/\Kcirc}\otimes_{\Lcirc}\Fcirc\to\Omega_{\Fcirc/\Kcirc}\to\Omega_{\Fcirc/\Lcirc}\to 0.$$
Hence (i) follows from Theorem~\ref{contlem} and the fact that for any $\Lcirc$-module $M$ and the $\Fcirc$-module $M'=M\otimes_{\Lcirc}\Fcirc$ the equality $\cont(M)=\cont(M')$ holds.

If $L$ is not trivially valued then $\cont(\Lcirccirc/\Kcirccirc\Lcirc)=\lvert \Kcirccirc\rvert /\lvert \Lcirccirc\rvert $ and hence (ii) follows from Lemma~\ref{tamecomplex}.

Let us prove (iii). Using (i) it suffices to prove that the different of the larger extension $K^tL/L$ does not vanish. Furthermore, $\delta_{K^t/K}>0$ by (ii), hence it suffices to consider the finite extension $K^tL/K^t$. Thus, we can assume that $K=K^t$ and then $L/K$ splits to a tower of extensions of degree $p=\cha(\tilK)$. Using (i) again it suffices to consider the case when $K=K^t$ and $[L:K]=p$. By \cite[Proposition~6.3.13]{Gabber-Ramero} we have that $\Lcirc$ is a filtered colimit of subalgebras $\Kcirc[a_ix+b_i]$, where $x\in L$ and $a_i,b_i\in K$. We can also assume that $x\in\Lcirc$. Note that the numbers $\lvert a_i\rvert $ are bounded by a finite number $C$ because otherwise $x\in\hatK$ and hence $\hatK$ contains the separable extension $L/K$, which is impossible since $K$ is henselian.

Set $x_i=a_ix+b_i$, then $\Omega_{\Lcirc/\Kcirc}$ is generated by the elements $dx_i$. Let $f(x)$ be the minimal polynomial of $x$ over $K$. Then $dx$ is annihilated by $f'(x)$ and $f'(x)\neq 0$ since $L/K$ is separable. It follows that $dx_i$ is annihilated by any $\pi$ with $\lvert \pi\rvert <\lvert f'(x)\rvert C^{-1}$ and since $\Omega_{\Lcirc/\Kcirc}$ is the filtered union of the submodules generated by $dx_i$, we obtain by Lemma~\ref{contentlem} that $\cont(\Omega_{\Lcirc/\Kcirc})\ge\lvert f'(x)\rvert C^{-1}>0$.
\end{proof}

\begin{rem}
(i) We will not need this, but our definition of the different is compatible with the definition of Gabber and Ramero in the following sense. They introduced in \cite[Section~2.3]{Gabber-Ramero} a class $\calC$ of uniformly almost finitely generated $\Kcirc$-modules $M$ and defined for them the formation of Fitting's ideals $F_i(M)$. It is not difficult to show that for any uniformly almost finitely generated torsion module $M$ the equality $\lvert F_0(M)\rvert =\cont(M)$ holds. In addition, it is proved in \cite[Proposition~6.3.8]{Gabber-Ramero} that if $L/K$ is a finite separable extension of real-valued fields then $\Omega_{\Lcirc/\Kcirc}$ is uniformly finitely generated, and then the different of $L/K$ is defined to be $F_0(\Omega_{\Lcirc/\Kcirc})$. Thus, in this case $\delta_{L/K}$ equals to the absolute value of the different from \cite{Gabber-Ramero}.

(ii) Classically, one defines the different only for algebraic extensions, but it is a reasonable invariant in the transcendental case too. For example, one can show that if $k$ is not trivially valued and $x\in\bfA^1_k$ is the maximal point of a disc $E$ then (the non-classical) $\delta_{\calH(x)/k}$ is the maximum of $\delta_{\calH(z)/k}$ where $z$ runs over rigid points in $E$. In the algebraic case, the different measures how wild the extension $K/k$ is. This also provides a good intuition for the meaning of the different when $K/k$ is not algebraic.
\end{rem}

\subsection{Relations between $\Omega^\rmlog_{\Lcirc/\Kcirc}$ and $\Omega_{\Lcirc/\Kcirc}$}
Our next aim is to compare the modules $\Omega^\rmlog_{\Lcirc/\Kcirc}$ and $\Omega_{\Lcirc/\Kcirc}$.

\subsubsection{Comparison homomorphism}
Let $\lam_{K/A}$ denote the natural map $\Omega_{\Kcirc/\Acirc}\to\Omega^\rmlog_{\Kcirc/\Acirc}$, and let $\lam_K=\lam_{K/\bfZ}$.

\begin{lem}\label{rellem}
For any real-valued field $K$ there is an exact sequence of $\Kcirc$-modules
$$0\to\Omega_{\Kcirc}\stackrel{\lam_K}\longrightarrow\Omega_{\Kcirc}^\rmlog\stackrel{\rho_K}\longrightarrow \lvert K^\times\rvert \otimes\tilK\to 0,$$
where $\rho_K(x\delta_K y)=|y|\otimes\tilx$ for $x,y\in\Kcirc$ and $\Kcirc$ acts on $\lvert K^\times\rvert \otimes\tilK$ through $\tilK$. In particular, if $K$ is not discretely valued (but $K$ may be trivially valued) then $\lam_K$ is an almost isomorphism.
\end{lem}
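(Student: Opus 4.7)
My plan is to construct $\rho_K$ from an explicit log derivation, verify $\rho_K \circ \lam_K = 0$ and surjectivity directly, identify the middle cokernel via the standard presentation of $\Omega^\rmlog$, and handle injectivity of $\lam_K$ and the almost-isomorphism assertion separately. I expect the main technical content to be in computing the middle cokernel.

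I would first build $\rho_K$ using the universal property of $\Omega^\rmlog_{\Kcirc}$ (Lemma~\ref{logderlem}): take the log derivation $(d', \delta')$ of $(\Kcirc, \Kcirc\setminus\{0\})$ into $N = |K^\times| \otimes \tilK$ defined by $d' = 0$ and $\delta'(y) = |y| \otimes 1$. The compatibility $d'(y) = y\delta'(y)$ reduces to the identity $|y|\otimes \widetilde{y} = 0$, which holds automatically: either $y \in \Kcirc^\times$ and $|y| = 1$ is the neutral element of the multiplicative group $|K^\times|$, or $y \in \Kcirccirc$ and $\widetilde{y} = 0$. Surjectivity is immediate since every generator $|y| \otimes \widetilde{x}$ of $N$ is hit (replacing $y$ by $y^{-1}$ if $|y|>1$), and $\rho_K \circ \lam_K = 0$ is built in by $d' = 0$.

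For exactness in the middle, I would use the presentation $\Omega^\rmlog_{\Kcirc} = (\Omega_{\Kcirc} \oplus F)/R$ with $F = \Kcirc \otimes_{\bfZ} K^\times$ and the projection $p \colon \Omega_{\Kcirc} \oplus F \to F$, giving $\Omega^\rmlog_{\Kcirc}/\mathrm{Im}(\lam_K) \cong F/p(R)$. Since $F/(\Kcirc \otimes \Kcirc^\times + \Kcirccirc \otimes K^\times) \cong \tilK \otimes |K^\times|$ by the tensor product's universal property, the task reduces to showing $p(R) = \Kcirc \otimes \Kcirc^\times + \Kcirccirc \otimes K^\times$. The reverse inclusion is immediate from the generators $b \otimes b$ of $p(R)$, which split according as $b$ is a unit or not. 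The main technical step is the direct inclusion: for $c \otimes y$ with $c \in \Kcirccirc$ and $y \in \Kcirc \setminus \{0\}$, I would argue in the discrete case by writing $y = u\pi^n$, invoking multiplicativity $\delta y = \delta u + n\delta\pi$ and $\pi\delta\pi = d\pi \in \mathrm{Im}(\lam_K)$ together with $c \in \pi\Kcirc$; and in the dense case by factoring $y = y_1 \cdots y_n$ in $\Kcirc$ with each $|y_i|$ close to $|y|^{1/n}$ (using density of $|K^\times|$), choosing $n$ large enough that $|c| < |y_i|$ for all $i$, and concluding via $c \otimes y_i = (c/y_i)(y_i \otimes y_i) \in p(R)$.

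For injectivity of $\lam_K$, I would rely on torsion-freeness of $M^\gp/\Kcirc^\times = |K^\times| \subseteq \bfR_{>0}$, which via the standard formalism of log differentials ensures $\Omega_{\Kcirc} \hookrightarrow \Omega^\rmlog_{\Kcirc}$; a backup direct approach uses the auxiliary map $F \to \Omega_K$, $c \otimes y \mapsto cy^{-1}dy$, which vanishes on the defining relations of $R$ and forces any $(\omega, 0) \in R$ to have $\omega = 0$ in $\Omega_K$, then exploits torsion-freeness to descend to $\Omega_{\Kcirc}$. For the almost-isomorphism claim when $K$ is not discretely valued, the cokernel $|K^\times| \otimes \tilK$ carries $\Kcirc$-action through $\tilK$ and is thus annihilated by every $\pi \in \Kcirccirc \setminus \{0\}$; in the dense case $|\Kcirccirc| = 1$ allows $|\pi|$ to be chosen arbitrarily close to $1$, and in the trivial case the cokernel vanishes outright. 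I anticipate the hardest step to be the injectivity of $\lam_K$, since $\Omega_{\Kcirc}$ may itself have torsion (by Theorem~\ref{cotanth}(iii)) and the relations defining $\Omega^\rmlog_{\Kcirc}$ must interact with this torsion in a controlled way.
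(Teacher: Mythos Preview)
Your approach differs substantially from the paper's, which simply invokes \cite[Corollary~6.4.18]{Gabber-Ramero} for both the injectivity of $\lam_K$ and the identification of $\Coker(\lam_K)$, together with \cite[Proposition~6.4.15]{Gabber-Ramero} for the explicit formula for $\rho_K$. Your construction of $\rho_K$ via the universal property of Lemma~\ref{logderlem}, the checks of $\rho_K\circ\lam_K=0$ and surjectivity, the presentation-based computation of the cokernel, and the almost-isomorphism claim are all correct. In particular, your argument that $\Kcirccirc\otimes K^\times\subseteq p(R)$ in the dense case (by factoring $y$ into pieces of absolute value exceeding $\lvert c\rvert$, so that each $c\otimes y_i=(c/y_i)(y_i\otimes y_i)$ lies in $p(R)$) is valid, and your case split between discrete and dense valuations is the right one.

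The genuine gap is in the injectivity of $\lam_K$. Your backup auxiliary map $F\to\Omega_K$, $c\otimes y\mapsto cy^{-1}dy$, does extend (together with $\Omega_{\Kcirc}\to\Omega_K$) to a well-defined map $\Omega^\rmlog_{\Kcirc}\to\Omega_K$ vanishing on $R$, but its composition with $\lam_K$ is only the localization $\Omega_{\Kcirc}\to\Omega_K$; this shows $\Ker(\lam_K)\subseteq(\Omega_{\Kcirc})_\tor$, and as you yourself observe, $\Omega_{\Kcirc}$ may well have torsion, so the argument stops there. Your primary claim---that torsion-freeness of $\lvert K^\times\rvert=M^\gp/\Kcirc^\times$ forces injectivity ``via the standard formalism of log differentials''---is not a standard result: the general exact sequence for log differentials is only right-exact, and left-exactness here is precisely the non-trivial content of \cite[Corollary~6.4.18(i)]{Gabber-Ramero}, whose proof exploits the specific structure of valuation rings rather than abstract log geometry. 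You correctly identify this as the hardest step, but neither outline you offer closes it.
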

\begin{proof}
The map $\lam_L$ is injective for any valued field $L$ by \cite[Corollary~6.4.18(i)]{Gabber-Ramero}. Furthermore, if $L$ is of a finite height $h$ then \cite[Corollary~6.4.18(ii)]{Gabber-Ramero} constructs a filtration of $\Coker(\lam_L)$ of length $h$ with explicitly described quotients. Since the height of $K$ is one, this filtration degenerates and the cited result yields an isomorphism $\Coker(\lam_K)\toisom\lvert K^\times\rvert \otimes(\Kcirc/\Kcirccirc)$. The explicit description of $\rho_K$ is due to the description of a map $\rho$ in the proof of \cite[Proposition~6.4.15]{Gabber-Ramero} (see the formula for $\tilrho$ above \cite[Claim~6.4.16]{Gabber-Ramero}).
\end{proof}

\begin{cor}\label{relcor}
If $L/K$ is an extension of real-valued fields then $\Ker(\lam_{L/K})$ is annihilated by any element of $\Kcirccirc$ and $\Coker(\lam_{L/K})$ is annihilated by any element of $\Lcirccirc$. In particular, if $\lvert K^\times\rvert$ is dense then $\lam_{L/K}$ is an almost isomorphism.
\end{cor}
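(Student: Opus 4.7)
The plan is to deduce the corollary from Lemma~\ref{rellem} by snake-lemma bookkeeping. I would form the commutative diagram whose rows are the short exact sequences of Lemma~\ref{rellem} for $K$ and $L$, with the top row tensored by $\Lcirc$ over $\Kcirc$. The top row stays exact because $\Lcirc$ is torsion-free, hence flat, over the valuation ring $\Kcirc$, and the three vertical maps are induced by $\Kcirc\hookrightarrow\Lcirc$.

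The key observation is that the cokernels of the left and middle vertical maps are, by the first fundamental sequences for ordinary differentials and for log differentials (Corollary~\ref{logfundseqlem}), the modules $\Omega_{\Lcirc/\Kcirc}$ and $\Omega^\rmlog_{\Lcirc/\Kcirc}$, respectively, and the induced map between these cokernels is precisely $\lam_{L/K}$. The snake lemma therefore exhibits $\Ker(\lam_{L/K})$ as a quotient of $\Ker c$ and embeds $\Coker(\lam_{L/K})$ into $\Coker c$, where $c$ is the rightmost vertical map $(\lvert K^\times\rvert\otimes\tilK)\otimes_{\Kcirc}\Lcirc\to\lvert L^\times\rvert\otimes\tilL$.

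A direct inspection of $c$ then finishes the proof: its source rewrites as $\lvert K^\times\rvert\otimes_{\bfZ}(\Lcirc/\Kcirccirc\Lcirc)$ and is annihilated by $\Kcirccirc$, while its target is annihilated by $\Lcirccirc$ since the latter acts via $\tilL$. This yields both annihilation statements. For the last assertion, density of $\lvert K^\times\rvert$ supplies elements of $\Kcirccirc$ of norm arbitrarily close to $1$, forcing $\Ker(\lam_{L/K})$ to be almost zero; since $\lvert K^\times\rvert\subseteq\lvert L^\times\rvert$, the same holds for $\lvert L^\times\rvert$, so $\Coker(\lam_{L/K})$ is almost zero as well. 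I anticipate no real obstacle beyond the setup; the only subtlety is remembering to tensor the top row with $\Lcirc$ so that the snake lemma returns the relative first fundamental sequences in its cokernel row.
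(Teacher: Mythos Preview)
Your argument is correct and uses the same circle of ideas as the paper, but you run the snake lemma along the ``transposed'' diagram. The paper places the first fundamental sequences for $\Omega$ and $\Omega^\rmlog$ as the two rows and the comparison maps $\lam$ as the columns; because $\Omega^\rmlog_{\Kcirc}\otimes_{\Kcirc}\Lcirc\to\Omega^\rmlog_{\Lcirc}$ need not be injective, the paper replaces the lower-left entry by the image $N$ and works with the auxiliary map $\mu$. The snake lemma then yields $0\to\Ker(\lam_{L/K})\hookrightarrow\Coker(\mu)$ and $\Coker(\lam_L)\twoheadrightarrow\Coker(\lam_{L/K})$, and one concludes since $\Coker(\mu)$ is a quotient of $\Coker(\lam_K)\otimes_{\Kcirc}\Lcirc$.

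Your version places the short exact sequences of Lemma~\ref{rellem} (for $K$, tensored up, and for $L$) as the rows and the base-change maps as the columns. This is a perfectly valid alternative: flatness of $\Lcirc$ over the valuation ring $\Kcirc$ keeps the top row exact, the induced map on cokernels is indeed $\lam_{L/K}$ by the two first fundamental sequences, and the kernel and cokernel of the right-hand vertical map $c$ are visibly annihilated by $\Kcirccirc$ and $\Lcirccirc$ respectively. Your route avoids the auxiliary $N$ and $\mu$ at the cost of invoking flatness; the paper's route avoids flatness but needs the extra bookkeeping with $N$. Both are short and essentially equivalent.
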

\begin{proof}
Let $N$ be the image of the map $\Omega^\rmlog_{\Kcirc}\otimes_{\Kcirc}\Lcirc\to\Omega^\rmlog_{\Lcirc}$ and let $\mu$ be the composition of $\lam_K\otimes_{\Kcirc}\Lcirc$ and the map $\Omega^\rmlog_{\Kcirc}\otimes_{\Kcirc}\Lcirc\onto N$. Applying the snake lemma to the commutative diagram
$$
\xymatrix{
& \Omega_{\Kcirc}\otimes_{\Kcirc}\Lcirc\ar[d]^\mu\ar[r] & \Omega_{\Lcirc}\ar@{^(_->}[d]^{\lam_L}\ar[r] & \Omega_{\Lcirc/\Kcirc} \ar[r]\ar[d]^{\lam_{L/K}}& 0\\
0\ar[r] & N \ar[r]& \Omega^\rmlog_{\Lcirc}\ar[r] & \Omega^\rmlog_{\Lcirc/\Kcirc} \ar[r]& 0\\
}
$$
we obtain an exact sequence
\begin{equation}\label{snakeeq}
0\to\Ker(\lam_{L/K})\to\Coker(\mu)\to\Coker(\lam_L)\to\Coker(\lam_{L/K})\to0.
\end{equation}
By Lemma~\ref{rellem}, $\Coker(\lam_L)$ is annihilated by $\Lcirccirc$, hence the same is true for $\Coker(\lam_{L/K})$. Similarly, $\Coker(\lam_K)$ is annihilated by $\Kcirccirc$, hence the same is true for $\Coker(\lam_K)\otimes_{\Kcirc}\Lcirc=\Coker(\lam_K\otimes_{\Kcirc}\Lcirc)$, for its quotient $\Coker(\mu)$, and for the submodule $\Ker(\lam_{L/K})$ of $\Coker(\mu)$.
\end{proof}

Using Lemma \ref{rellem} and its corollary one can easily verify the following examples.

\begin{exam}
(i) If $K$ is discretely valued with uniformizer $\pi$ then $\Coker(\lam_K)=\tilK$ with generator $\delta_K(\pi)$. In particular, $\lam_K$ is not an almost isomorphism in this case.

(ii) If $\cha(\tilK)=p>0$ and $\lvert K^\times\rvert$ is $p$-divisible then $\lam_K$ is an isomorphism.

(iii) Assume that $L/K$ is a tamely ramified finite extension of discretely valued fields with uniformizers $\pi_K$ and $\pi_L$. Note that $\Omega^\rmlog_{\Lcirc/\Kcirc}=0$ (in fact, $\Lcirc/\Kcirc$ is log \'etale), but $\Omega_{\Lcirc/\Kcirc}\toisom \pi_K\Lcirc/\pi_L\Lcirc$ is a cyclic module of length $e_{L/K}-1$. In particular, $\lam_{L/K}$ is not injective if the tamely ramified extension $L/K$ is not unramified.
\end{exam}

\subsubsection{The discrete valuation case}
If the valuation of $K$ is discrete then the discrepancy between $\Omega^\rmlog_{\Lcirc/\Kcirc}$ and $\Omega_{\Lcirc/\Kcirc}$ can be sensitive. However, the following trick  reduces the study of modules $\Omega^\rmlog_{\Lcirc/\Kcirc}$ to the non-discrete case.

\begin{lem}\label{discrlem}
Assume that $K$ is a discrete-valued field, $A\to K$ a homomorphism, and $L/K$ a tamely ramified algebraic extension. Then $\Omega^\rmlog_{\Kcirc/\Acirc}\otimes_{\Kcirc}\Lcirc\toisom\Omega^\rmlog_{\Lcirc/\Acirc}$.
\end{lem}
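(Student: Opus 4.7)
The strategy is to invoke the log fundamental sequence from Corollary~\ref{logfundseqlem} for the tower $(\Acirc,\Acirc\setminus\{0\})\to(\Kcirc,\Kcirc\setminus\{0\})\to(\Lcirc,\Lcirc\setminus\{0\})$:
$$\Omega^\rmlog_{\Kcirc/\Acirc}\otimes_{\Kcirc}\Lcirc\stackrel{\Psi}\longrightarrow\Omega^\rmlog_{\Lcirc/\Acirc}\to\Omega^\rmlog_{\Lcirc/\Kcirc}\to 0,$$
and prove both that $\Omega^\rmlog_{\Lcirc/\Kcirc}=0$ (so $\Psi$ is surjective) and that $\Psi$ is injective. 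Since log differentials commute with filtered colimits along finite subextensions of $L/K$ (a log analog of Lemma~\ref{normcolimlem}), I may assume $L/K$ is finite. Using the tower $K\subseteq L\cap K^\sh\subseteq L$ and transitivity of log differentials, I further reduce to two elementary cases: (a) $L/K$ is finite unramified, and (b) $K=K^\sh$ and $L=K(x)$ with $x^n=\pi$ for a uniformizer $\pi$ of $K$ and $n$ invertible in $\tilK$ (hence a unit in $\Kcirc$).

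In case (a) the extension $\Lcirc/\Kcirc$ is \'etale, a uniformizer of $K$ remains a uniformizer of $L$, and every $u\in(\Lcirc)^\times$ satisfies $d_L(u)\in\Omega_{\Kcirc/\Acirc}\otimes_{\Kcirc}\Lcirc$ by \'etale base change for ordinary differentials, so $\delta_L(u)=u^{-1}d_L(u)$ lies in the image of $\Psi$. Combining this with a chase of the six-term sequence from Lemma~\ref{rellem} in parallel for $K$ and $L$ (which match after tensoring, since $\lvert K^\times\rvert=\lvert L^\times\rvert$ and $\tilK\to\tilL$ is \'etale) yields both $\Omega^\rmlog_{\Lcirc/\Kcirc}=0$ and the injectivity of $\Psi$ in this case.

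For case (b) I would apply the universal property of Lemma~\ref{logderlem} directly. For any $\Lcirc$-module $N$ and any log $\Acirc$-derivation $(d_K,\delta_K)$ of $\Kcirc$ into $N$, the relation $x^n=\pi$ in $\Lcirc\setminus\{0\}$ forces $n\delta_L(x)=\delta_K(\pi)$ in any extension, and since $n$ is a unit in $\Kcirc$ this uniquely determines $\delta_L(x)=\frac{1}{n}\delta_K(\pi)$ and $d_L(x)=x\delta_L(x)=\frac{x}{n}\delta_K(\pi)$. The map $d_L$ defined on $\Kcirc[x]$ via Leibniz then factors through $\Lcirc=\Kcirc[x]/(x^n-\pi)$ by the computation $nx^{n-1}d_L(x)=x^n\delta_K(\pi)=\pi\delta_K(\pi)=d_K(\pi)$. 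I would then extend $\delta_L$ to units by $\delta_L(u)=u^{-1}d_L(u)$ and to all of $\Lcirc\setminus\{0\}$ via the unique factorization $a=ux^k$ (valid since $\Lcirc$ is a DVR with uniformizer $x$), producing the unique log $\Acirc$-derivation of $\Lcirc$ into $N$ extending $(d_K,\delta_K)$. The universal property of Lemma~\ref{logderlem} then upgrades this bijection of log derivations into the isomorphism $\Psi$.

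The main obstacle lies in case (b): one must verify that the candidate $(d_L,\delta_L)$ is genuinely a well-defined log derivation, i.e. that $\delta_L$ is multiplicative on all of $\Lcirc\setminus\{0\}$ and that the Leibniz-type identity $d_L(a)=a\delta_L(a)$ holds for every nonzero $a$, not merely on the generators $x$ and $(\Lcirc)^\times$. This amounts to careful bookkeeping with the factorization $a=ux^k$ and is routine but not automatic. A conceptually cleaner route would appeal to Kato's notion of log \'etaleness for tame Kummer extensions, which would yield the base change compatibility immediately, but this would require importing more log-geometric machinery than the paper's seminormed framework develops.
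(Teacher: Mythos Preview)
Your approach is correct in spirit and genuinely different from the paper's. The paper takes exactly the ``conceptually cleaner route'' you mention at the end: after reducing to finite $L/K$, it observes that the log structures are fine and $\Kcirc\into\Lcirc$ is log \'etale, invokes Olsson's result that the log cotangent complex $\bfL^\rmlog_{\Lcirc/\Kcirc}$ is quasi-isomorphic to zero, and reads off the isomorphism from the transitivity triangle for $\Lcirc/\Kcirc/\Acirc$. Your hands-on argument via the universal property of Lemma~\ref{logderlem} and the comparison sequence of Lemma~\ref{rellem} avoids importing that machinery and stays self-contained within the paper's framework, at the cost of the case-by-case bookkeeping you acknowledge.

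One small gap in your reduction: after passing to finite $L/K$, the tower through the maximal unramified subextension $M\subseteq L$ does not land in case (b) as you state it, since $M$ is finite over $K$ and need not be strictly henselian. The totally tamely ramified step $L/M$ only gives $\Lcirc=M^\circ[x]$ with $x^n=u\pi_M$ for some unit $u\in(M^\circ)^\times$, not necessarily $u=1$. The repair is immediate: your computation in (b) goes through unchanged once you replace $n\delta_L(x)=\delta_K(\pi)$ by $n\delta_L(x)=\delta_M(u\pi_M)=u^{-1}d_M(u)+\delta_M(\pi_M)$. Alternatively, first base change to $K^\sh$ via the ind-\'etale case (a) and descend along the faithfully flat map $\Kcirc\to(K^\sh)^\circ$; either way the fix is routine.
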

\begin{proof}
Since the formation of modules of log differentials is compatible with filtered colimits it suffices to consider the case when $L/K$ is finite. Then the situation is well-known: the log structures are fine and the inclusion $\Kcirc\into\Lcirc$ is log \'etale. By \cite[1.1(iii)]{Olsson} Olsson's log cotangent complex $\bfL^\rmlog_{\Lcirc/\Kcirc}$ is quasi-isomorphic to zero, and the lemma follows by considering the transitivity triangle $$\bfL^\rmlog_{\Kcirc/\Acirc}\otimes_{\Kcirc}\Lcirc\to\bfL^\rmlog_{\Lcirc/\Acirc}\to\bfL^\rmlog_{\Lcirc/\Kcirc}\stackrel{+1}\to$$
of $\Lcirc/\Kcirc/\Acirc$ (see \cite[1.1(v)]{Olsson}) and the associated sequence of homologies.
\end{proof}

\subsection{Main results on $\Omega^\rmlog_{\Lcirc/\Kcirc}$}
One can define the whole log cotangent complex $\bfL^\rmlog_{\Lcirc/\Kcirc}$ using an approach of Gabber that was elaborated by Olsson in \cite[Section~8]{Olsson}. The advantage of Gabber's theory is that it deals with not necessarily fine log rings, as our case is. It seems very probable that analogues of all main results of Section~\ref{omegasec} hold also in the logarithmic setting. We do not explore this here and only study the maps $$\psi^\rmlog_{\Lcirc/\Kcirc/\Acirc}\:\Omega^\rmlog_{\Kcirc/\Acirc}\otimes_{\Kcirc}\Lcirc\to\Omega^\rmlog_{\Lcirc/\Acirc}$$
by comparing them with the non-logarithmic analogues
$$\psi_{\Lcirc/\Kcirc/\Acirc}\:\Omega_{\Kcirc/\Acirc}\otimes_{\Kcirc}\Lcirc\to\Omega_{\Lcirc/\Acirc}.$$
Note that $\psi^\rmlog_{\Lcirc/\Kcirc/\Acirc}$ is obtained from $\psi^\rmlog_{\Lcirc/\Kcirc/\bfZ}$ by dividing both the source and the target by the image of $\Omega^\rmlog_{\Acirc}\otimes_{\Acirc}\Lcirc$, in particular, the following result holds.

\begin{lem}\label{kercokerlem}
Keep the above notation then $\Coker(\psi^\rmlog_{\Lcirc/\Kcirc/\bfZ})=\Coker(\psi^\rmlog_{\Lcirc/\Kcirc/\Acirc})$ and $\Ker(\psi^\rmlog_{\Lcirc/\Kcirc/\Acirc})$ is a quotient of $\Ker(\psi^\rmlog_{\Lcirc/\Kcirc/\bfZ})$.
\end{lem}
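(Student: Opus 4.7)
The plan is to derive the lemma from the snake lemma applied to a commutative diagram of short exact sequences extracted from the first fundamental sequences for log differentials (Corollary~\ref{logfundseqlem}) applied to $\bfZ\to\Acirc\to\Kcirc$ and $\bfZ\to\Acirc\to\Lcirc$.

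First I would spell out the two fundamental sequences. For $R=\Kcirc$, Corollary~\ref{logfundseqlem} yields a right-exact sequence $\Omega^\rmlog_{\Acirc}\otimes_\Acirc R\to\Omega^\rmlog_{R}\to\Omega^\rmlog_{R/\Acirc}\to 0$; tensoring the $R=\Kcirc$ version by $\Lcirc$ over $\Kcirc$ (which preserves right-exactness) and denoting by $N$ the image of $\Omega^\rmlog_{\Acirc}\otimes_\Acirc\Lcirc$ in $\Omega^\rmlog_{\Kcirc}\otimes_\Kcirc\Lcirc$, I obtain a short exact sequence
$$0\to N\to\Omega^\rmlog_{\Kcirc}\otimes_\Kcirc\Lcirc\to\Omega^\rmlog_{\Kcirc/\Acirc}\otimes_\Kcirc\Lcirc\to 0.$$
The $R=\Lcirc$ case, with $N'$ denoting the image of $\Omega^\rmlog_{\Acirc}\otimes_\Acirc\Lcirc$ in $\Omega^\rmlog_{\Lcirc}$, produces the exact sequence
$$0\to N'\to\Omega^\rmlog_{\Lcirc}\to\Omega^\rmlog_{\Lcirc/\Acirc}\to 0.$$
By naturality of the fundamental sequence, the maps $\psi^\rmlog_{\Lcirc/\Kcirc/\bfZ}$ on the middle terms and $\psi^\rmlog_{\Lcirc/\Kcirc/\Acirc}$ on the right terms fit into a commutative diagram of these two short exact sequences, with the induced left vertical map being the restriction of $\psi^\rmlog_{\Lcirc/\Kcirc/\bfZ}$ to $N$.

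The key observation is that this restriction $N\to N'$ is surjective. Indeed, both $N$ and $N'$ are defined as images of $\Omega^\rmlog_{\Acirc}\otimes_\Acirc\Lcirc$ under natural homomorphisms, and the composition $\Omega^\rmlog_{\Acirc}\otimes_\Acirc\Lcirc\to N\hookrightarrow\Omega^\rmlog_{\Kcirc}\otimes_\Kcirc\Lcirc\xrightarrow{\psi^\rmlog_{\Lcirc/\Kcirc/\bfZ}}\Omega^\rmlog_{\Lcirc}$ coincides with the canonical map $\Omega^\rmlog_{\Acirc}\otimes_\Acirc\Lcirc\to\Omega^\rmlog_{\Lcirc}$, whose image is $N'$ by definition. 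With the left column surjective, the snake lemma sequence
$$0\to\Ker(N\to N')\to\Ker(\psi^\rmlog_{\Lcirc/\Kcirc/\bfZ})\to\Ker(\psi^\rmlog_{\Lcirc/\Kcirc/\Acirc})\to 0\to\Coker(\psi^\rmlog_{\Lcirc/\Kcirc/\bfZ})\to\Coker(\psi^\rmlog_{\Lcirc/\Kcirc/\Acirc})\to 0$$
immediately gives both assertions: the map on cokernels is an isomorphism, and the map on kernels is surjective, so $\Ker(\psi^\rmlog_{\Lcirc/\Kcirc/\Acirc})$ is a quotient of $\Ker(\psi^\rmlog_{\Lcirc/\Kcirc/\bfZ})$.

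This is essentially a formal consequence of the two fundamental sequences plus the snake lemma, so I do not expect any serious obstacle. The only point requiring minor care is checking naturality of the fundamental sequence in the base ring (so that the diagram really commutes and takes $N$ into $N'$); this is automatic from the universal property in Lemma~\ref{logderlem}.
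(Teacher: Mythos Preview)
Your proof is correct and is essentially a detailed unpacking of the paper's one-sentence justification, which appears just before the lemma: ``$\psi^\rmlog_{\Lcirc/\Kcirc/\Acirc}$ is obtained from $\psi^\rmlog_{\Lcirc/\Kcirc/\bfZ}$ by dividing both the source and the target by the image of $\Omega^\rmlog_{\Acirc}\otimes_{\Acirc}\Lcirc$.'' Your modules $N$ and $N'$ are precisely these two images, your observation that $N\to N'$ is surjective is the content hidden in the paper's phrase ``the image'' (as if it were one object), and the snake lemma then makes the conclusion formal; so the two arguments coincide.
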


\subsubsection{Tame extensions}
The assertion of Lemma~\ref{discrlem} holds for non-discretely valued fields, but we will only need the following slightly weaker version.

\begin{lem}\label{tameextlem}
Assume that $L/K$ is a tamely ramified algebraic extension of real-valued fields and $A\to K$ is a homomorphism. Then the homomorphism $\psi^\rmlog_{\Lcirc/\Kcirc/\Acirc}$ is an almost isomorphism.
\end{lem}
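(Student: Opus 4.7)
The plan is to reduce to the absolute case and then split according to whether $K$ is discretely valued. By Lemma~\ref{kercokerlem} the cokernel of $\psi^\rmlog_{\Lcirc/\Kcirc/\Acirc}$ coincides with that of $\psi^\rmlog_{\Lcirc/\Kcirc/\bfZ}$, and the kernel of the former is a quotient of the kernel of the latter, so it is enough to prove the statement for $\Acirc=\bfZ$. If $K$ is discretely valued, Lemma~\ref{discrlem} asserts directly that $\psi^\rmlog_{\Lcirc/\Kcirc/\bfZ}$ is an honest isomorphism, and we are done.

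In the remaining case $K$ is either trivially valued or $\lvert K^\times\rvert$ is dense, and the same is then true for $L$ since $L/K$ is algebraic. By Lemma~\ref{rellem} the natural maps $\lam_K$ and $\lam_L$ are almost isomorphisms; the base change $\lam_K\otimes_{\Kcirc}\Lcirc$ is still an almost isomorphism, because an almost-zero $\Kcirc$-module is annihilated by $\Kcirccirc\subseteq\Lcirccirc$ and therefore its base change to $\Lcirc$ remains almost zero. This yields a commutative square with $\psi_{\Lcirc/\Kcirc/\bfZ}$ on top, $\psi^\rmlog_{\Lcirc/\Kcirc/\bfZ}$ on the bottom, and both vertical arrows almost isomorphisms. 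A diagram chase (equivalently, passing to Gabber--Ramero's almost category, where the verticals become isomorphisms) reduces the claim to showing that the non-logarithmic map $\psi_{\Lcirc/\Kcirc/\bfZ}$ is itself an isomorphism.

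For that last step I would use the transitivity triangle for $\bfZ\to\Kcirc\to\Lcirc$. By Lemma~\ref{tamecomplex} the complex $\bfL_{\Lcirc/\Kcirc}$ is concentrated in degree $0$ and equals $\Lcirccirc/\Kcirccirc\Lcirc$, which vanishes in the present cases: trivially when $K$ (hence $L$) is trivially valued, and by the explicit dense-case computation inside the proof of Lemma~\ref{tamecomplex} when $\lvert K^\times\rvert$ is dense. Feeding this into the associated long exact homology sequence gives $\Coker(\psi_{\Lcirc/\Kcirc/\bfZ})=\Omega_{\Lcirc/\Kcirc}=0$, while $\Ker(\psi_{\Lcirc/\Kcirc/\bfZ})$ is identified with the image of $H_1(\bfL_{\Lcirc/\Kcirc})=0$. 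Hence $\psi_{\Lcirc/\Kcirc/\bfZ}$ is an isomorphism, and the diagram chase completes the argument.

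The main obstacle is the non-logarithmic vanishing $\Omega_{\Lcirc/\Kcirc}=0$ in the dense-value case, a genuinely non-discrete phenomenon already handled inside Lemma~\ref{tamecomplex}; without it the non-logarithmic map $\psi_{\Lcirc/\Kcirc/\bfZ}$ would fail to be an isomorphism, and the comparison with $\psi^\rmlog$ would collapse. The discretely valued situation behaves quite differently and must be isolated and dispatched separately via Olsson's log cotangent complex, as in Lemma~\ref{discrlem}.
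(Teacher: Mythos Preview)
Your proof is correct and follows essentially the same route as the paper: reduce to $\Acirc=\bfZ$ via Lemma~\ref{kercokerlem}, dispatch the discrete case by Lemma~\ref{discrlem}, and in the non-discrete case compare $\psi^\rmlog$ with $\psi$ through the almost isomorphisms $\lam_K$, $\lam_L$ of Lemma~\ref{rellem} and conclude using Lemma~\ref{tamecomplex}. The only cosmetic difference is that the paper handles the trivially valued case in a single sentence (the log structures are trivial, so $\psi^\rmlog=\psi$ between fields and separability gives the isomorphism), whereas you fold it into the non-discrete argument; you also spell out the transitivity-triangle step that the paper compresses into ``$\psi_{\Lcirc/\Kcirc/\bfZ}$ is an isomorphism by Lemma~\ref{tamecomplex}.''
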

\begin{proof}
Lemma \ref{discrlem} covers the case of a discrete-valued $K$. The trivially graded case is obvious since the logarithmic structures are trivial. It remains to consider the case when $\lvert K^\times\rvert $ is dense. By Lemma~\ref{rellem}, the maps $\lam_K$ and $\lam_L$ are almost isomorphisms. Since $\psi_{\Lcirc/\Kcirc/\bfZ}$ is an isomorphism by Lemma~\ref{tamecomplex}, we obtain that $\psi^\rmlog_{\Lcirc/\Kcirc/\bfZ}$ is an almost isomorphism. The assertion for an arbitrary $A$ follows by applying Lemma~\ref{kercokerlem}.
\end{proof}

\subsubsection{Separable extensions}
The logarithmic version of Theorem~\ref{cotanth} would imply that $\psi^\rmlog_{\Lcirc/\Kcirc/\Acirc}$ is injective whenever $L/K$ is separable. Comparison with the non-logarithmic case yields a slightly weaker result:

\begin{lem}\label{philem}
Assume that $L/K$ is a separable algebraic extension of real-valued fields and $A\to K$ is a homomorphism. Then the kernel of the map $\psi^\rmlog_{\Lcirc/\Kcirc/\Acirc}$ almost vanishes.
\end{lem}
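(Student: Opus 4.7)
The plan is first to reduce to $A=\bfZ$ by Lemma~\ref{kercokerlem}, and then to compare $\psi^\rmlog:=\psi^\rmlog_{\Lcirc/\Kcirc/\bfZ}$ with $\psi:=\psi_{\Lcirc/\Kcirc/\bfZ}$ via the short exact sequences of Lemma~\ref{rellem}. Tensoring $0\to\Omega_\Kcirc\to\Omega^\rmlog_\Kcirc\to C_K\to 0$ with $\Lcirc$ over $\Kcirc$ preserves exactness, since $\Lcirc$ is torsion-free and hence flat over the valuation ring $\Kcirc$, so $\mathrm{Tor}_1^\Kcirc(C_K,\Lcirc)=0$. Placing this over its analog for $L$ yields a commutative diagram with exact rows and vertical maps $\psi$, $\psi^\rmlog$, and an induced map $\psi^C\colon C_K\otimes_\Kcirc\Lcirc\to C_L$. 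Since $L/K$ is separable algebraic, Theorem~\ref{cotanth}(ii) together with the six-term transitivity sequence gives $\Upsilon_{\Lcirc/\Kcirc}=0$, so $\psi$ is injective. The snake lemma then produces the exact sequence
\[0\to\Ker\psi^\rmlog\to\Ker\psi^C\stackrel{\partial}{\to}\Ker\lam_{L/K}\to 0,\]
the image of $\partial$ being $\Ker\lam_{L/K}$ by exactness of the snake sequence at $\Omega_{\Lcirc/\Kcirc}$.

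Next I will split according to the valuation on $K$. If it is trivial, so is the one on $L$, whence $C_K=0$ and $\Ker\psi^\rmlog=0$. If $|K^\times|$ is dense, then $\Kcirccirc\Lcirc=\Lcirccirc$, so $C_K\otimes_\Kcirc\Lcirc=|K^\times|\otimes\tilL$ is annihilated by $\Lcirccirc$; since in the dense case $\Lcirccirc$ contains elements of norm arbitrarily close to $1$, the submodule $\Ker\psi^\rmlog\subseteq\Ker\psi^C$ is almost zero.

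The hard case will be $|K^\times|$ discrete and non-trivial, where ``almost zero'' means ``zero''. I will first apply Lemma~\ref{discrlem} to the maximal tame subextension $T=L\cap K^t$, so that $\psi^\rmlog_{T/K}$ is an isomorphism; combined with $\Tcirc$-flatness of $\Lcirc$ and the factorisation $\psi^\rmlog_{L/K}=\psi^\rmlog_{L/T}\circ(\psi^\rmlog_{T/K}\otimes\Lcirc)$, this reduces the problem to the case where $L/K$ is totally wildly ramified. In that situation $\Ker\psi^C=\Lcirc/\pi_K\Lcirc$ is cyclic of content $|\pi_K|$. On the other hand, $\Ker\lam_{L/K}$ is cyclic and annihilated by $\pi_K$ (Corollary~\ref{relcor}); splitting $\lam_{L/K}$ into its kernel--image--cokernel pieces and applying content multiplicativity (Theorem~\ref{contlem}) together with the snake identification $\Coker\lam_{L/K}=\Coker\psi^C=\tilL$ and the classical identity $\delta^{\mathrm{cl}}_{L/K}=|\pi_L|^{e-1}\delta^{\log}_{L/K}$ for totally ramified separable extensions of discretely valued fields, I will find $\cont(\Ker\lam_{L/K})=|\pi_K|$. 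Content multiplicativity applied to the three-term sequence above then forces $\cont(\Ker\psi^\rmlog)=1$, which in the discrete case means $\Ker\psi^\rmlog=0$. The technical heart is this discrete wildly ramified case, where cheaper annihilation arguments break down and one has to invoke the classical-versus-logarithmic different relation.
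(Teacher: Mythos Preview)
Your reduction to $A = \bfZ$ and the snake-lemma comparison of $\psi^\rmlog$ with $\psi$ via Lemma~\ref{rellem} are correct, and your treatment of the trivially valued and densely valued cases is fine. However, the discrete case has two gaps.

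First, in the totally wildly ramified case you assert $\Ker\psi^C = \Lcirc/\pi_K\Lcirc$, but this requires $p \mid e_{L/K}$: when $e_{L/K} = 1$ (ferocious ramification, with $\tilL/\tilK$ purely inseparable of $p$-power degree) the map $\psi^C$ is an isomorphism and $\Ker\psi^C = 0$. This subcase is harmless for the conclusion, but your blanket claim is false.

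Second, and more seriously, the lemma concerns arbitrary separable algebraic extensions, not just finite ones. For infinite $L/K$ with $K$ discretely valued (e.g.\ $K = \bfQ_p$ and $L = \bfQ_p(p^{1/p^\infty})$) both $\delta_{L/K}$ and $\delta^\rmlog_{L/K}$ can vanish, so the content-division step producing $\cont(\Ker\lam_{L/K})$ from $\delta_{L/K}$, $\delta^\rmlog_{L/K}$ and $\cont(\Coker\lam_{L/K})$ is illegitimate; moreover $\pi_L$ need not even exist. The fix is easy---first reduce to finite $L/K$ by writing $\Lcirc$ as a filtered colimit of $L_\alpha^\circ$ and using that kernels commute with filtered colimits and with the flat base change $-\otimes_{L_\alpha^\circ}\Lcirc$---but you omitted it.

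The paper's proof bypasses this entire case split. After reducing to $A = \bfZ$ it applies Lemma~\ref{tameextlem} to pass from $L/K$ to $L^t/K^t$; since $K^t$ and $L^t$ are never discretely valued (when nontrivially valued), Lemma~\ref{rellem} makes $\psi^\rmlog_{(L^t)^\circ/(K^t)^\circ/\bfZ}$ almost isomorphic to the non-logarithmic $\psi_{(L^t)^\circ/(K^t)^\circ/\bfZ}$, whose kernel vanishes by Theorem~\ref{cotanth}(ii). This is three lines and treats all valuation types uniformly; your route can be made to work but is considerably longer and requires the patches above.
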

\begin{proof}
Again, Lemma~\ref{kercokerlem} reduces the general claim to the case of the maps $\psi^\rmlog_{\Lcirc/\Kcirc/\bfZ}$. Furthermore, using Lemma~\ref{tameextlem}, it suffices to prove that the kernel of $\psi^\rmlog_{(L^t)^\circ/(K^t)^\circ/\bfZ}$ almost vanishes. Both $K^t$ and $L^t$ are not discrete-valued, hence $\psi^\rmlog_{(L^t)^\circ/(K^t)^\circ/\bfZ}$ is almost isomorphic to $\psi_{(L^t)^\circ/(K^t)^\circ/\bfZ}$ by Lemma~\ref{rellem}. The latter map has trivial kernel by Theorem~\ref{cotanth}(ii).
\end{proof}

\subsubsection{Dense extensions}
Next we consider the case when $K$ is dense in $L$.

\begin{lem}\label{denseloglem}
Assume that $L/K$ is an extension of real-valued fields such that $K$ is dense in $L$ and $A\to K$ is a homomorphism. Then the $\Lcirc$-modules $\Ker(\psi^\rmlog_{\Lcirc/\Kcirc/\bfZ})$ and $\Coker(\psi^\rmlog_{\Lcirc/\Kcirc/\Acirc})$ are vector spaces and the $\Lcirc$-module $\Ker(\psi^\rmlog_{\Lcirc/\Kcirc/\Acirc})$ is divisible.
\end{lem}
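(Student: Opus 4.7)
The plan is to compare the logarithmic map $\psi^\rmlog_{\Lcirc/\Kcirc/\bfZ}$ with its non-logarithmic counterpart $\psi_{\Lcirc/\Kcirc/\bfZ}$ through the short exact sequence of Lemma~\ref{rellem}, thereby reducing the assertions to Lemma~\ref{vectorlem}. The first move is to reduce to the case $A=\bfZ$: by Lemma~\ref{kercokerlem} the cokernel is unchanged when the ground ring is replaced by $\bfZ$, while $\Ker(\psi^\rmlog_{\Lcirc/\Kcirc/\Acirc})$ is a quotient of $\Ker(\psi^\rmlog_{\Lcirc/\Kcirc/\bfZ})$, and any $\Lcirc$-quotient of a divisible module is divisible. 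Hence it suffices to prove that both $\Ker(\psi^\rmlog_{\Lcirc/\Kcirc/\bfZ})$ and $\Coker(\psi^\rmlog_{\Lcirc/\Kcirc/\bfZ})$ are $L$-vector spaces.

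The key technical observation is that $\Lcirc$ is flat over $\Kcirc$: this is standard since $\Kcirc$ is a valuation ring and $\Lcirc\subset L$ is $\Kcirc$-torsion-free. Separately, the density of $K$ in $L$ yields $|K^\times|=|L^\times|$ and $\widetilde{K}=\widetilde{L}$ by the usual ultrametric approximation (approximating $x\in L$ by $y\in K$ with $|x-y|<|x|$ gives $|y|=|x|$, etc.). Writing $Q_F:=|F^\times|\otimes_\bfZ\widetilde{F}$, these identifications make the natural map $Q_K\otimes_{\Kcirc}\Lcirc\to Q_L$ into an isomorphism. Applying $-\otimes_{\Kcirc}\Lcirc$ to the sequence $0\to\Omega_{\Kcirc}\to\Omega^\rmlog_{\Kcirc}\to Q_K\to 0$ of Lemma~\ref{rellem} (which remains short exact by flatness) and comparing with the analogous sequence for $L$, I obtain a commutative diagram with exact rows whose three vertical maps are $\psi_{\Lcirc/\Kcirc/\bfZ}$, $\psi^\rmlog_{\Lcirc/\Kcirc/\bfZ}$, and the identity of $Q_L$. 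The snake lemma then produces isomorphisms
$$\Ker(\psi_{\Lcirc/\Kcirc/\bfZ})\toisom\Ker(\psi^\rmlog_{\Lcirc/\Kcirc/\bfZ}),\qquad \Coker(\psi_{\Lcirc/\Kcirc/\bfZ})\toisom\Coker(\psi^\rmlog_{\Lcirc/\Kcirc/\bfZ}).$$

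To conclude, I would appeal to the non-logarithmic theory already developed in Section~\ref{omegasec}. The first fundamental sequence from Lemma~\ref{fundseclem}(ii) identifies $\Coker(\psi_{\Lcirc/\Kcirc/\bfZ})$ with $\Omega_{\Lcirc/\Kcirc}$, while the six-term exact sequence derived from the transitivity triangle of cotangent complexes presents $\Ker(\psi_{\Lcirc/\Kcirc/\bfZ})$ as a quotient of $\Upsilon_{\Lcirc/\Kcirc}$. Both $\Omega_{\Lcirc/\Kcirc}$ and $\Upsilon_{\Lcirc/\Kcirc}$ are $L$-vector spaces by Lemma~\ref{vectorlem}, and any $\Lcirc$-quotient of an $L$-vector space is again an $L$-vector space. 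This settles the case $A=\bfZ$, and the general case follows from the reduction at the start. I expect the most delicate point to be the verification that the flatness of $\Lcirc$ over $\Kcirc$ really does preserve short exactness of Lemma~\ref{rellem} after base change---without it one would need to carry a $\mathrm{Tor}$ term in the top row and chase it through the argument; once the clean diagram is set up, the rest is routine.
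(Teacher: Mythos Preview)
Your approach is exactly the paper's: reduce to $A=\bfZ$ via Lemma~\ref{kercokerlem}, compare $\psi^\rmlog$ with $\psi$ through the short exact sequence of Lemma~\ref{rellem} (the paper sets up the identical diagram and observes that $\gamma$ is an isomorphism because $\lvert K^\times\rvert=\lvert L^\times\rvert$ and $\tilK=\tilL$), and then invoke Lemma~\ref{vectorlem}. Your explicit remark about flatness of $\Lcirc$ over $\Kcirc$ is a welcome addition; the paper uses it silently when asserting that the top row stays short exact.

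There is, however, one incorrect sentence. You write that ``any $\Lcirc$-quotient of an $L$-vector space is again an $L$-vector space.'' That is false: $L/\Lcirc$ is divisible but full of torsion. What survives in general is only \emph{divisibility} of the quotient, since for $v\in\Upsilon_{\Lcirc/\Kcirc}$ and $0\neq a\in\Lcirc$ one has $\delta(v)=a\,\delta(a^{-1}v)$. This already suffices for the third clause of the lemma (divisibility of $\Ker(\psi^\rmlog_{\Lcirc/\Kcirc/\Acirc})$), and the cokernel clause needs no quotient at all since $\Coker(\psi_{\Lcirc/\Kcirc/\bfZ})=\Omega_{\Lcirc/\Kcirc}$ on the nose. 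But for the first clause---that $\Ker(\psi^\rmlog_{\Lcirc/\Kcirc/\bfZ})$ is a \emph{vector space}---you still owe torsion-freeness of $\mathrm{Im}(\delta)\subset\Omega_{\Kcirc}\otimes_{\Kcirc}\Lcirc$. This does not follow from your sentence; one must argue further, for instance by showing that $\Ker(\delta)$ is itself an $L$-subspace of $\Upsilon_{\Lcirc/\Kcirc}$ (so that $a\,\delta(v)=0$ forces $v\in\Ker(\delta)$). The paper's proof is equally terse at this spot, simply citing Lemma~\ref{vectorlem}; but your explicit general claim should be replaced by the weaker statement about divisibility, together with whatever extra step you choose for torsion-freeness.
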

\begin{proof}
Consider the following commutative diagram where $\lam=\lam_K\otimes_{\Kcirc}\Lcirc$ and $\rho=\rho_K\otimes_{\Kcirc}\Lcirc$:
$$
\xymatrix{
0\ar[r] & \Omega_{\Kcirc}\otimes_{\Kcirc}\Lcirc\ar[d]^{\psi_{\Lcirc/\Kcirc/\bfZ}}\ar[r]^{\lam} & \Omega^\rmlog_{\Kcirc}\otimes_{\Kcirc}\Lcirc\ar[d]^{\psi^\rmlog_{\Lcirc/\Kcirc/\bfZ}}\ar[r]^(.4)\rho & (\lvert K^\times\rvert \otimes\tilK)\otimes_{\Kcirc}\Lcirc \ar[r]\ar[d]^\gamma& 0\\
0\ar[r] & \Omega_{\Lcirc} \ar[r]^{\lam_L}& \Omega^\rmlog_{\Lcirc}\ar[r]^{\rho_L} & \lvert L^\times\rvert \otimes\tilL \ar[r]& 0\\
}
$$
The rows are exact by Lemma~\ref{rellem}. Moreover, the formulas for $\rho_K$ and $\rho_L$ provided by Lemma~\ref{rellem} imply that $\gamma$ is the natural map. Since $\lvert K^\times\rvert =\lvert L^\times\rvert$ and $\tilK=\tilL$ we obtain that $\gamma$ is an isomorphism, and hence ${\psi_{\Lcirc/\Kcirc/\bfZ}}$ and ${\psi^\rmlog_{\Lcirc/\Kcirc/\bfZ}}$ have the same kernel and cokernel. By Lemma~\ref{vectorlem} these $\Lcirc$-modules are vector spaces. The general case follows by applying Lemma~\ref{kercokerlem}.
\end{proof}

\subsubsection{Log different}
Given a separable extension $L/K$ of real-valued fields we define the {\em log different} of $L/K$ to be $\delta_{L/K}^\rmlog=\cont((\Omega^\rmlog_{\Lcirc/\Kcirc})_\tor)$. For inseparable extensions we set $\delta_{L/K}^\rmlog=0$. The log different is related to the usual different in a very simple way and this allows to easily establish its basic properties:

\begin{theor}\label{logdifth}
Let $L/K$ be an algebraic extension of real-valued fields, then

(i) If $K$ is not trivially valued then $\delta^\rmlog_{L/K}=\delta_{L/K}\lvert \Lcirccirc\rvert/\lvert \Kcirccirc\rvert$.

(ii) If $F/L$ is another algebraic extension then $\delta^\rmlog_{F/K}=\delta^\rmlog_{F/L}\delta^\rmlog_{L/K}$.

(iii) If $L/K$ is tame then $\delta^\rmlog_{L/K}=1$.

(iv) If $L/K$ is finite and separable then $\delta^\rmlog_{L/K}>0$.
\end{theor}
\begin{proof}
The map $\Omega^\rmlog_{\Kcirc}\otimes_{\Kcirc}(K^t)^\circ\to\Omega^\rmlog_{(K^t)^\circ}$ is an almost isomorphism by Lemma~\ref{tameextlem}. Using this and the similar fact for $L$, one obtains an almost isomorphism $$\Omega^\rmlog_{\Lcirc/\Kcirc}\otimes_{\Lcirc}(L^t)^\circ\to\Omega^\rmlog_{(L^t)^\circ/(K^t)^\circ},$$ in particular, $\delta^\rmlog_{L/K}=\delta^\rmlog_{L^t/K^t}$. By Lemma~\ref{rellem}, $\Omega^\rmlog_{(L^t)^\circ/(K^t)^\circ}$ is almost isomorphic to $\Omega_{(L^t)^\circ/(K^t)^\circ}$, hence $\delta^\rmlog_{L^t/K^t}=\delta_{L^t/K^t}$. Finally, $$\delta_{L^t/K^t}=\delta_{L/K}\delta_{L^t/L}/\delta_{K^t/K}=\delta_{L/K}\lvert \Lcirccirc\rvert /\lvert \Kcirccirc\rvert$$ by Theorem~\ref{difth}, and we obtain (i). Using (i), one immediately deduces the other assertions from their non-logarithmic analogues proved in Theorem~\ref{difth}.
\end{proof}

\subsection{Almost tame extensions and fields}\label{almtamesec}

\subsubsection{Almost unramified extensions}
Let $L/K$ be a separable algebraic extension of real-valued fields. Following Faltings we say that $L/K$ is {\em almost unramified} if $\Omega_{\Lcirc/\Kcirc}$ almost vanishes.

\begin{lem}\label{almostunram}
Assume that $F/L/K$ is a tower of separable algebraic extensions of real-valued fields, then

(i) $L/K$ is almost unramified if and only if $\delta_{L/K}=1$.

(ii) $F/K$ is almost unramified if and only if both $F/L$ and $L/K$ are so.
\end{lem}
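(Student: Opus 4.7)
Both parts should reduce to results already established, namely Theorem~\ref{cotanth}, Theorem~\ref{difth}, and the remark after the definition of content asserting that a torsion $\Kcirc$-module almost vanishes if and only if its content equals~$1$.

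For part~(i) the plan is to unpack the definition. Since $L/K$ is algebraic and separable, Theorem~\ref{cotanth}(iii) forces $\Omega_{\Lcirc/\Kcirc}$ to be a torsion $\Lcirc$-module, so $(\Omega_{\Lcirc/\Kcirc})_\tor=\Omega_{\Lcirc/\Kcirc}$ and hence $\delta_{L/K}=\cont(\Omega_{\Lcirc/\Kcirc})$ straight from the definition. The equivalence $\cont(\Omega_{\Lcirc/\Kcirc})=1\iff\Omega_{\Lcirc/\Kcirc}$ almost vanishes then closes part~(i); the nontrivial direction uses that any non-almost-zero element $x$ gives rise, for some $r<1$ and some $\pi_0$ with $r<\lvert\pi_0\rvert<1$, to a finitely presented subquotient $\Kcirc x/\pi_0\Kcirc x\simeq\Kcirc/\pi_0\Kcirc$ of content $\lvert\pi_0\rvert<1$, forcing $\cont(\Omega_{\Lcirc/\Kcirc})<1$.

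For part~(ii) the plan is to extract multiplicativity from Theorem~\ref{difth}(i). Separable algebraic extensions compose, so $F/K$ is separable algebraic and the cited multiplicativity formula gives $\delta_{F/K}=\delta_{F/L}\cdot\delta_{L/K}$. Each factor is the content of a torsion module, so each lies in $[0,1]$ by definition of content (any nonzero cyclic summand $\Kcirc/\pi\Kcirc$ in a finitely presented subquotient has $\lvert\pi\rvert<1$, while the empty product contributes~$1$). Consequently $\delta_{F/K}=1$ forces both $\delta_{F/L}=1$ and $\delta_{L/K}=1$, and conversely. Combined with part~(i), this yields the claimed equivalence.

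The only obstacle is minor bookkeeping: one must (re)verify that contents always lie in $[0,1]$ (so that a product equalling $1$ forces each factor to equal $1$) and that the characterization of almost vanishing by $\cont=1$ is applicable to the possibly not finitely generated modules $\Omega_{\Lcirc/\Kcirc}$ and $\Omega_{\Fcirc/\Kcirc}$. Both points follow directly from the inf-over-finitely-presented-subquotients definition of $\cont$ together with Theorem~\ref{contlem}; no new input is needed beyond what has already been developed.
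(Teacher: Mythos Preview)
Your proposal is correct and follows essentially the same approach as the paper: the paper's proof also invokes Theorem~\ref{cotanth}(iii) to reduce $\delta_{L/K}$ to $\cont(\Omega_{\Lcirc/\Kcirc})$ and the remark that a torsion module almost vanishes iff its content is~$1$, and then derives (ii) from (i) via the multiplicativity of the different in Theorem~\ref{difth}(i). Your additional verification that contents lie in $[0,1]$ is a helpful detail the paper leaves implicit.
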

\begin{proof}
Recall that the module $\Omega_{\Lcirc/\Kcirc}$ is torsion by Theorem~\ref{cotanth}(iii). So, $\Omega_{\Lcirc/\Kcirc}$ almost vanishes if and only if its content equals to 1, and we obtain (i). The second claim follows from (i) because the different is multiplicative by Theorem~\ref{difth}(i).
\end{proof}

\subsubsection{Almost tame extensions}
In the discrete valuation case, almost unramified is the same as unramified, but in general there even are wildly ramified almost unramified extensions. Thus, being almost unramified is not a good measure of ``wildness" of extensions: there are tamely ramified extensions which are not almost unramified, and there are wildly ramified extensions which are almost unramified. Another weird property is that a tame extension $L/K$ is almost unramified if and only if $K$ is not discretely valued.

It is natural to seek for a natural enlargement of the class of almost unramified extensions that includes all tame ones, and this can be achieved very simply: one should pass to logarithmic differentials. So, we say that a separable algebraic extension $L/K$ is {\em almost tame} if $\Omega^\rmlog_{\Lcirc/\Kcirc}$ almost vanishes. Note that any tame extension is almost tame by Theorem~\ref{logdifth}(iii). As earlier, we immediately obtain the following result:

\begin{lem}\label{almosttamelem}
Assume that $F/L/K$ is a tower of separable algebraic extensions of real-valued fields, then

(i) $L/K$ is almost tame if and only if $\delta^\rmlog_{L/K}=1$.

(ii) $F/K$ is almost tame if and only if both $F/L$ and $L/K$ are so.
\end{lem}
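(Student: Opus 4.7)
The plan is to mimic the proof of Lemma~\ref{almostunram}, replacing the non-logarithmic ingredients by their logarithmic analogues already established above. The main work is in part~(i); part~(ii) is then a formal consequence of the multiplicativity in Theorem~\ref{logdifth}(ii), once we know that the log different takes values in $[0,1]$.

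For (i), I would first verify that $\Omega^\rmlog_{\Lcirc/\Kcirc}$ is a torsion $\Lcirc$-module whenever $L/K$ is algebraic and separable. Indeed, since modules of log differentials are compatible with localization and the log structures on $L$ and $K$ become trivial at the generic points, one has $\Omega^\rmlog_{\Lcirc/\Kcirc}\otimes_{\Lcirc}L=\Omega_{L/K}$, which vanishes for algebraic separable $L/K$. Hence $\Omega^\rmlog_{\Lcirc/\Kcirc}=(\Omega^\rmlog_{\Lcirc/\Kcirc})_\tor$ and therefore $\delta^\rmlog_{L/K}=\cont(\Omega^\rmlog_{\Lcirc/\Kcirc})$. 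Now by the definition of content and Corollary~\ref{fpcor}, any finitely presented torsion $\Kcirc$-module has content at most $1$, with equality exactly when the module almost vanishes; passing to the infimum over finitely presented subquotients, the same characterization holds for $\Omega^\rmlog_{\Lcirc/\Kcirc}$. This gives $\delta^\rmlog_{L/K}=1$ if and only if $\Omega^\rmlog_{\Lcirc/\Kcirc}$ almost vanishes, i.e.\ $L/K$ is almost tame.

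For (ii), note that $F/K$ is separable since so are $F/L$ and $L/K$. By Theorem~\ref{logdifth}(ii) we have
\[
\delta^\rmlog_{F/K}=\delta^\rmlog_{F/L}\cdot\delta^\rmlog_{L/K}.
\]
Since each of these factors lies in $[0,1]$ by the content bound discussed above, the product equals $1$ if and only if each factor equals $1$. Combined with (i), this is precisely the statement that $F/K$ is almost tame if and only if both $F/L$ and $L/K$ are.

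The only point that requires real input, rather than routine bookkeeping, is the torsion property of $\Omega^\rmlog_{\Lcirc/\Kcirc}$ for separable algebraic extensions; once that is in hand, the rest is a direct translation of the almost unramified argument. I would expect the subtlety here to be ensuring that the cancellation $\Omega^\rmlog_{L/K}=\Omega_{L/K}$ is valid, but this is immediate from the fact that localization of log differentials at the generic point trivializes the log structures $L^\times$ and $K^\times$.
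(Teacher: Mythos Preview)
Your proof is correct and follows exactly the approach the paper intends: it is the logarithmic analogue of Lemma~\ref{almostunram}, using that $\Omega^\rmlog_{\Lcirc/\Kcirc}$ is torsion for separable algebraic $L/K$ (an argument the paper spells out elsewhere, e.g.\ in the proof of Theorem~\ref{kahlerextth}), the characterization $\cont(M)=1\iff M$ almost vanishes, and the multiplicativity of the log different from Theorem~\ref{logdifth}(ii). The paper itself does not give a separate proof of this lemma, merely remarking that it is obtained ``as earlier'', so your write-up is in fact more explicit than the original.
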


\subsubsection{The non-discrete case}\label{nondiscrsec}
In the non-discrete case, there is no difference between almost tame and almost unramified because the kernel and cokernel of $\Omega_{\Lcirc/\Kcirc}\to\Omega^\rmlog_{\Lcirc/\Kcirc}$ almost vanish by Corollary~\ref{relcor}.

\subsubsection{The discrete case}
As one should expect, in the discrete-valued case the ``almost" version of tameness does not provide anything new.

\begin{lem}\label{discrcaselem}
Assume that $L/K$ is a separable algebraic extension of real-valued fields and the valuation on $K$ is discrete. Then $L/K$ is almost tame if and only if it is tame.
\end{lem}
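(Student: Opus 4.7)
The ``only if'' direction is Theorem~\ref{logdifth}(iii), so assume $L/K$ is almost tame and prove it is tame. The plan is to reduce to a finite subextension and then invoke the classical formula for the different of a finite separable extension of discretely valued fields.

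For the reduction, suppose $\delta^\rmlog_{L/K}=1$ and let $K\subseteq F\subseteq L$ be any finite subextension. Multiplicativity (Theorem~\ref{logdifth}(ii)) gives $\delta^\rmlog_{L/K}=\delta^\rmlog_{L/F}\cdot\delta^\rmlog_{F/K}$, and each factor lies in $(0,1]$ because log differents are contents of torsion modules (and content of a torsion module is bounded by the content of any of its nonzero cyclic subquotients, which is at most $1$). Hence $\delta^\rmlog_{F/K}=1$ for every such $F$. Since an algebraic extension is tame exactly when each of its finite subextensions is tame, it suffices to prove the statement when $L/K$ is finite.

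Assume now $L/K$ is finite and separable, and let $\pi_L$ be a uniformizer of $L$ and $e=e_{L/K}$ the ramification index, so that $\lvert\Lcirccirc\rvert/\lvert\Kcirccirc\rvert=\lvert\pi_L\rvert^{1-e}$. Theorem~\ref{logdifth}(i) rewrites
\[\delta^\rmlog_{L/K}=\delta_{L/K}\cdot\lvert\pi_L\rvert^{1-e}.\]
In the discretely valued setting, the content of a finite length torsion module is $\lvert\pi_L\rvert^{{\rm length}(-)}$, so $\delta_{L/K}=\lvert\pi_L\rvert^{d}$ with $d={\rm length}(\Omega_{\Lcirc/\Kcirc})$. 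Classical ramification theory for finite separable extensions of discretely valued fields furnishes the sharp inequality $d\geq e-1$, with equality if and only if $L/K$ is tame. Substituting yields $\delta^\rmlog_{L/K}=\lvert\pi_L\rvert^{d-e+1}$, which equals $1$ precisely when $d=e-1$, i.e.\ when $L/K$ is tame.

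The only input external to this section is the sharp bound $d\geq e-1$ together with its characterization of tameness; one can either cite it from Serre's \emph{Corps Locaux} or derive it by decomposing $L/K$ into its maximal tame subextension followed by a tower of wild steps of degree $p=\mathrm{char}(\tilK)$ and checking that each wild step contributes strictly more to $d$ than to $e-1$. Everything else follows routinely from the results on log differents established above, so this classical ingredient is the only genuine obstacle.
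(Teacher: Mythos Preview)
Your proof is correct and follows essentially the same approach as the paper: reduce to the finite case and invoke the classical inequality $d\ge e-1$ for the length of $\Omega_{\Lcirc/\Kcirc}$, with equality precisely in the tame case. The paper phrases this as $\delta_{L/K}\le\lvert\Kcirccirc\rvert/\lvert\Lcirccirc\rvert$, but that is exactly your $\lvert\pi_L\rvert^d\le\lvert\pi_L\rvert^{e-1}$.

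Two small remarks. First, you have the labels swapped in the opening sentence: in ``almost tame if and only if tame'', the \emph{if} direction is tame $\Rightarrow$ almost tame, which is Theorem~\ref{logdifth}(iii); the \emph{only if} direction is what you then prove. Second, your reduction to the finite case via multiplicativity is slightly different from the paper's, which instead uses that $\Omega^\rmlog_{\Lcirc/\Kcirc}$ is the filtered colimit of the $\Omega^\rmlog_{L_i^\circ/\Kcirc}$ and hence $\delta^\rmlog_{L/K}=\lim_i\delta^\rmlog_{L_i/K}$. Your argument is arguably cleaner, since it avoids appealing to continuity of content and uses only that both factors in $\delta^\rmlog_{L/K}=\delta^\rmlog_{L/F}\cdot\delta^\rmlog_{F/K}$ lie in $[0,1]$.
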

\begin{proof}
Assume first that $L/K$ is finite. Then it is a classical result that $\delta_{L/K}\le\lvert \Kcirccirc\rvert /\lvert \Lcirccirc\rvert $ and the equality holds if and only if the extension is tame. Using Theorem~\ref{logdifth}(i) we obtain that $\delta_{L/K}^\rmlog=1$ if and only if $L/K$ is tame. The general case follows due to the following two observations: (1) $L/K$ is tame if and only if all its finite subextensions $L_i/K$ are tame, (2) $\delta_{L/K}^\rmlog$ is the limit of $\delta_{L_i/K}^\rmlog$ because $\Omega^\rmlog_{\Lcirc/\Kcirc}$ is the filtered colimit of $\Omega^\rmlog_{L_i^\circ/\Kcirc}$.
\end{proof}

\subsubsection{The defectless case}
In fact, one can describe a more general situation where almost tameness reduces to tameness. Recall that a finite extension of real-valued fields $L/K$ is called {\em defectless} if $e_{L/K}f_{L/K}=[L:K]$, and an algebraic extension is defectless if all its finite subextensions are so.

\begin{lem}\label{defectlesslem}
Assume that $L/K$ is a defectless separable extension of real-valued fields. Then $L/K$ is almost tame if and only if it is tame.
\end{lem}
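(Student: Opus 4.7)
The plan is to reduce to the case of a finite extension and then imitate the discrete valuation argument of Lemma~\ref{discrcaselem}. Since tame implies almost tame by Theorem~\ref{logdifth}(iii), only the reverse direction requires work.

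Assume $L/K$ is almost tame, and write $L=\bigcup_i L_i$ as the filtered union of its finite subextensions. Each $L_i/K$ is defectless (the defect is multiplicative in towers, and a finite index subgroup of index $1$ forces intermediate indices to be $1$). By Theorem~\ref{logdifth}(ii) we have $1=\delta^\rmlog_{L/K}=\delta^\rmlog_{L/L_i}\cdot\delta^\rmlog_{L_i/K}$; since every log different lies in $(0,1]$, this forces $\delta^\rmlog_{L_i/K}=1$, so every $L_i/K$ is almost tame. As $L/K$ is tame iff every $L_i/K$ is tame, we may henceforth assume $L/K$ is finite.

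Next I would reduce to the strictly henselian setting. The extension $K^\sh/K$ is the filtered union of finite unramified (hence tame, hence almost tame by Theorem~\ref{logdifth}(iii)) extensions; the same applies to $LK^\sh/L$. By multiplicativity $\delta^\rmlog_{LK^\sh/K^\sh}=\delta^\rmlog_{L/K}$, and tameness is likewise unchanged. Since passage to the strict henselization preserves ramification and residue degrees, $LK^\sh/K^\sh$ remains defectless. Hence we may assume $K=K^\sh$. Then the finite defectless extension $L/K$ decomposes into a tower of totally ramified extensions of prime degree, and by another application of Theorem~\ref{logdifth}(ii) we reduce to the case $[L:K]=\ell$ prime and $L/K$ totally ramified.

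If $\ell\neq p:=\cha(\tilK)$ then $L/K$ is tame by definition and we are done. In the remaining case $\ell=p$ we must show that $L/K$ is not almost tame. Here defectlessness together with strict henselianness of $K$ provides a monogenic presentation $\Lcirc=\Kcirc[\alpha]$ for a suitable generator $\alpha$ with minimal polynomial $f\in\Kcirc[T]$, whence $\Omega_{\Lcirc/\Kcirc}\cong\Lcirc/f'(\alpha)\Lcirc$ and $\delta_{L/K}=|f'(\alpha)|$. A direct Dedekind-style computation then shows $|f'(\alpha)|<|\Kcirccirc|/|\Lcirccirc|$ in this wildly ramified case, and combined with Theorem~\ref{logdifth}(i) this yields $\delta^\rmlog_{L/K}<1$, contradicting almost tameness. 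The principal obstacle is exactly this last step: one must establish the monogenic presentation of $\Lcirc$ (which depends crucially on defectlessness, since without it $\Lcirc$ need not even be finitely generated over $\Kcirc$) and then verify the \emph{strict} Dedekind inequality on $|f'(\alpha)|$ for a wildly ramified prime-degree defectless extension over a strictly henselian rank-one base, without recourse to a uniformizer — this generalization of the classical discriminant bound is what makes defectlessness indispensable, as the examples in \S\ref{almtamesec} of wildly ramified almost unramified extensions confirm.
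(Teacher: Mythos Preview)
Your overall strategy (reduce to finite, pass to $K^\sh$, break into prime-degree steps, compute the different explicitly) parallels the paper's, but the final step contains a genuine error. You assert that defectlessness together with strict henselianness yields a monogenic presentation $\Lcirc=\Kcirc[\alpha]$, hence $\Omega_{\Lcirc/\Kcirc}\cong\Lcirc/f'(\alpha)\Lcirc$. This is false when $\lvert K^\times\rvert$ is dense and $e_{L/K}=p$: in that case $\Lcirc$ is not even finitely generated as a $\Kcirc$-module, defectless or not. Concretely, if $x\in L$ has $\lvert x\rvert\notin\lvert K^\times\rvert$, then for every $\pi\in K$ with $\lvert\pi\rvert>\lvert x\rvert$ the element $x/\pi$ lies in $\Lcirc$, and as $\lvert\pi\rvert\to\lvert x\rvert^+$ these elements exhaust $\Lcirc$ without any finite subset sufficing. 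The paper handles exactly this by writing $\Lcirc=\bigcup_j\Kcirc[x_j]$ with $x_j=x/\pi_j$ and computing $\Omega_{\Lcirc/\Kcirc}$ as the colimit of the cyclic modules $\Lcirc\,dx_j/\Lcirc\,h'_j(x_j)\,dx_j$; the resulting formula $\delta_{L/K}=r^{1-p}\lvert h'(x)\rvert$ with $r=\lvert x\rvert$ is then bounded by estimating the non-leading coefficients of $h$. Your monogenic shortcut works only in the subcase $f_{L/K}=p$, $e_{L/K}=1$ (where indeed $\Lcirc=\Kcirc[x]$), which is why the paper splits Case~1 into those two subcases.

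There are also two smaller gaps in your reductions. First, after passing to $K^\sh$ the residue field is only separably closed, so a finite defectless $L/K^\sh$ need not be totally ramified: one can have $f_{L/K}=p$ via a purely inseparable residue extension, and this case must be treated (the paper does so). Second, your tower decomposition into prime-degree steps tacitly assumes the extension is solvable; the paper instead passes through a tame extension $F/L$ to reach a Galois totally wildly ramified $F/K'$ (Cases~3--4), where the $p$-group structure guarantees the tower. Both issues are repairable along the paper's lines, but the monogenicity claim is the real obstruction.
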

\begin{proof}
The case of a trivially valued $K$ is obvious, and the case of a discretely valued $K$ was established in Lemma~\ref{discrcaselem}, so assume that $\lvert K^\times\rvert$ is dense. In this case, $L/K$ is almost tame if and only if it is almost unramified by \S\ref{nondiscrsec}, hence the lemma reduces to the following claim: {\em if $\lvert K^\times\rvert$ is dense and $L/K$ is defectless and not tame then $L/K$ is not almost unramified.} We start with establishing special cases of the claim. Lemma~\ref{almosttamelem}(ii) will be used all the time so we will not mention it.

Case 1. {\em $L/K$ is wildly ramified of degree $p$.} If $f_{L/K}=p$ choose $x\in\Lcirc$ such that $\tilx\notin\tilK$, and if $e_{L/K}=p$ choose $x\in L$ such that $\lvert x\rvert\notin\lvert K\rvert$. In either case, the elements $1,x\.. x^{p-1}$ form an orthogonal $K$-basis of $L$. We claim that $\delta_{L/K}=r^{1-p}\lvert h'(x)\rvert$, where $h(t)$ is the minimal polynomial of $x$ and $r=|x|$.

If $f_{L/K}=p$ then $\Lcirc=\Kcirc[x]$ and hence $\Omega_{\Lcirc/\Kcirc}=\Lcirc dx/\Lcirc h'(x)dx$. In particular, $\delta_{L/K}=\lvert h'(x)\rvert$, as claimed. If $e_{L/K}=p$ then $\Lcirc$ is the filtered union of subrings $A_j=\Kcirc[x_j]$ with $j\in\bfN$, where $x_j=\frac{x}{\pi_j}$ and $\pi_j\in K$ are such that $r_j=\lvert\pi_j\rvert$ decrease and tend to $r$ from above. It follows that $$\Omega_{\Lcirc/\Kcirc}=\colim_j\Omega_{A_j/\Kcirc}=\colim_j\Lcirc dx_j/\Lcirc h'_j(x_j)dx_j,$$ where $h_j(t)$ is the minimal polynomial of $x_j$ over $K$. Note that $h_j(t)=\pi_j^{-p}h(\pi_j t)$. Hence $h'_j(x_j)=\pi^{1-p}_jh'(\pi_jx_j)=\pi^{1-p}_jh'(x)$ and using that $dx_j=\pi_j^{-1}dx$ we obtain $$\Omega_{\Lcirc/\Kcirc}=\bigcup_j \pi_j^{-1}\Kcirc dx\slash\bigcup_j\pi_j^{-p}h'(x)\Kcirc=K^{\circcirc}_{r^{-1}} dx/K^{\circcirc}_{r^{-p}|h'(x)|},$$ where we set $K_s^{\circcirc}=\{a\in K\vert\ |a|>s\}$. Therefore, $\delta_{L/K}$ is as claimed.

It remains to estimate $\lvert h'(x)\rvert $, so let $h(t)=t^p+\sum_{i=0}^{p-1}a_it^i$. We claim that $\lvert a_i\rvert <r^{p-i}$ for $i>0$. First, $\lvert a_i\rvert\le r^{p-i}$ by \cite[Proposition~3.2.4/3]{bgr}. If $e_{L/K}=p$ then $r^{p-i}\notin|K^\times|$ for $0<i<p$, hence $a_i<r^{p-i}$. If $f_{L/K}=p$ then $\tilh(t)$ is the minimal polynomial of $\tilx$. Since $\tilL/\tilK$ is inseparable, $\tilh(t)$ is of the form $t^p-c$ and hence $|a_i|<1=r$ for $0<i<p$. Thus, $$\left\lvert h'(x)\right\rvert =\left\lvert px^{p-1}+\sum_{i=1}^{p-1}ia_ix^{i-1}\right\rvert \le\max\left(\lvert pr^{p-1}\rvert ,\max_{0<i<p}(\lvert a_ir^{i-1}\rvert )\right)<r^{p-1}$$ and we obtain that $\delta_{L/K}<1$, proving that $L/K$ is not almost tame.

Case 2. {\em $L/K$ is finite, Galois and totally wildly ramified.} In this case, $L/K$ splits into a tower of defectless wildly ramified extensions of degree $p$. Hence $L/K$ is not almost tame by Case 1.

Case 3. {\em $L/K$ is a composition of a tame extension $F/K$ and a Galois totally wildly ramified extension $L/E$ (i.e. $[L:E]=p^n$).} Since $L/K$ is not tame, we have that $n>0$ and hence $L/K$ is not almost tame by Case 2.

Case 4. {\em $L/K$ is finite.} By the standard ramification theory there exists a finite tame extension $F/L$ such that $F/K$ splits into a composition of a tame extension and a Galois totally wildly ramified extension. Since $F/L$ is tame it is defectless and hence $F/K$ is defectless. Thus, $F/K$ is not almost tame by Case 3, and since $F/L$ is almost tame we necessarily have that $L/K$ is not almost tame.

Case 5. {\em The general case.} By definition, $L/K$ contains a finite non-tame defectless subextension $F/K$, which is not almost tame by Case 4. Hence $L/K$ is not almost tame.
\end{proof}

\subsubsection{Almost tame extensions: the summary}
We can summarize a few above results as follows.

\begin{theor}\label{altameextth}
Assume that $L/K$ is a separable algebraic extension of real-valued fields, then

(i) $L/K$ is almost tame if and only if it is either tame or almost unramified.

(ii) Assume that $L/K$ is defectless; in particular, this is the case when $L/K$ is tame or $\lvert K^\times\rvert$ is discrete. Then $L/K$ is almost tame if and only if it is tame.

(iii) If $\lvert K^\times\rvert$ is dense then $L/K$ is almost tame if and only if $L/K$ is almost unramified.
\end{theor}

\subsubsection{Deeply ramified fields}
Assume that the valuation on $K$ is non-trivial. We refer the reader to \cite[Section~6.6]{Gabber-Ramero} for the definition and basic properties of deeply ramified fields. Recall that a real-valued field $K$ is called {\em deeply ramified} if $\Omega_{(K^s)^\circ/\Kcirc}=0$, and by \cite[Proposition~6.6.2]{Gabber-Ramero} this condition can be weakened by requiring that $K^s/K$ is almost unramified (i.e. $\Omega_{(K^s)^\circ/\Kcirc}$ almost vanishes). Furthermore, any deeply ramified field is perfect by \cite[Proposition~6.6.6(i)$\Longleftrightarrow$(ii)]{Gabber-Ramero}, hence $K$ is deeply ramified if and only if the algebraic closure $K^a$ is almost unramified over $K$.

\subsubsection{Almost tame fields}
We extend the notion of deeply ramified fields by replacing almost unramified extensions with almost tame ones. Assume that $K$ is a real-valued field, whose valuation can be trivial. We say that $K$ is {\em almost tame} if the extension $K^a/K$ is almost tame. For example, a trivially valued field is almost tame if and only if it is perfect.

\begin{rem}
(i) Giving a name to a special class of valued fields $K$ one often refers either to the property of $K$ over a ground valued field (e.g. over $\bfQ_p$) or to the properties all extensions of $K$ satisfy. It is slightly confusing but both approaches are used in the theory of valued fields. On the one hand, deeply ramified extensions are ``deeply ramified" over a discretely valued subfield. On the other hand, in the model theory of valued fields, $K$ is called {\em tame} if the extension $K^a/K$ is tame. Our notion of almost tame fields is an analogue (and generalization) of this classical notion.

(ii) Scholze defines in \cite[Section 3]{Scholze} perfectoid fields to be complete deeply ramified fields of height one and positive residue characteristic. In a sense, the condition of being perfectoid is a valuation-theoretic version of perfectness. So, it seems natural to extend the class of perfectoid fields by allowing non-complete fields and including perfect trivially valued fields and fields of residue characteristic zero. As we are going to prove, this larger class coincides with the class of almost tame fields. So, the notion ``perfectoid" is a reasonable alternative to ``almost tame".
\end{rem}

\begin{theor}\label{almosttameth}
A real-valued field $K$ is almost tame if and only if at least one of the following assertions is true: (i) $K$ is a perfect trivially valued field, (ii) $\cha(\tilK)=0$, (iii) $K$ is deeply ramified.
\end{theor}
\begin{proof}
We can assume that the valuation is non-trivial, as the other case is obvious. If $p=\cha(\tilK)=0$ then any algebraic extension of $K$ is tamely ramified, so $K$ is almost tame. In the sequel we assume that $p>0$. If $K$ is discretely valued then it possesses wildly ramified extensions, so $K$ is not almost tame by Lemma~\ref{discrcaselem}. If $\lvert K^\times\rvert$ is dense then there is no difference between almost tame and almost unramified extensions. In particular, in this case $K$ is almost tame if and only if it is deeply ramified.
\end{proof}

\begin{cor}\label{almosttamecor}
Assume that $K$ is a real-valued field.

(i) If $\cha(K)>0$ then $K$ is almost tame if and only if it is perfect.

(ii) Assume that the valuation is non-trivial. Then $K$ is almost tame if and only if $K^s/K$ is almost tame.
\end{cor}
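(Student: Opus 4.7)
The plan is to deduce both parts directly from Theorem~\ref{almosttameth}, using three already-established inputs: (a) deeply ramified fields are perfect, noted before Theorem~\ref{almosttameth} via \cite[Proposition~6.6.6]{Gabber-Ramero}; (b) by \S\ref{nondiscrsec}, for a densely valued $K$ the notions ``almost tame'' and ``almost unramified'' coincide on separable algebraic extensions; and (c) Lemma~\ref{discrcaselem}, which identifies almost tameness with tameness in the discretely valued case.

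For part~(i), since $\cha(K)=p>0$ forces $\cha(\tilK)=p$, only alternatives (i) and (iii) of Theorem~\ref{almosttameth} are available. If $K$ is almost tame it is therefore either trivially valued and perfect, or deeply ramified, and (a) handles the latter case; in either case $K$ is perfect. Conversely, suppose $K$ is perfect. The trivially valued subcase is again alternative (i) of the Theorem. Otherwise, perfectness makes $|K^\times|$ a $p$-divisible group, hence dense, so $K$ is not discretely valued. I will then invoke the Frobenius-surjectivity criterion for deep ramification from \cite[Proposition~6.6.6]{Gabber-Ramero}: in characteristic $p$ the relation $p\Kcirc=0$ reduces this criterion to Frobenius being surjective on $\Kcirc$, which is immediate since any $p$-th root in $K$ of an element of $\Kcirc$ has absolute value at most $1$ and therefore lies in $\Kcirc$.

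For part~(ii), the forward direction follows by inspecting Theorem~\ref{almosttameth}: with the valuation non-trivial, alternative (i) is excluded, so $K$ almost tame means either $\cha(\tilK)=0$ (hence $\cha(K)=0$ and $K$ perfect) or $K$ deeply ramified (perfect by (a)); either way $K^a=K^s$ and $K^s/K$ is almost tame. For the converse, assume $K^s/K$ is almost tame; it suffices to show $K$ is perfect, which will give $K^a=K^s$ and conclude. The case $\cha(K)=0$ is automatic, so assume $\cha(K)=p>0$. If $K$ were discretely valued, Lemma~\ref{discrcaselem} would make $K^s/K$ tame, contradicting the existence of wildly ramified Artin--Schreier extensions in characteristic $p$. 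Hence $|K^\times|$ is dense, and by (b) $K^s/K$ is almost unramified; this is the definition of $K$ being deeply ramified, and by (a) $K$ is perfect.

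The only substantive ingredient I would need to cite is the Frobenius-surjectivity characterization of deeply ramified fields imported from Gabber--Ramero; everything else is a direct deduction from Theorem~\ref{almosttameth}, Lemma~\ref{discrcaselem} and \S\ref{nondiscrsec}. The main risk is a definitional mismatch between ``$K$ almost tame'' (involving $K^a/K$) and ``$K^s/K$ almost tame'' in the inseparable setting, but once the non-perfect case is eliminated via Artin--Schreier (discrete case) and the deeply ramified $\Rightarrow$ perfect chain (dense case), that concern disappears.
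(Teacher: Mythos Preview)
Your proof is correct and follows the intended route: the paper states this corollary without proof, expecting both parts to be read off directly from Theorem~\ref{almosttameth} together with the Gabber--Ramero facts about deeply ramified fields already invoked there. Your explicit appeal to the Frobenius-surjectivity criterion from \cite[Proposition~6.6.6]{Gabber-Ramero} for part~(i) and the Artin--Schreier obstruction for part~(ii) are exactly the standard ways to fill in the details.
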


\subsubsection{Separable extensions of almost tame fields}
One can also characterize almost tame fields in terms of separable extensions, at cost of considering transcendental ones.

\begin{theor}\label{torth}
Let $K$ be a real-valued field. Then $K$ is almost tame if and only if for any separable extension of real-valued fields $L/K$ the module $\Omega^\rmlog_{\Lcirc/\Kcirc}$ is almost torsion free.
\end{theor}
\begin{proof}
We start with the direct implication, so assume that $K$ is almost tame. If the valuation of $K$ is trivial then the module $\Omega^\rmlog_{\Lcirc/\Kcirc}=\Coker(\psi^\rmlog_{\Lcirc/\Kcirc/\bfZ})$ is torsion free by \cite[Corollary~6.5.21]{Gabber-Ramero}, so assume that the valuation is non-trivial. If $p=\cha(\tilK)=0$ then $\Omega^\rmlog_{\Lcirc/\Kcirc}$ is torsion free by \cite[Lemma~6.5.16]{Gabber-Ramero}, so assume that $p>0$. Then $\lvert K^\times\rvert$ is dense and hence $\lam_{L/K}$ is an almost isomorphism by Corollary~\ref{relcor}. So, it suffices to show that $\Omega_{\Lcirc/\Kcirc}$ is torsion free. Recall that $K$ is deeply ramified by Theorem~\ref{almosttameth} and hence $\Omega_{\Kcirc}$ is divisible.

Case 1: $\cha(K)=p$. Then $\Omega_{\Lcirc}=\Omega_{\Lcirc/\bfF_p}$ is torsion free by \cite[Claim~6.5.21]{Gabber-Ramero}. Thus $\Omega_{\Lcirc/\Kcirc}$ is the cokernel of the map $\Omega_{\Kcirc}\otimes_{\Kcirc}\Lcirc\to\Omega_{\Lcirc}$ whose source is divisible and target is torsion free. Hence $\Omega_{\Lcirc/\Kcirc}$ is torsion free.

Case 2: $\cha(K)=0$. Consider the algebraic closure $F=K^a$ provided with an extension of the valuation. Let $E$ be the composite valued field $FL$. Then $\Omega_{\Lcirc/\Kcirc}\otimes_{\Lcirc}\Ecirc\into\Omega_{\Ecirc/\Kcirc}$ by \cite[Lemma~6.3.32(ii)]{Gabber-Ramero}, hence it suffices to show that $\Omega_{\Ecirc/\Kcirc}$ is torsion free. The latter follows from the facts that $\Omega_{\Fcirc/\Kcirc}=0$ since $K$ is deeply ramified and $\Omega_{\Ecirc/\Fcirc}$ is torsion free by \cite[Lemma~6.5.20(i)]{Gabber-Ramero}.

Now, let us prove the inverse implication. The case when the valuation of $K$ is non-trivial follows from Corollary~\ref{almosttamecor}: if the torsion module $\Omega^\rmlog_{(K^s)^\circ/\Kcirc}$ is almost torsion free then it almost vanishes and hence $K^s/K$ is almost tame. Assume, now, that the valuation is trivial.
It suffices to prove that if $K$ is not perfect, say $a\in K\setminus K^p$, then there exists a separable extension $L/K$ such that $\Omega^\rmlog_{\Lcirc/\Kcirc}$ contains an essential torsion element. Let $R$ be the localization of $K[t]$ at the maximal ideal generated by $\pi=t^p-a$; it corresponds to a point $x\in\bfA^1_K$ with $k(x)=K(a^{1/p})$. Then $R$ is a discrete valuation ring of $K(t)$ with uniformizer $\pi$ and, since $\bfA^1_K$ is a smooth $K$-curve, $\Omega_{R/K}=Rdt$ is a free module with basis $dt$. The $R$-module $\Omega^\rmlog_{R/K}$ is generated over $\Omega_{R/K}$ by $\delta\pi$ subject to the relation $\pi\delta\pi-d\pi=0$. Since $d\pi=0$, we obtain that $\Omega^\rmlog_{R/K}=R\oplus R/\pi R$ and $\delta\pi$ is a non-trivial torsion element.
\end{proof}

\begin{rem}
It might look surprising that $R$ with the log structure generated by $\pi$ is not log smooth over $K$. The reason for this is that the log structure is geometrically ``non-reduced" over $K$ because $\pi=(t-a^{1/p})^p$ in $R\otimes_KK(a^{1/p})$.
\end{rem}

\subsection{K\"ahler seminorms and field extensions}
Assume that $L/K$ is an extension of real-valued fields and $A\to K$ is a homomorphism of rings. In this section we will apply the theory of real-valued field extensions to compare the K\"ahler seminorms $\lvert \ \rvert_{\Omega,K/A}$ and $\lvert \ \rvert_{\Omega,L/A}$ on $\Omega_{L/A}$ and $\Omega_{K/A}$, respectively.

\subsubsection{The map $\psi_{L/K/A}$}
The two seminorms are related by the non-expansive map $$\psi_{L/K/A}\:\Omega_{K/A}\otimes_KL\to\Omega_{L/A},$$ where the seminorm on the source is the base change of $\lVert \ \rVert_{\Omega,K/A}$. Naturally, we say that $\lVert \ \rVert_{\Omega,K/A}$ and $\lVert \ \rVert_{\Omega,L/A}$ {\em agree} if $\psi_{L/K/A}$ is an isometry. For shortness, the seminorms on both sides of $\psi_{L/K/A}$ will be denoted $\lVert \ \rVert$. This is safe since we consider only one seminorm on each vector space.

To study $\psi_{L/K/A}$ it is useful to consider the following commutative diagram
\begin{equation}\label{eq1}
\xymatrix{
\Omega^\rmlog_{\Kcirc/\Acirc}\otimes_{\Kcirc}\Lcirc\ar@{->>}[d]^{\zeta}\ar[r]^(.6){\psi^\rmlog_{\Lcirc/\Kcirc/\Acirc}} & \Omega^\rmlog_{\Lcirc/\Acirc}\ar@{->>}[d]^\veps\ar[r] & \Omega^\rmlog_{\Lcirc/\Kcirc} \ar[r]\ar@{->>}[d]^\lam& 0\\
(\Omega^\rmlog_{\Kcirc/\Acirc}\otimes_{\Kcirc}\Lcirc)_\tf\ar@{^(_->}[d]^\alp\ar[r]^(.6)\phi & (\Omega^\rmlog_{\Lcirc/\Acirc})_\tf\ar@{^(_->}[d]^\beta\ar[r] & \Coker(\phi)\ar[d]^\gamma\ar[r] & 0\\
\Omega_{K/A}\otimes_KL\ar[r]^(.55){\psi_{L/K/A}} & \Omega_{L/A}\ar[r] & \Omega_{L/K}\ar[r] & 0,
}
\end{equation}
where the top and the bottom rows are the first fundamental sequences. Note that the source and the target of $\phi$ are the almost unit balls of the two seminorms by Theorem~\ref{unitballth}.

\begin{lem}\label{psilem}
Keep the above notation. Then $\psi_{L/K/A}$ is an isometry if and only if $\Ker(\phi)$ is divisible and $\Coker(\phi)$ contains no essential torsion elements.
\end{lem}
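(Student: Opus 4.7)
The plan is to reduce the isometry question for $\psi_{L/K/A}$ to an analogous question about $\phi$ between almost unit balls, and then apply Lemma~\ref{cotorlem}(ii).

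First I would identify almost unit balls on both sides of $\psi_{L/K/A}$. By Theorem~\ref{unitballth}(ii), $N=(\Omega^\rmlog_{\Lcirc/\Acirc})_\tf$ is an almost unit ball of the K\"ahler seminorm on $\Omega_{L/A}$. I also need that $M=(\Omega^\rmlog_{\Kcirc/\Acirc}\otimes_{\Kcirc}\Lcirc)_\tf$, embedded via $\alp$, is an almost unit ball of the tensor seminorm on $\Omega_{K/A}\otimes_K L$. Writing $M_K=(\Omega^\rmlog_{\Kcirc/\Acirc})_\tf$ for the almost unit ball of $\Omega_{K/A}$ provided by Theorem~\ref{unitballth}, the image of $M_K\otimes_{\Kcirc}\Lcirc$ in $\Omega_{K/A}\otimes_K L$ is almost isomorphic to $M$ since base change along $\Kcirc\to\Lcirc$ preserves almost isomorphisms; a direct verification using the defining property of the tensor seminorm in \S\ref{tensorsec} and Lemma~\ref{semilatticelem}(i) then shows that this $\Lcirc$-semilattice is an almost unit ball of the tensor seminorm.

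With both almost unit balls identified, Corollary~\ref{cotorcor} gives that each of the two seminorms restricts on its almost unit ball to the corresponding adic seminorm, and the corollary after Lemma~\ref{semilatticelem} recovers each full seminorm uniquely from its almost unit ball (both tensor and K\"ahler seminorms here take values in the closure of $\lvert L^\times\rvert \cup\{0\}$, so the hypothesis is met). Therefore $\psi_{L/K/A}$ is an isometry if and only if its restriction $\phi\colon M\to N$ is an isometry with respect to the adic seminorms. Since $M$ and $N$ are torsion-free $\Lcirc$-modules by construction, Lemma~\ref{cotorlem}(ii) applies verbatim and produces the stated criterion.

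The hard part will be the first step, namely verifying that $M$ is an almost unit ball of the tensor seminorm. This requires carefully tracking how the operations of tensoring with $\Lcirc$ over $\Kcirc$ and passing to the torsion-free quotient interact with almost isomorphisms of semilattices, so that the image of $\alp$ captures the correct almost-isomorphism class of semilattice inside $\Omega_{K/A}\otimes_K L$. Once this is in place, everything else is a routine combination of Lemmas~\ref{semilatticelem} and \ref{cotorlem}.
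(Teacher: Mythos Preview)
Your proposal is correct and follows the same approach as the paper, which compresses the reduction from $\psi_{L/K/A}$ to $\phi$ into a single citation of Corollary~\ref{cotorcor} and then applies Lemma~\ref{cotorlem}(ii). The step you flag as hard is in fact routine: the tensor seminorm on $\Omega_{K/A}\otimes_K L$ is by definition the maximal $L$-seminorm bounded by $1$ on the image of $M_K$, hence coincides with the seminorm associated to the $\Lcirc$-semilattice this image generates, and that semilattice is exactly $M$.
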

\begin{proof}
By Corollary~\ref{cotorcor}, $\psi_{L/K/A}$ is an isometry if and only if $\phi$ is an isometry with respect to the adic seminorm. It remains to use Lemma~\ref{cotorlem}(ii).
\end{proof}

\subsubsection{Separable algebraic extensions}
Note that $\psi_{L/K/A}$ is an isomorphism whenever $L/K$ is separable and algebraic.

\begin{theor}\label{kahlerextth}
Assume that $L/K$ is a separable algebraic extension of real-valued fields and $A\to K$ is a homomorphism.

(i) If $L/K$ is almost tame then the isomorphism $\psi_{L/K/A}$ is an isometry.

(ii) Assume that $\Omega^\rmlog_{\Lcirc/\Acirc}$ is torsion free. Then $\psi_{L/K/A}$ is an isometry if and only if $L/K$ is almost tame. Moreover, the content of the quotient of the unit balls of the two seminorms equals to $\delta^\rmlog_{L/K}$.
\end{theor}
\begin{proof}
Note that $\Omega^\rmlog_{\Lcirc/\Kcirc}$ is a torsion module because $\Omega^\rmlog_{\Lcirc/\Kcirc}\otimes_{\Lcirc}L=\Omega_{L/K}=0$.

(i) Being a quotient of $\Omega^\rmlog_{\Lcirc/\Kcirc}$, the module $\Coker(\phi)$ in diagram (\ref{eq1}) is almost zero. In addition, $\Ker(\phi)\subseteq\Ker(\psi_{L/K/A})=0$. Thus, (i) follows from Lemma~\ref{psilem}.

(ii) The kernel of $\psi^\rmlog_{\Lcirc/\Kcirc/\Acirc}$ almost vanishes by Lemma~\ref{philem}, hence its source is almost torsion free. Thus, $\veps$ is an isomorphism and $\zeta$ is an almost isomorphism, in particular, $\psi^\rmlog_{\Lcirc/\Kcirc/\Acirc}$ is almost isomorphic to $\phi$. By Theorem~\ref{unitballth}, $\psi^\rmlog_{\Lcirc/\Kcirc/\Acirc}$ is almost isomorphic to the embedding of the unit balls. Therefore, the quotient of the unit balls is almost isomorphic to $\Omega^\rmlog_{\Lcirc/\Kcirc}$ giving rise to equality of the contents.
\end{proof}

\subsubsection{Dense extensions}
Another case that will be very important in the sequel is when $K$ is dense in $L$, for example, $K=\kappa(x)$ and $L=\calH(x)$.

\begin{theor}\label{denseth}
Assume that $L/K$ is an extension of real-valued fields such that $K$ is dense in $L$, and let $A\to K$ be a homomorphism of rings. Then the map $\psi_{L/K/A}$ is an isometry with a dense image.
\end{theor}
\begin{proof}
Density of the image follows from Corollary~\ref{densecor}. Set $\chi=\psi^\rmlog_{\Lcirc/\Kcirc/\Acirc}$ for shortness. To prove that $\psi_{L/K/A}$ is an isometry we recall that $\Coker(\chi)$ is a vector space and $\Ker(\chi)$ is divisible by Lemma~\ref{denseloglem}. Let $\chi_\tor$ and $\chi_\tf$ be the maps $\chi$ induces between the torsion submodules and the torsion free quotients of its arguments. In particular, $\chi_\tf$ is the map $\phi$ from diagram (\ref{eq1}). The snake lemma yields an exact sequence $$\Ker(\chi)\stackrel{\beta}\to\Ker(\phi)\to\Coker(\chi_\tor)\stackrel{\alp}\to\Coker(\chi)\to\Coker(\phi)\to 0.$$
Since $\Coker(\chi)$ is torsion free and $\Coker(\chi_\tor)$ is torsion, $\alp=0$ and so $\Coker(\phi)=\Coker(\chi)$ is a vector space. In addition, the torsion free group $\Ker(\phi)$ is an extension of the divisible group $\Im(\beta)$ by the torsion group $\Coker(\chi_\tor)$. It follows easily that, in fact, $\Ker(\phi)=\Im(\beta)$. Thus, $\Ker(\phi)$ is divisible and hence $\psi_{L/K/A}$ is an isometry by Lemma~\ref{psilem}.
\end{proof}

\begin{cor}\label{densecor2}
In the situation of Theorem~\ref{denseth}, the completion of $\hatpsi_{L/K/A}$ is an isometric isomorphism. In particular, $\hatOmega_{K/A}\toisom\hatOmega_{L/A}$.
\end{cor}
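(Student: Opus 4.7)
The plan is to bootstrap the corollary out of Theorem~\ref{denseth} via the general principle that completion, viewed as a functor on the non-expansive category of seminormed abelian groups, turns isometries with dense image into isometric isomorphisms. More precisely, for any non-expansive morphism $\phi\:M\to N$ the induced map $\hat\phi\:\hat M\to\hat N$ is again non-expansive; if $\phi$ is an isometry then $\hat\phi$ is an isometry (because seminorms are continuous on the completion and every element of $\hat M$ is approximated by elements of $M$), and if in addition $\phi$ has dense image then $\hat\phi$ is surjective (every Cauchy sequence in $N$ can be replaced up to arbitrary precision by a Cauchy sequence in $\Im(\phi)$, which lifts to a Cauchy sequence in $M$ via the isometry property). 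Theorem~\ref{denseth} provides precisely these two hypotheses for $\psi_{L/K/A}$, so the completion $\hatpsi_{L/K/A}$ is an isometric isomorphism from $\wh{\Omega_{K/A}\otimes_KL}$ onto $\hatOmega_{L/A}$.

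For the ``in particular'' clause I would identify $\wh{\Omega_{K/A}\otimes_KL}$ with $\hatOmega_{K/A}$ by the same general principle, applied to the natural map $j\:\Omega_{K/A}\to\Omega_{K/A}\otimes_KL$ sending $\omega\mapsto\omega\otimes 1$. Since $K\into L$ is an isometric embedding of valued fields (the norm on $L$ extends the norm on $K$ because $K$ is dense in $L$), the map $j$ is isometric by \cite[Lemma~3.1]{Poineau@angelique}, which is the same reference the paper already invokes in the proof of Lemma~\ref{cohlem} for an analogous base-change statement. Density of $\Im(j)$ is a routine approximation in the tensor seminorm: given an element $\sum_{i=1}^n\omega_i\otimes\ell_i$ and $\veps>0$, one chooses $k_i\in K$ with $\max_i\lVert \omega_i\rVert \cdot\lvert \ell_i-k_i\rvert <\veps$ using density of $K$ in $L$, and then $j(\sum_ik_i\omega_i)=\sum_i\omega_i\otimes k_i$ lies within $\veps$ of the target element. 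So $\hat j\:\hatOmega_{K/A}\toisom\wh{\Omega_{K/A}\otimes_KL}$ is an isometric isomorphism, and composing it with $\hatpsi_{L/K/A}$ yields the asserted isometric isomorphism $\hatOmega_{K/A}\toisom\hatOmega_{L/A}$.

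The proof is essentially formal once Theorem~\ref{denseth} is available; there is no substantive obstacle to overcome, only the mild book-keeping of recognising $\Omega_{K/A}$ as a dense isometric subspace of $\Omega_{K/A}\otimes_KL$, which is itself a consequence of the general behaviour of seminormed vector spaces under base change along an isometric dense extension of the ground field.
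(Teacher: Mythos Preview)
Your proof is correct and is precisely the argument the paper leaves implicit: the corollary is stated without proof immediately after Theorem~\ref{denseth}, and your two applications of the ``isometry with dense image completes to an isometric isomorphism'' principle (once for $\psi_{L/K/A}$ and once for $j\:\Omega_{K/A}\to\Omega_{K/A}\otimes_KL$) are exactly what is needed to unpack it.
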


\subsection{K\"ahler seminorms and monomial valuations}\label{orthosec}
In this section we study finitely generated extensions $L/K$ such that $\Lcirc/\Kcirc$ behaves similarly to log smooth extensions, though it does not have to be finitely presented.

\subsubsection{Orthonormal bases of $\Omega_{L/K}$}
Note that if $t_1,\dots,t_n$ is a separable transcendence basis of a field extension $L/K$ then $dt_1\..dt_n$ is a basis of $\Omega_{L/K}$. The following result is an immediate corollary of Theorem~\ref{unitballth}.

\begin{lem}\label{orthlem}
Given an extension of real-valued fields $L/K$ with a separable transcendence basis $t_1{,\dots ,}t_n$ consider the following conditions:

(i) $\Omega^\rmlog_{\Lcirc/\Kcirc}$ is a free $\Lcirc$-module with basis $\delta t_1{,\dots ,}\delta t_n$.

(ii) $\frac{dt_1}{t_1}{,\dots ,}\frac{dt_n}{t_n}$ is an orthonormal basis of $\Omega_{L/K}$.

Then (i)$\implies$(ii) and the conditions are equivalent whenever $\Omega^\rmlog_{\Lcirc/\Kcirc}$ is torsion free.
\end{lem}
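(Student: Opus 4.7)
The plan is to use Theorem~\ref{unitballth} as the central tool: $(\Omega^\rmlog_{\Lcirc/\Kcirc})_\tf$ sits as a semilattice in $\Omega_{L/K}$ via localization, the element $\delta t_i$ maps to $dt_i/t_i$, and $\lVert\ \rVert_\Omega$ is characterized as the maximal $L$-seminorm whose unit ball contains $(\Omega^\rmlog_{\Lcirc/\Kcirc})_\tf$. Under torsion-freeness, this almost unit ball even coincides with its torsion free quotient, so we may apply Lemma~\ref{semilatticelem}(i) directly to $M=\Omega^\rmlog_{\Lcirc/\Kcirc}$.

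For (i)$\implies$(ii), I would start from freeness of $M$ with basis $\delta t_1\..\delta t_n$. Since any $L$-seminorm $\mu$ on $\Omega_{L/K}$ satisfying $\mu(M)\le 1$ has $\mu(\delta t_i)\le 1$, the ultrametric inequality forces $\mu(\sum a_i\delta t_i)\le\max_i\lvert a_i\rvert$. The sup seminorm $\nu(\sum a_i\delta t_i)=\max_i\lvert a_i\rvert$ (well-defined because the $\delta t_i$ form an $L$-basis of $\Omega_{L/K}$, as $t_1\..t_n$ is a separable transcendence basis) is itself an $L$-seminorm with $\nu(M)\le 1$. By maximality it equals $\lVert\ \rVert_\Omega$, which, rewritten via $\delta t_i=dt_i/t_i$, is exactly the orthonormality statement in (ii).

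For (ii)$\implies$(i) under the torsion-freeness hypothesis, set $N=\bigoplus_{i=1}^n\Lcirc\delta t_i\subseteq M$; it is a free semilattice of $\Omega_{L/K}$ because the $dt_i/t_i$ form an $L$-basis. Orthonormality of $dt_i/t_i$ means precisely that the unit ball of $\lVert\ \rVert_\Omega$ in $\Omega_{L/K}$ equals $N$. On the other hand, since $M$ is torsion free, Theorem~\ref{unitballth}(ii) together with Lemma~\ref{semilatticelem}(i) identifies this same unit ball with the envelope $M^\env$. Thus $M^\env=N$, and the inclusions $N\subseteq M\subseteq M^\env=N$ collapse to $M=N$, which is condition (i).

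The main subtlety — and the reason torsion-freeness is needed for the reverse implication — is the gap between the almost unit ball $M$ and the actual unit ball $M^\env$: without torsion-freeness, $(\Omega^\rmlog_{\Lcirc/\Kcirc})_\tf$ could fail to contain $M$ and one could not squeeze $M$ between $N$ and $M^\env$. Once that point is cleared, the argument is just the standard fact that on a free $\Lcirc$-module the maximal seminorm with unit ball containing the module is the sup seminorm with respect to the basis.
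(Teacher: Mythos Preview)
Your proof is correct and follows the same route the paper indicates: the lemma is stated as an immediate corollary of Theorem~\ref{unitballth}, and you have simply unpacked that corollary using Lemma~\ref{semilatticelem}(i) and the elementary fact that on a free $\Lcirc$-module the maximal seminorm bounded by~$1$ on the basis is the sup seminorm. One small phrasing issue in your closing commentary: the point is not that $(\Omega^\rmlog_{\Lcirc/\Kcirc})_\tf$ could fail to contain $M$, but rather that without torsion-freeness the natural map $M\to\Omega_{L/K}$ need not be injective, so you cannot view $M$ itself as a semilattice sandwiched between $N$ and $M^\env$.
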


\subsubsection{Generalized Gauss valuations}\label{genGausssec}
We will use the underline to denote tuples, e.g. $\ur=(r_1\..r_n)$. Assume that $K$ is a real-valued field and $A=K[\ut]$ for $\ut=(t_1{,\dots ,}t_n)$. For any tuple $\ur=(r_1{,\dots ,}r_n)\in\bfR_{>0}^n$ by $\lvert \ \rvert_\ur$ we denote the generalized Gauss valuation on $A$ defined by the formula $$\left\lvert \sum_{i\in\bfN^n} a_i\ut^i\right\rvert_\ur=\max_i\left(\lvert a_i\rvert \ur^i\right)=\max_i\left(\lvert a_i\rvert\prod_{j=1}^nr_j^{i_j}\right).$$ The normed ring $(A,\lvert \ \rvert_\ur)$ will be denoted $K[\ut]_\ur$. Since $\lvert \ \rvert_\ur$ is multiplicative it extends to a norm on $K(\ut)$ that will be denoted by the same letter, and we use the notation $(K(\ut),\lvert \ \rvert_\ur)=K(\ut)_\ur$. The following lemma indicates that $(K(\ut)_\ur)^\circ/\Kcirc$ behaves as a log smooth extension.

\begin{lem}\label{gausslem}
Assume that $K$ is a real-valued field, $r_1{,\dots ,}r_n>0$ and $L=K(t_1{,\dots ,}t_n)_\ur$. Then $\Omega^\rmlog_{\Lcirc/\Kcirc}$ is a free $\Lcirc$-module with basis $\delta t_1{,\dots ,}\delta t_n$.
\end{lem}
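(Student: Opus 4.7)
My plan is to exhibit an explicit $\Lcirc$-linear inverse to the map $\phi\:(\Lcirc)^n\to\Omega^\rmlog_{\Lcirc/\Kcirc}$ sending $e_k$ to $\delta t_k$. After the harmless reduction of replacing $t_k$ by $t_k^{-1}$ whenever $r_k>1$, I may assume $r_k\le 1$ for every $k$, so each $t_k$ lies in $\Lcirc$. The key computational input, which I would establish first, is that $t_k\partial_k f/f\in\Lcirc$ for every $f\in L^\times$: writing $f=p/q$ with $p,q\in K[\ut]$ and using that for $p=\sum_I a_I\ut^I$ one has $t_k\partial_k p=\sum_I i_k a_I\ut^I$ with $|i_k|_K\le 1$, one obtains $|t_k\partial_k p|_\ur\le|p|_\ur$, and hence $t_k\partial_k f/f=t_k\partial_k p/p-t_k\partial_k q/q\in\Lcirc$.

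Next, I would use the universal property of Lemma~\ref{logderlem} to build a retraction $\psi\:\Omega^\rmlog_{\Lcirc/\Kcirc}\to(\Lcirc)^n$. Define a log $\Kcirc$-derivation $(D,\Delta)$ of $\Lcirc$ with values in $(\Lcirc)^n$ by $\Delta f=\sum_k(t_k\partial_k f/f)e_k$ and $Df=f\cdot\Delta f=\sum_k(t_k\partial_k f)e_k$; additivity of $D$, the Leibniz rule, multiplicativity of $\Delta$, and vanishing of both on $\Kcirc\setminus\{0\}$ all follow immediately from the Leibniz rule for $\partial_k$. The associated $\Lcirc$-linear map $\psi$ satisfies $\psi(\delta f)=\Delta f$, and in particular $\psi(\delta t_k)=e_k$, so $\psi\circ\phi$ is the identity and $\delta t_1\..\delta t_n$ are $\Lcirc$-linearly independent.

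For surjectivity of $\phi$, set $\omega_f=\delta f-\sum_k(t_k\partial_k f/f)\delta t_k\in\ker(\psi)$ for $f\in\Lcirc\setminus\{0\}$. From $dt_k=t_k\delta t_k$ one rewrites $df=\sum_k\partial_k f\cdot dt_k$ as $\sum_k(t_k\partial_k f)\delta t_k$, and the log relation $f\delta f=df$ then gives $f\omega_f=0$ in $\Omega^\rmlog_{\Lcirc/\Kcirc}$. The assignment $f\mapsto\omega_f$ is monoid-multiplicative (by multiplicativity of $\delta$ together with $\partial_k(fg)/(fg)=\partial_kf/f+\partial_kg/g$), so it extends uniquely to a group homomorphism $\tilde\omega\:L^\times\to\ker(\psi)$. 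I would then verify $\tilde\omega$ vanishes on three classes in $L^\times$: on $\Kcirc\setminus\{0\}$ because $\delta c=0$ by the log-structure condition, on each $t_k$ by the direct check $\omega_{t_k}=\delta t_k-\delta t_k=0$, and on $(\Lcirc)^\times$ because $u\omega_u=0$ combined with invertibility of $u$ forces $\omega_u=0$.

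To finish I would note that these three classes generate $L^\times$: for any $f\in L^\times$, the equality $|L^\times|=|K^\times|\cdot r_1^\bfZ\cdots r_n^\bfZ$ provides a decomposition $|f|=|c|\ur^I$ with $c\in K^\times$ and $I\in\bfZ^n$, and then $u:=fc^{-1}t^{-I}$ has norm $1$, so $u\in(\Lcirc)^\times$ and $f=c\cdot t^I\cdot u$. Consequently $\tilde\omega\equiv 0$, so $\omega_f=0$ and $\delta f=\sum_k(t_k\partial_k f/f)\delta t_k\in\sum_k\Lcirc\delta t_k$ for every $f\in\Lcirc\setminus\{0\}$; since such $\delta f$ generate $\Omega^\rmlog_{\Lcirc/\Kcirc}$, $\phi$ is surjective, hence an isomorphism. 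The main bookkeeping subtlety I anticipate is the handling of elements $c\in K^\times\setminus\Kcirc$ and of $t_k$ with $r_k>1$: neither need lie in $\Lcirc$, but passing through the group extension $\tilde\omega$ rather than $\omega$ itself dispatches these cleanly once the initial reduction to $r_k\le 1$ has been made.
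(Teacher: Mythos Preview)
Your proof is correct and rests on the same two ingredients as the paper's: the explicit logarithmic-derivative formula $\Delta f=\sum_k(t_k\partial_k f/f)\,m_k$ and the factorization of any $f\in L^\times$ as $c\cdot\ut^I\cdot u$ with $c\in K^\times$ and $|u|_\ur=1$. The paper packages these as a direct verification of the universal property of Lemma~\ref{logderlem} (for every target module $M$ and every choice of $m_i$, these formulas define the unique log $\Kcirc$-derivation with $\delta t_i=m_i$), whereas you apply that lemma once to obtain a retraction $\psi$ and then run a separate surjectivity argument via the group extension $\tilde\omega$. This is slightly more roundabout but entirely equivalent.

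One small imprecision worth tightening: the intermediate expression $\sum_k\partial_k f\cdot dt_k$ is not literally well-formed in $\Omega^\rmlog_{\Lcirc/\Kcirc}$, since $\partial_k f$ need not lie in $\Lcirc$. What you actually need is $df=\sum_k(t_k\partial_k f)\,\delta t_k$, and this should be checked directly rather than by ``rewriting''. For a monomial $m=a\ut^I\in\Lcirc\setminus\{0\}$ with $a\in K^\times$, multiplicativity of $\delta$ together with $\delta(a)=0$ (if $a\in\Kcirc$) or $\delta(a^{-1})=0$ (if $a^{-1}\in\Kcirc$) gives $\delta m=\sum_k i_k\,\delta t_k$, hence $dm=m\,\delta m=\sum_k(t_k\partial_k m)\,\delta t_k$; additivity then handles all of $K[\ut]\cap\Lcirc$, and the quotient rule with a unit denominator yields the general case.
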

\begin{proof}
By Lemma~\ref{logderlem} it suffices to show that for any $\Lcirc$-module $M$ and elements $m_1{,\dots ,}m_n$ there exists a unique log $\Kcirc$-derivation $(d,\delta)\:(\Lcirc,\Lcirc\setminus\{0\})\to M$ such that $\delta t_i=m_i$. If $u\in K[\ut]$ satisfies $\lvert u\rvert_\ur=1$ then $u\in(\Lcirc)^\times$ and $u=\sum_{l\in\bfN^n} a_l\ut^l$, where $\lvert a_l\ut^l\rvert \le 1$, so we set $$\delta u=u^{-1}\left(\sum_{l\in\bfN^n}\sum_{i=1}^n l_ia_l\ut^lm_i\right).$$ Since $\lvert L^\times\rvert=\lvert K^\times\rvert r_1^\bfZ\dots r_n^\bfZ$, an arbitrary element $z\in\Lcirc$ is of the form $a\ut^lu/v$ for $l\in\bfZ^n$, $a\in K$ and $u,v\in K[\ut]$ with $\lvert u\rvert_\ur=\lvert v\rvert_\ur=1$, so we set $\delta z=l\delta t+\delta u-\delta v$ and $dz=z\delta z$. It is a direct check that the so defined $(d,\delta)$ is a log $\Kcirc$-derivation. Any other log $\Kcirc$-derivation should satisfy the same formulas for $\delta u$, $\delta z$ and $dz$, so uniqueness is clear.
\end{proof}

\subsubsection{A characterization of Gauss valuations}
Under mild technical assumptions, one can also characterize generalized Gauss valuations in terms of $\Omega^\rmlog_{\Lcirc/\Kcirc}$.

\begin{lem}\label{inversegausslem}
Let $L=K(t_1{,\dots ,}t_n)/K$ be a purely transcendental extension of real-valued fields, let $r_i=\lvert t_i\rvert$, let $p=\expchar(\tilK)$, and assume that $\tilK$ is perfect and $\lvert K^\times\rvert$ is $p$-divisible. If $\frac{dt_1}{t_1}{,\dots ,}\frac{dt_n}{t_n}$ is an orthonormal basis of $\Omega_{L/K}$ then $L=K(t_1{,\dots ,}t_n)_\ur$ as a valued field.
\end{lem}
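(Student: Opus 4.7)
The plan is to show the identity map $K(\ut)_{\ur}\to L$ is an isometry. The inequality $|f|_L\le|f|_{\ur}$ is automatic from the ultrametric triangle inequality applied to a polynomial representation $f=\sum a_I t^I$, and extends to all of $K(\ut)$ by multiplicativity. For the reverse, it suffices to show that the images $\bar t_1,\dots,\bar t_n$ of $t_1,\dots,t_n$ in the graded reduction $\mathrm{gr}(L)=\bigoplus_{\gamma\in|L^\times|}L_{\le\gamma}/L_{<\gamma}$ are graded-algebraically independent over $\mathrm{gr}(K)$; equivalently, no polynomial relation $\sum a_I t^I=0$ can hold modulo lower grade when all $|a_I|\ur^I$ equal a common value $\rho$. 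This would force $|L^\times|=|K^\times|r_1^{\bfZ}\cdots r_n^{\bfZ}$ and pin down the valuation as the Gauss one.

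Assume for contradiction that such a relation exists, giving a non-zero graded polynomial $g\in\mathrm{gr}(K)[x_1,\dots,x_n]$ homogeneous of grade $\rho$ (with $\deg x_i=r_i$, so the coefficient $\bar a_I$ of $x^I$ lies in the grade $\rho/\ur^I$ component of $\mathrm{gr}(K)$, forcing $\rho/\ur^I\in|K^\times|$) with $g(\bar t_1,\dots,\bar t_n)=0$. Choose such a $g$ of minimal total polynomial degree. Lifting each coefficient to $a_I\in K$ with $|a_I|=\rho/\ur^I$, I obtain $G=\sum a_I t^I\in K[\ut]$ whose support equals that of $g$ and satisfies $|G|_L<\rho$.

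The orthonormality of $\{dt_i/t_i\}$ yields $\|dG\|_\Omega=\max_i|\partial_i G|_L\cdot r_i$, while the fundamental bound $\|dG\|_\Omega\le|G|_L<\rho$ of the K\"ahler seminorm then gives $|\partial_i G|_L<\rho/r_i$ for every $i$. The top graded component of $\partial_i G$ in grade $\rho/r_i$ corresponds to the formal derivative $\partial_i g$ evaluated at $\bar t$, so $\partial_i g(\bar t)=0$, and minimality of $\deg g$ forces $\partial_i g=0$ as a polynomial, for every $i$.

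When $p=1$ (i.e.\ $\cha(\tilK)=0$), vanishing of all partials forces $g$ to be a non-zero constant, contradicting $g(\bar t)=0$. When $p>0$, it forces $g=\sum b_J x^{pJ}$ to be a polynomial in the $x_i^p$, with $b_J\in\mathrm{gr}(K)$ of grade $\rho/\ur^{pJ}$. Here the two hypotheses enter decisively: the $p$-divisibility of $|K^\times|$ supplies a $p$-th root $(\rho/\ur^{pJ})^{1/p}\in|K^\times|$, and the perfectness of $\tilK$ (which, combined with $p$-divisibility of $|K^\times|$, makes Frobenius bijective on $\mathrm{gr}(K)$) lets me write $b_J=c_J^p$ for some $c_J\in\mathrm{gr}(K)$ of the required grade. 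Then $g=h^p$ with $h=\sum c_J x^J$ a graded polynomial of strictly smaller degree, and $h(\bar t)=0$ because $\mathrm{gr}(L)$ is a graded integral domain; this contradicts the minimality of $g$. The main obstacle is exactly this $p$-th root extraction, which requires the joint action of $\tilK$ perfect and $|K^\times|$ $p$-divisible.
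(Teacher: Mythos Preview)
Your argument is correct and is essentially the paper's proof recast in the language of graded reductions. The paper works directly with a minimal-degree polynomial $f\in K[\ut]$ having $\lvert f\rvert<\lvert f\rvert_\ur$ and all monomials of equal top norm, first uses the hypotheses on $K$ (via approximate $p$-th roots of the coefficients in $K$) to locate an exponent not divisible by $p$, and then applies orthonormality to derive the contradiction; you phrase the minimal relation in $\mathrm{gr}(K)[x]$, apply orthonormality first to kill all formal partials, and then extract an exact $p$-th root inside $\mathrm{gr}(K)$ to contradict minimality---the two orderings are interchangeable and the steps match one for one.
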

\begin{proof}
Assume that, conversely, the valuation $\lvert \ \rvert$ on $L$ is not generalized Gauss with respect to $\ut$. Then the valuation on $K[\ut]$ is strictly dominated by $\lvert \ \rvert_\ur$, in particular, there exists a polynomial $f(\ut)=\sum_{l\in\bfN^n} a_l\ut^l$ in $\Lcirc$ such that $\lvert f\rvert <\lvert f\rvert_\ur=\max_l\lvert a_l\rvert \ur^l$. Removing all monomials of absolute value strictly smaller than $\lvert f\rvert_\ur$ we can achieve that the following condition holds: (*) $\lvert f\rvert <\lvert f\rvert_\ur$ and $\lvert a_l\ut^l\rvert =\lvert f\rvert_\ur$ for each $a_l\neq 0$.

Choose $f$ satisfying (*) and of minimal possible degree. We claim that if $p>1$ then not all monomials are $p$-th powers. Indeed, assume that the claim fails, say $f=\sum_{l\in p\bfN^n} a_l\ut^l$. By our assumption on $K$, for any $a\in K$ there exists $b\in K$ with $|a-b^p|<|a|$. Indeed, since $|K^\times|$ is $p$-divisible we have that $a=x^py$ with $|y|=1$, and using that $\tilK$ is perfect we can find $z\in K$ with $\tily=\tilz^p$. Then $b=xz$ is as required. Now, for any $a_l$ fix $b_l$ such that $\lvert b_l^p-a_l\rvert <\lvert a_l\rvert$. Clearly, $\sum_{l\in p\bfN^n} b_l^p\ut^l$ satisfies (*) and hence $\sum_{l\in\bfN^n} b_{pl}\ut^l$ also satisfies (*), but its degree is smaller than that of $f$. It follows that there exists $l\in\bfN^n$ and $1\le i\le n$ such that $a_l\neq 0$ and $\lvert l_i\rvert =1$.

Since the basis $\frac{dt_1}{t_1}{,\dots ,}\frac{dt_n}{t_n}$ of $\Omega_{L/K}$ is orthonormal, $\lvert f\rvert \ge\lVert df\rVert_\Omega$ and
$$df=\sum_{l\in\bfN^n}\sum_{i=1}^n l_ia_l\ut^l\frac{dt_i}{t_i},$$ the inequality $\lvert l_ia_lt^l\rvert \le\lvert f\rvert$ holds for any choice of $l$ and $i$. However, $\lvert l_ia_l\ut^l\rvert =\lvert a_l\ut^l\rvert =\lvert f\rvert_\ur>\lvert f\rvert$ for the choice with $a_l\neq 0$ and $\lvert l_i\rvert =1$, a contradiction.
\end{proof}

\subsubsection{$\ut$-monomial valuations}
Let $L/K$ be an extension of real-valued fields and $\ut=(t_1{,\dots ,}t_n)$ a tuple of elements of $L$. We say the valuation on $L$ is {\em $\ut$-monomial} with respect to $K$ if the induced valuation on $k[\ut]$ is a generalized Gauss valuation. This happens if and only if $L$ contains the valued subfield $K(\ut)_\ur$ where $r_i=\lvert t_i\rvert$. All results proved in Section~\ref{orthosec} can be summarized as follows.

\begin{theor}\label{tmonth}
Assume that $L/K$ is an extension of real-valued fields with a separable transcendence basis $t_1{,\dots ,}t_n$. Consider the following conditions:

(i) The valuation on $L$ is $\ut$-monomial and $\Omega^\rmlog_{\Lcirc/K(\ut)^\circ}=0$.

(ii) $\Omega^\rmlog_{\Lcirc/\Kcirc}$ is a free $\Lcirc$-module with basis $\delta t_1{,\dots ,}\delta t_n$.

(iii) $\frac{dt_1}{t_1}{,\dots ,}\frac{dt_n}{t_n}$ is an orthonormal basis of $\Omega_{L/K}$.

(iv) $\lVert \frac{dt_1}{t_1}\wedge\dots\wedge\frac{dt_n}{t_n}\rVert_{\Omega^n}=1$ in $\Omega^n_{L/K}$.

Then (i)$\implies$(ii)$\implies$(iii)$\Longleftrightarrow$(iv). Furthermore, if $K$ is almost tame then all four conditions are equivalent.
\end{theor}
\begin{proof}
(i)$\implies$(ii) Set $F=K(\ut)$. By the first fundamental sequence, we have a surjective map $\psicirc\:\Omega^\rmlog_{\Fcirc/\Kcirc}\otimes_{\Fcirc}\Lcirc\to\Omega^\rmlog_{\Lcirc/\Kcirc}$, which becomes the isomorphism $\psi\:\Omega_{F/K}\otimes_FL\toisom\Omega_{L/K}$ after tensoring with $L$. In particular, the kernel of $\psicirc$ is torsion. By Lemma~\ref{gausslem}, the source of $\psicirc$ is a free module with basis $\delta t_1{,\dots ,}\delta t_n$. So, the kernel of $\psicirc$ is trivial, and hence $\psicirc$ is an isomorphism.

(ii)$\implies$(iii) This is covered by Lemma~\ref{orthlem}.

(iii)$\Longleftrightarrow$(iv) The direct implication is obvious. Conversely, assume that (iii) fails. Then the lattice $M$ generated by the elements $e_i=\frac{dt_i}{t_i}$ is strictly smaller than $\Omega_{L/K}^\di$ and hence there exists a lattice $M'$ such that $M\subsetneq M'\subseteq\Omega_{L/K}^\di$. Choose a basis $e'_1\..e'_n$ of $M'$ then $$1\ge\lVert e'_1\wedge\dots\wedge e'_n\rVert_{\Omega^n}=[M':M]\cdot\lVert e_1\wedge\dots\wedge e_n\rVert_{\Omega^n},$$ and since $[M':M]>1$ we obtain that (iv) fails.

Finally, assume that $K$ is almost tame, in particular, $\Omega^\rmlog_{\Lcirc/\Kcirc}$ is torsion free. Then the implication (iii)$\implies$(ii) follows from Lemma~\ref{orthlem}. Assume, now, that (ii) holds; in particular, $\psicirc$ is surjective and hence $\Omega^\rmlog_{\Lcirc/\Fcirc}=0$. Furthermore, the isomorphism $\psi$ is non-expansive and $\lVert \frac{dt_i}{t_i}\rVert_\Omega\le 1$ in its source. Therefore, $\psi$ is an isometry and $\frac{dt_1}{t_1}{,\dots ,}\frac{dt_n}{t_n}$ is an orthonormal basis of $\Omega_{F/K}$. By Lemma~\ref{inversegausslem}, the valuation on $F$ is generalized Gauss, i.e. $L$ is $\ut$-monomial.
\end{proof}

\begin{rem}
The assumption that $K$ is almost tame in Theorem~\ref{tmonth} is needed for the implication (iii)$\implies$(ii) even when $n=1$ and $K$ is trivially valued. For example, assume that $p=\cha(K)>0$ and $a\in\Kcirc$ is such that $\tila\notin\tilK^p$ and define the norm on $L=K(t)$ so that $\lvert t^p-a\rvert =r$ for some $r\in(0,1)$ and the norm is $(t^p-a)$-monomial. Then one can check that $\Omega^\rmlog_{\Lcirc/\Kcirc}$ is a direct sum of $\Lcirc\delta t$ and an essential torsion submodule generated by $\delta(t^p-a)$, and hence $\frac{dt}t$ is of norm one. Note that $\Omega^\rmlog_{\Lcirc/\Kcirc}$ is not free in this case, although its torsion free quotient is free with basis $\delta t$. In the particular case of the trivial valuation, the same example in a non-complete setting was already used in the end of proof of Theorem~\ref{torth}.
\end{rem}

\section{Metrization of $\Omega_{X/S}$}\label{metrsec}
Throughout Section \ref{metrsec}, $f\:X\to S$ denotes a morphism of $k$-analytic spaces. In the case of sheaves, $\lvert \ \rvert$ will always denote spectral seminorms on the structure sheaves and their stalks, and $\lVert \ \rVert$ will always denote K\"ahler seminorms (defined below) on sheaves of pluriforms and their stalks.

\subsection{K\"ahler seminorm on $\Omega_{X/S}$}
In this section we introduce a seminorm $\lVert \ \rVert =\lVert \ \rVert_{\Omega,X/S}$ on $\Omega_{X/S}$. We will mention $\Omega$ and $X/S$ in the notation only when a confusion is possible.


\subsubsection{The definition}\label{kahlerdefsec}
The construction is straightforward: we simply sheafify the presheaf of K\"ahler seminorms on affinoid algebras. Let $\calC$ be the full subctegory of $X_G$ whose objects are affinoid domains $V=\calM(\calB)$ in $X$ such that $f(V)$ is contained in an affinoid domain $U=\calM(\calA)\subseteq S$. Note that $\Omega_{X/S}(V)=\hatOmega_{\calB/\calA}$, in particular, $\hatOmega_{\calB/\calA}$ is independent of the choice of $U$. Furthermore, the K\"ahler seminorm on $\hatOmega_{\calB/\calA}$ is the quotient of the K\"ahler seminorm on $\hatOmega_{\calA/k}$ by Lemma~\ref{fundseclem}(i), hence it is independent of $U$ too and we can denote it $\lVert \ \rVert '_{\calB/S}$. This construction produces a locally bounded pre-quasi-norm $\lVert \ \rVert '$ on the restriction of $\Omega_{X/S}$ to $\calC$. Its sheafification is a seminorm on $\Omega_{X/S}\vert_\calC$, and we can extend this seminorm to a seminorm $\lVert \ \rVert =\lVert \ \rVert _{\Omega,X/S}$ on the whole $\Omega_{X/S}$ since $\calC$ is cofinal in $X_G$. We call $\lVert \ \rVert _{\Omega,X/S}$ the {\em K\"ahler seminorm} of $X/S$.

\begin{rem}
(i) By definition, if $V$ is a domain in $X$ and $\phi\in\Omega_{X/S}(V)$ then $\lVert \phi\rVert_V =\inf\max_i\lVert \phi\rVert'_{\calA_i/S}$, where the infimum is over admissible coverings $V=\cup_i V_i$ with $V_i=\calM(\calA_i)$ in $\calC$.

(ii) We do not study the question whether $\lVert \ \rVert _V=\lVert \ \rVert '_V$ for any $V$ in $\calC$ (i.e., whether $\lVert \ \rVert '$ is already a seminorm on $\Omega_{X/S}\vert_\calC$). This is not essential for our needs because all results about $\lVert \ \rVert $ will be proved using stalks.
\end{rem}

\subsubsection{The universal property}
The universal property satisfied by K\"ahler seminorms of seminormed rings, see Lemma~\ref{universemilem}(i), and the universal property of sheafification, see Lemma~\ref{sheafiflem}, imply the following characterization of $\lVert \ \rVert $.

\begin{lem}\label{univkahlem}
The K\"ahler seminorm $\lVert \ \rVert _{\Omega,X/S}$ is the maximal seminorm on $\Omega_{X/S}$ that makes the differential $d\:\calO_X\to\Omega_{X/S}$ a non-expansive map.
\end{lem}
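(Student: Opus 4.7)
The plan is to verify the two defining properties of a maximal seminorm in turn: first that $d\:\calO_{X_G}\to\Omega_{X/S}$ is itself non-expansive with respect to $\lVert\ \rVert_{\Omega,X/S}$, and second that any $\calO_{X_G}$-module seminorm $\lVert\ \rVert''$ on $\Omega_{X/S}$ making $d$ non-expansive is dominated by $\lVert\ \rVert_{\Omega,X/S}$. The two main tools are the universal property of K\"ahler seminorms on seminormed rings (Lemma~\ref{universemilem}(i)) and the characterization of sheafification as the maximal quasi-norm dominated by a given pre-quasi-norm (Lemma~\ref{sheafiflem}).

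For non-expansiveness, fix $V=\calM(\calB)\in\calC$ with $f(V)\subseteq U=\calM(\calA)$. By construction the pre-quasi-norm $\lVert\ \rVert'_V$ is the K\"ahler norm on $\hatOmega_{\calB/\calA}$, obtained by continuous extension of the K\"ahler seminorm on $\Omega_{\calB/\calA}$ characterized by Lemma~\ref{universemilem}(i); in particular, $d_{\calB/\calA}\:\calB\to\hatOmega_{\calB/\calA}$ is non-expansive. Hence $d$ is non-expansive with respect to $\lVert\ \rVert'$ on $\calC$, and since $\lVert\ \rVert_{\Omega,X/S}\le\lVert\ \rVert'$ on $\calC$ by the sheafification construction (Lemma~\ref{sheafiflem}), non-expansiveness is inherited. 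For a general analytic domain $W$, a $G$-covering $\{V_i\}$ by objects of $\calC$ and the quasi-norm property yield
$$\lVert d(a)\rVert_{\Omega,X/S,W}=\sup_i\lVert d(a|_{V_i})\rVert_{\Omega,X/S,V_i}\le\sup_i|a|_{V_i}=|a|_W.$$

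For maximality, let $\lVert\ \rVert''$ be such a seminorm, and fix $V=\calM(\calB)\in\calC$. For any $\omega=\sum_j b_j\,d(c_j)$ in the uncompleted submodule $\Omega_{\calB/\calA}\subseteq\hatOmega_{\calB/\calA}$, combining the $\calB$-module axiom with non-expansiveness of $d$ gives $\lVert\omega\rVert''_V\le\max_j|b_j|\,|c_j|$; taking the infimum over representations yields $\lVert\omega\rVert''_V\le\lVert\omega\rVert_\Omega$ for every $\omega\in\Omega_{\calB/\calA}$. A density argument using the ultrametric inequality and the approximation of elements of $\hatOmega_{\calB/\calA}$ by those of $\Omega_{\calB/\calA}$ then extends the bound to $\lVert\ \rVert''_V\le\lVert\ \rVert'_V$ on all of $\hatOmega_{\calB/\calA}$. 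Thus $\lVert\ \rVert''|_\calC\le\lVert\ \rVert'|_\calC$, and since $\lVert\ \rVert''$ is itself a quasi-norm, Lemma~\ref{sheafiflem} yields $\lVert\ \rVert''|_\calC\le\lVert\ \rVert_{\Omega,X/S}|_\calC$. Cofinality of $\calC$ in $X_G$ propagates this inequality to the whole site.

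The main obstacle is the density extension in the last step: passing from the domination on the uncompleted submodule $\Omega_{\calB/\calA}$, where Lemma~\ref{universemilem}(i) applies directly, to the completion $\hatOmega_{\calB/\calA}$ where sections of $\Omega_{X/S}$ actually live. Because an abstract seminorm $\lVert\ \rVert''_V$ is not a priori continuous with respect to the K\"ahler norm, one must exploit the ultrametric triangle inequality together with the $\calB$-module structure and the density of $\Omega_{\calB/\calA}$ in $\hatOmega_{\calB/\calA}$ to transfer the bound to limits of Cauchy sequences; once this is done the rest of the argument is purely formal and reduces to the two cited universal properties.
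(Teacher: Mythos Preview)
Your overall structure is exactly what the paper has in mind: invoke Lemma~\ref{universemilem}(i) to handle each affinoid $V\in\calC$, then invoke Lemma~\ref{sheafiflem} to pass from $\lVert\ \rVert'$ to its sheafification. The first half (non-expansiveness of $d$) is fine.

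The gap is in the density step, and you have in fact correctly located it: the argument you sketch does not work. If $\omega\in\hatOmega_{\calB/\calA}$ and $\omega_n\in\Omega_{\calB/\calA}$ approximates it in $\lVert\ \rVert'$, the ultrametric inequality gives $\lVert\omega\rVert''_V\le\max(\lVert\omega_n\rVert''_V,\lVert\omega-\omega_n\rVert''_V)$, and while the first term is controlled, the second is not: you only know $\lVert\omega-\omega_n\rVert'_V\to 0$, and there is no reason $\lVert\ \rVert''_V$ should be continuous for $\lVert\ \rVert'_V$. Invoking ``the $\calB$-module structure and density'' does not close this; an arbitrary $\calB$-seminorm on a completion can be strictly larger than the completion norm off the dense image.

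The correct fix is to avoid density entirely by observing that in the affinoid situation the completion map $\alpha\:\Omega_{\calB/\calA}\to\hatOmega_{\calB/\calA}$ is \emph{surjective}. Indeed, writing $\calB=\calA\{T_1,\dots,T_n\}/I$, the module $\hatOmega_{\calB/\calA}$ is finitely generated over $\calB$ by $dT_1,\dots,dT_n$, all of which lie in the image of $\alpha$. Since $\alpha$ is an isometry (completion maps always are), every $\omega\in\hatOmega_{\calB/\calA}$ has a preimage $x$ with $\lVert x\rVert_\Omega=\lVert\omega\rVert'_V$, hence a finite representation $\omega=\sum c_i\,db_i$ with $\max_i|c_i||b_i|$ arbitrarily close to $\lVert\omega\rVert'_V$. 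This gives $\lVert\omega\rVert''_V\le\lVert\omega\rVert'_V$ directly, with no limiting process. In other words, Lemma~\ref{universemilem}(i) does extend verbatim to $\hatOmega_{\calB/\calA}$ in the affinoid case, but because of surjectivity of $\alpha$, not because of density.
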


\subsubsection{Unit balls}
There is an alternative approach to K\"ahler seminorms via unit balls. It is more elementary but less robust and we will not use it. For the sake of completeness we outline it in the following remark leaving some simple verifications to the interested reader.

\begin{rem}\label{unitballrem}
(i) If $\lvert k^\times\rvert $ is dense then Lemma \ref{univkahlem} provides a simple way to describe the unit ball $\Omega^\di_{X/S}$ of $\lVert \ \rVert $, and we obtain another way to define $\lVert \ \rVert $. This involves the sheaf-theoretic extension of the terminology of Section~\ref{semivectsec}. Let $\calB_{X/S}=\calO^\circ_{X_G}d\calO^\circ_{X_G}$ denote the subsheaf of $\calO^\circ_{X_G}$-modules of $\Omega_{X/S}$ generated by $d\calO^\circ_{X_G}$. Lemma \ref{univkahlem} implies that $\lVert \ \rVert $ is the maximal seminorm whose unit ball contains $d\calO^\circ_{X/G}$, hence $\calB_{X/S}$ is an almost unit ball of $\lVert \ \rVert $ and the unit ball $\Omega^\di_{X/S}$ is the almost isomorphic envelope of $\calB_{X/S}$.

(ii) Let $k$ be arbitrary. Already for the unit disc $E=\calM(k\{t\})$, the inclusion $i\:\calB_{E/k}\into\Omega^\di_{E/k}$ is usually not an isomorphism. For example, if $x$ is the maximal point of a disc around zero of radius $r\notin\lvert k^\times\rvert ^\bfQ$ then $\frac{dt}{t}$ is contained in $\Omega^\di_{E/k,x}$ but not in $\calB_{E/k,x}$. Moreover, if $\lvert k^\times\rvert $ is discrete then $i$ is not even an almost isomorphism. This example suggests that one can improve the situation by adding logarithmic differentials of units, so we set $$\calB_{X/S}^\rmlog=\calB_{X/S}+O^\circ_{X_G}\delta\calO^\times_{X/S},$$ where $\delta\:\calO^\times_{X/S}\to\Omega_{X/S}$ is the logarithmic differential. It is easy to see that $\calB_{X/S}^\rmlog\subseteq\Omega^\di_{X/S}$ and the inclusion is an equality for $E/k$. It is an interesting question whether $\calB_{X/S}^\rmlog=\Omega^\di_{X/S}$ in general.
\end{rem}

\subsubsection{The stalks and the fibers}\label{metrstalksec}
Our next aim is to study local behavior of the K\"ahler seminorm $\lVert \ \rVert$ at points of $X$. Given $x\in X$ with $s=f(x)$, fix an affinoid domain $U=\calM(\calA)$ containing $s$ and let $\{V_\lam=\calM(\calB_\lam)\}_\lam$ be the family of affinoid domains in $X$ such that $x\in V_\lam$ and $f(V_\lam)\subseteq U$. Provide $\Omega_{X/S,x}$ with the stalk seminorm $\lVert \ \rVert_x$, then we saw in \S\ref{kahlerdefsec} that $\Omega_{X/S,x}$ is the filtered colimit of the seminormed $\calA$-modules $\hatOmega_{\calB_\lam/\calA}$ (see \S\ref{normcolimsec}). In fact, $\Omega_{X/S,x}$ is an (uncompleted) filtered colimit of completed modules of differentials, so it can be informally thought of as a partial completion of $\Omega_{\calO_{x}/\calO_{s}}$, where we set $\calO_x=\calO_{X_G,x}$ and $\calO_s=\calO_{S_G,s}$ for shortness. Consider the following commutative diagram of seminormed modules
\begin{equation}\label{eq2}
\xymatrix{
\Omega_{\calB_\lam/\calA}\ar@{->>}[r]^{\alp_\lam}\ar[d]^{\beta_\lam}& \hatOmega_{\calB_\lam/\calA}\ar[d]^{\gamma_\lam}\ar[rd]^{\psi_\lam} & \\
\Omega_{\calO_{x}/\calO_{s}}\ar[r]^{\alp_x}& \Omega_{X/S,x}\ar[r]^(0.4){\psi_x}& \hatOmega_{\calH(x)/\calH(s)},
}
\end{equation}
where $\psi_x$ is the non-expansive $\calA$-homomorphism induced by the $\calA$-homomorphisms $\psi_\lam$ via the universal property of colimits.

\begin{theor}\label{isometryth}
Keep the above notation. Then $\alp_x$ and $\psi_x$ are isometries with dense images. In particular, $\hatOmega_{\calH(x)/\calH(s)}$ is the completion of both $\Omega_{X/S,x}$ and $\Omega_{\calO_{x}/\calO_{s}}$.
\end{theor}
\begin{proof}
By Lemma~\ref{normcolimlem}, $\Omega_{\calO_{x}/\calO_{s}}=\colim_\lam\Omega_{\calB_\lam/\calA}$ as seminormed modules. In particular, $\alp_x$ is the colimit of isometries with dense images $\alp_\lam$, and hence $\alp_x$ itself is an isometry with a dense image. Therefore, it suffices to show that the map $\Omega_{\calO_{x}/\calO_{s}}\to\hatOmega_{\calH(x)/\calH(s)}$ is the completion homomorphism. Indeed, the surjection $\Omega_{\calO_{x}/\calO_{s}}\to\Omega_{\kappa_G(x)/\kappa_G(s)}$ is an isometry by Corollary~\ref{fundcor}
and $\hatOmega_{\kappa_G(x)/\kappa_G(s)}=\hatOmega_{\calH(x)/\calH(s)}$ by Corollary~\ref{densecor2}.
\end{proof}

\begin{cor}\label{inducecor}
In the situation of Theorem~\ref{isometryth}, $\psi_x$ identifies the completed fiber $\wh{\Omega_{X/S}(x)}$ with $\hatOmega_{\calH(x)/\calH(s)}$.
\end{cor}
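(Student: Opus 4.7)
The plan is to descend the isometry $\psi_x$ of Theorem~\ref{isometryth} to the fiber $\Omega_{X/S}(x)=\Omega_{X/S,x}/m_{G,x}\Omega_{X/S,x}$ and then take completions.

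First, I would observe that $\hatOmega_{\calH(x)/\calH(s)}$ is naturally a Banach $\calH(x)$-module, and its $\calO_{X_G,x}$-structure induced via $\psi_x$ factors through the composition $\calO_{X_G,x}\twoheadrightarrow\kappa_G(x)\hookrightarrow\calH(x)$ (see Lemma~\ref{stalklem}), whose kernel is $m_{G,x}$. Consequently $\psi_x$ vanishes on $m_{G,x}\Omega_{X/S,x}$ and descends to a non-expansive $\calH(x)$-linear map $\bar\psi_x\colon\Omega_{X/S}(x)\to\hatOmega_{\calH(x)/\calH(s)}$, which retains the dense image of $\psi_x$.

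Next, I would verify that $\bar\psi_x$ is an isometry for the fiber seminorm $\lVert\ \rVert_{(x)}$. For any $\bar\omega\in\Omega_{X/S}(x)$ with an arbitrary lift $\omega\in\Omega_{X/S,x}$, the definition of the quotient seminorm together with the isometry property of $\psi_x$ from Theorem~\ref{isometryth} gives
$$\lVert\bar\omega\rVert_{(x)}\le\lVert\omega\rVert_x=\lVert\psi_x(\omega)\rVert=\lVert\bar\psi_x(\bar\omega)\rVert,$$
while non-expansiveness of $\bar\psi_x$ yields the reverse inequality. Taking completions of the isometric dense embedding $\bar\psi_x$ then produces the desired isometric isomorphism $\wh{\Omega_{X/S}(x)}\toisom\hatOmega_{\calH(x)/\calH(s)}$.

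I do not anticipate a serious obstacle, since Theorem~\ref{isometryth} does all the heavy work: the only substantive check is that $m_{G,x}$ acts trivially on the target, which is immediate from the factorization of $\calO_{X_G,x}\to\calH(x)$ through the residue field $\kappa_G(x)$, and after that the fiber seminorm coincides with the pullback of the K\"ahler norm on $\hatOmega_{\calH(x)/\calH(s)}$ essentially by construction.
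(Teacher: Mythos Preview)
Your proposal is correct and is essentially the argument the paper has in mind; the corollary is stated without proof as an immediate consequence of Theorem~\ref{isometryth}, together with the general remark (made just before the definition of completed fibers) that for any seminormed $\calO_{X_G}$-module the stalk seminorm $\lVert\ \rVert_x$ vanishes on $m_{G,x}\calF_x$ and is therefore induced from the fiber seminorm $\lVert\ \rVert_{(x)}$. Your explicit verification that $\bar\psi_x$ is an isometry with dense image is exactly the unpacking of this.
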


\begin{rem}
(i) Corollary~\ref{inducecor} provides an alternative way to define $\lVert \ \rVert_{x}$. Instead of the colimit definition in \S\ref{metrstalksec}, one can simply induce $\lVert \ \rVert_x$ from $\hatOmega_{\calH(x)/\calH(s)}$ by the rule $\lVert \phi\rVert_{x}=\lVert \psi_x(\phi)\rVert_{\hatOmega,\calH(x)/\calH(s)}$ for $\phi\in\Omega_{X/S,x}$.

(ii) Even more importantly, the corollary provides a convenient way to compute the values of $\lVert \ \rVert_x$ since the finite-dimensional normed vector spaces $\hatOmega_{\calH(x)/\calH(s)}$ are often pretty explicit. For comparison, we note that it is not clear how to describe the fiber $\Omega_{X/S}(x)$ in terms of $\kappa_G(x)/\kappa_G(s)$. One can only say that $\Omega_{X/S}(x)$ is a finite-dimensional quotient, in fact, a partial completion of the huge vector space $\Omega_{\kappa_G(x)/\kappa_G(s)}$. However, the seminorm of $\Omega_{X/S}(x)$ can still have a non-trivial kernel.
\end{rem}

\subsubsection{K\"ahler seminorms on pluriforms}
By the constructions of \S\ref{opersec}, $\lVert \ \rVert_\Omega$ induces seminorms on the sheaves obtained from $\Omega_{X/S}$ by tensor products, symmetric powers and exterior powers. In particular, it induces a canonical seminorm $\lVert \ \rVert_{(\Omega_X^l)^{\otimes m}}$ on the sheaf of pluriforms $(\Omega_X^l)^{\otimes m}$, that will be called K\"ahler seminorm too.

Of particular interest will be the situation when $X\to S$ is quasi-smooth of relative dimension $n$. Then the relative canonical sheaf $\omega_{X/S}=\bigwedge^n\Omega_{X/S}$ is invertible, as well as the relative pluricanonical sheaves $\omega_{X/S}^{\otimes m}$. The corresponding seminorms will be denoted $\lVert \ \rVert_\omega$ and $\lVert \ \rVert_{\omega^{\otimes m}}$.

\subsubsection{Analyticity of the seminorms}
We have already used Corollary~\ref{densecor2} when studying the stalk seminorms of $\lVert \ \rVert $. As another application, let us show that all seminorms we have constructed are analytic.

\begin{theor}\label{analyticomega}
The seminorm $\lVert \ \rVert_\Omega$ and the induced seminorms on the sheaves obtained from $\Omega_{X/S}$ by tensor products, symmetric powers and exterior powers are analytic.
\end{theor}
\begin{proof}
By Remark~\ref{ansemrem}(ii), analyticity is $G$-local, hence it suffices to consider the case when $X$ is affinoid. In the sequel $\Omega_{X/S}$ denotes the $\calO_X$-module and the $\calO_{X_G}$-module will be denoted $\Omega_{X_G/S_G}$. By Lemma~\ref{cohlem}(i)$\Longleftrightarrow$(iii) we should prove that for each $x\in X$ the map $h\:\Omega_{X/S,x}\to\Omega_{X_G/S_G,x}$ is an isometry with respect to the stalks of $\lVert \ \rVert_\Omega$. It suffices to check that the completion of $h$ is an isometry and we know by Theorem~\ref{isometryth} that the completion of $\Omega_{X_G/S_G,z}$ is $\hatOmega_{\calH(x)/\calH(s)}$. So, it suffices to check that $\hatOmega_{\calH(x)/\calH(s)}$ is also the completion of $\Omega_{X/S,x}$, and for this we will copy the argument from the proof of Theorem~\ref{isometryth} but with sheaves in the usual topology.

Set $s=f(x)$, $\calO_x=\calO_{X,x}$ and $\calO_s=\calO_{S,s}$, and provide $\Omega_{\kappa(x)/\kappa(s)}$ with the K\"ahler seminorm. We claim that $\hatOmega_{\calH(x)/\calH(s)}$ is the completion of $\Omega_{\calO_x/\calO_s}$. Indeed, the surjection $\Omega_{\calO_x/\calO_s}\to\Omega_{\kappa(x)/\kappa(s)}$ is an isometry by Corollary~\ref{fundcor} and $\hatOmega_{\kappa(x)/\kappa(s)}=\hatOmega_{\calH(x)/\calH(s)}$ by Corollary~\ref{densecor2}.
\end{proof}

\subsection{Examples}\label{examsec}
In this section, we compute $\lVert \ \rVert_\Omega$ and its completed fibers in a few basic cases. We try to choose simple examples that illustrate the general situation. In particular, we will see that $\lVert \ \rVert_\Omega$ discovers a rather subtle behavior even in the one-dimensional case.

\subsubsection{The case of a disc}\label{discsec}
Assume that $k=k^a$ and $X=\calM(k\{T\})$ is the unit disc. Then $\Omega_X$ is a free sheaf with basis $dT$, so we can identify it with $\calO_X$ by sending $dT$ to $1$. Let $r(x)$ be the radius function, i.e. $r(x)$ is the infimum of radii of subdiscs of $X$ containing $x$. We claim that $\lVert dT\rVert_x=r(x)$ for any $x\in X$. If $x$ is contained in a disc of radius $s$ with center at $a$ then $\lvert T-a\rvert_x\le s$ and hence $\lVert dT\rVert_x=\lVert d(T-a)\rVert_x\le s$. The function $\lVert dT\rVert \:X\to\bfR_{\ge 0}$ is semicontinuous by Theorem~\ref{analyticomega}, therefore it suffices to check that $\lVert dT\rVert_x\ge r(x)$ at a type 2 or 3 point $x$. In such case, replacing $T$ by a suitable $T-a$ with $a\in k$ we can achieve that $\lvert T\rvert_x=r(x)$ and the valuation on $\calH(x)$ is $T$-monomial. But then $\lVert \frac{dT}{T}\rVert_{\Omega,\calH(x)/k}=1$ by Theorem~\ref{tmonth}(i)$\implies$(iii), and so $\lVert dT\rVert_x=\lvert T\rvert_x=r(x)$.

The formula for $\lVert dT\rVert$ implies that its maximality locus consists of a single point, the maximal point of $X$. Another consequence is that $\lVert \ \rVert$ is the seminorm corresponding to $r(x)$ in the sense of Lemma~\ref{rankonelem}.

\begin{rem}
The radius function $r(x)$ is upper semicontinuous but not continuous, and this is a typical behavior for functions of the form $\lVert \phi\rVert$. This indicates that the K\"ahler seminorm on $\Omega_{X/S}$ is very different from the spectral seminorm on $\calO_{X_G}$ even when $\Omega_{X/S}$ is invertible. For example, if $X$ is a curve and $f\in\Gamma(\calO_{X_G})$ is a global function then $\lvert f\rvert$ is locally constant outside of a finite graph. On the other hand, if $\phi\in\Gamma(\Omega_X)$ then for any type 2 point $x$ the value of $\lVert \phi\rVert$ decreases in almost all directions leading from $x$. This property is tightly related to the fact that the maximality locus of such $\phi$ is a finite graph. Also, this indicates that the unit balls $\Omega^\di_{X/S}$ are usually huge $\calO^\circ_{X_G}$-modules.
\end{rem}

\subsubsection{Rigid points: perfect ground field}\label{rigsec}
Assume that $k$ is perfect. Let $x\in X$ be any point with $\calH(x)\subseteq\whka$, for example, a rigid point (i.e. a point with $[\calH(x):k]<\infty$), or a type 1 point on a curve. Note that $\hatOmega_{\calH(x)/k}=0$, since $k^a\cap\calH(x)$ is dense in $\calH(x)$ by Ax-Sen-Tate theorem, see \cite{Ax}. Therefore, any differential form $\phi$ satisfies $\lVert \phi\rVert_{\Omega,x}=0$ by Corollary~\ref{inducecor}.

\subsubsection{Rigid points: non-perfect ground field}\label{rignonperf}
If $k$ is not perfect then the situation is different. For example, assume that $x$ is a rigid point with $l=\calH(x)$ inseparable over $k$; a disc contains plenty of such points, e.g. the points given by $T^p-a=0$ for $a\in k\setminus k^p$. One can easily give examples when K\"ahler seminorm on the finite dimensional vector space $\Omega_{l/k}$ is a norm and hence $\hatOmega_{l/k}=\Omega_{l/k}\neq 0$. (Probably, this is always the case since $k$ is complete.) On the other hand, $\hatOmega_{l/k}$ is the completion of $\Omega_{X/k,x}$, so the seminorm on the latter does not vanish.

Let us outline a concrete particular case. Assume that $l=k(\alp)$ is a purely inseparable extension of $k$ of degree $p$; in particular, $\Omega_{l/k}=ld\alp$. Then $r=\inf_{c\in k}\lvert \alp-c\rvert$ is positive since $k$ is complete. Since $d\alp=d(\alp-c)$, we obviously have that $\lVert d\alp\rVert_{\Omega,l/k}\le r$. One can check straightforwardly that the choice of $\lVert d\alp\rVert =r$ makes the map $d_{l/k}$ non-expansive, so, in fact, $\lVert d\alp\rVert_{\Omega,l/k}=r$. In particular, if $T$ is the coordinate on $X=\bfA^1_k$ and $x\in X$ is the type 1 point given by $T^p=a$, then $\lVert dT\rVert_{x}=s^{1/p}$, where $s=\inf_{c\in k}\lvert a-c^p\rvert$.

\subsubsection{Disc over non-perfect field}\label{discnonperf}
Assume that $\cha(k)=p>0$ and $\tilk$ is not perfect and let $X=\calM(k\{T\})$. Choose any $a\in\kcirc$ such that $\tila\notin\tilk^p$ and let $x$ be the rigid point given by $T^p=a$. Then $\lVert dT\rVert_x=1$ by \S\ref{rignonperf}, and we claim that, more generally, $\lVert dT\rVert =1$ on the whole line connecting $x$ with the maximal point of $X$. In particular, the maximum locus of $\lVert dT\rVert$ contains a huge subgraph, whose combinatorial cardinality (i.e. the cardinality of the set $V\cup E$ of vertices and edges) is easily seen to be equal to the cardinality of $k$ (it is infinite since $\tilk\neq\tilk^p$).

To verify our claim, let $q$ be the maximal point of the disc around $x$ given by $\lvert T^p-a\rvert \le r$. Set $S=T^p-a$ and $L=\wh{k(S)}\subseteq\calH(q)$, then the valuation on $L$ is $S$-monomial and hence $\inf_{c\in L}\lvert S+a-c^p\rvert =1$. Since $T=(a+S)^{1/p}$ we obtain by \S\ref{rignonperf} that $\lVert dT\rVert_{\Omega,\calH(q)/L}=1$, and hence $\lVert dT\rVert_{\Omega,\calH(q)/k}\ge 1$. The opposite inequality is obvious, so $\lVert dT\rVert_{q}=\lVert dT\rVert_{\Omega,\calH(q)/k}=1$.

\begin{rem}\label{mixedrem}
In the mixed characteristic case the situation is even weirder. On the one hand, the K\"ahler seminorm vanishes at rigid points, but on the other hand, if $\tilk$ is not perfect, say $\tila\notin\tilk^p$, and $I$ is the interval connecting $x=(T^p-a)$ with the maximal point $q$ then a similar argument shows that $\lVert dT\rVert =1$ on some neighborhood of $q$ in $I$. In fact, the maximality locus of $\lVert dT\rVert$ is a huge tree with root $q$ but its leaves are points of type 2.
\end{rem}

\subsection{K\"ahler seminorms and base changes}\label{changesec}

\subsubsection{Domination}
For general base changes, K\"ahler seminorms are related as follows.

\begin{lem}\label{domlem}
Assume that $X\to S$ and $S'\to S$ are morphisms of $k$-analytic spaces and $X'=X\times_SS'$. Let $\phi\in\Gamma(\Omega_{X/S})$ and let $\phi'\in\Gamma(\Omega_{X'/S'})$ be the pullback of $\phi$. Then for any point $x'\in X'$ with image $x\in X$ one has that $\lVert \phi'(x')\rVert \le\lVert \phi(x)\rVert$. In other words, the pullback of $\lVert \ \rVert_{\Omega,X/S}$ (see \S\ref{pullsec}) dominates $\lVert \ \rVert_{\Omega,X'/S'}$.
\end{lem}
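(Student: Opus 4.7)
The plan is to reduce the inequality to a statement about completed fibers via Corollary~\ref{inducecor}, and then to build the required non-expansive comparison map by iterated use of the first fundamental sequence.

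Let $s=f(x)$ and let $s'\in S'$ be the image of $x'$; by functoriality $s'$ lies over $s$. Put $K=\calH(s)$, $K'=\calH(s')$, $L=\calH(x)$, $L'=\calH(x')$, so we have a commutative square of non-expansive homomorphisms of analytic fields. By Corollary~\ref{inducecor}, the completed fibers $\wh{\Omega_{X/S}(x)}$ and $\wh{\Omega_{X'/S'}(x')}$ are identified isometrically with $\hatOmega_{L/K}$ and $\hatOmega_{L'/K'}$ respectively, and $\lVert\phi(x)\rVert$ and $\lVert\phi'(x')\rVert$ are computed there. It therefore suffices to construct a non-expansive $L'$-linear map
$$\chi\colon \hatOmega_{L/K}\wh\otimes_L L'\longrightarrow \hatOmega_{L'/K'}$$
carrying the canonical image of $\phi(x)$ to $\phi'(x')$: applying $\chi$ to $\phi(x)\otimes 1$, whose tensor seminorm does not exceed $\lVert\phi(x)\rVert$, will then give the desired inequality.

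To build $\chi$, first apply Lemma~\ref{fundseclem}(i) to the tower $K\to L\to L'$ to obtain a non-expansive map $\Omega_{L/K}\otimes_L L'\to\Omega_{L'/K}$. Next, since $K\to L'$ factors through $K'$, the derivation $d_{L'/K'}$ is in particular a non-expansive $K$-derivation, so the universal property in Lemma~\ref{universemilem}(ii) produces a non-expansive $L'$-linear map $\Omega_{L'/K}\to\Omega_{L'/K'}$. Composing and passing to completions yields $\chi$. The compatibility $\chi(\phi(x)\otimes 1)=\phi'(x')$ is immediate from the construction of the pullback seminormed sheaf $f^*\Omega_{X/S}$ in \S\ref{pullsec} together with the fact that $\phi'$ is the image of $\phi$ under the canonical map $f^*\Omega_{X/S}\to\Omega_{X'/S'}$; at the level of completed fibers this map is precisely $\chi$. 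The only delicate point is the bookkeeping between stalk and fiber completions, which is handled by Theorem~\ref{isometryth} and Corollary~\ref{inducecor}; no new technique beyond the functoriality of K\"ahler seminorms is required.
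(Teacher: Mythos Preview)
Your proof is correct and follows essentially the same route as the paper: both reduce via Corollary~\ref{inducecor} to showing that the natural map $\hatOmega_{L/K}\otimes_L L'\to\hatOmega_{L'/K'}$ is non-expansive. The only difference is presentational: the paper dispatches this in one line by remarking that ``all inequalities defining the seminorm of the source hold in the target'' (in the style of Lemma~\ref{basechangelem}), whereas you factor the comparison map explicitly through $\Omega_{L'/K}$ using Lemma~\ref{fundseclem}(i) and Lemma~\ref{universemilem}(ii), which is a perfectly equivalent unpacking of the same argument.
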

\begin{proof}
Let $s\in S$ and $s'\in S'$ be the images of $x'$. Unrolling the definitions of the K\"ahler seminorms and the pullback operation we see that the assertion of the lemma reduces to the claim that the map $\whOmega_{\calH(x)/\calH(s)}\otimes_{\calH(x)}\calH(x')\to\whOmega_{\calH(x')/\calH(x)}$ is non-expansive. The latter follows straightforwardly from the definition of the seminorm on the source: similarly to the argument in Lemma~\ref{basechangelem}, all inequalities defining the seminorm of the source hold in the target.
\end{proof}

\begin{rem}\label{domrem}
(i) The domination of seminorms from Lemma~\ref{domlem} is not an equality in general. We will later see that this is often the case when $X$ is a disc, $S=\calM(k)$ and $S'=\calM(l)$ for a finite wildly ramified extension $l/k$. One can also provide a characteristic-free example. Assume that $k=k^a$, $S=\calM(k)$ and $X=S'$ is the unit disc over $k$, and let $x=s'$ be a point. It is easy to see that the fiber over $(x,s')$ in $X'$ contains a type 1 point $x'$. So, $\lVert \ \rVert_{x'}=0$ by \S\ref{rigsec}. On the other hand, if $r(x)>0$ then $\lVert \ \rVert_x\neq 0$.

(ii) At first glance, the above examples are surprising because in the context of seminormed rings, K\"ahler seminorms are compatible with base changes by Lemma~\ref{basechangelem}. However, we use structure sheafs provided with the spectral seminorms in the definition of K\"ahler seminorms, while the tensor seminorms do not have to be spectral. For example, if $E/k$ and $F/k$ are finite extensions of analytic fields such that $K=E\otimes_k F$ is a field, the tensor norm on $K$ can be strictly larger than the valuation. As we will prove below, this phenomenon may only happen when the extensions are wild.
\end{rem}

\subsubsection{Universally spectral norms}
Remark \ref{domrem}(ii) motivates the following definition. We say that a Banach $k$-algebra $\calA$ is {\em spectral} if its norm is power-multiplicative. In other words, $\lvert \ \rvert_\calA$ coincides with the spectral seminorm. We say that a Banach $k$-algebra is {\em universally spectral} if $\calA\wtimes_kl$ provided with the tensor seminorm is spectral for any extension of analytic fields $l/k$.

\begin{rem}\label{unispecrem}
(i) The condition that a Banach algebra is spectral is an analogue of reducedness in the usual ring theory. Thus, universal spectrality can be viewed as an analogue of geometric reducedness over a field.

(ii) For comparison, we note that a point $x\in X$ was called universal by Poineau, see \cite[Definition~3.2]{Poineau@angelique}, if the tensor norm on $\calH(x)\wtimes_kl$ is multiplicative for any $l/k$ (originally, universal norms were called peaked, see \cite[Section~5.2]{berbook}). The algebraic analogue of the property that a Banach field is universal over $k$ is geometrical integrality.
\end{rem}

\subsubsection{Defectless case}
One can show that an algebraic extension is universally spectral if and only if it is almost tame, but this will be worked out elsewhere. Here we only check this for defectless extensions.

\begin{lem}\label{univspeclem}
Assume that $K/k$ is a finite defectless extension of analytic fields. Then $K$ is universally spectral over $k$ if and only if $K/k$ is tame.
\end{lem}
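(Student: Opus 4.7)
The plan is to compute the tensor norm on $K\wtimes_k l=K\otimes_k l$ explicitly via an orthogonal $k$-basis of $K$. Since $K/k$ is defectless, such a basis $e_1{,\dots ,}e_n$ exists with $\lvert e_i\rvert_K=r_i$ (cf.\ \cite[\S2.4]{bgr}), and a direct argument from orthogonality shows that for any analytic extension $l/k$,
\[
\left\lvert\textstyle\sum_i e_i\otimes b_i\right\rvert_\otimes=\max_i r_i\lvert b_i\rvert_l.
\]
When $K/k$ is separable, $K\otimes_k l$ is \'etale over $l$ and decomposes as $\prod_\sigma l_\sigma$ over embeddings $\sigma\:K\hookrightarrow\bar l$, with spectral seminorm $\max_\sigma\lvert\cdot\rvert_{l_\sigma}$. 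Taking $l$ to contain a Galois closure of $K/k$ gives $K\otimes_k l=\prod_\sigma l$, and universal spectrality of $K/k$ becomes equivalent to the matrix $M=(\sigma(e_i))_{\sigma,i}$ being an isometry from $l^n$ with the weighted max norm (weights $r_i$) to $l^n$ with the uniform max norm.

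For the forward direction I would use the structure theorem for tame extensions: $K=K_0(\beta)$ where $K_0/k$ is the maximal unramified subextension and $\beta^e=c\in K_0^\times$ with $(e,\cha\tilk)=1$. With an orthogonal basis of the form $\alp_i\beta^j$ for residue lifts $\alp_i$ of a $\tilk$-basis of $\tilK_0$, the matrix $M$ factors into a Vandermonde block in the conjugates of $\beta$ and a block of conjugates of the $\alp_i$. Tameness ensures that $\sigma(\beta)/\beta$ is an $e$-th root of unity (hence a unit of $l^\circ$) and that the $\alp_i$-block reduces to the Galois matrix of a separable residue extension; both factors preserve the relevant max norms, so $M$ is an isometry.

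For the reverse direction I argue by contraposition. Assuming $K/k$ is not tame, the classification of defectless extensions produces a subextension $L/k$ of degree $p=\cha\tilk$ which is either inseparable, wildly totally ramified, or unramified with inseparable residue extension. An orthogonal decomposition $K=\bigoplus_j L\gamma_j$ (afforded by defectlessness) shows that the tensor norm on $L\otimes_k l$ viewed inside $K\otimes_k l$ is unchanged, and the spectral seminorms agree as well (by uniqueness of valuation extensions in each local factor), so it suffices to prove that $L$ is not universally spectral. In the inseparable case take $l=L$: the element $\alp\otimes 1-1\otimes\alp\in L\otimes_k L$ (where $\alp$ generates $L/k$) is a nonzero nilpotent, and its tensor norm computed in the orthogonal basis $1,\alp,\dots,\alp^{p-1}$ equals $\lvert\alp\rvert>0$, while its spectral seminorm vanishes. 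In the two separable wild cases take $l$ containing the Galois closure of $L/k$; the idempotent $\veps\in L\otimes_k l=\prod_\sigma l$ cutting out one embedding has spectral norm $1$, while its tensor norm is $\max_i r_i\lvert b_i\rvert_l$ with $(b_i)$ a column of $M^{-1}$. Using Lagrange interpolation and the fact that $\lvert\sigma(\alp)\rvert=\lvert\alp\rvert$ for all embeddings, one sees that $\max_i r_i\lvert b_i\rvert_l = \lvert\alp\rvert^{p-1}/\lvert f'(\alp)\rvert$, which exceeds $1$ precisely because the bound $\lvert f'(\alp)\rvert<\lvert\alp\rvert^{p-1}$ (established in the proof of Lemma~\ref{defectlesslem}) holds in each wild subcase.

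The main obstacle is the explicit estimate on the entries of $M^{-1}$ in the wild case, whose norms are governed by the differences $\sigma(\alp)-\tau(\alp)$ of Galois conjugates of a generator of $L/k$; a careful and uniform treatment of the totally ramified and residually-inseparable wild subcases is required, together with a verification that the residue matrix in the forward direction is genuinely invertible modulo $l^{\circ\circ}$. These computations tie together the ramification-theoretic content of Section~\ref{omegasec} with the valuation-theoretic structure of orthogonal bases, mirroring the proof pattern of Lemma~\ref{defectlesslem}.
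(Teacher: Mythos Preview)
Your approach differs genuinely from the paper's. The paper passes to $\bfR_{>0}$-graded reductions: using the orthogonal basis it identifies the graded reduction of $K\otimes_k l$ under the tensor norm with $\tilK_\gr\otimes_{\tilk_\gr}\till_\gr$, observes that the tensor norm is spectral precisely when this graded ring is reduced, and then invokes the equivalence (from \cite[Proposition~2.10]{tamediscs}) between tameness of $K/k$ and separability of the graded field extension $\tilK_\gr/\tilk_\gr$. Both directions then follow at once from graded commutative algebra, with $l=\whka$ witnessing non-reducedness in the wild case. Your Vandermonde/residue-matrix factorisation is a correct hands-on alternative for the forward direction, and the reduction to a single $l$ containing the Galois closure is fine once one notes that both the tensor and the spectral norm are preserved under the isometric embedding $K\otimes_k l'\hookrightarrow K\otimes_k\widehat{l'^a}$.

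The reverse direction, however, has a real gap: the reduction to a wild subextension $L/k$ of degree exactly $p$ is unjustified and in fact false. Take $p=2$ and $k=\bfF_4((t))$; one can build a totally ramified Galois extension $M/k$ with group $A_4$ (set $s=t^{1/3}$, let $\zeta\in\bfF_4^\times$ be primitive, and adjoin Artin--Schreier roots of $s^{-1}$ and $\zeta s^{-1}$ to $k(s)$, so that $\sigma\colon s\mapsto\zeta s$ permutes the three nontrivial Artin--Schreier classes cyclically). Then $K=M^{C_3}$ is a defectless, separable, totally wildly ramified extension of $k$ of degree $4$ with \emph{no} proper intermediate field at all, because $C_3$ is a maximal subgroup of $A_4$. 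There is nothing of degree $p$ for your argument to grab. The idempotent/Lagrange estimate can in principle be run directly on $K=k(\alpha)$ of arbitrary degree---the tensor norm of the idempotent comes out to $\lvert\alpha\rvert^{n-1}/\lvert f'(\alpha)\rvert$, and one must then relate $\lvert f'(\alpha)\rvert$ to the different of $K/k$ for a suitably chosen generator---but that is a different and longer argument than the one you sketched; the paper's graded-reduction route bypasses this difficulty entirely.
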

\begin{proof}
Throughout the proof, given an analytic $k$-field $l$ we set $L=K\otimes_kl$ and provide it with the tensor seminorm $\lVert \ \rVert_L$. Let $\tilk_\gr=\oplus_{r>0}\tilk_r$ be the $\bfR_{>0}$-graded reduction of $k$, and define $\tilL_\gr$, $\tilK_\gr$ and $\till_\gr$ similarly. Also, let $\tilL'_\gr$ be the $\bfR_{>0}$-graded ring associated with the filtration on $L$ induced by $\lVert \ \rVert_L$; in particular, $\tilL'_\gr=\tilL_\gr$ only when $\lVert \ \rVert_L$ coincides with the spectral seminorm. Note that $\lVert \ \rVert_L$ is not spectral if and only if $\lVert x^n\rVert_L<\lVert x\rVert^n_L$ for some $x$ and large $n$, and this happens if and only if $\tilx$ is a homogeneous nilpotent element of $\tilL'_\gr$. Thus, $\lVert \ \rVert_L$ is spectral if and only if the graded ring $\tilL'_\gr$ is reduced in the sense of graded commutative algebra of \cite[Section~1]{local-properties-II}. (This also reinforces Remark~\ref{unispecrem}(i).)

Since $K/k$ is defectless, $K$ possesses an orthogonal $k$-basis $a_1\..a_n$. Then, this basis is also an orthogonal basis of $L$ over $l$ and hence $\tila_1\..\tila_n$ is a basis of both $\tilK_\gr$ over $\tilk_\gr$ and $\tilL'_\gr$ over $\till_\gr$. In particular, we obtain that $\tilL'_\gr=\tilK_\gr\otimes_{\tilk_\gr}\till_\gr$.

Now, let us prove the lemma. The extension $K/k$ is tame if and only if $\tilK/\tilk$ is separable and $\lvert K^\times\rvert /\lvert k^\times\rvert$ has no $p$-torsion. By \cite[Proposition~2.10 ]{tamediscs} the latter happens if and only if the extension of graded fields $\tilK_\gr/\tilk_\gr$ is separable (in the graded setting). This implies that for any extension of analytic fields $l/k$, the graded $\till_\gr$-algebra $\tilK_\gr\otimes_{\tilk_\gr}\till_\gr$ is reduced, and as we saw above
this happens if and only if the tensor seminorm on $K\otimes_kl$ is spectral.

Conversely, if $K/k$ is wild then we showed in the above paragraph that $\tilK_\gr/\tilk_\gr$ is not separable. For simplicity take $l=\whka$ (in fact, $l=K$ would suffice). Then $\till_\gr$ is an algebraically closed graded field and $\tilK_\gr\otimes_{\tilk_\gr}\till_\gr$ is an inseparable finite $\till_\gr$-algebra. By \cite[1.14.3]{tamediscs}, this implies that $\tilK_\gr\otimes_{\tilk_\gr}\till_\gr$ is not reduced, and hence $\lVert \ \rVert_L$ is not spectral.
\end{proof}

\begin{cor}\label{univspeccor}
Any tame extension is universally spectral.
\end{cor}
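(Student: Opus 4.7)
The plan is to reduce the statement to the finite defectless case that was just handled in Lemma~\ref{univspeclem}. Write $K = \bigcup_\alpha K_\alpha$ as the filtered union of its finite tame subextensions $K_\alpha/k$. Since every such $K_\alpha/k$ is tame and finite, it is defectless, and Lemma~\ref{univspeclem} guarantees that each $K_\alpha$ is universally spectral over $k$.

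Fix an arbitrary analytic field extension $l/k$; I must show that the tensor norm on $K\wtimes_k l$ is power-multiplicative. The uncompleted tensor product $K\otimes_k l = \bigcup_\alpha (K_\alpha\otimes_k l)$ is dense in $K\wtimes_k l$, so it is enough to establish power-multiplicativity on this dense subalgebra and then pass to the closure: if $x_i \to x$ in $K\wtimes_k l$ with $x_i\in K\otimes_k l$ satisfying $\lVert x_i^n\rVert=\lVert x_i\rVert^n$, then $x_i^n\to x^n$ in norm, so taking limits gives $\lVert x^n\rVert=\lVert x\rVert^n$.

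The key step I would carry out first is the identification of tensor seminorms. For $x\in K_\alpha\otimes_k l$, I claim that its tensor seminorm, computed inside $K\otimes_k l$, agrees with its tensor seminorm computed inside $K_\alpha\otimes_k l$. To see this, pick an orthogonal $k$-basis $\{a_i\}$ of $K_\alpha$ (available since $K_\alpha/k$ is defectless). For any finite intermediate $K_\alpha\subseteq K_\beta\subseteq K$, the extension $K_\beta/K_\alpha$ is tame and therefore defectless, so there is an orthogonal $K_\alpha$-basis $\{b_j\}$ of $K_\beta$ with $b_1=1$; then $\{a_ib_j\}$ is an orthogonal $k$-basis of $K_\beta$ containing the given basis of $K_\alpha$. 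A standard orthogonal-basis computation (as used implicitly in the proof of Lemma~\ref{univspeclem}) shows that the tensor seminorm on $K_\beta\otimes_k l$ of an element $x=\sum_i a_i\otimes c_i\in K_\alpha\otimes_k l$ equals $\max_i|a_i|\cdot|c_i|$, independent of $\beta$. Passing to the filtered colimit identifies the tensor seminorm in $K\otimes_k l$ with the same quantity, which coincides with the intrinsic tensor seminorm in $K_\alpha\otimes_k l$.

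Granted this identification, each subspace $K_\alpha\otimes_k l\subseteq K\wtimes_k l$ carries the tensor seminorm of $K_\alpha\wtimes_k l$, which is power-multiplicative by Lemma~\ref{univspeclem}. Thus every $x\in K\otimes_k l$ satisfies $\lVert x^n\rVert=\lVert x\rVert^n$, and the density argument above concludes. The principal technical point — and the step I expect to require the most care — is the compatibility of tensor seminorms; it is what forces the use of the defectlessness of tame towers and the choice of compatible orthogonal bases.
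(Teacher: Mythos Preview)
Your argument is correct, but it is considerably more elaborate than what the paper does. The paper's proof is a single sentence: any tame extension is defectless, so Lemma~\ref{univspeclem} applies directly. In other words, the paper simply invokes the standard fact that a (finite) tame extension $K/k$ is automatically defectless, and then the ``only if'' direction of the lemma is not even needed---the ``if'' direction already gives universal spectrality.

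Your approach, by contrast, writes $K$ as a filtered union of finite tame subextensions and then carefully verifies that the tensor seminorms are compatible across the tower using orthogonal bases. This is a genuinely different (and longer) route: it would be the natural argument if one wanted the result for \emph{infinite} tame algebraic extensions, where the lemma does not apply directly because of its finiteness hypothesis. In the paper's setting, however, the applications (Theorem~\ref{compatth}, Corollary~\ref{compatcor}) only ever use the finite case, and for a single finite tame extension there is no tower to climb---defectlessness is immediate and the lemma finishes the job in one step. So your orthogonal-basis compatibility argument, while correct, is doing work that the intended statement does not require.
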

\begin{proof}
This follows from the lemma since any tame extension is defectless.
\end{proof}

\subsubsection{Residually tame morphisms}\label{restamesec}
We say that a morphism $g\:S'\to S$ is {\em residually tame} (resp. {\em residually unramified}) {\em at} $s'\in S'$ if the extension of completed residue fields $\calH(s')/\calH(s)$, where $s=g(s')$, is finite and tame (resp. unramified). A morphism is {\em residually tame} or {\em residually unramified} if it is so at all points of the source.

\subsubsection{Compatibility with base changes}
In view of Remark~\ref{domrem}(i), one has to restrict base change morphisms in order to ensure compatibility of K\"ahler seminorms with the base change. Here is a natural way to impose such a restriction.

\begin{theor}\label{compatth}
Let $f\:X\to S$ and $g\:S'\to S$ be two morphisms of analytic $k$-spaces. Assume that for any point $s'\in S'$ with $s=g(s')$ the field $\calH(s')$ is finite and universally spectral over $\calH(s)$ (by Corollary~\ref{univspeccor} this includes the case of a residually tame $g$). Then the pullback of the K\"ahler seminorm on $\Omega_{X/S}$ coincides with the K\"ahler seminorm on $\Omega_{X\times_SS'/S'}$.
\end{theor}
\begin{proof}
Fix a point $x'\in X'=X\times_SS'$ and let $x\in X$, $s'\in S'$ and $s\in S$ be its images. Set also $K=\calH(s)$, $K'=\calH(s')$, $L=\calH(x)$, and $L'_1=\calH(x')$. We should prove that if $\phi\in\Omega_{X/S,x}$ and $\phi'\in\Omega_{X'/S',x'}$ is its pullback then $\lVert \phi\rVert_x=\lVert \phi'\rVert_{x'}$. By Corollary~\ref{inducecor}, the values of the seminorms can be computed at $\whOmega_{L/K}$ and $\whOmega_{L'_1/K'}$, respectively. This reduces the question to proving that the map $\psi\:\whOmega_{L/K}\otimes_L L'_1\to\whOmega_{L'_1/K'}$ is an isometry.

Consider the seminormed ring $L'=L\otimes_KK'$ and note that $\lam\:\whOmega_{L/K}\otimes_L L'\toisom\whOmega_{L'/K'}$ is an isometry by Lemma~\ref{basechangelem}. In addition, $L'$ is reduced since it is spectral, and $L'$ is a finite $L$-algebra since $K'/K$ is finite. Hence $L'\toisom\prod_{i=1}^nL'_i$, where $L'_i$ are extensions of $L$ and $L'_1$ is as defined earlier. We claim that this isomorphism is also an isometry. Indeed, the right-hand side is provided with the sup seminorm hence this follows from \cite[Theorem~3.8.3/7]{bgr}. Thus, $\Omega_{L'/K'}=\prod_{i=1}^n\Omega_{L'_i/K'}$ and hence $\psi'\:\whOmega_{L'/K'}\otimes_{L'} L'_1\to\whOmega_{L'_1/K'}$ is an isometric isomorphism. Therefore, the composition $\psi=\psi'\circ(\lam\otimes_{L'}L'_1)$ is an isometry.
\end{proof}

Let us record the most important particular cases of the theorem.

\begin{cor}\label{compatcor2}
Assume that $X\to S$ is a morphism of $k$-analytic spaces, $s\in S$ is a point and $X_s$ is the fiber over $s$. Then $\lVert \ \rVert_{\Omega,X_s/s}$ equals to the restriction of $\lVert \ \rVert_{\Omega,X/S}$ onto the fiber.
\end{cor}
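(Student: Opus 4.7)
The plan is to deduce this as a direct application of Theorem~\ref{compatth} applied with the base change morphism $g\:S'=\calM(\calH(s))\to S$ given by the canonical $k$-morphism sending the unique point of $S'$ to $s$. Under this choice, the fiber product $X\times_SS'$ coincides with the fiber $X_s$ (which is how $X_s$ is defined), so the conclusion of Theorem~\ref{compatth} becomes precisely the desired equality of the K\"ahler seminorm $\lVert\ \rVert_{\Omega,X_s/s}$ with the pullback of $\lVert\ \rVert_{\Omega,X/S}$ along the closed immersion $X_s\into X$.

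The only thing to verify is the hypothesis of the theorem. The space $S'=\calM(\calH(s))$ consists of a single point $s'$, and by construction $\calH(s')=\calH(s)$ is the trivial (one-dimensional) extension of $\calH(g(s'))=\calH(s)$. A trivial extension of analytic fields is certainly finite; moreover, for any analytic field $l/\calH(s)$ the tensor product $\calH(s)\wtimes_{\calH(s)}l=l$ carries the original valuation, which is power-multiplicative. Hence $\calH(s')/\calH(s)$ is universally spectral in the sense of \S\ref{changesec}, and the hypothesis of Theorem~\ref{compatth} is satisfied.

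The only minor point of care is matching the notation ``restriction to the fiber" in the corollary with the ``pullback of the K\"ahler seminorm" appearing in Theorem~\ref{compatth}; since the embedding $X_s\into X$ is exactly the base change of $s\to S$ along $f$, this is a tautology, and there is no real obstacle in the argument. Thus the corollary follows immediately from Theorem~\ref{compatth}.
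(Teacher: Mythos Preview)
Your proposal is correct and is exactly the argument the paper has in mind: the corollary is listed immediately after Theorem~\ref{compatth} as one of its ``most important particular cases,'' and no separate proof is given because it is precisely the specialization $S'=\calM(\calH(s))\to S$, for which the hypothesis is trivially verified as you do.
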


\begin{cor}\label{compatcor}
Assume that $f\:X\to S$ is a morphism of $k$-analytic spaces and $l/k$ is a finite extension such that $l$ is universally spectral over $k$ (e.g., $l/k$ is tame). Set $X_l=X\otimes l$ and $S_l=Y\otimes l$. Then $\lVert \ \rVert_{\Omega,X_l/S_l}$ is the pullback of $\lVert \ \rVert_{\Omega,X/S}$.
\end{cor}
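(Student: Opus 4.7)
The plan is to deduce this directly from Theorem~\ref{compatth} applied to the base change $g \colon S_l \to S$. Everything comes down to verifying that for each $s' \in S_l$ with image $s = g(s') \in S$, the extension $\calH(s')/\calH(s)$ is finite and universally spectral; once this is established, Theorem~\ref{compatth} gives the result. The hypothesis that $l$ is universally spectral over $k$ will enter twice: first to decompose $\calH(s) \otimes_k l$, and second to transport spectrality across $F$-extensions.

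To check finiteness, I would observe that since $l/k$ is finite, the fiber $g^{-1}(s)$ is $\calM(\calH(s) \wtimes_k l) = \calM(\calH(s) \otimes_k l)$. The tensor seminorm on $\calH(s) \otimes_k l$ is spectral by assumption, hence the algebra is reduced and, being also finite over the field $\calH(s)$, decomposes as a product $\prod_i K_i$ of finite field extensions of $\calH(s)$. By \cite[Theorem~3.8.3/7]{bgr}, this algebra isomorphism is even an isometry when the right side is endowed with the sup norm. The points of the fiber are then precisely the $s_i$ with $\calH(s_i) = K_i$, so $\calH(s')$ is one of these $K_i$ and is finite over $\calH(s)$.

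For universal spectrality, I would take an arbitrary analytic extension $F/\calH(s)$ (which is automatically an analytic $k$-field) and show $\calH(s') \otimes_{\calH(s)} F$ is spectral. The key identification is
\[
l \wtimes_k F \;=\; (l \otimes_k \calH(s)) \otimes_{\calH(s)} F \;=\; \Bigl(\prod_i K_i\Bigr) \otimes_{\calH(s)} F \;=\; \prod_i \bigl(K_i \otimes_{\calH(s)} F\bigr),
\]
where the left side is spectral by the universal-spectrality hypothesis on $l$. The short technical step is a direct verification that the tensor seminorm on $(\prod_i A_i) \otimes_B C$, when $\prod_i A_i$ carries the sup norm, coincides with the sup of the tensor seminorms on the factors $A_i \otimes_B C$ (one direction uses the non-expansive projections, the other uses the orthogonal idempotents $e_i$ to patch local representations). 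Granting this, spectrality of the product forces spectrality of each factor $K_i \otimes_{\calH(s)} F$, in particular of $\calH(s') \otimes_{\calH(s)} F$.

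The only mild obstacle is the tensor-norm identification in the preceding display; everything else is a bookkeeping exercise combining the decomposition of the fiber with the definition of universal spectrality. With that sublemma in hand, the hypothesis of Theorem~\ref{compatth} is verified at every $s' \in S_l$, and the corollary follows. The parenthetical remark that tame $l/k$ is a special case is then just Corollary~\ref{univspeccor}.
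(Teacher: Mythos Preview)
Your proposal is correct and follows the approach the paper intends: the corollary is stated immediately after Theorem~\ref{compatth} as a direct consequence, with no separate proof given, so applying Theorem~\ref{compatth} to $g\colon S_l\to S$ and verifying its hypothesis at each $s'\in S_l$ is exactly what is expected. Your verification that $\calH(s')$ is finite and universally spectral over $\calH(s)$---via the decomposition $\calH(s)\otimes_k l\cong\prod_i K_i$ from \cite[3.8.3/7]{bgr} and the tensor-norm identity $(\prod_i K_i)\otimes_{\calH(s)}F\cong\prod_i(K_i\otimes_{\calH(s)}F)$---is the natural way to fill in the details the paper omits.
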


\begin{rem}
(i) In fact, instead of finiteness of $l/k$ it suffices to assume that the extension is algebraic. The proof is based on passing to a colimit and completing and we leave the details to the interested reader.

(ii) In particular, the K\"ahler seminorm is preserved when we replace $k$ with its completed tame closure. In principle, this reduces the study of K\"ahler seminorms to the case of a tamely closed ground field $k$.
\end{rem}

\subsubsection{Geometric K\"ahler seminorm}
Given a morphism $f\:X\to S$, let $\of\:\oX\to\oS$ denote its ground field extension with respect to $\whka/k$ and let $g$ denote the morphism $\oX\to X$. Provide $g_*\Omega_{\oX/\oS}$ with the pushout quasi-norm $g_*(\lVert \ \rVert_{\Omega,\oX/\oS})$ (see \S\ref{pushsec}) and let $\lVert \ \rVert_{\oOmega,X/S}$ be the quasi-norm it induces on $\Omega_{X/S}$ via the embedding $\Omega_{X/S}\into g_*\Omega_{\oX/\oS}$. By definition, if $\phi\in\Omega_{X/S}(U)$ then $\lVert \phi\rVert_{\oOmega,V}=\lVert g^*\phi\rVert_{\Omega,g^{-1}(V)}$. It follows easily that $\lVert \ \rVert_{\oOmega,X/S}$ is an analytic seminorm and $\lVert \phi\rVert_{\oOmega,x}=\lVert g^*\phi\rVert_{\Omega,x'}$ for any $x'\in \oX$ and $x=g(x')$. We call $\lVert \ \rVert_{\oOmega,X/S}$ the {\em geometric K\"ahler seminorm} on $\Omega_{X/S}$ and use $\oOmega$ in the notation to stress that it is geometric.

Absolutely in the same way one defines the {\em geometric K\"ahler seminorm} $\lVert \ \rVert_{(\oOmega_{X/S}^l)^{\otimes m}}$ on the sheaf of pluriforms $(\Omega_{\oX/\oS}^l)^{\otimes m}$.

\begin{rem}
If $k$ possesses non-trivial wild extensions, K\"ahler seminorms on $k$-analytic spaces can behave rather weird (see \S\ref{rignonperf} and \S\ref{discnonperf}). So, in this case it is often more useful to work with the geometric K\"ahler seminorm.
\end{rem}

\section{PL subspaces}\label{PLsec}
Main results about K\"ahler seminorms are related to the PL structure of analytic spaces, so in the current section we describe what this structure is.

\subsection{Invariants of a point}
We start with recalling basic results about invariants $t$ and $s$ associated to points of $k$-analytic spaces, see \cite[\S9]{berbook}. To stress the valuative-theoretic origin of these invariants we prefer to denote them $F$ and $E$, the transcendental analogues of $e$ and $f$.

\subsubsection{Invariants of extensions of valued fields}
To any extension of valued fields $L/K$ one can associate two cardinals that measure the ``transcendence size" of the extension: the {\em residual transcendence degree} $F_{L/K}=\trdeg_\tilK(\tilL)$ and the {\em rational rank} $E_{L/K}=\dim_{\bfQ}(\lvert L^\times\rvert /\lvert K^\times\rvert \otimes_\bfZ\bfQ)$. Both invariants are additive in towers of extensions $L'/L/K$, i.e. $E_{L'/K}=E_{L'/L}+E_{L/K}$ and $F_{L'/K}=F_{L'/L}+F_{L/K}$.

\subsubsection{Invariants of points}
For a point $x$ of a $k$-analytic space $X$ we define the {\em residual transcendence degree} $F_{X,x}=F_{\calH(x)/k}$ and the {\em rational rank} $E_{X,x}=E_{\calH(x)/k}$. Usually, we will omit the space $X$ in this notation. Additivity of the invariants can now be expressed as follows. Assume that $f\:X\to Y$ is a morphism of $k$-analytic spaces, $x\in X$ is a point with $y=f(x)$, and the $\calH(y)$-analytic space $Z=X\times_Y\calM(\calH(y))$ is the fiber over $y$. Then $F_{X,x}=F_{Y,y}+F_{Z,x}$ and $E_{X,x}=E_{Y,y}+E_{Z,x}$. In particular, if $f$ is finite then $F_{X,x}=F_{Y,y}$ and $E_{X,x}=E_{Y,y}$.

\subsubsection{Classification of points on a curve}\label{curvesec}
Starting with \cite[\S1.4.4]{berbook} and \cite[\S3.6]{berihes}, points of $k$-analytic curves are classified to four types: (1) $\calH(x)\subseteq\whka$, (2) $F_x=1$, (3) $E_x=1$, (4) the rest. In all cases, it is easy to see that $E_x+F_x\le 1$, i.e., $E_x=0$ for type 2 points, $F_x=0$ for type 3 points, and $E_x=F_x=0$ for type 1 and 4 points.

If $x$ is of type 2 or 3 then the following three claims hold: $\wHx$ is finitely generated over $\tilk$, $\lvert \calH(x)^\times\rvert$ is finitely generated over $\lvert k^\times\rvert$, if $k$ is stable then $\calH(x)$ is stable. The first two are simple, and we refer to \cite[Corollary~6.3.6]{temst} for the stability theorem. Note that all three claims can fail for types 1 and 4.

\subsubsection{Monomial points}
Fibering $X$ by curves and using the additivity of $E$ and $F$ and induction on dimension, it is easy to see that any point $x\in X$ satisfies the inequality $E_x+F_x\le\dim_x(X)$. A point $x\in X$ is called {\em monomial} or {\em Abhyankar} if $F_x+E_x=\dim_x(X)$. The set of all monomial points of $X$ will be denoted $X^\mon$.

\begin{rem}\label{monomrem}
(i) Monomial points are adequately controlled by the invariants $E_x$ and $F_x$. This often makes the work with them much easier than with general points. For example, see Corollary~\ref{fibcor} below.

(ii) Analogues of monomial points in the theory of Riemann-Zariski spaces are often called Abhyankar valuations. They are much easier to work with too; for example, local uniformization is known for such points.
\end{rem}

\subsection{PL subspaces}
The set $X^\mon$ is huge and, at first glance, may look a total mess when $\dim(X)>1$. Nevertheless, it possesses a natural structure of an ind-PL space that we are going to recall.

\subsubsection{The model case}\label{modelcasesec}
Recall that points of the affine space $$\bfA_k^n=\bigcup_r\calM(k\{r^{-1}t_1\..r^{-1}t_n\})$$ with coordinates $t_1\..t_n$ can be identified with real semivaluations on $k[t_1\..t_n]$ that extend the valuation of $k$. The semivaluations that do not vanish at $t_1\..t_n$ form the open subspace $\bfG_m^n$. Thus, for each tuple $r\in\bfR_{>0}^n$ the generalized gauss valuation $|\ |_\ur$ (see \S\ref{genGausssec}) defines a point $p_\ur\in\bfG_m^n$, and the correspondence $\ur\mapsto p_\ur$ provides a topological embedding $\alp\:\bfR_{>0}^n\into \bfG_m^n$. Let $S$ be the image of $\alp$.

\begin{rem}\label{modelrem}
(i) One often calls $S$ the {\em skeleton} of $\bfG_m^n$; this terminology is justified by (ii) and (iii) below. Sometimes one refers to the points of $S$ as {\em $\ut$-monomial valuations} because they are determined by their restriction to the monoid $\prod_{i=1}^nt_i^\bfN$.

(ii) Any semivaluation $x\in \bfG_m^n$ is dominated by the $\ut$-monomial valuation $p_{\lvert \ut(x)\rvert }$. The map $x\mapsto p_{\lvert \ut(x)\rvert }$ is a retraction $r\:\bfG_m^n\onto S$. Moreover, Berkovich constructs in \cite[\S6]{berbook} a deformational retraction of $\bfG_m^n$ onto $S$ whose level at 1 is the above map.

(iii) All $t$-monomial valuations are distinguished by invertible functions (in fact, by monomials $t^a$), hence they are incomparable with respect to the domination. Thus, $S$ is the set of all points of $\bfG_m^n$ that are maximal with respect to the domination. In particular, it is independent of the coordinates.
\end{rem}

\subsubsection{Monomial charts}
By a monomial chart of an analytic space $X$ we mean a morphism $f\:U\to\bfG_m^n$ such that $U$ is an analytic domain in $X$ and $f$ has zero-dimensional fibers. Clearly, $f$ is determined by invertible functions $t_1\..t_n\in\calO^\times_X(U)$. A point $x\in U$ of the chart is called {\em $\ut$-monomial} or {\em $f$-monomial} if the restriction of $\lvert \ \rvert_x$ to $k[t_1\..t_n]$ is monomial. In this case, we also say that $t_1\..t_n$ is a {\em family of monomial parameters} at $x$. Note that $\calH(f(x))=\wh{k(\ut)}$ is provided with a generalized Gauss valuation and $\calH(x)$ is its finite extension.

In addition, we say that the chart is residually tame or unramified (at a point $x\in U$) if the morphism $f$ is so, see \S\ref{restamesec}. In this case we also say that the family of monomial parameters $t_1\..t_n$ is {\em residually tame} or {\em unramified} (at $x$). The following result shows that monomial charts adequately describe the whole $X^\mon$.

\begin{lem}\label{monchartlem}
A point $x\in X$ is monomial if and only if there exists a monomial chart $f\:U\to\bfG_m^n$ such that $x$ is $f$-monomial.
\end{lem}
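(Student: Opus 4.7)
The plan is to prove both implications, with the forward direction being the main content. The backward implication follows from additivity of the invariants $E$ and $F$: if $f\:U\to\bfG_m^n$ is a monomial chart and $x$ is $f$-monomial with $y=f(x)$, then $\calH(y)$ is the completed generalized Gauss valuation field at $\ur=(|t_1(x)|\..|t_n(x)|)$, for which $E_y+F_y=n$. Zero-dimensional fibers of $f$ at $x$ make $\calH(x)/\calH(y)$ finite, so $E_x+F_x=E_y+F_y=n$; combined with the Abhyankar inequality $E_x+F_x\le\dim_x(X)\le n$ this forces $\dim_x(X)=n=E_x+F_x$, showing $x\in X^\mon$.

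For the forward direction, set $e=E_x$, $f=F_x$, and $n=e+f=\dim_x(X)$. My first step is to choose data at the level of $\calH(x)$: pick $\tau_1\..\tau_e\in\calH(x)^\times$ whose absolute values are $\bfQ$-linearly independent modulo $|k^\times|$ (possible since $E_x=e$), and pick $\tau_{e+1}\..\tau_n\in\calH(x)^\circ$ whose residues form a transcendence basis of $\widetilde{\calH(x)}$ over $\tilk$ (possible since $F_x=f$); in particular $|\tau_i|=1$ for $i>e$. By Lemma~\ref{stalklem}, $\kappa_G(x)\subset\calH(x)$ is dense, so after perturbing each $\tau_i$ slightly within $\calH(x)$ I can find elements of $\kappa_G(x)^\times$ with the same absolute value and, for $i>e$, the same residue. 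Lifting to $\calO_{X_G,x}^\times$ and passing to an analytic domain $U\ni x$ on which they extend to invertible sections, I obtain $t_1\..t_n\in\calO_{X_G}^\times(U)$ and a morphism $f\:U\to\bfG_m^n$.

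To verify that $x$ is $f$-monomial it suffices to show that for every $P=\sum_\alpha a_\alpha\ut^\alpha\in k[\ut]$, the bound $|P(x)|\le M:=\max_\alpha|a_\alpha|r^\alpha$ (with $r_i=|t_i(x)|$) is attained. For two indices $\alpha,\beta$ at which the maximum is attained, the ratio $|a_\beta/a_\alpha|=r^{\alpha-\beta}$ lies in $|k^\times|$, and by $\bfQ$-independence of $r_1\..r_e$ modulo $|k^\times|$ together with $r_i=1$ for $i>e$, this forces $\alpha_i=\beta_i$ for $i\le e$. Dividing by a fixed maximizing monomial and reducing modulo the maximal ideal of $\calH(x)^\circ$ then reduces non-cancellation to the fact that a polynomial over $\tilk$ in the algebraically independent residues $\tilde t_{e+1}\..\tilde t_n$, having constant term $1$, is nonzero. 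This yields the Gauss property. To conclude, additivity of $E$ and $F$ in the tower $\calH(x)/\calH(f(x))/k$ gives $E_{\calH(x)/\calH(f(x))}=F_{\calH(x)/\calH(f(x))}=0$, so the fiber over $f(x)$ has dimension $0$ at $x$, whence $f$ is quasi-finite at $x$; shrinking $U$ makes all fibers zero-dimensional.

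The hard part is the transfer from abstract data in $\calH(x)$ to honest sections in $\calO_{X_G,x}$: one must simultaneously preserve the absolute value (for the first $e$ elements) and the residue (for the last $f$ elements) under the approximation from $\kappa_G(x)$. Everything else reduces either to additivity of $E$ and $F$ or to the classical non-cancellation argument for Gauss valuations, made possible by the careful choice of rationally and residually independent data.
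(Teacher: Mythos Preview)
Your argument follows essentially the same route as the paper's proof: choose $n$ units whose values and residues realize the invariants $E_x$ and $F_x$, verify that the induced valuation on $k[\ut]$ is a generalized Gauss valuation, and then pass to a chart with zero-dimensional fibers. Your explicit verification of the Gauss property (reducing to a nonzero Laurent polynomial in the algebraically independent residues $\tilt_{e+1}\..\tilt_n$) is correct and more detailed than what the paper writes.

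There is, however, a genuine gap in the last step. From additivity you deduce $E_{\calH(x)/\calH(y)}=F_{\calH(x)/\calH(y)}=0$, and then assert ``so the fiber over $f(x)$ has dimension $0$ at $x$''. This does not follow: the Abhyankar inequality only gives $E+F\le\dim$, not the reverse. A point of type~4 in a one-dimensional fiber has $E=F=0$ over the base field while sitting in positive dimension, so the vanishing of the relative invariants says nothing about $\dim_x(U_y)$ on its own. What is actually needed here is an inequality of the form $\dim_x(U_y)\le\dim_x(U)-d_k(y)$, where $d_k(y)=E_y+F_y=n$ since $y$ is the Gauss point; combined with $\dim_x(U)=n$ this forces $\dim_x(U_y)=0$. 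This is a nontrivial statement about fiber dimensions in Berkovich geometry, and the paper invokes it via an external reference (\cite[Corollary~8.4.3]{flatness}). Likewise, the subsequent shrinking of $U$ to make \emph{all} fibers zero-dimensional requires upper semicontinuity of relative dimension (the paper cites \cite[Theorem~4.9]{Ducros}); your phrase ``shrinking $U$ makes all fibers zero-dimensional'' hides this input.

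So the strategy is right and matches the paper, but the sentence ``everything else reduces to additivity of $E$ and $F$'' understates what is required: the passage from $E=F=0$ to zero-dimensional fiber is exactly where a nontrivial dimension-theoretic result must be invoked.
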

\begin{proof}
Set $L=\calH(x)$. If there exists a monomial chart $f$ such that $f(x)$ is a monomial point then the induced map $f\:U\to\bfG_m^n$ has zero-dimensional fibers and hence $L$ is finite over $K=\calH(y)$, where $y=f(x)$. Since $E_y+F_y=n$, we obtain that $E_x+F_x=n$, i.e. $x$ is monomial.

Conversely, assume that the sum of $E=E_x$ and $F=F_x$ equals to $n=\dim_x(X)$. Choose $t_1\..t_F\in\kappa_G(x)$ such that $\tilt_1\..\tilt_F$ is a transcendence basis of $\tilL/\tilk$, and choose $\{t_{F+1}\..t_n\}$ such that its image in $(\lvert L^\times\rvert /\lvert k^\times\rvert )\otimes\bfQ$ is a basis. Then the valuation on $K=\wh{k(t_1\..t_n)}$ is a generalized Gauss valuation. Take an analytic domain $U\subseteq X$ containing $x$ such that $\dim(U)=n$ and $t$ induces a morphism $f\:U\to\bfG_m^n$. Then $y=f(x)$ is a monomial point of $\bfG^m_n$ (even a point of its skeleton). Consider the fiber $U_y=f^{-1}(y)$. By \cite[Corollary~8.4.3]{flatness} $\dim_y(U_y)=0$, hence using \cite[Theorem~4.9]{Ducros} we can shrink $U$ around $y$ so that $f$ has zero-dimensional fibers.
\end{proof}

\begin{cor}\label{fibcor}
Assume that $x\in X$ is a monomial point, then
\begin{itemize}
\item[(0)] The ring $\calO_{X_G,x}$ is Artin. In particular, if $X$ is reduced then $m_{G,x}=0$ and $\calO_{X_G,x}=\kappa_G(x)$.
\item[(1)] The extension $\wHx/\tilk$ is finitely generated.
\item[(2)] The group $\lvert \calH(x)^\times\rvert /\lvert k^\times\rvert$ is finitely generated.
\item[(3)] If $k$ is stable then $\calH(x)$ is stable.
\end{itemize}
\end{cor}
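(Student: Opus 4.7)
The plan is to use Lemma~\ref{monchartlem} to reduce every claim to the Gauss point case. By that lemma we may choose a monomial chart $f\:U\to\bfG_m^n$ at $x$ with zero-dimensional fibers such that $y=f(x)$ is the generalized Gauss point with coordinates $r_i=\lvert t_i(x)\rvert$; in particular $K:=\calH(y)=\wh{k(t_1\..t_n)}$ with generalized Gauss valuation and $L:=\calH(x)$ is a finite extension of $K$.

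Parts (1) and (2) are then immediate from the explicit description of $K$. By the construction in the proof of Lemma~\ref{monchartlem}, $\tilK=\tilk(\tilt_1\..\tilt_F)$ is finitely generated over $\tilk$ and $\lvert K^\times\rvert=\lvert k^\times\rvert\cdot\langle r_{F+1}\..r_n\rangle$ is finitely generated over $\lvert k^\times\rvert$, where $F=F_x$. Since $L/K$ is finite, the fundamental inequality gives that both $[\tilL:\tilK]$ and $[\lvert L^\times\rvert:\lvert K^\times\rvert]$ are finite, and (1) and (2) follow. For (3), a one-variable Gauss extension preserves stability by \cite[Corollary~6.3.6]{temst} applied to the relevant type 2 or 3 Gauss point on an analytic affine line; iterating one variable at a time shows that $K/k$ preserves stability, and since finite extensions of stable fields are stable, so is $L$.

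For (0), I would first shrink $U$ so that $f\:U\to V$ becomes a finite morphism onto some analytic domain $V\subseteq\bfG_m^n$ containing $y$, using the standard fact that a morphism with a zero-dimensional fiber at a point is finite after suitable shrinking. Taking $G$-stalks, $\calO_{X_G,x}$ is then a finite module over $\calO_{V_G,y}=\calO_{\bfG_m^n,G,y}$, and the key observation is that the latter ring is a field: for any nonzero $g\in\calO_{\bfG_m^n,G,y}$, multiplicativity of the Gauss seminorm forces $\lvert g\rvert_y>0$, so $g$ is non-vanishing on a sufficiently small affinoid neighborhood of $y$ and hence a unit there. Thus $\calO_{X_G,x}$ is finite over a field, hence Artin; if moreover $X$ is reduced then the $G$-stalk at $x$ is reduced as a filtered colimit of reduced affinoid algebras, and a reduced Artin local ring is a field, giving $m_{G,x}=0$ and $\calO_{X_G,x}=\kappa_G(x)$.

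The main difficulty I expect is the finiteness step in (0): ``zero-dimensional fibers imply finite after shrinking'' is a standard input from non-archimedean analytic geometry, but the precise formulation requires some care in the $H$-strict setting. Everything else reduces to the explicit Gauss-point description of $K$ together with the one-dimensional stability results recorded in~\S\ref{curvesec}.
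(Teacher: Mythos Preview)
Your proof is correct and follows essentially the same route as the paper: both reduce via Lemma~\ref{monchartlem} to a generalized Gauss point $y$, verify properties (0)--(3) for $y$ directly, and transfer them to $x$ using finiteness of $\calH(x)/\calH(y)$ and of $\calO_{X_G,x}$ over $\calO_{\bfG_m^n,G,y}$. The paper's proof is simply a terse three-line version of yours, asserting the stalk finiteness without comment; your identification of the shrinking step (zero-dimensional fiber $\Rightarrow$ finite on suitable neighborhoods) as the only nontrivial input is accurate, and your argument that $\calO_{\bfG_m^n,G,y}$ is a field via multiplicativity of the Gauss seminorm is exactly what underlies the paper's unstated claim.
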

\begin{proof}
By Lemma~\ref{monchartlem} there exists a chart $f\:U\to\bfG_m^n$ that takes $x$ to a point $y$ corresponding to a generalized Gauss valuation. Note that $\calO_{X_G,x}$ is finite over $\calO_{Y_G,y}$. All four properties are satisfied by $y$, hence they also hold for $x$.
\end{proof}

We can now strengthen Lemma~\ref{monchartlem} in the case of an algebraically closed $k$.

\begin{cor}\label{monchartcor}
Assume $X$ is a $k$-analytic space, $k$ algebraically closed and $x\in X$ is a monomial point. Then there exists a monomial chart $f$ such that $x$ is $f$-monomial and $f$ is residually unramified at $x$.
\end{cor}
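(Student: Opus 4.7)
Set $L=\calH(x)$, $E=E_x$, $F=F_x$, and $n=\dim_x(X)=E+F$. The plan is to repeat the construction in the proof of Lemma~\ref{monchartlem}, but with a more careful choice of monomial parameters exploiting the hypothesis $k=k^a$, which yields that $\tilk$ is algebraically closed and that $|k^\times|$ is divisible. By Corollary~\ref{fibcor}(1)--(2), $\tilL/\tilk$ is finitely generated of transcendence degree $F$ and $|L^\times|/|k^\times|$ is finitely generated of $\bfQ$-rank $E$. Divisibility of $|k^\times|$ forces $|L^\times|/|k^\times|$ to be torsion-free---if $|\gamma|^n\in|k^\times|$, write $|\gamma|^n=|b|^n$ for some $b\in k^\times$ and conclude $|\gamma|=|b|\in|k^\times|$---hence free abelian of rank $E$. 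Perfectness of $\tilk$ ensures that $\tilL/\tilk$ admits a separating transcendence basis.

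Now I would pick $t_1,\ldots,t_F\in\calO_{X_G,x}$ with $|t_i|_x=1$ so that $\tilde t_1,\ldots,\tilde t_F$ is a separating transcendence basis of $\tilL/\tilk$, and $t_{F+1},\ldots,t_n\in\calO_{X_G,x}$ so that $|t_{F+1}|_x,\ldots,|t_n|_x$ is a $\bfZ$-basis of $|L^\times|/|k^\times|$ (in particular $|t_i|_x\notin|k^\times|$ for $i>F$). This is exactly the data feeding into the construction in the proof of Lemma~\ref{monchartlem}: the $\tilde t_i$'s are algebraically independent over $\tilk$ and the $|t_i|_x$'s for $i>F$ are multiplicatively independent modulo $|k^\times|$, so the same argument produces an analytic domain $U\ni x$ and a monomial chart $f\:U\to\bfG_m^n$ at which $x$ is $f$-monomial. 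The image $y=f(x)$ carries the generalized Gauss valuation on $k(\ut)$ with $r_i=|t_i|_x$, so $\tilH(y)=\tilk(\tilde t_1,\ldots,\tilde t_F)$ (only the $t_i$ with $|t_i|_x=1$ survive in the residue field) and $|\calH(y)^\times|=|k^\times|\cdot\prod_{i>F}r_i^\bfZ=|L^\times|$.

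It remains to verify that $\calH(x)/\calH(y)$ is finite and unramified. Finiteness is immediate from the zero-dimensionality of the fiber $f^{-1}(y)$, which is a zero-dimensional $\calH(y)$-analytic space containing $x$. For unramifiedness, $e_{\calH(x)/\calH(y)}=1$ from the matching of value groups, and the residue extension $\tilL/\tilk(\tilde t_1,\ldots,\tilde t_F)$ is finite separable by the separating-basis choice, so the finite extension $\calH(x)/\calH(y)$ is tame. Lemma~\ref{tamecomplex} then gives $\Omega_{\calH(x)^\circ/\calH(y)^\circ}=\calH(x)^{\circ\circ}/\calH(y)^{\circ\circ}\calH(x)^\circ=0$---the quotient vanishes precisely because $e=1$ implies $\calH(x)^{\circ\circ}=\calH(y)^{\circ\circ}\calH(x)^\circ$---so $\calH(x)^\circ/\calH(y)^\circ$ is \'etale. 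The main obstacle is the residue-field bookkeeping: without $\tilk$ perfect a separating transcendence basis need not exist, and without $|k^\times|$ divisible one cannot in general split $|L^\times|/|k^\times|$ off as a free basis so as to force $e=1$; both features are supplied by $k=k^a$, making the corollary a direct refinement of Lemma~\ref{monchartlem}.
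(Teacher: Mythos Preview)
Your overall strategy is exactly the paper's: refine the construction of Lemma~\ref{monchartlem} by choosing $t_1,\dots,t_F$ to lift a separating transcendence basis of $\tilL/\tilk$ and $t_{F+1},\dots,t_n$ to lift a $\bfZ$-basis of $|L^\times|/|k^\times|$, using that $\tilk$ is algebraically closed and $|k^\times|$ is divisible. The bookkeeping up to and including $|\calH(x)^\times|=|\calH(y)^\times|$ and separability of $\tilL/\tilk(\tilt_1,\dots,\tilt_F)$ is fine.

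The gap is your inference ``$e=1$ and residue extension separable, so the extension is tame.'' This implication is \emph{false} for general complete valued fields: a finite immediate extension of degree $p$ has $e=f=1$ and trivially separable residue extension, yet it is wildly ramified, not tame. What rules out such defect here is the \emph{stability} of $\calH(y)$, which is a nontrivial input (Corollary~\ref{fibcor}(3), using that $k$ is stable because it is algebraically closed). Stability forces $[\calH(x):\calH(y)]=e_{\calH(x)/\calH(y)}f_{\calH(x)/\calH(y)}$, and together with $e=1$ and separable residue this gives unramifiedness directly. The paper's proof invokes exactly this: ``Since $K$ is stable by Corollary~\ref{fibcor}(3), $L/K$ is unramified.'' Once you add that line, the detour through Lemma~\ref{tamecomplex} becomes unnecessary (and, as written, that detour only works \emph{after} tameness is established, so it cannot be used to prove it).
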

\begin{proof}
The proof repeats that of Lemma~\ref{monchartlem}, but we will choose $t_i$ more carefully. Set $L=\calH(x)$, $E=E_{L/k}$, $F=F_{L/k}$ and $n=E+F$. Since $k=k^a$ the field $\tilk$ is algebraically closed and the group $|k^\times|$ is divisible. In particular, the extension $\tilL/\tilk$ is separable and the group $\lvert L^\times\rvert /\lvert k^\times$ is torsion free. Since $\tilL/\tilk$ is finitely generated by Corollary~\ref{fibcor}(1), it possesses a separable transcendence basis. Choose $t_1\..t_F\in\kappa_G(x)$ so that $\tilt_1\..\tilt_F$ is such a basis. Since the group $\lvert L^\times\rvert /\lvert k^\times\rvert$ is finitely generated by Corollary~\ref{fibcor}(2), it is isomorphic to $\bfZ^E$. Choose $t_{F+1}\..t_n\in\kappa_G(x)$ so that their images form a basis of $\lvert L^\times\rvert /\lvert k^\times$.

The elements $t_1\..t_n$ define a morphism $f\:U\to\bfG_m^n$ for a small enough analytic domain $U$ containing $x$, and we claim that $f$ is as required. Indeed, it suffices to check that $L$ is unramified over $K=\calH(f(x))=\wh{k(t_1\..t_n)}$. By our choice $\lvert L^\times\rvert =\lvert K^\times\rvert$ and $\tilL/\tilK$ is separable. Since $K$ is stable by Corollary~\ref{fibcor}(3), $L/K$ is unramified.
\end{proof}

\subsubsection{Skeletons of monomial charts}
The set of all $f$-monomial points of $U$ will be denoted $S(f)$ and called the {\em skeleton} of the chart. Note that $S(f)$ is nothing else but the preimage of the skeleton of $\bfG_m^n$ under $f$. By Lemma~\ref{monchartlem}, $X^\mon=\cup_f S(f)$, where $f$ runs over all monomial charts of $X$. We warn the reader that, in general, $S(f)$ does not have to be a retract of $U$.

\subsubsection{$R_S$-PL structures}\label{plspacesec}
We will usually abbreviate ``piecewise linear" as PL. We refer to \cite[\S1]{bercontr2} for the definition of an $R_S$-PL space $Q$ for a ring $R\subseteq\bfR$ and its exponential module $S\subseteq\bfR_{>0}$. Here we only recall that $Q$ has an atlas $\{P_i\}$ of $R_S$-PL polytopes, i.e. polytopes in $\bfR_{>0}^n$ given by finitely many inequalities $st_1^{e_1}\dots t_n^{e_n}\le 1$ with $s\in S$, $e_i\in R$ and provided with the family of $R_S$-PL functions.

\subsubsection{Rational PL-subspaces}\label{plsec}
Absolute values of the coordinates of $\bfG_m^n$ induce coordinates $t_i\:S\to\bfR_{>0}$ on its skeleton $S$. The latter are unique up to the action of $\GL(n,\bfZ)\ltimes\lvert k^\times\rvert^n$ combined from the action of $\GL(n,\bfZ)$ on $\prod_{i=1}^nt_i^\bfZ$ and rescaling the coordinates by elements of $\lvert k^\times\rvert$. Therefore, $S$ acquires a canonical $\bfZ_{\lvert k^\times\rvert }$-PL structure.

The following facts were proved by Ducros: (1) for any monomial chart $f\:U\to\bfG^n_m$ the skeleton $S(f)$ possesses a unique $\bfQ_H$-PL structure such that the map $S(f)\to S$ is $\bfQ_H$-PL, see \cite[Th. 3.1]{Ducpl}, (2) for any two monomial charts $f$ and $g$ the intersection $S(f)\cap S(g)$ is $\bfQ_H$-PL in both $S(f)$ and $S(g)$, see \cite[Th. 5.1]{Ducpl2}, and so $S(f)\cup S(g)$ acquires a natural $\bfQ_H$-PL structure and the whole $X^\mon$ acquires a natural structure of an ind-$\bfQ_H$-PL space. By a {\em rational $\bfQ_H$-PL subspace} of $X$ we mean a subset of the form $\cup_{i=1}^nS({f_i})$ with its induced $\bfQ_H$-PL structure.

\subsubsection{Integral structure}
One may wonder if the $\bfQ_H$-PL structure on a rational PL subspace $P$ of $X$ can be refined to an integral one. Ducros and Thuillier showed in \cite{skeletons} that this is indeed the case: there is a $\bfZ_H$-PL structure on $P$ such that a function $f\:P\to\bfR_{>0}$ is $\bfZ_H$-PL if and only if $G$-locally it is of the form $r\lvert f\rvert$, where $f$ is an analytic function on $X$ and $r\in H$ (see \cite[3.7]{skeletons} and note that $H^\bfQ=H$). Obviously, such a structure is unique but consistency of the definition requires an argument. In addition, they show that the associated $\bfQ_H$-PL space, obtained by adjoining integral roots of all $\bfZ_H$-PL functions on $P$, coincides with the original $\bfQ_H$-PL space. In the sequel, we provide $P$ with this $\bfZ_H$-PL structure and call it a {\em $\bfZ_H$-PL subspace} of $X$.

\subsection{Semistable formal models}\label{semistablesec}

\subsubsection{Semistable formal schemes}
We say that a formal $\kcirc$-scheme is {\em strictly semi\-stable} if locally it admits an \'etale morphism to a formal scheme of the form
$\gtZ_{n,a}=\Spf(\kcirc\{T_0\.. T_n\}/(T_0\dots T_n-a))$ with $0\neq a\in\kcirc$. A formal $\kcirc$-scheme is called {\em semistable} if it is \'etale-locally strictly semistable .

\begin{rem}
(i) Sometimes one does not require that $a\neq 0$, thereby obtaining a wider class of semistable formal schemes. If $\gtX$ is semistable in this sense then $\gtX_\eta$ is quasi-smooth if and only if one can find charts with $a\neq 0$. Since we will only be interested in formal models of quasi-smooth spaces, we use the definition that includes the condition $a\neq 0$.

(ii) If the valuation is discrete and $a$ is a uniformizer, one often considers schemes $\gtZ=\Spf(\kcirc\{T_0\.. T_n\}/(T_0^{l_0}\dots T_n^{l_n}-a))$. They are regular with snc closed fiber. If $l_i>1$ then the closed fiber is not reduced and hence $\gtZ$ is not semistable. Note also that if $(l_1\.. l_n)\in\tilk^\times$ then $\gtZ$ is log smooth, see Remark~\ref{logsmoothrem} below.
\end{rem}

\subsubsection{Skeletons associated to semistable formal models}\label{modelsec}
To any semistable formal $\kcirc$-scheme $\gtX$ with generic fiber $X=\gtX_\eta$ Berkovich associated in \cite[Section~5]{bercontr} the skeleton $S(\gtX)\subset X$ and constructed a deformational retraction $X\onto S(\gtX)$. Moreover, these constructions are compatible with any \'etale morphism $\phi\:\gtY\to\gtX$, i.e. $S(\gtY)=\phi_\eta^{-1}(S(\gtX))$ and the retraction is compatible with $\phi_\eta$, see \cite[Theorem~5.2(vii)]{bercontr}.

Since any semistable formal $\kcirc$-scheme is connected with semistable formal schemes $\gtZ_{n,a}$ by a zigzag of two \'etale morphisms, description of the skeleton and the retraction reduces to the model case, and the latter is induced from $\bfG^n_m$. Namely, let $\bfG^n_m$ be the $n$-dimensional torus with coordinates $T_1\..T_n$. The affinoid subdomain given by $\lvert T_1\dots T_n\rvert \ge\lvert a\rvert$ and $\lvert T_i\rvert \le 1$ for $1\le i\le n$ equals to $Z=\calM(k\{T_0\..T_n\}/(T_0\dots T_n-l)$ and hence can be identified with the generic fiber of $\gtZ=\gtZ_{n,a}$. Then $T_1\..T_n$ give rise to the monomial chart $f\:Z\into\bfG^n_m$ with skeleton $S(f)=S(\bfG^m_n)\cap Z$, and $S(\gtZ)$ coincides with $S(f)$. The retraction of $Z$ onto $S(\gtZ)$ is the restriction of the retraction of $\bfG^m_n$ onto $S(\bfG^m_n)$.

\begin{rem}\label{logsmoothrem}
(i) Slightly more generally, Berkovich makes the above constructions for {\em polystable} models, i.e. those models that are \'etale-locally isomorphic to products of semistable ones. In fact, this can be extended further to log smooth formal models with trivial generic log structure -- these are formal schemes that \'etale-locally admit smooth morphisms to formal schemes of the form $\Spf(\kcirc\{P/\Pi\})$, where $\Pi$ and $P$ are sharp fs monoids, $\alp\:\Pi\into\kcirc\setminus\{0\}$ is an embedding such that the composition $\Pi\to\lvert \kcirc\setminus\{0\}\rvert$ is injective, $\phi\:\Pi\to P$ is an injective homomorphism with no $p$-torsion in the cokernel and such that $\Pi^\gp P=P^\gp$, and $\kcirc\{P/\Pi\}$ is the quotient of $\kcirc\{P\}$ by the ideal generated by elements $\alp(\pi)-\phi(\pi)$ for $\pi\in\Pi$. The details will be worked out elsewhere.

(ii) The main motivation for considering log smooth formal models is that it is believed (at least by the author) that any quasi-smooth compact strictly analytic space possesses a log smooth formal model. This is absolutely open when $\cha(\tilk)>0$, but one may hope to prove this by current techniques when $\cha(\tilk)=0$. Currently, the latter is only known for a discretely valued $k$: desingularization of excellent formal schemes implies that there even exists a semistable formal model. If the $\bfQ$-rank of $\lvert k^\times\rvert$ is larger than one, then it is easy to give examples when semistable models do not exist, e.g. $\Spf(k\{at^{-1},t\})\times\Spf(k\{bt^{-1},t\})$, where $\lvert b\rvert \notin\lvert a\rvert^\bfQ$ (see also \cite[Remark~1.1.1(ii)]{altered}). The situation with polystable models is unclear: it is still an open combinatorial question (in dimension at least 4) whether any log smooth formal model possesses a polystable refinement. A tightly related question was raised by Abramovich and Karu in \cite[Section~8]{AK}: log smooth (resp. polystable) models are analogues of weakly semistable (resp. semistable) morphisms in the sense of \cite[Section~0]{AK}.
\end{rem}

\section{Metrization of pluricanonical forms}\label{lastsec}
Throughout Section~\ref{lastsec}, $X$ is assumed to be quasi-smooth of pure dimension $n$. In particular, $\Omega_X=\Omega^1_{X/k}$ is a locally free sheaf of rank $n$ and the pluricanonical sheaves $\omega^{\otimes m}_X=(\bigwedge^n\Omega_{X})^{\otimes m}$ are invertible.

\subsection{Monomiality of K\"ahler seminorms}

\subsubsection{Stalks at monomial points}
Recall that if $k=k^a$ then any monomial point possesses a family of residually tame monomial parameters by Corollary~\ref{monchartcor}. This allows to describe $\lVert \ \rVert_\omega$ as follows.

\begin{theor}\label{diffieldprop}
Assume that $k$ is algebraically closed. Let $X$ be a quasi-smooth $k$-analytic space of dimension $n$ with a point $x\in X$, and let $t_1\..t_n$ be invertible elements of $\calO_{X_G,x}$. Then $\lVert \frac{dt_1}{t_1}\wedge\dots\wedge\frac{dt_n}{t_n}\rVert_{\omega,x}\le 1$ and the following conditions are equivalent:

(i) $\lVert \frac{dt_1}{t_1}\wedge\dots\wedge\frac{dt_n}{t_n}\rVert_{\omega,x}=1$.

(ii) $\frac{dt_1}{t_1}\..\frac{dt_n}{t_n}$ form an orthonormal basis of $\hatOmega_{\calH(x)/k}$.

(iii) $t$ is a family of residually tame monomial parameters at $x$.
\end{theor}
\begin{proof}
Since $\lVert dt_i\rVert_{\Omega,x}\le\lvert t_i\rvert_x$, it follows that $\lVert \frac{dt_1}{t_1}\wedge\dots\wedge\frac{dt_n}{t_n}\rVert_{\omega,x}\le 1$. The equivalence (i)$\Longleftrightarrow$(ii) is proved precisely as the equivalence (iii)$\Longleftrightarrow$(iv) in the proof of Theorem~\ref{tmonth}. We will complete the proof by showing that (ii)$\Longleftrightarrow$(iii).

Set $l=k(t)$ and $L=\hatl$. First, assume that $t_i$ form a family of residually tame monomial parameters. Then $\frac{dt_1}{t_1}\..\frac{dt_n}{t_n}$ is an orthonormal basis of $\hatOmega_{l/k}$ by Theorem~\ref{tmonth}. It remains to use that $\hatOmega_{l/k}=\hatOmega_{L/k}$ by Corollary~\ref{densecor2}, and $\hatOmega_{L/k}=\hatOmega_{\calH(x)/k}$ because $\calH(x)/L$ is tame.

Conversely, assume that $\frac{dt_1}{t_1}\..\frac{dt_n}{t_n}$ is an orthonormal basis of $\hatOmega_{\calH(x)/k}$. Since the isomorphism $\psi_{\calH(x)/L/k}\:\hatOmega_{L/k}\otimes_L\calH(x)\to\hatOmega_{\calH(x)/k}$ is non-expansive and $\lVert \frac{dt_i}{t_i}\rVert_{\Omega,{L/k}}\le 1$, we obtain that $\psi_{\calH(x)/L/k}$ is an isometry and $\frac{dt_1}{t_1}\..\frac{dt_n}{t_n}$ is an orthonormal basis of $\hatOmega_{L/k}$. The module $\Omega^\rmlog_{\calH(x)^\circ/\kcirc}$ is almost torsion free by Theorem~\ref{torth}, hence $\calH(x)/L$ is almost tame by Theorem~\ref{kahlerextth}.

It remains to show that $t_1\..t_n$ is a family of monomial parameters because then $L$ is stable by Corollary~\ref{fibcor}(3) and hence $\calH(x)/L$ is tame by Theorem~\ref{altameextth}(ii). Note that $t_1\..t_n$ are algebraically independent over $k$ because $dt_1\..dt_n$ are linearly independent in $\hatOmega_{\calH(x)/k}$, hence $t_1\..t_n$ is a separable transcendence basis of $l/k$. Since $\Omega_{l/k}$ is finite-dimensional, the completion $\Omega_{l/k}\to\hatOmega_{l/k}$ is surjective, and comparing the dimensions we see that it is an isomorphism. Thus, $\Omega_{l/k}\otimes_lL\to\hatOmega_{L/k}$ is an isometric isomorphism by Corollary~\ref{densecor2}. This implies that $\frac{dt_1}{t_1}\..\frac{dt_n}{t_n}$ is an orthonormal basis of $\Omega_{l/k}$, and hence the valuation on $l$ is $t$-monomial by Theorem~\ref{tmonth}.
\end{proof}

\begin{cor}\label{monomnormcor2}
Assume that $k$ is algebraically closed, $X$ is quasi-smooth, elements $t_1\..t_n\in\calO_{X_G,x}$ form a family of residually tame monomial parameters at a point $x$, and $\calF=(\Omega_X^l)^{\otimes m}$. Then
$$B=\left\{\left(\frac{dt_{i_1}}{t_{i_1}}\wedge\dots\wedge\frac{dt_{i_l}}{t_{i_l}}\right)\otimes\dots
\otimes\left(\frac{dt_{j_1}}{t_{j_1}}\wedge\dots\wedge\frac{dt_{j_l}}{t_{j_l}}\right)\right\}$$
is an orthonormal basis of $\calF_x$. In particular, any pluriform $\phi\in\Gamma((\Omega_X^l)^{\otimes m})$ can be represented as $\phi=\sum_{e\in B}\phi_e e$ locally at $x$, and the following equality holds $$\lVert \phi\rVert_{(\Omega_X^l)^{\otimes m},x}=\max_{e\in B}\lvert \phi_e\rvert_x.$$
\end{cor}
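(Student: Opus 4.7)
The corollary is essentially a pluriform extension of Theorem~\ref{diffieldprop}, so the plan is to reduce to the completed fiber at $x$, where orthonormality of the asserted basis follows from Theorem~\ref{diffieldprop} together with the standard transfer of orthonormal bases through tensor and exterior powers.

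First I would set up the reduction. Since $X$ is quasi-smooth and the residually tame monomial parameters $t_1\..t_n$ form a system of local coordinates on a small enough neighborhood of $x$, the differentials $dt_1\..dt_n$ are an $\calO_{X_G,x}$-basis of $\Omega_{X,x}$, so the indicated products $e\in B$ form an $\calO_{X_G,x}$-basis of $\calF_x$. In particular, any $\phi$ admits a unique expansion $\phi=\sum_{e\in B}\phi_e e$ with $\phi_e\in\calO_{X_G,x}$. Next, by Lemma~\ref{stalkcompat} applied iteratively, the stalk $\calF_x$ equals $(\bigwedge^l\Omega_{X,x})^{\otimes m}$ as a seminormed module, and the same equality holds between the fibers $\calF(x)$ and $(\bigwedge^l\Omega_X(x))^{\otimes m}$. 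Completing and using Corollary~\ref{inducecor}, together with the fact that for finite-dimensional cartesian normed spaces both exterior and tensor power constructions commute with completion, identifies $\wh{\calF(x)}$ isometrically with $(\bigwedge^l\hatOmega_{\calH(x)/k})^{\otimes m}$ in such a way that the class of $e$ corresponds to the obvious pluriform built from the $\frac{dt_i}{t_i}$.

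The second step invokes Theorem~\ref{diffieldprop}, implication (iii)$\Rightarrow$(ii): since $t_1\..t_n$ is a family of residually tame monomial parameters at $x$, the classes of $\frac{dt_1}{t_1}\..\frac{dt_n}{t_n}$ form an orthonormal basis of $\hatOmega_{\calH(x)/k}$ over $\calH(x)$. I would then invoke the standard result (see \cite[§2.5]{bgr}) that orthonormal bases pass to both tensor and exterior powers for cartesian Banach spaces over $\calH(x)$: if $\{e_i\}$ is orthonormal in $V$ then $\{e_{i_1}\wedge\dots\wedge e_{i_l}\}_{i_1<\dots<i_l}$ is orthonormal in $\bigwedge^l V$, and tensor products of orthonormal bases are orthonormal in the tensor product. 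Applied iteratively, this shows that the image of $B$ is an orthonormal basis of $(\bigwedge^l\hatOmega_{\calH(x)/k})^{\otimes m}$.

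Finally, the norm formula transfers back. By Corollary~\ref{fibcor}(0) (using that quasi-smooth spaces are reduced and $x$ is monomial) one has $\calO_{X_G,x}=\kappa_G(x)$, and the natural map $\kappa_G(x)\to\calH(x)$ is an isometry, so for each coefficient $\phi_e\in\calO_{X_G,x}$ one has $\lvert\phi_e\rvert_x=\lvert\phi_e(x)\rvert$. Combining this with the fact that the stalk seminorm on $\calF_x$ coincides (under the inclusion $\calF_x\into\wh{\calF(x)}$) with the norm computed on the completed fiber, the orthonormality of $B$ in $\wh{\calF(x)}$ gives $\lVert\phi\rVert_{\calF,x}=\max_e\lvert\phi_e\rvert_x$, as claimed. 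The only genuine technical point is ensuring that the identification of $\wh{\calF(x)}$ with $(\bigwedge^l\hatOmega_{\calH(x)/k})^{\otimes m}$ is isometric; this is where I expect the main care to be needed, but it follows cleanly from Lemma~\ref{stalkcompat} and the remark (already implicit in the way K\"ahler seminorms on pluriforms were defined in \S\ref{metrstalksec}) that these finite-dimensional operations commute with completion over $\calH(x)$.
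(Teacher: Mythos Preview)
Your proof is correct and follows essentially the same approach as the paper. The paper's own proof is two sentences: it invokes Theorem~\ref{diffieldprop} to conclude that $\frac{dt_1}{t_1},\ldots,\frac{dt_n}{t_n}$ is an orthonormal basis of $\Omega_{X,x}$ and then appeals to ``simple multilinear algebra''. What you have done is unpack both of these steps carefully---the passage from the orthonormal basis of $\hatOmega_{\calH(x)/k}$ (which is what Theorem~\ref{diffieldprop}(ii) literally gives) to the stalk $\Omega_{X,x}$ via Theorem~\ref{isometryth}/Corollary~\ref{inducecor} and Lemma~\ref{stalkcompat}, and the explicit multilinear-algebra propagation of orthonormality through exterior and tensor powers. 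Your detour through Corollary~\ref{fibcor}(0) is unnecessary (the equality $\lvert\phi_e\rvert_x=\lvert\phi_e(x)\rvert$ holds for any stalk of $\calO_{X_G}$ by Lemma~\ref{stalklem}), but it does no harm.
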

\begin{proof}
By Theorem~\ref{diffieldprop}, $\frac{dt_1}{t_1}\..\frac{dt_n}{t_n}$ is an orthonormal basis of $\Omega_{X,x}$. This reduces the claim to simple multilinear algebra.
\end{proof}

\begin{cor}\label{monomnormcor}
Keep the assumptions of Corollary~\ref{monomnormcor2} and assume that $\calF=\omega_X^{\otimes m}$. Then $e=(\frac{dt_1}{t_1}\wedge\dots\wedge\frac{dt_n}{t_n})^{\otimes m}$ is a basis of $\calF_x$, and if $\phi=fe$ is the representation of a pluricanonical form $\phi$ at $x$ then $\lVert \phi\rVert_{\omega^{\otimes m},x}=\lvert f\rvert_x$.
\end{cor}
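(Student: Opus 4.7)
The plan is to deduce Corollary~\ref{monomnormcor} as a direct specialization of Corollary~\ref{monomnormcor2} to the case $l=n$. Since $X$ is quasi-smooth of pure dimension $n$, the stalk $\Omega_{X,x}$ is free of rank $n$, and the top exterior power $\omega_{X,x}=\bigwedge^{n}\Omega_{X,x}$ is an invertible $\calO_{X_G,x}$-module, as is $\omega_{X,x}^{\otimes m}$.

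First, I would invoke Corollary~\ref{monomnormcor2} with $\calF=(\Omega_X^n)^{\otimes m}=\omega_X^{\otimes m}$. In that statement the orthonormal basis $B$ of $\calF_x$ is indexed by $m$-tuples of strictly increasing multi-indices $1\le i_1<\dots<i_n\le n$ (one such multi-index for each tensor factor). When $l=n$ there is exactly one such multi-index, namely $(1,\dots,n)$. Hence $B$ collapses to the singleton $\{e\}$, where $e=(\frac{dt_1}{t_1}\wedge\dots\wedge\frac{dt_n}{t_n})^{\otimes m}$. In particular $e$ is a basis of the rank-one module $\calF_x$ with $\lVert e\rVert_{\omega^{\otimes m},x}=1$.

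Next, writing a pluricanonical form $\phi\in\Gamma(\omega_X^{\otimes m})$ in the form $\phi=fe$ locally at $x$ realizes the representation $\phi=\sum_{e'\in B}\phi_{e'}e'$ of Corollary~\ref{monomnormcor2} with $\phi_e=f$ and no other terms. The formula $\lVert\phi\rVert_{\omega^{\otimes m},x}=\max_{e'\in B}\lvert\phi_{e'}\rvert_x$ then collapses to $\lVert\phi\rVert_{\omega^{\otimes m},x}=\lvert f\rvert_x$, which is the desired equality.

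There is no genuine obstacle to this argument: it is purely the observation that in top degree the combinatorial index set of the orthonormal basis in Corollary~\ref{monomnormcor2} becomes a singleton, so the sup formula degenerates to evaluation on a single coefficient. All substantive work (the orthonormality of $\frac{dt_i}{t_i}$, the description of the stalk norm via a family of residually tame monomial parameters) has already been carried out in Theorem~\ref{diffieldprop} and Corollary~\ref{monomnormcor2}.
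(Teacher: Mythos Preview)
Your proposal is correct and matches the paper's treatment: the paper states this corollary without proof, as it is an immediate specialization of Corollary~\ref{monomnormcor2} to the case $l=n$, exactly as you describe.
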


\subsubsection{Piecewise monomiality}
Now, we can prove that norms of pluriforms induce $\bfZ_H$-PL functions on $\bfZ_H$-PL subspaces of $X$ (see \S\ref{plsec}). In particular, when restricted to $\bfZ_H$-PL subspaces of $X$, spectral seminorm and K\"ahler seminorms demonstrate similar behavior, although their global behavior is very different.

\begin{theor}\label{rplth}
Assume that $X$ is a reduced $k$-analytic space and $\phi\in\Gamma((\Omega_X^l)^{\otimes m})$ is a pluriform on $X$, and consider the function $\lVert \phi\rVert \:X\to\bfR_{\ge 0}$ that sends $x$ to the value $\lVert \phi\rVert_{(\oOmega_X^l)^{\otimes m},x}$ of the geometric K\"ahler seminorm. Then for any $\bfZ_H$-PL subspace $P\subset X$ the restriction of $\lVert \phi\rVert$ onto $P$ is a $\bfZ_H$-PL function.
\end{theor}
\begin{proof}
Since $X$ is reduced, any monomial point satisfies $m_x=0$ and hence $X$ is quasi-smooth at $x$. Therefore, replacing $X$ by a neighborhood of $P$ we can assume that $X$ is quasi-smooth.

First, we consider the case when $k=k^a$. By Theorem~\ref{tamechart} that will be proved in the end of the paper, we can cover $S$ by finitely many skeletons of residually tame (even unramified) monomial charts, hence it suffices to consider the case when $P$ itself is the skeleton of a residually tame monomial chart $f\:U\to\bfG^n_m$ given by $t_1\..t_n\in\Gamma(\calO_U^\times)$. By Corollary~\ref{monomnormcor2}, locally at a point $x\in P$ we can represent $\lVert \phi\rVert$ as the maximum of $\bfZ_H$-PL functions $\lvert \phi_i\rvert$. Hence $\lVert \phi\rVert$ is $\bfZ_H$-PL too.

Assume, now, that $k$ is arbitrary. Set $\oX=X\wtimes_kk^a$ and let $\ophi\in\Gamma((\Omega_\oX^l)^{\otimes m})$ be the pullback of $\phi$. The preimage $\oP\subset\oX$ of $P$ is a $\bfZ_H$-PL subspace: just take the charts of $P$ and extend the ground field to $\whka$. Also, it follows from the description with charts that the map $\oP\to P$ is $\bfZ_H$-PL. Since the pullback of $\lVert \phi\rVert_{(\oOmega_X^l)^{\otimes m},x}$ to $\oP$ is $\lVert \ophi\rVert_{(\Omega_\oX^l)^{\otimes m},x}$ and the latter function is $\bfZ_H$-PL by the case of an algebraically closed ground field, $\lVert \phi\rVert_{(\oOmega_X^l)^{\otimes m},x}$ is a $\bfZ_H$-PL function as well.
\end{proof}

\begin{rem}
Most probably, the assumption that $X$ is reduced can also be removed. Also, it seems probable that the theorem holds for the K\"ahler seminorm too.
\end{rem}

\subsection{Maximality locus of pluricanonical forms}\label{maxsec}
Given a pluricanonical form $\phi$, let $M_\phi$ be the maximality locus of the K\"ahler seminorm of $\phi$ and let $\oM_\phi$ be the maximality locus of the geometric K\"ahler seminorm of $\phi$. Our next aim is to study $\oM_\phi$; recall that it is closed by Theorem~\ref{analyticomega} and Lemma~\ref{rankonelem}. The main results of this section, including Theorems \ref{semistableprop} and \ref{plmaxth}, do not hold for the maximality locus $M_\phi$, at least when the residue field is not perfect; the counterexamples being as in Section~\ref{discnonperf} and Remark~\ref{mixedrem}.

\subsubsection{The torus case}
We start with studying the standard pluricanonical form on a torus.

\begin{lem}\label{toruscor}
Let $X$ be the $k$-analytic torus $\bfG_m^{n,\an}$ with coordinates $t_1\..t_n$. Consider the pluricanonical form $\phi=(\frac{dt_1}{t_1}\wedge\dots\wedge\frac{dt_n}{t_n})^{\otimes m}$. Then $\lVert \phi\rVert_{\oomega^{\otimes m},x}\le 1$ for any $x\in X$ and the equality takes place if and only if $x$ is a generalized Gauss point (see \S\ref{modelcasesec}). In particular, the maximum locus of $\lVert \phi\rVert_{\oomega^{\otimes m}}$ is the skeleton $\bfR_{>0}^n$ of $X$.
\end{lem}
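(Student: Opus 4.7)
The plan is to reduce immediately to the case $k=k^a$ and then invoke Theorem~\ref{diffieldprop} with the identity chart. Let $g\colon\oX\to X$ denote the base change $\oX=X\wtimes_k\whka$, and let $\ophi$ be the pullback of $\phi$. By the definition of the geometric K\"ahler seminorm in \S\ref{changesec}, for any $x\in X$ and any $\ox\in g^{-1}(x)$ one has
$$\lVert\phi\rVert_{\oomega^{\otimes m},x}=\lVert\ophi\rVert_{\omega^{\otimes m},\ox}.$$
Moreover, since $\ophi$ is expressed in terms of the pullbacks of the same coordinates $t_1,\dots,t_n$, it suffices to prove the lemma over an algebraically closed $k$ once I verify that ``being a generalized Gauss point'' is compatible with the base change. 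This compatibility is elementary: if $\ox$ is the $\ur$-Gauss point on $\bfG_m^{n,\an}_{\whka}$ then its restriction to $k[t_1,\dots,t_n]$ is still the $\ur$-Gauss valuation, so $x=g(\ox)$ is a Gauss point; conversely, any Gauss point $x$ with radii $\ur$ admits the $\ur$-Gauss point of $\oX$ as a preimage, which suffices since the equality $\lVert\phi\rVert_{\oomega^{\otimes m},x}=\lVert\ophi\rVert_{\omega^{\otimes m},\ox}$ holds for \emph{any} choice of $\ox$.

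Now assume $k=k^a$ and consider the identity monomial chart $f=\id\colon X=\bfG_m^n\to\bfG_m^n$ determined by $t_1,\dots,t_n\in\Gamma(\calO_X^\times)$. Since $f$ is the identity, it is residually unramified at every point, and the condition that $x$ be $f$-monomial is precisely the condition that $x$ be a generalized Gauss point of $\bfG_m^n$. Thus by Theorem~\ref{diffieldprop} applied to the tuple $t_1,\dots,t_n$ (which are invertible in $\calO_{X_G,x}$ for every $x\in X$), we obtain $\lVert\tfrac{dt_1}{t_1}\wedge\dots\wedge\tfrac{dt_n}{t_n}\rVert_{\omega,x}\le 1$ with equality if and only if $x$ is a generalized Gauss point.

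Raising to the $m$-th tensor power gives $\lVert\phi\rVert_{\omega^{\otimes m},x}\le 1$ with equality exactly on the set of generalized Gauss points. Combined with the reduction in the first paragraph, this proves the claim about $\lVert\phi\rVert_{\oomega^{\otimes m}}$. The final assertion then just identifies the locus of generalized Gauss points with the skeleton $\bfR_{>0}^n\subset X$ as described in \S\ref{modelcasesec}. There is no substantial obstacle here: the proof is essentially a direct application of Theorem~\ref{diffieldprop} once the descent to $k=k^a$ is handled, and the only mild subtlety is checking the behavior of Gauss points under ground field extension, which follows from inspecting their definition by explicit formulas on polynomials.
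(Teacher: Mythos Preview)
Your proof is correct and follows the same approach as the paper, which simply records that the lemma ``immediately follows from Theorem~\ref{diffieldprop}''. You have just made explicit the two routine steps that the paper leaves implicit: the passage to $k=k^a$ via the definition of the geometric K\"ahler seminorm, and the observation that for the identity chart on $\bfG_m^n$ the condition ``$t$ is a family of residually tame monomial parameters at $x$'' in Theorem~\ref{diffieldprop}(iii) reduces exactly to $x$ being a generalized Gauss point.
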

\begin{proof}
This immediately follows from Theorem~\ref{diffieldprop}.
\end{proof}

\subsubsection{The semistable case}
Recall that the skeleton $S(\gtX)\subset X$ associated with a strictly semistable formal model has a natural structure of a simplicial complex.

\begin{theor}\label{semistableprop}
Assume that $X$ is a quasi-smooth compact strictly $k$-analytic space, $\phi\in\Gamma(\omega^{\otimes m}_X)$ is a pluricanonical form on $X$, $\gtX$ is a strictly semistable formal model of $X$, and $S(\gtX)\subset X$ is the skeleton associated with $\gtX$. Then the maximality locus $\oM_\phi$ is a union of faces of $S(\gtX)$.
\end{theor}
\begin{proof}
Set $\lVert \ \rVert =\lVert \ \rVert_{\oomega^{\otimes m}}$ for shortness. First, let us prove that $\oM_\phi\subseteq S(\gtX)$. Working locally on $\gtX$ we can assume that there exists an \'etale morphism $$\gtg\:\gtX\to\gtY=\Spf(\kcirc\{t_0\..t_n\}/(t_0\dots t_n-a)),$$ where $0\neq a \in\kcirc$. Since $Y=\gtY_\eta$ is a domain in $\bfG_m^n$ with coordinates $t_1\..t_n$ (see \S\ref{modelsec}), $e=(\frac{dt_1}{t_1}\wedge\dots\wedge\frac{dt_{n}}{t_{n}})^{\otimes m}$ is a nowhere vanishing pluricanonical form on $Y$ and therefore $\phi=he$ for a function $h\in\Gamma(\calO_X)$. By \cite[Theorem~5.3.2(ii)]{bercontr}, the retraction $r\:X\to S(\gtX)$ is compatible with domination, in particular, $\lvert h\rvert_x\le\lvert h\rvert_{r(x)}$. Since $\lVert \phi\rVert_{x}=\lvert h\rvert_x\lVert e\rVert_{x}$, it remains to prove that $\lVert e\rVert_{x}\le\lVert e\rVert_{r(x)}$ and the equality holds if and only if $x=r(x)$, i.e. $x\in S(\gtX)$.

When working with $e$, we can also view it as a form on $Y$. Let $g\:X\to Y$ be the generic fiber of $\gtg$. Since $\gtg$ is \'etale, if $x\in X$ and $y=g(x)$ then the extension $\calH(x)/\calH(y)$ is unramified by \cite[Lemma~1.6]{bercontr}. So, Theorem~\ref{compatth} and Lemma~\ref{univspeclem} imply that $\lVert e\rVert_{x}=\lVert e\rVert_{y}$. By Corollary~\ref{toruscor}, $\lVert e\rVert_{y}\le 1$ for any point $y\in Y$ and the equality holds precisely for the points of $S(\gtY)$. So, $\lVert e\rVert_{x}\le 1$ for any $x\in X$ and the equality holds precisely for the points of $g^{-1}(S(\gtY))=S(\gtX)$.

It remains to show that if $\Delta$ is a face of $S(\gtX)$ then either $\Delta\subset\oM_\phi$ or the interior $\Delta^\circ$ is disjoint from $\oM_\phi$. This claim is local on $\gtX$ so we can assume that $\gtX=\Spf(\calAcirc)$ is affine and there is an \'etale morphism $\gtg\:\gtX\to\gtY$ as above. In particular, we can assume that $\phi=he$, as above, and so $\lVert \phi\rVert_{x}=\lvert h\rvert_x$ at any point $x\in S(\gtX)$. Note that $\gtX$ is pluri-nodal in the sense of \cite[Section 1]{bercontr}, hence $\lvert h\rvert_\calA\in\lvert k^\times\rvert$ by \cite[Proposition~1.4]{bercontr}. Thus, multiplying $\phi$ by an element of $k^\times$ we can achieve that $\lvert h\rvert_\calA=1$, and then $h\in\calAcirc$ is a function on $\gtX$. Note that $\Delta^\circ$ is the preimage in $S(\gtX)$ of a point $\gtx\in\gtX$. If $\tilh(\gtx)=0$ then $\lvert h\rvert_x<1$ at any point in the preimage of $\gtx$ in $X$, and hence $\Delta^\circ\cap\oM_\phi=\emptyset$. Otherwise, $\lvert h\rvert_x=1$ for any $x$ as above, hence $\Delta^\circ\subseteq\oM_\phi$ and by the closedness of $\oM_\phi$ we obtain that $\Delta\subseteq\oM_\phi$.
\end{proof}

\begin{rem}
(i) Theorem~\ref{semistableprop} is an analogue of \cite[Th. 4.5.5]{Mustata-Nicaise}, though it applies to a wider context (e.g., $X$ is only assumed to be quasi-smooth). Note, however, that these results consider different seminorms when $\cha(\tilk)>0$ (see Section~\ref{comparsec} below).

(iii) The lemma can be extended to general semistable models at cost of considering generalized simplicial complexes. Moreover, it should extend to arbitrary log smooth formal models, once the foundations are set (see Remark~\ref{logsmoothrem}).
\end{rem}

\subsubsection{Residually tame coverings}
Recall that residual tameness was defined in \S\ref{restamesec}.

\begin{lem}\label{semistablecor}
Assume that $X$ is a quasi-smooth compact strictly $k$-analytic space admitting a residually tame quasi-\'etale covering $Y\to X$ such that $Y$ possesses a strictly semistable formal model. Then for any pluricanonical form $\phi\in\Gamma(\omega^{\otimes m}_X)$ on $X$ the geometric maximality locus $\oM_\phi$ is a compact $\bfZ_H$-PL subspace of $X$.
\end{lem}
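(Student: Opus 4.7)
The plan is to pull $\phi$ back along $g\:Y\to X$ to a pluricanonical form $\psi=g^*\phi\in\Gamma(\omega_Y^{\otimes m})$, apply Theorem~\ref{semistableprop} on $Y$ where a strictly semistable model is at hand, and then descend to $X$. The first step is to verify that $g$ preserves the geometric K\"ahler seminorm, so that $\lVert\psi\rVert_y=\lVert\phi\rVert_{g(y)}$ for every $y\in Y$. Base changing to $\whka$ preserves both the quasi-\'etaleness and the residual tameness of $g$, hence this compatibility will follow from Theorem~\ref{compatth} applied to the base changed morphism, together with Corollary~\ref{univspeccor} which ensures that finite tame residue field extensions are universally spectral. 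Consequently $\oM_\psi=g^{-1}(\oM_\phi)$ and, by surjectivity of $g$, $\oM_\phi=g(\oM_\psi)$.

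Next, Theorem~\ref{semistableprop} applied to $(Y,\gtY,\psi)$ shows that $\oM_\psi$ is a union of closed faces of the simplicial skeleton $S(\gtY)$, hence a compact $\bfZ_H$-PL subspace of $Y$; its continuous image $\oM_\phi$ is then compact in $X$. Residual tameness together with finiteness of the fibers of $g$ preserves the numerical invariants $E$ and $F$, so $g$ sends $Y^\mon$ into $X^\mon$, giving the inclusion $\oM_\phi\subseteq X^\mon$.

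It remains to exhibit $\oM_\phi$ as a $\bfZ_H$-PL subspace of $X$, i.e.\ locally a finite union of skeletons of monomial charts. Fix $x\in\oM_\phi$, choose $y\in g^{-1}(x)\cap\oM_\psi$ lying in some face $\Delta\subset S(\gtY)$, and shrink so that $g$ restricts to a finite \'etale residually tame morphism $V\to U$ for which $\Delta\cap V$ is the skeleton of a residually unramified monomial chart on $V$ with parameters $s_1,\dots,s_n\in\calO_V^\times$. The plan is to descend these parameters to invertible functions $t_1,\dots,t_n\in\calO_U^\times$ defining a monomial chart $f\:U\to\bfG_m^n$ whose skeleton $S(f)$ contains $g(\Delta\cap V)$: since each $s_i$ generates a finite tame extension of $\calH(g(y))$, suitable tame combinations (for instance elementary symmetric functions of Galois conjugates over an \'etale-local Galois closure of $g$, or appropriate powers absorbing the tame ramification index) will yield such $t_i$. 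Finitely many such charts will then glue, by compactness of $\oM_\phi$, to present $\oM_\phi$ as a compact $\bfZ_H$-PL subspace of $X$.

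The hard part will be precisely this descent of the PL structure through the finite residually tame cover: one must establish that residually tame quasi-\'etale coverings preserve the rational PL structure strongly enough that the forward image of a union of faces of $S(\gtY)$ is again a union of skeletons of monomial charts on $X$. Once this is secured, everything else is a straightforward pullback-pushforward argument leveraging Theorem~\ref{semistableprop} upstairs.
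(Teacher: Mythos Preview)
Your first two steps match the paper's proof exactly: pull back to $\psi$ on $Y$, use Theorem~\ref{compatth} (applied after base change to $\whka$, as you correctly note) together with Corollary~\ref{univspeccor} to get $\lVert\psi\rVert_y=\lVert\phi\rVert_{g(y)}$ and hence $\oM_\phi=g(\oM_\psi)$, then invoke Theorem~\ref{semistableprop} to see that $\oM_\psi$ is a compact $\bfZ_H$-PL subspace of $Y$.

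The divergence is in the final step. The paper does not attempt to descend monomial charts through the cover at all; it simply cites \cite[Proposition~2.1]{skeletons} (Ducros--Thuillier), which asserts that the image of a compact $\bfZ_H$-PL subspace under a morphism of $k$-analytic spaces is again a $\bfZ_H$-PL subspace. This is a general structural result about PL subspaces and morphisms, not specific to residually tame covers, and it is exactly the ``hard part'' you identify. Your proposed direct construction --- descending the $s_i$ to $t_i$ via symmetric functions or tame powers --- is plausible in spirit but is neither complete nor obviously easier than the cited result; in particular, producing invertible functions on $U$ whose associated chart has skeleton containing $g(\Delta\cap V)$ requires real work (one must control how the PL structure behaves under finite maps, which is precisely the content of the Ducros--Thuillier proposition). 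So there is no genuine gap in your strategy, but you are attempting to reprove by hand a result that the paper imports as a black box; once you cite \cite[Proposition~2.1]{skeletons}, your proof collapses to the paper's three-line argument.
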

\begin{proof}
Let $\psi\in\Gamma(\omega_Y)$ be the pullback of $\phi$. By Theorem~\ref{compatth}, the norm functions $\lVert \phi\rVert_{\omega^{\otimes m}}$ and $\lVert \psi\rVert_{\omega^{\otimes m}}$ are compatible, and hence $\oM_\phi=f(\oM_\psi)$. It remains to recall that $\oM_\psi$ is a compact $\bfZ_H$-PL subspace by Theorem~\ref{semistableprop}, and hence its image under $f$ is a $\bfZ_H$-PL subspace by \cite[Proposition~2.1]{skeletons}.
\end{proof}

\subsubsection{Residue characteristic zero}
In order to use the previous lemma, one should construct an appropriate covering $Y\to X$. The author conjectures that any quasi-smooth strictly analytic space possesses a quasi-net of analytic subdomains that admit a semistable model (this is an analogue of the local uniformization conjecture in the desingularization theory). However, this seems to be out of reach when $\cha(\tilk)>0$. Even the case of $\cha(\tilk)=0$, which should be relatively simple, is missing in the literature. We can avoid dealing with it here in view of the following generalization of a theorem of U. Hartl to ground fields with non-discrete valuations, see \cite[Theorem~3.4.1]{altered}: there exist a finite extension $l/k$ and a quasi-\'etale surjective morphism $Y\to X_l=X\otimes_kl$ such that $Y$ possesses a strictly semistable formal model. The theorem does not provide any control on residual tameness of $f$, but it is automatic whenever $\cha(\tilk)=0$.

\begin{theor}\label{plmaxth}
Assume that $\cha(\tilk)=0$ and $X$ is a quasi-smooth compact strictly $k$-analytic space. Then for any pluricanonical form $\phi\in\Gamma(\omega^{\otimes m}_X)$ on $X$, the maximality locus $M_\phi$ is a compact $\bfZ_H$-PL subspace of $X$.
\end{theor}
\begin{proof}
In this case, there is no difference between $\lVert \ \rVert_{\omega^{\otimes m}}$ and $\lVert \ \rVert_{\oomega^{\otimes m}}$. Take $Y$ and $l$ as in \cite[Theorem~3.4.1]{altered}, then the composition $Y\to X\otimes_k l\to X$ is a quasi-\'etale covering, which is automatically residually tame. It remains to use Lemma~\ref{semistablecor}.
\end{proof}

We us say that a point $x\in X$ is {\em divisorial} if $x$ is monomial and $E_{\calH(x)/k}=0$. Thus, $x$ is divisorial if and only if $\trdeg_{\wHx/\tilk}=\dim_x(X)$. We will not need this, but it is easy to see that if $X$ is strictly analytic then $x$ is divisorial if and only if there exists a formal model $\gtX$ such that $x$ is the preimage of a generic point of $\gtX_s$ under the reduction map.

\begin{cor}\label{plmaxcor}
Keep the assumptions of Theorem~\ref{plmaxth} and let $X^\div$ be the set of divisorial points of $X$. Then $\lVert \phi\rVert _{\omega^{\otimes m}}=\max_{x\in X^\div}\lVert \phi\rVert _{\omega^{\otimes m},x}$.
\end{cor}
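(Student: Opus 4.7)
The plan is to combine Theorem~\ref{plmaxth} with density of divisorial points inside any non-empty PL subspace. We may assume $\phi\neq 0$, as the claim is trivial otherwise. First I would observe that by Theorem~\ref{analyticomega} and Lemma~\ref{rankonelem} the function $x\mapsto\lVert\phi\rVert_{\omega^{\otimes m},x}$ is upper semicontinuous on $X$ with respect to the usual topology; combined with analyticity of the seminorm and quasi-compactness of $X$, this gives $\lVert\phi\rVert_{\omega^{\otimes m}}=\sup_{x\in X}\lVert\phi\rVert_{\omega^{\otimes m},x}$, and the supremum is finite and attained on the non-empty closed maximality locus $M_\phi\subseteq X$.

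Next I would apply Theorem~\ref{plmaxth} (using $\cha(\tilk)=0$, so that the K\"ahler seminorm on $\omega^{\otimes m}_X$ coincides with the geometric K\"ahler seminorm) to conclude that $M_\phi$ is a non-empty compact $\bfZ_H$-PL subspace of $X$; in particular $M_\phi\subseteq X^\mon$. Since the function $\lVert\phi\rVert_{\omega^{\otimes m},\cdot}$ is identically equal to $\lVert\phi\rVert_{\omega^{\otimes m}}$ on $M_\phi$, it suffices to produce a single divisorial point inside $M_\phi$.

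For this I would establish the following density statement: in every non-empty $\bfZ_H$-PL subspace $P$ of $X$, the divisorial points are dense. By \S\ref{plsec} the space $P$ is locally covered by pieces of skeletons $S(f_i)$ for monomial charts $f_i\colon U_i\to\bfG_m^{n_i}$ with parameters $t_1,\ldots,t_{n_i}$, and the $\bfZ_H$-PL structure identifies $S(f_i)$ with a polytope in $\bfR_{>0}^{n_i}$ via $x\mapsto(|t_1|_x,\ldots,|t_{n_i}|_x)$. For $x\in S(f_i)$ the extension $\calH(x)/\wh{k(\ut)}$ is finite, so $E_x$ equals the $\bfQ$-rank of the image of $\langle|t_1|_x,\ldots,|t_{n_i}|_x\rangle$ in $\bfR_{>0}/|k^\times|$; hence $x$ is divisorial precisely when all $|t_j|_x\in H=|k^\times|^\bfQ$. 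Since $H\neq 1$ is a divisible subgroup of $\bfR_{>0}$, it is dense in $\bfR_{>0}$, so $H^{n_i}$ is dense in $\bfR_{>0}^{n_i}$ and the divisorial points are dense in each $S(f_i)$, hence in $P$. In particular $M_\phi$ contains a divisorial point $x$, and then $\lVert\phi\rVert_{\omega^{\otimes m}}=\lVert\phi\rVert_{\omega^{\otimes m},x}$, as required.

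The only substantive obstacle is the density step: one must verify that the coordinate-wise characterization of divisorial points inside $S(f_i)$ is genuinely compatible with the intrinsic $\bfZ_H$-PL structure on $M_\phi$ supplied by \cite{skeletons}, so that density in individual monomial charts transfers to density inside $M_\phi$ as a whole. All remaining ingredients are immediate from the results cited above.
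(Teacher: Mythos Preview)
Your argument is correct and follows essentially the same route as the paper: both reduce the claim to producing a single divisorial point inside the compact $\bfZ_H$-PL subspace $M_\phi$ furnished by Theorem~\ref{plmaxth}. The paper reaches this endpoint by invoking Theorem~\ref{rplth} to say that $\lVert\phi\rVert$ restricts to a $\bfZ_H$-PL function on $M_\phi$, which therefore attains its maximum at a $\bfZ_H$-rational point, and such points are divisorial. Since $\lVert\phi\rVert$ is in fact constant on $M_\phi$, the appeal to Theorem~\ref{rplth} is not really doing work beyond guaranteeing a rational point, so your direct density argument is a slightly leaner packaging of the same idea.

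One small correction: you identify $H$ with $\lvert k^\times\rvert^\bfQ$, but in the paper $H$ is only required to be a divisible group containing $\lvert k^\times\rvert$. Divisoriality of $x\in S(f)$ is equivalent to $\lvert t_j\rvert_x\in\lvert k^\times\rvert^\bfQ$ for all $j$, not merely $\lvert t_j\rvert_x\in H$. This does not damage your argument---points with coordinates in $\lvert k^\times\rvert^\bfQ$ are still dense in each chart (the valuation is non-trivial in the setting of Theorem~\ref{plmaxth})---but the ``precisely when'' characterization should be stated with $\lvert k^\times\rvert^\bfQ$ rather than $H$. The obstacle you flag at the end is not a genuine issue: density of $\lvert k^\times\rvert^\bfQ$-rational points in each $S(f_i)$ immediately gives density in any finite union, hence in $M_\phi$.
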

\begin{proof}
By Theorem~\ref{plmaxth}, the maximality locus $M_\phi$ is a compact $\bfZ_H$-PL subspace. By Theorem~\ref{rplth}, the restriction of $\lVert \phi\rVert $ on $M_\phi$ is a $\bfZ_H$-PL function, hence it achieves maximum at a $\bfZ_H$-rational point $x$. Any such $x$ is a divisorial point of $X$.
\end{proof}

\subsection{Comparison with the weight norm of Musta\c{t}\u{a}-Nicaise}\label{comparsec}
We conclude Section~\ref{lastsec} by comparing $\lVert \ \rVert_{\omega}$ with the weight norm \`a la Musta\c{t}\u{a} and Nicaise, see \cite{Mustata-Nicaise}. Unless said to the contrary, $k$ is assumed to be discretely valued.

\subsubsection{Weight seminorm}
Assume that $K/k$ is a separable finitely generated extension of real-valued fields of transcendence degree $n$ such that $\trdeg(\tilK/\tilk)=n$. Note that $K$ is  discretely valued and such extensions correspond to divisorial valuations. Let us recall how a norm on $\omega_{K/k}=\Omega_{K/k}^n$ is defined in \cite{Mustata-Nicaise}. We will call it the {\em weight norm} and denote $\lVert \ \rVert_\rmwt$. Fix $t_1\..t_n\in\Kcirc$ such that $\tilt_1\..\tilt_n$ is a transcendence basis of $\tilK/\tilk$. We claim that replacing $t_i$ with elements $t'_i$ such that $\lvert t_i-t'_i\rvert <1$ one can in addition achieve that $t_1\..t_n$ is a separable transcendence basis of $K$. To prove this we will use the observation that the latter happens if and only if $dt_1\..dt_n$ is a basis of $\Omega_{K/k}$. Choose a separable transcendence basis $x_1\..x_n$, in particular, $dx_i$ form a basis. Then the elements $d(t_i+ax_i)=dt_i+adx_i$ form a basis for all but finitely many values of $a\in k$. In particular, we can choose $a\in k$ such that $d(t_i+ax_i)$ form a basis and $\lvert ax_i\rvert <1$ for any $i$, and then $t'_i=t_i+ax_i$ are as required.

Note that the induced valuation on $l=k(t_1\..t_n)$ is Gauss, and so $\lcirc$ is a localization of $\kcirc[t_1\..t_n]$. Since $\lcirc\into\Kcirc$ is a finite lci homomorphism, there exists a representation $\Kcirc=\lcirc[s_1\..s_m]/(f_1\..f_m)$ and then $\Kcirc$ is a localization of $\kcirc[\ut,\us]/(\uf)$. By \cite[4.1.4]{Mustata-Nicaise}, the canonical module $\omega_{\Kcirc/\kcirc}$ is generated by $\Delta^{-1}\phi$, where $\phi=dt_1\wedge\dots\wedge dt_n$ and $\Delta=\det\left(\frac{\partial f_i}{\partial s_j}\right)$. To describe $\lVert \ \rVert_\rmwt$ it suffices to compute the norm of $\phi$. The definitions of \cite[4.2.3--4.2.5]{Mustata-Nicaise} introduce a log-norm that we call weight: $\rmwt(\phi)=\frac{\nu_k(\Delta)+1}{e}$, where $e=e_{K/k}$ and $\nu_k\:k^\times\to\bfZ$ is the additive valuation of $k$ (the weight function $\rmwt_\phi$ considered in loc.cit. is a function of a point $x\in X$ where $K$ appears as the residue field of $x$; it is the logarithmic analogue of the function $\lVert \phi\rVert$ on $X$). So, we define the {\em weight norm} by $\lVert \phi\rVert_\rmwt=\lvert \Delta\pi_K\rvert$, where $\pi_K$ is a uniformizer of $K$. (The factor $1/e$ is only needed in the additive setting to make the group of values of $K$ equal to $e^{-1}\bfZ$, so that $\nu_k$ agrees with $\nu_K$.) The weight norm on $\omega_{K/k}^{\otimes m}$ is defined via $\lVert \phi^{\otimes m}\rVert_{\rmwt^{\otimes m}}=(\lVert \phi\rVert_\rmwt)^m$.

Note that $\Omega_{\Kcirc/\lcirc}$ is the cokernel of the map $$\oplus_{i=1}^mf_i\Kcirc\stackrel{d}{\to}\oplus_{j=1}^m\Kcirc ds_j,$$ where $df_i=\sum_{j=1}^n \frac{\partial f_i}{\partial s_j}ds_j$. It follows that $\Delta=\cont(\Omega_{\Kcirc/\lcirc})=\delta_{K/l}$.

\subsubsection{Comparison}
The norms $\lVert \ \rVert_\omega$ and $\lVert \ \rVert_\rmwt$ on the one-dimensional vector space $\omega_{K/k}$ differ by a factor, so to compare them it suffices to evaluate the K\"ahler seminorm at $\phi$. Since $\lvert t_i\rvert =1$, Theorem~\ref{tmonth} implies that $dt_1\..dt_n$ is a basis of the $\Kcirc$-module $\Omega^\rmlog_{\lcirc/\kcirc}\otimes_{\lcirc}\Kcirc$. The homomorphism $\psi^\rmlog_{\Kcirc/\lcirc/\kcirc}$ is an almost embedding by Lemma~\ref{philem}, and since its source is torsion free (even free), it is injective and we obtain an exact sequence
$$0\to\Omega^\rmlog_{\lcirc/\kcirc}\otimes_{\lcirc}\Kcirc\to\Omega^\rmlog_{\Kcirc/\kcirc}\to\Omega^\rmlog_{\Kcirc/\lcirc}\to 0.$$
Note that
$$\left(\Omega^\rmlog_{\Kcirc/\kcirc}\right)_\tf/\left(\Omega^\rmlog_{\lcirc/\kcirc}\otimes_{\lcirc}\Kcirc\right)=\Omega^\rmlog_{\Kcirc/\lcirc}/\left(\Omega^\rmlog_{\Kcirc/\kcirc}\right)_\tor$$
and denote this module by $M$.

By Theorem~\ref{unitballth}, $(\Omega^\rmlog_{\Kcirc/\kcirc})_\tf$ is an almost unit ball of $\lVert \ \rVert_{\omega,K/k}$. Since $\phi$ is a basis of $\det(\Omega^\rmlog_{\lcirc/\kcirc}\otimes_{\lcirc}\Kcirc)$ we have that $$\lVert \phi\rVert_{\omega,K/k}=\left[\left(\Omega^\rmlog_{\Kcirc/\kcirc}\right)_\tf:\left(\Omega^\rmlog_{\lcirc/\kcirc}\otimes_{\lcirc}\Kcirc\right)\right]^{-1},$$ and then Lemma~\ref{contentindex} implies that $\lVert \phi\rVert_{\omega,K/k}=\cont(M)$. By Theorem~\ref{contlem} $$\cont(M)=\cont\left(\Omega^\rmlog_{\Kcirc/\lcirc}\right)/\cont\left(\left(\Omega^\rmlog_{\Kcirc/\kcirc}\right)_\tor\right)=\delta^\rmlog_{K/l}/\delta^\rmlog_{K/k},$$
and we obtain that $\lVert \phi\rVert_\omega=\delta^\rmlog_{K/l}/\delta^\rmlog_{K/k}$. Since $\delta^\rmlog_{K/l}=\delta_{K/l}\lvert \pi_K\pi_l^{-1}\rvert$ and $\lvert \pi_l\rvert =\lvert \pi_k\rvert$, the equality rewrites as $\lVert \phi\rVert_\omega=(\delta^\rmlog_{K/k})^{-1}\Delta\lvert \pi_K\pi_k^{-1}\rvert$. Thus, $\lVert \ \rVert_\rmwt=\lvert \pi_k\rvert \delta^\rmlog_{K/k}\lVert \ \rVert_\omega$ and twisting by $m$ we obtain the following comparison result.

\begin{theor}\label{comparth}
If $k$ is discretely valued, $X$ is quasi-smooth and $x\in X$ is a divisorial point, i.e. a monomial point with discretely valued $K=\calH(x)$, then the K\"ahler and the weight norms on $m$-canonical forms are related by $$\lVert \ \rVert_{\rmwt^{\otimes m}}=\lvert \pi_k\rvert^m\left(\delta^\rmlog_{K/k}\right)^{m}\lVert \ \rVert_{\omega^{\otimes m}}.$$
\end{theor}

\begin{rem}
(i) If $X$ is the analytification of a smooth $k$-variety, Musta\c{t}\u{a} and Nicaise extend the weight norms $\lVert \ \rVert _{\rmwt,x}$ to a {\em weight seminorm} $\lVert \ \rVert _{X,\rmwt}$ on the whole $X$ by semicontinuity, i.e. $\lVert \ \rVert _{X,\rmwt}$ is the minimal seminorm that extends the family $\{\lVert \ \rVert _{\rmwt,x}\}_{x\in X^\div}$. If $\cha(\tilk)=0$ then $\lVert \ \rVert_{X,\omega^{\otimes m}}$ is an $X^\div$-seminorm by Corollary~\ref{plmaxcor}, hence Theorem~\ref{comparth} implies that $\lVert \ \rVert_{X,\rmwt^{\otimes m}}=\lvert \pi_k\rvert^m\lVert \ \rVert_{X,\omega^{\otimes m}}$. If $\cha(\tilk)>0$ then it is easy to see that $X=\bfA^1_k$ contains divisorial points $x$ with $\delta^\rmlog_{\calH(x)/k}<1$. Hence the seminorms differ already for $\bfA^1_k$.

(ii) The constant factor $\lvert \pi_k\rvert$ in the formula for $\lVert \ \rVert_\rmwt$ is analogous to the $-1$ shift in \cite[4.5.3]{Mustata-Nicaise}, while the log different factor is rather subtle (whenever $\cha(\tilk)>0$). It seems very probable that for any $K$ and quasi-smooth $X$, the function $\delta^\rmlog(x)=\delta^\rmlog_{\calH(x)/k}$ is upper semicontinuous on $X$. In particular, the seminorms $\lVert \ \rVert_{\rmwt,x}=\lvert \pi_k\rvert \delta^\rmlog_{\calH(x)/k}\lVert \ \rVert_{\omega,x}$ should define an analytic seminorm $\lVert \ \rVert_{X,\rmwt}$ on $\omega_X$. Also, I expect that $\lVert \ \rVert_{X,\rmwt}$ (as well as $\lVert \ \rVert_{X,\omega^{\otimes m}}$) is an $X^\div$-seminorm, and hence it coincides with the weight seminorm of Musta\c{t}\u{a}-Nicaise in the situation they considered.
\end{rem}

\section{The topological realization of $X_G$}\label{kansec}
Let $X$ be an analytic space. In Section \ref{kansec} we study the topological space $\lvert X_G\rvert $ associated to $X_G$ and prove Theorem~\ref{tamechart} that was used earlier in the paper. The section is independent of the rest of the paper, so there is no cycle reasoning here.

\subsection{Topological realization of $X_G$}\label{Gsec}

\subsubsection{Prime filters}
Let us recall the definition of completely prime filters on $G$-topological spaces (e.g., see \cite[p. 83]{Put-Schneider}). Let $p=\{U_i\}$ be a set of analytic domains of $X$ then
\begin{itemize}
\item[(0)] $p$ is {\em proper} if $\emptyset\notin p$ and $X\in p$.

\item[(1)] $p$ is {\em saturated} if for any analytic domains $U\subseteq V$ with $U\in p$ also $V\in p$.

\item[(2)] $p$ is {\em filtered} if for any $U,V\in p$ also $U\cap V\in p$.

\item[(3)] $p$ is {\em completely prime} if for any admissible covering $U=\cup_i U_i$ with $U\in p$ at least one $U_i$ is in $p$.
\end{itemize}
We say that $p$ is a {\em filter} if it is proper, saturated and filtered. Note that these three conditions are purely set-theoretic, while the complete primality condition involves the $G$-topology.

\begin{rem}
(i) A filter is called {\em prime} if it satisfies (3) for finite admissible coverings. Van der Put and Schneider considered in \cite{Put-Schneider} prime filters of {\em quasi-compact} analytic domains. In this case, primality and complete primality are equivalent. Moreover, any finite covering of a quasi-compact domain by quasi-compact domains is admissible, hence primality reduces to a set-theoretical condition.

(ii) Our definition deals with arbitrary analytic domains. In this case, prime filters do not form an interesting class and one has to work with completely prime ones.
\end{rem}

\subsubsection{The space $\lvert X_G\rvert$}
By a {\em point} $x$ of $X_G$ we mean a completely prime filter $\{U_i\}$ of analytic domains of $X$. Intuitively, this is the prime filter of all analytic domains ``containing" $x$. We denote by $\lvert X_G\rvert$ the set of all points of $X_G$. For any analytic domain $U\subseteq X$ saturation of a filter of $U$ in $X$ induces an embedding $\lvert U_G\rvert \into \lvert X_G\rvert$.

We provide $\lvert X_G\rvert$ with the topology whose base is formed by all sets of the form $\lvert U_G\rvert$. Obviously, any sheaf $\calF$ on $X_G$ extends to a sheaf $\calF'$ on $\lvert X_G\rvert$ by setting $\calF'(\lvert U_G\rvert )=\calF(U)$ and sheafifying, so we obtain a functor $\alp_X\:X_G^\sim\to\lvert X_G\rvert^\sim$ between the associated topoi, where, as in \cite{sga41}, given a site $\calC$ we denote by $\calC^\sim$ the topos of sheaves of sets on $\calC$. The stalk of $\calF'$ at $x\in\lvert X_G\rvert$ is simply $\colim_{U\in x}\calF(U)$. For shortness, we will denote this stalk as $\calF_x$.

\begin{rem}
We refer to \cite[Tag:00Y3]{stacks} for the definition of points of a general site. It is easy to see that for $G$-topological spaces this definition agrees with our definition given in terms of completely prime filters.
\end{rem}

\subsubsection{Abundance of points}\label{manypoints}
Since any point of $X$ possesses a compact neighborhood and any compact analytic space is quasi-compact in the $G$-topology, the site of $X$ is locally coherent in the sense of \cite[VI.2.3]{sga42}. Therefore, $X_G^\sim$ has enough points by Deligne's theorem, see \cite[VI.9.0]{sga42}.

\begin{theor}\label{equivth}
For any $k$-analytic space $X$ the topological space $\lvert X_G\rvert$ is sober and the functor $\alp_X\:X_G^\sim\to\lvert X_G\rvert^\sim$ is an equivalence of categories.
\end{theor}
\begin{proof}
In view of \cite[IV.7.1.9]{sga41} and \cite[VI.7.1.6]{sga41}, it suffices to show that $X_G^\sim$ is generated by subsheaves of the final sheaf $1_X$. For any analytic subdomain $U\subseteq X$, let $\calL_U\subseteq 1_X$ denote the extension of $1_U$, i.e. $\calL_U(V)=\{1\}$ if $V\subseteq U$ and $\calL_U(V)=\emptyset$ otherwise. If $\calF$ is a sheaf on $X$ then any section $s\in\calF(U)$ induces a morphism $\calL_U\to\calF$. In particular, we obtain an epimorphism $\phi\:\coprod_{s,U}\calL_U\to\calF$, where $U$ runs over all analytic subdomains and $s$ runs over $\calF(U)$.
\end{proof}

\subsubsection{Ultrafilters}
Our next aim is to classify the points of $X_G$ and we start with those corresponding to the maximal completely prime filters.

\begin{lem}\label{ultralem}
The completely prime filter $\calP_x$ of all analytic domains containing a point $x\in X$ is maximal, and any maximal completely prime filter is of the form $\calP_x$. In particular, we obtain an embedding of sets $X\into\lvert X_G\rvert$ whose image consists of all points that have no non-trivial generizations.
\end{lem}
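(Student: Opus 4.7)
My plan is to split the lemma into the three substantive assertions and handle each in turn: (i) $\calP_x$ is maximal, (ii) every maximal completely prime filter equals some $\calP_x$, (iii) the image is exactly the points with no non-trivial generizations. The key technical tool throughout is that each point $x\in X$ has a fundamental system of affinoid neighborhoods, that affinoid domains of an affinoid $V$ are compact (hence closed) in the Hausdorff space $V$, and that every analytic domain admits an admissible cover by affinoid subdomains. The main obstacle is part (ii), where one must somehow produce the point $x$ from the purely combinatorial data of the filter; the idea is to reduce to the compact case by complete primality and then use the finite intersection property against compactness.

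For (i), suppose $\calP\supsetneq\calP_x$ and pick $U\in\calP\setminus\calP_x$, so $x\notin U$. Take any affinoid neighborhood $V$ of $x$; then $V\in\calP_x\subseteq\calP$, hence $U\cap V\in\calP$. Choose an admissible affinoid cover $U\cap V=\cup_j W_j$; by complete primality some $W_j\in\calP$. Since $W_j\subseteq U$ we have $x\notin W_j$, and $W_j$ is closed in the compact Hausdorff space $V$. Thus $V\setminus W_j$ is an open neighborhood of $x$, and we can find an affinoid $V'\subseteq V\setminus W_j$ containing $x$. Then $V'\cap W_j=\emptyset$ while $V',W_j\in\calP$, contradicting properness of $\calP$ (since $\calP$ is filtered).

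For (ii), let $\calP$ be any completely prime filter. Fix an admissible cover $X=\cup_i V_i$ by affinoid domains; by complete primality some $V:=V_i$ lies in $\calP$, and we may replace $\calP$ by the completely prime filter $\{U\in\calP\colon U\subseteq V\}$ on $V$. Now consider $\calP':=\{U\in\calP\colon U\text{ is an affinoid subdomain of }V\}$. Any finite intersection of elements of $\calP'$ lies in $\calP$ and is therefore nonempty; since each such element is closed in the compact space $V$, compactness gives $K:=\bigcap_{U\in\calP'}U\neq\emptyset$. Pick $x\in K$. For any $U\in\calP$, choose an admissible affinoid cover $U=\cup_j W_j$; complete primality gives some $W_j\in\calP'$, whence $x\in W_j\subseteq U$. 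Thus $\calP\subseteq\calP_x$, and if $\calP$ is maximal then $\calP=\calP_x$. (En route this also shows the map $X\to\lvert X_G\rvert$ is injective: if $x\neq y$ lies in a common affinoid $V$, separate them in $V$ by an affinoid containing $x$ but not $y$; if $y$ lies outside some affinoid neighborhood of $x$, use that directly.)

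For (iii), the topology on $\lvert X_G\rvert$ has base $\{\lvert U_G\rvert\}$, and $\calP\in\lvert U_G\rvert$ iff $U\in\calP$. Hence $\calP'$ is a specialization of $\calP$ iff every basic open containing $\calP$ contains $\calP'$, iff $\calP\subseteq\calP'$; equivalently, generizations correspond to smaller filters. So $\calP$ has no non-trivial generizations iff $\calP$ is maximal among completely prime filters, and by (i)--(ii) this happens iff $\calP=\calP_x$ for some $x\in X$.
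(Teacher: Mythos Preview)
Your proof is correct and follows essentially the same line as the paper's. Part (i) is identical: pass to an affinoid domain containing $x$, use complete primality to land in a compact subdomain missing $x$, then separate by a neighborhood. A minor terminological quibble: you write ``affinoid neighborhood of $x$'', but $X$ need not be good, so one should say ``affinoid domain containing $x$'' (inside an affinoid $V$ one does have affinoid neighborhoods, so the rest goes through unchanged).

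In part (ii) you take a slightly more direct route than the paper. The paper argues by contradiction: assuming a maximal $\calP$ is not of the form $\calP_x$, for each $x\in V$ it produces an affinoid $V_x\in\calP$ avoiding $x$, and then a finite subintersection is empty. You instead intersect all affinoid members of $\calP$ lying in $V$ and invoke compactness to produce $x$ directly, concluding $\calP\subseteq\calP_x$ for \emph{any} completely prime filter. This is mildly stronger: you have in fact proved the paper's Corollary~\ref{ultracor} (every point of $\lvert X_G\rvert$ has a generization in $X$) en route, rather than as a separate consequence. Part (iii) is a clean unfolding of the definitions that the paper leaves implicit.
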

\begin{proof}
If $\calP_x$ is not maximal then it can be increased to a larger completely prime filter $\calP$. Fix an analytic domain $U\in\calP\setminus\calP_x$. Take an affinoid domain $V$ containing $x$, then $W=U\cap V$ lies in $\calP$. Choose an admissible covering of $W$ by affinoid domains $W_i$. Then at least some $W'=W_i$ lies in $\calP$. Since $W'$ is a compact domain in $V$ not containing $x$, there exists a neighborhood $V'$ of $x$ in $V$ such that $V'\cap W'=\emptyset$. Since $V'\in\calP_x\subset\calP$, this contradicts $\calP$ being a filter, so $\calP_x$ is maximal.

Assume, now, that $\calP$ is a maximal completely prime filter. Since $X$ possesses an admissible covering $X=\cup_iV_i$ by affinoid domains we can fix $V=V_i\in\calP$. Assume that $\calP$ is not of the form $\calP_x$ with $x\in V$. By the maximality of $\calP$, for any $x\in V$ we have that $\calP\nsubseteq\calP_x$ and hence there exists an affinoid domain $V_x\subset V$ with $x\notin V_x$. Then $\cap_{x\in V}V_x=\emptyset$, and hence already the intersection of finitely many sets $V_x$ is empty. This contradicts $\calP$ being a filter.
\end{proof}

In the sequel we will freely consider $X$ as a subset of $\lvert X_G\rvert$.

\begin{rem}
One may wonder whether $X\into X_G$ is a topological embedding. Clearly, this may make sense only for the $G$-topology of $X$ since any analytic domain $U\subseteq X$ is the preimage of the open subset $U_G$ of $X_G$. Nevertheless, even for the $G$-topology the answer is negative simply because $X$ is not a topological space for the $G$-topology. Moreover, there exist analytic domains $U$ and $V$ such that $U\cup V$ is not an analytic domain (e.g. the closed polydisc of radii $(1,2)$ and the open polydisc of radii $(2,1)$). So, $U_G\cup V_G$ is open in $X_G$ but its restriction to $X$ is not an analytic domain.
\end{rem}

\begin{cor}\label{ultracor}
Any point $z\in\lvert X_G\rvert$ possesses a unique generization $\gtr(z)$ lying in $X$.
\end{cor}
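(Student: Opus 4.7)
The plan is to reformulate the statement in terms of completely prime filters and then treat existence and uniqueness separately. First I would note that the basic open $\lvert U_G\rvert\subseteq\lvert X_G\rvert$ consists of those completely prime filters containing $U$, so $z\in\overline{\{w\}}$ if and only if $z\subseteq w$ as filters; consequently generizations of $z$ lying in $X$ correspond, via Lemma~\ref{ultralem}, to maximal completely prime filters containing $z$.

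For existence I would apply Zorn's lemma to the poset of completely prime filters containing $z$, ordered by inclusion. The union of a chain in this poset is again a completely prime filter: properness, saturation, and the filter property pass to chains trivially, while complete primality holds because any admissible cover $U=\bigcup_iU_i$ with $U$ in the union already has $U$ in some member of the chain, and that member then contains some $U_i$. A maximal element therefore exists, and by Lemma~\ref{ultralem} corresponds to a point $\gtr(z)\in X$.

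The main obstacle is uniqueness, since a priori $z$ could sit inside several distinct $\calP_x$. My approach is to separate two putative generizations by a Laurent covering and then exploit complete primality. Suppose $x\neq y$ both satisfy $z\subseteq\calP_x$ and $z\subseteq\calP_y$, so that every analytic domain in $z$ contains both $x$ and $y$. Pick an affinoid $V\in z$ and choose $f\in\calO(V)$ with $\lvert f(x)\rvert\neq\lvert f(y)\rvert$, which exists since analytic functions separate distinct points of an affinoid. Using that $H$ is divisible and non-trivial, hence dense in $\bfR_{>0}$, pick $r\in H$ strictly between $\lvert f(x)\rvert$ and $\lvert f(y)\rvert$. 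This yields the admissible $H$-strict Laurent covering
\[
V=V\{\lvert f\rvert\le r\}\cup V\{r\le\lvert f\rvert\}.
\]
Complete primality of $z$ forces one of the two pieces to belong to $z$, hence to contain both $x$ and $y$; but the first piece excludes $y$ and the second excludes $x$, giving the desired contradiction and completing the proof.
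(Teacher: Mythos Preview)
Your proof is correct. The reformulation via completely prime filters and the existence argument via Zorn's lemma are fine; the paper is terser here, simply citing Lemma~\ref{ultralem} for existence, but the implicit use of a maximality argument is the same.

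The genuine difference is in uniqueness. You separate $x$ and $y$ by a function $f$ with $\lvert f(x)\rvert\neq\lvert f(y)\rvert$ and use the resulting Laurent covering, relying on the density of $H$ in $\bfR_{>0}$. The paper instead takes any $V\in z$ and observes that $V\setminus\{x\}$ and $V\setminus\{y\}$ form an \emph{open} covering of $V$, which is automatically admissible; complete primality then forces one of them into $z$, a contradiction. The paper's route is shorter and avoids both the appeal to separating functions and the density of $H$, while your argument has the mild advantage of staying entirely within the class of affinoid (Laurent) domains rather than invoking the fact that arbitrary open subsets are admissible analytic domains.
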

\begin{proof}
We should prove that any completely prime filter is contained in a single filter of the form $\calP_x$. One such $\calP_x$ exists by Lemma~\ref{ultralem}. Assume that $\calP$ is contained in $\calP_x$ and $\calP_y$ with $x\neq y$. Choose any $V\in\calP$, then $V_x=X\setminus\{y\}$ and $V_y=V\setminus\{x\}$ form an open and, hence, admissible covering of $V$. Thus, either $V_x$ or $V_y$ lies in $\calP$ and we obtain a contradiction.
\end{proof}

\subsubsection{The retraction}
By Corollary~\ref{ultracor} we obtain a retraction $\gtr_X\:\lvert X_G\rvert \to X$ given by $z\mapsto\gtr(z)$. For each $x\in X$, the fiber $\gtr_X^{-1}(x)$ is the set of all specializations of $x$ in $\lvert X_G\rvert$. Thus, $\gtr_X^{-1}(x)$ is the closure of $x$ in $\lvert X_G\rvert$ and we will also denote it $\ox_{X,G}$.

\begin{theor}\label{retrth}
Let $X$ be a $k$-analytic space, $U\subseteq X$ a $k$-analytic subdomain and $x\in U$ a point. Then,

(i) $\gtr_X\:\lvert X_G\rvert \to X$ is a topological quotient map and $X$ is the maximal locally Hausdorff quotient of $\lvert X_G\rvert$.

(ii) $U$ is a neighborhood of $x$ if and only if the inclusion $\ox_{U,G}\subseteq\ox_{X,G}$ is an equality. In particular, $U$ is open if and only if the inclusion $\lvert U_G\rvert\subseteq\gtr_X^{-1}(U)$ is an equality.
\end{theor}
\begin{proof}
We start with (ii). For an analytic domain $V\subseteq X$ with $x\in V$ let $(V,x)$ denote the germ of $V$ at $x$. Define a presheaf of abelian groups on $\lvert X_G\rvert$ as follows: $F(V)$ is either $0$ or $\bfZ$, and the second case takes place if and only if $x\in V$ and $(V,x)$ is not contained in $(U,x)$, i.e. for any neighborhood $W$ of $x$ one has that $W\cap V\nsubseteq W\cap U$. The restriction maps are either identities or the map $\bfZ\to 0$. If $V_1\..V_n$ are domains containing $x$ and satisfying $\cup_{i=1}^n(V_i,x)=(V,x)$, then $(V,x)\nsubseteq(U,x)$ if and only if $(V_i,x)\nsubseteq(U,x)$ for some $i$. It follows that the presheaf $F$ is separated and hence the sheafification map $F\to\calF=\alp F$ is injective by \cite[II.3.2]{sga41}. In particular, $\calF=0$ if and only if $F=0$. Obviously, $F=0$ if and only if $U$ is a neighborhood of $x$.

On the other hand, the stalk of $\calF$ at a point $z\in\lvert X_G\rvert $ is given by $\calF_z=\colim_{z\in W} F(W)$, in particular, $\calF_z$ is either 0 or $\bfZ$. The second possibility holds if and only if for any $W$ with $z\in W$ one has that $x\in W$ and $(W,x)$ is not contained in $(U,x)$. The first condition means that $\gtr_X(z)=x$ and then the second condition holds if and only if $z\notin\lvert U_G\rvert $, i.e. $z\notin\lvert U_G\rvert \cap\ox_{X,G}=\ox_{U,G}$. Thus, $\calF$ has non-zero stalks if and only if the inclusion $\ox_{U,G}\subseteq\ox_{X,G}$ is not equality, and hence $\calF=0$ if and only if $\ox_{U,G}=\ox_{X,G}$. Combining this with the conclusion of the above paragraph we obtain (ii).

Let us prove (i). If $U\subseteq X$ is open then $\gtr_X^{-1}(U)=\lvert U_G\rvert$ by (ii). Thus, $\gtr_X^{-1}(U)$ is open in $\lvert X_G\rvert $ and we obtain that $\gtr_X$ is continuous.

To prove that $\gtr_X$ is a topological quotient map, assume that $U\subseteq X$ is not open and let us prove that $\gtr_X^{-1}(U)$ is not open. Choose a point $x\in U$ not lying in the interior of $U$. By (ii) there exists a point $z\in\ox_{X,G}\setminus\ox_{U,G}$, in particular, $z\notin\lvert U_G\rvert $ and $z\in\gtr_X^{-1}(U)$. We claim that $\gtr_X^{-1}(U)$ is not a neighborhood of $z$. Assume to the contrary that $z$ lies in the interior of $\gtr_X^{-1}(U)$. Then there exists an analytic domain $W\subseteq X$ such that $z\in\lvert W_G\rvert \subseteq \gtr_X^{-1}(U)$. Since $z\notin\lvert U_G\rvert $, we have that $W\nsubseteq U$. So there exists a point $y\in W\setminus U$, and observing that $y\in\lvert W_G\rvert$ and $y\notin \gtr_X^{-1}(U)$ we obtain a contradiction. Thus, $\gtr_X$ is a topological quotient map.

Finally, $X$ is the maximal locally Hausdorff quotient of $\lvert X_G\rvert$ because any locally Hausdorff quotient $\lvert X_G\rvert \to Z$ should identify each point $x\in X$ with any of its specialization $z\in\ox_{X,G}$.
\end{proof}

\subsubsection{Notation $X_G$}
Starting from this point we will not distinguish the site $X_G$ and the topological space $\lvert X_G\rvert$. In particular, we will usually write $x\in X_G$ instead of $x\in\lvert X_G\rvert$.

\subsubsection{Non-analytic points}
Our next aim is to describe the {\em non-analytic} or infinitesimal points of $X_G$, i.e. the points of $X_G\setminus X$. For this we have to recall some results about reductions of germs.

\subsubsection{Germ reduction}\label{germred}
By $\tilcalA_H$ we denote the $H$-graded reduction $\oplus_{h\in H}\tilcalA_h$. In \cite[Section~8]{descent} and \cite[Section~1.5]{flatness}, to any germ $(X,x)$ of an $H$-strict analytic space at a point one associates an $H$-graded reduction $\wt{(X,x)}_H$, which is an $H$-graded Riemann-Zariski space associated to the extension of $H$-graded fields $\wHx_H/\tilk_H$. In particular, any point $z\in\wt{(X,x)}_H$ induces a graded valuation on $\wHx_H$ and if $(X,x)$ is separated then $z$ is also determined by this valuation.

\begin{theor}\label{redth}
Let $X$ be an $H$-strict $k$-analytic space and $x\in X$ a point. Then the closure of $x$ in $X_G$ is canonically homeomorphic to $\wt{(X,x)}_H$.
\end{theor}
\begin{proof}
Let $L$ denote the set-theoretical lattice of subdomains $(U,x)\subseteq(X,x)$. Clearly, the closure of $x$ is a sober topological space and $L$ is its topology base. On the other hand, subdomains of $(X,x)$ are in a one-to-one correspondence with quasi-compact open subspaces of $\wt{(X,x)}_H$ by \cite[Theorem~4.5]{local-properties-II} and \cite[Theorem~8.5]{descent}. So, $L$ is also the lattice of a topology base of the sober topological space $\wt{(X,x)}_H$. Since, a sober topological space is determined by such a lattice (it can be reconstructed as points of the corresponding topos), we obtain the asserted homeomorphism.
\end{proof}

\subsection{Stalks of $\calO_{X_G}$ and $\calOcirc_{X_G}$}
Our next aim is to describe the stalks of the sheaves $\calO_{X_G}$ and $\calOcirc_{X_G}$ at non-analytic points. In particular, this will lead to an explicit description of the homeomorphism from Theorem~\ref{redth}.

\subsubsection{Spectral seminorm}
We provide each stalk $\calO_{X_G,x}$ with the stalk $\lvert \ \rvert_x$ of the spectral seminorm, i.e. $\lvert s\rvert_x=\inf_{x\in U_G}\lvert s\rvert_U$, where $\lvert \ \rvert_U$ is the spectral seminorm of $U$.

\begin{lem}\label{Gstalklem}
Let $X$ be an analytic space and let $x\in X_G$ be a point. Then,

(i) The seminorm $\lvert \ \rvert_x$ is a semivaluation and $\calO_{X_G,x}$ is a local ring whose maximal ideal is the kernel of $\lvert \ \rvert_x$.

(ii) If $y\in X_G$ generizes $x$ then the generization homomorphism $\phi_{x,y}\:\calO_{X_G,x}\to\calO_{X_G,y}$ is an isometry with respect to $\lvert \ \rvert_x$ and $\lvert \ \rvert_y$. In particular, $\phi$ is local.
\end{lem}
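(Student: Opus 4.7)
The plan is to exploit the filtered colimit structure $\calO_{X_G,x}=\colim_{x\in U_G}\calO_{X_G}(U)$ together with complete primality of the prime filter corresponding to $x$. Here $U$ runs over affinoid domains in the filter of $x$, $\lvert f\rvert_U=\sup_{y\in U}\lvert f(y)\rvert$ is the spectral seminorm on $\calO_{X_G}(U)$, and $\lvert f\rvert_x=\inf_U\lvert f\rvert_U$. The key geometric input is the elementary fact that for any $s\in\calO_{X_G}(U)$ and any $r\in H$, the Laurent subdomains $\{y\in U\:\lvert s(y)\rvert\ge r\}$ and $\{y\in U\:\lvert s(y)\rvert\le r\}$ are $H$-strict affinoid subdomains whose union is an admissible covering of $U$; since $H$ is divisible, $r$ can be chosen arbitrarily close to any positive real, and complete primality then forces one of the two subdomains into the filter of $x$.

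For (i), sub-multiplicativity $\lvert st\rvert_x\le\lvert s\rvert_x\lvert t\rvert_x$ is immediate: by filteredness of the colimit, a single $U$ in the filter of $x$ can witness both $\lvert s\rvert_U<\lvert s\rvert_x+\epsilon$ and $\lvert t\rvert_U<\lvert t\rvert_x+\epsilon$. For the reverse we may assume $a=\lvert s\rvert_x$ and $b=\lvert t\rvert_x$ are both positive, represent $s,t$ on an affinoid $U$ in the filter of $x$, and for small $\epsilon>0$ with $a-\epsilon,b-\epsilon\in H$ apply the Laurent covering of $U$ at level $a-\epsilon$ to $s$. Complete primality forces $V_s=\{\lvert s\rvert\ge a-\epsilon\}$ into the filter, since otherwise $\lvert s\rvert_x\le a-\epsilon$. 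Similarly $V_t$, and hence $V=V_s\cap V_t$, are in the filter, and on $V$ one has $\lvert st\rvert\ge(a-\epsilon)(b-\epsilon)$. For any $W$ in the filter, $W\cap V$ is in the filter too, so $\lvert st\rvert_W\ge\lvert st\rvert_{W\cap V}\ge(a-\epsilon)(b-\epsilon)$; letting $\epsilon\to 0$ gives multiplicativity. Since $\lvert\ \rvert_x$ is thus a semivaluation, its kernel is a prime ideal, and the same Laurent-covering argument shows it consists exactly of the non-units: if $\lvert s\rvert_x>c>0$ with $c\in H$, then $V=\{\lvert s\rvert\ge c\}$ is forced into the filter of $x$, and $s$ is invertible on $V$, hence in $\calO_{X_G,x}$.

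For (ii), the relation $x\in\overline{\{y\}}$ in $X_G$ is equivalent to the inclusion of filters: every basic open $V_G$ containing $x$ also contains $y$. This inclusion directly yields $\lvert s\rvert_x\ge\lvert s\rvert_y$, as the latter infimum is taken over a larger family. For the reverse, let $c=\lvert s\rvert_y$, choose an affinoid $U$ in the filter of $x$ on which $s$ is defined, fix $\epsilon>0$ with $c+\epsilon,c+\epsilon/2\in H$, and consider the admissible covering $U=W\cup W'$ with $W=\{\lvert s\rvert\le c+\epsilon\}$ and $W'=\{\lvert s\rvert\ge c+\epsilon/2\}$. If $W'$ lay in the filter of $x$ then, by the inclusion of filters, it would also lie in the filter of $y$, giving $\lvert s\rvert_y\ge c+\epsilon/2>c$, a contradiction; complete primality of the filter of $x$ therefore forces $W$ into it, and $\lvert s\rvert_x\le\lvert s\rvert_W\le c+\epsilon$. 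Letting $\epsilon\to 0$ gives the isometry. Locality of $\phi_{x,y}$ is automatic: being an isometry it identifies the kernels of $\lvert\ \rvert_x$ and $\lvert\ \rvert_y$, which by (i) are the maximal ideals.

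The only point requiring care is the repeated application of complete primality to the Laurent-type covering $U=\{\lvert s\rvert\ge r\}\cup\{\lvert s\rvert\le r\}$; once this basic geometric input is granted, the rest of the argument is a clean iteration of the same extraction-of-a-subdomain trick, and no input from graded reductions or adic-space machinery is required.
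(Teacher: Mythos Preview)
Your argument is correct. It differs from the paper's in that the paper does not establish multiplicativity of $\lvert\ \rvert_x$ directly: instead it first proves (ii) in the special case where $y$ is replaced by the maximal analytic generization $z=\gtr_X(x)\in X$, showing $\lvert s\rvert_x=\lvert s\rvert_z$ via the single Weierstrass domain $X\{\lvert s\rvert\le\lvert s\rvert_z+\veps\}$ (which is a neighborhood of $z$, hence contains $x$ by Theorem~\ref{retrth}(ii)). Part (i) then comes for free, since $\lvert\ \rvert_z$ is already known to be multiplicative and to detect units by Lemma~\ref{stalklem}; and the general case of (ii) follows because both $x$ and $y$ have the same analytic generization $z$.

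So the paper leverages the previously established structure at analytic points and the retraction $\gtr_X$, while you work entirely inside the filter of $x$ using complete primality and two-sided Laurent coverings. Your route is more self-contained (it does not invoke Lemma~\ref{stalklem} or Theorem~\ref{retrth}) and makes the role of complete primality more transparent; the paper's route is shorter once that machinery is in place and immediately yields the stronger statement $\calH(x)=\calH(z)$ implicit in the equality of seminorms.
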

For the sake of comparison we note that in the case of schemes any non-trivial generization is not local.
\begin{proof}
Let $z=\gtr_X(x)$ be the maximal generization of $x$. We claim that $\lvert s\rvert_x=\lvert s\rvert_z$ for any $s\in\calO_{X_G,x}$. If $U$ is an analytic domain with $x\in U_G$ then $z\in U$ and so $\lvert s\rvert_x\ge\lvert s\rvert_z$. Conversely, set $r=\lvert s\rvert_z$ and note that $X_\veps=X\{\lvert s\rvert \le r+\veps\}$ is a neighborhood of $z$ for any $\veps>0$. Since $x\in(X_\veps)_G$ we obtain that $\lvert s\rvert_x\le r+\veps$ for any $\veps$, and so $\lvert s\rvert_x\le\lvert s\rvert_z$. In other words, $\phi_{x,z}$ is an isometry. In the same way, $\phi_{y,z}$ is an isometry, and therefore $\phi_{x,y}$ is an isometry.

It remains to prove (i). Since $\phi_{x,z}$ is an isometry and $\lvert \ \rvert_z$ is multiplicative, it follows that $\lvert \ \rvert_y$ is multiplicative. If $\lvert s\rvert_x=0$ for $s\in\calO_{X_G,x}$ then $s$ is not invertible. Conversely, if $\lvert s\rvert_x=r>0$ then $\lvert s\rvert_z >0$ and hence there exists a neighborhood $U$ of $z$ such that $s\in\calO_{X_G}(U)^\times$. Since $x\in U_G$, we obtain that $s$ is invertible.
\end{proof}

\subsubsection{Residue fields}
Once we know that $\calO_{X_G,x}$ is local, we denote its maximal ideal by $m_{G,x}$. The residue field will be denoted $\kappa_G(x)=\calO_{X_G,x}/m_{G,x}$. Since $m_{G,x}$ is the kernel of $\lvert \ \rvert_x$, the residue field acquires a real valuation and we denote its completion by $\calH(x)$. This extends the notation of \S\ref{localringsec} to non-analytic points.

\subsubsection{Generization homomorphisms}
Recall that by Lemma~\ref{Gstalklem}(ii), any generization homomorphism $\phi_{x,y}\:\calO_{X_G,y}\to\calO_{X_G,x}$ induces an embedding of real-valued fields $\kappa_G(y)\into\kappa_G(x)$.

\begin{lem}\label{genlem}
If $y\in X_G$ generizes $x\in X_G$ then the induced embedding $\kappa_G(x)\into\kappa_G(y)$ has dense image and so $\calH(x)=\calH(y)$.
\end{lem}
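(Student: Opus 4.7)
Let $z=\gtr_X(x)\in X$ be the maximal generization given by Corollary~\ref{ultracor}. Since $y$ generizes $x$ and $\gtr_X(y)$ generizes $y$, the transitivity of the specialization order shows that $\gtr_X(y)$ generizes $x$; the uniqueness in Corollary~\ref{ultracor} then forces $\gtr_X(y)=z$. By Lemma~\ref{Gstalklem}(ii) the generization homomorphisms $\calO_{X_G,x}\to\calO_{X_G,y}\to\calO_{X_G,z}$ are isometric and local, so they descend to isometric embeddings
$$\kappa_G(x)\hookrightarrow\kappa_G(y)\hookrightarrow\kappa_G(z)\hookrightarrow\calH(z)$$
(the last inclusion coming from Lemma~\ref{stalklem}(ii)). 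My plan is to prove that the image of $\kappa_G(x)$ is already dense inside $\calH(z)$; since the embedding $\kappa_G(y)\hookrightarrow\calH(z)$ is an isometry, $\kappa_G(y)$ carries the subspace topology from $\calH(z)$, and the closure of the image of $\kappa_G(x)$ in $\kappa_G(y)$ will equal $\overline{\kappa_G(x)}^{\calH(z)}\cap\kappa_G(y)=\kappa_G(y)$, giving the sought density. Once density is established, completing the isometric embedding $\kappa_G(x)\hookrightarrow\kappa_G(y)$ will yield the equality $\calH(x)=\calH(y)$ formally.

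For the density inside $\calH(z)$, I would exploit the filter-theoretic description of points of $X_G$. By Lemma~\ref{ultralem}, $\calP_z$ is the maximal completely prime filter, hence $\calP_x\subseteq\calP_z$; in particular, any affinoid domain $V$ with $x\in V_G$ automatically contains $z$ as an analytic point, and the germ map $\calA_V\to\calO_{X_G,x}$ is well defined. The composition
$$\calA_V\to\calO_{X_G,x}\to\calO_{X_G,z}\to\kappa_G(z)\hookrightarrow\calH(z)$$
is precisely the evaluation $f\mapsto f(z)$, and by Berkovich's construction of $\calH(z)$—already invoked in the proof of Lemma~\ref{stalklem}(ii)—this evaluation has dense image. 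Letting $V$ range over the cofinal system of affinoids inside $\calP_x$ then shows that the image of $\calO_{X_G,x}$, and hence of $\kappa_G(x)$, is dense in $\calH(z)$.

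The only subtle point is the bookkeeping behind the inclusion $\calP_x\subseteq\calP_z$ and its consequence that every $V\in\calP_x$ contains $z$ as an analytic point; this is pure filter theory, granted by the maximality established in Lemma~\ref{ultralem}. Everything else reduces to the standard description of the Berkovich completed residue field at an analytic point as the completion of the sections of any affinoid neighborhood modulo the valuation kernel, together with elementary properties of isometric embeddings of valued fields and their completions.
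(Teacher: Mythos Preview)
Your overall strategy is sound and close to the paper's, but one step is false as written. You assert that for an affinoid domain $V$ with $x\in V_G$ the evaluation $\calA_V\to\calH(z)$, $f\mapsto f(z)$, has dense image. This fails already for $V=\calM(k\{T\})$ with $z$ the Gauss point: for any $f\in k\{T\}$ one has $\lvert f-T^{-1}\rvert_z=\lvert Tf-1\rvert_z\ge 1$ because the constant term of $Tf-1$ is $-1$, so $T^{-1}\in\calH(z)$ cannot be approximated by elements of $\calA_V$. What Berkovich's construction actually gives (and what the proof of Lemma~\ref{stalklem}(ii) invokes) is that the \emph{fraction field} $\Frac(\calA_V/\wp_z)$ is dense in $\calH(z)$, where $\wp_z=\Ker\lvert\ \rvert_z$; equivalently, the residue field of the local ring $\calO_{V,z}$ is dense, not the image of $\calA_V$.

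The repair is immediate and your argument then goes through: $\kappa_G(x)$ is a subfield of $\calH(z)$ via the isometric embedding you set up, and it contains the image $\calA_V/\wp_z$ of $\calA_V$; being a field, it therefore contains $\Frac(\calA_V/\wp_z)$, which is dense in $\calH(z)$. For comparison, the paper's proof is the same idea packaged slightly differently: it first replaces $X$ by an affinoid domain in $\calP_x$, then uses that the usual-topology local ring $\calO_{X,z}$ maps to $\calO_{X_G,x}$ (open neighborhoods of $z$ lie in $\calP_x$), producing a chain $\kappa(z)\into\kappa_G(x)\into\kappa_G(y)\into\kappa_G(z)$ in which the composite $\kappa(z)\into\kappa_G(z)$ is already known to have dense image.
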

\begin{proof}
Note that we can replace $X$ with an analytic domain $X'$ such that $x\in X'_G$. In particular, we can assume that $X$ is affinoid. Let $z\in X$ be the maximal generization of $y$ and $x$. The local embeddings $\calO_{X,z}\into\calO_{X_G,x}\into\calO_{X_G,y}\into\calO_{X_G,z}$ induce embeddings of the residue fields $\kappa(z)\into\kappa_G(x)\into\kappa_G(y)\into\kappa_G(z)$. It remains to use that $\kappa(z)\into\kappa_G(z)$ has a dense image.
\end{proof}

\subsubsection{Stalks of $\calOcirc_{X_G}$}
As in \cite[Section 2.1]{temrz}, by a semivaluation ring $A$ with semifraction ring $B$ we mean the following datum: a local ring $(B,m)$ and a subring $A\subseteq B$ such that $m\subset A$ and $A/m$ is a valuation ring of $k=B/m$. Such a datum defines an equivalence class of semivaluations $\nu\:B\to\Gamma\cup\{0\}$ whose kernel is $m$, and $A=\nu^{-1}(\Gamma_{\le 1}\cup\{0\})$ is the ring of integers of $\nu$.

\begin{lem}\label{Ocirclem}
For any $x\in X_G$, the stalk $\calOcirc_{X_G,x}$ is a semivaluation ring with semifraction ring $\calO_{X_G,x}$. In particular, $\calOcirc_{X_G,x}/m_{G,x}$ is a valuation ring of $\kappa_G(x)$.
\end{lem}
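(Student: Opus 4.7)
The plan is to establish the two ingredients of a semivaluation ring structure in turn: first that $\calOcirc_{X_G,x}$ sits inside $\calO_{X_G,x}$ as a subring containing the maximal ideal $m_{G,x}$, and second that the quotient $\calOcirc_{X_G,x}/m_{G,x}$ is a valuation ring of $\kappa_G(x)=\calO_{X_G,x}/m_{G,x}$.

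The subring and containment claim will be immediate from the definitions. The inclusion of sheaves of rings $\calOcirc_{X_G}\hookrightarrow\calO_{X_G}$ passes to stalks as an injective ring homomorphism, since filtered colimits preserve injections. For the inclusion $m_{G,x}\subseteq\calOcirc_{X_G,x}$, I will take $s\in m_{G,x}$ and invoke Lemma~\ref{Gstalklem}(i) to get $\lvert s\rvert_x=0$; the infimum definition of the stalk spectral seminorm then gives an affinoid domain $U$ in the filter of $x$ with $\lvert s\rvert_U\le 1$, so $s|_U\in\calAcirc_U=\calOcirc(U)$, whence $s\in\calOcirc_{X_G,x}$.

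The main step is the valuation-ring property. Given $a\in\kappa_G(x)^\times$, I will lift to $s\in\calO_{X_G,x}\setminus m_{G,x}$; by Lemma~\ref{Gstalklem}(i) $s$ is invertible, so it suffices to show that one of $s, s^{-1}$ belongs to $\calOcirc_{X_G,x}$. I will represent $s$ by a section on an affinoid domain $U$ in the filter of $x$ (this is possible because affinoid domains in the filter are cofinal, by applying complete primality to any admissible affinoid covering of an arbitrary domain in the filter), and then apply the standard admissible covering $U=U'\cup U''$ with Weierstrass piece $U'=U\{\lvert s\rvert\le 1\}$ and Laurent piece $U''=U\{\lvert s\rvert\ge 1\}$. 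Complete primality of the filter of $x$ forces either $U'$ or $U''$ into the filter. In the first case $\lvert s\rvert_{U'}\le 1$ by construction of the Weierstrass domain, so $s\in\calOcirc(U')$ and hence $s\in\calOcirc_{X_G,x}$; in the second case $s$ is nowhere zero on $U''$, the function $s^{-1}$ is analytic on $U''$ with $\lvert s^{-1}\rvert_{U''}\le 1$, and so $s^{-1}\in\calOcirc(U'')\subseteq\calOcirc_{X_G,x}$. This produces the required dichotomy for $a$ and $a^{-1}$ modulo $m_{G,x}$.

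The expected obstacle is not in any one step, but in a tempting alternative route: one might try to prove directly that $\calOcirc_{X_G,x}$ equals $\{s\in\calO_{X_G,x}:\lvert s\rvert_x\le 1\}$, but the boundary case $\lvert s\rvert_x=1$ is genuinely awkward because the infimum defining $\lvert s\rvert_x$ need not be attained, so one cannot upgrade $\lvert s\rvert_x\le 1$ to $\lvert s\rvert_U\le 1$ on any neighborhood. The whole point of the Weierstrass/Laurent covering argument is to bypass this boundary issue: complete primality hands us exactly the valuation-ring alternative without forcing us to decide which of the two inequalities is strict at $x$.
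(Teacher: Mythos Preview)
Your proof is correct and follows essentially the same approach as the paper: the paper also shows $m_{G,x}\subseteq\calOcirc_{X_G,x}$ via $\lvert s\rvert_x=0$, and for the valuation-ring dichotomy it uses the same admissible covering $X\{u\}\cup X\{u^{-1}\}$ together with complete primality. The only cosmetic difference is that the paper first shrinks so that $u$ is a global unit (making both pieces Weierstrass), whereas you work directly with a Weierstrass/Laurent pair; this is immaterial.
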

\begin{proof}
If $s\in m_{G,x}$ then $\lvert s\rvert_x=0$ and hence $\lvert s\rvert <1$ in a neighborhood of $x$. In particular, $s\in\calOcirc_{X_G,x}$. It remains to show that if $u\in\calO_{X_G,x}^\times$ then either $u$ or $u^{-1}$ lies in $\calOcirc_{X_G,x}$. Shrinking $X$ we can assume that $u\in\calO_{X_G}(X)^\times$. Then $X$ is the union of $Y=X\{u\}$ and $Z=X\{u^{-1}\}$, hence $x$ lies in either $Y_G$ or $Z_G$, and then $u\in \calOcirc_{X_G,x}$ or $u^{-1}\in \calOcirc_{X_G,x}$, respectively.
\end{proof}

\subsubsection{Valuation $\nu_x$}\label{nusec}
Let $\nu_x$ denote both the semivaluation induced by $\calOcirc_{X_G,x}$ on $\calO_{X_G,x}$ and the valuations induced on $\kappa_G(x)$ and $\calH(x)$. If $x\in X$ then an element $s\in\calO_{X_G,x}$ lies in $\calOcirc_{X_G,x}$ if and only if $\lvert s\rvert_x\le 1$. Thus, $\calOcirc_{X_G,x}/m_{G,x}=\calH(x)^\circ$, i.e. $\nu_x$ is the standard real valuation of $\calH(x)$. If $x$ is arbitrary, we only have an inclusion $\calOcirc_{X_G,x}/m_{G,x}\into\calH(x)^\circ$, so $\nu_x$ is composed from the real valuation of $\calH(x)$ and the residue valuation $\tilnu_x$ on $\wHx$.

The following remark clarifies the relation between these valuations and germ reductions. It will not be used in the sequel so we just formulate the results.

\begin{rem}\label{Ocircrem}
(i) The construction of $\tilnu_x$ can be extended by associating to $x$ a graded valuation ring of $\wHx_H$ (see \S\ref{germred}) whose component in degree 1 is $\calOcirc_{X_G,x}/m_{G,x}$, and the argument is essentially the same. Set $\calO=\calO_{X_G}$ for shortness and let $\calOcirc_r$ and $\calO^\circcirc_r$ be the subsheaves of $\calO$ whose sections on $U$ satisfy $\lvert s\rvert_U\le r$ and $\lvert s\rvert_U<r$, respectively. Then $A_x=\oplus_{h\in H}(\calOcirc_h)_x/(\calO^\circcirc_h)_x$ is a graded valuation ring of $\wHx_H=\oplus_{h\in H}(\calO_x)^\circ_h/(\calO_x)^\circcirc_h$ that coincides with the graded valuation ring induced by the image of $x$ under the homeomorphism $\gtr_X^{-1}(y)=\wt{(X,y)}_H$ from Theorem~\ref{redth}, where $y\in X$ is the maximal generization of $x$. In particular, $x\in X$ if and only if $A_x=\wHx_H$, i.e. the graded valuation is trivial.

(ii) Assume that $H=\sqrt{\lvert k^\times\rvert }$ and hence $X_G$ is the usual strictly analytic G-topology. Then the $H$-graded reduction coincides (as a topological space) with the ungraded reduction $\wt{(X,x)}$ because taking the degree-1 components provides a one-to-one correspondence between graded valuation $\tilk_H$-rings in $\wHx_H$ and valuation $\tilk$-rings in $\wHx$. In particular, $x$ is determined by its maximal generization $y$ and a point of $\wt{(X,y)}$, or, that is equivalent, $x$ is determined by the valuation $\nu_x$. This implies that $X_G$ coincides with the Huber adic space $X^\ad$ corresponding to $X$, and $x\in X$ if and only if $\nu_x$ is of height one. Furthermore, if $H=\bfR_{>0}$ then $X_G$ coincides with the so-called reified adic space introduced by Kedlaya in \cite{Kedlaya-reified}, and (perhaps) the case of a general $H$ will be set in details in \cite{skeletons}.
\end{rem}

\subsection{Topological realization of PL spaces and skeletons}\label{Psec}

\subsubsection{The $n$-dimensional affine $R_S$-PL space}
Consider the $R_S$-PL space $A=\bfR_{>0}^n$ with coordinates $t_1\..t_n$. It is provided with the $G$-topology $A_G$ of $R_S$-PL subspaces. Recall that $U\subseteq A$ is an $R_S$-PL subspace if the $R_S$-polytopes contained in $U$ form a quasi-net (hence also a net) of $U$, and a covering $U=\cup_i U_i$ by $R_S$-PL subspaces is admissible if $\{U_i\}_i$ is a quasi-net of $U$.

For any polytope $P\subset A$ the topological realization $\lvert P_G\rvert$ is a quasi-compact topological space. As in the case of analytic spaces (see \S\ref{manypoints}), Deligne's theorem implies that $A_G$ has enough points, and, moreover, $(A_G)^\sim$ is equivalent to $\lvert A_G\rvert^\sim$. For shortness, we will not distinguish $A_G$ (resp. $P_G$) and its topological realization $\lvert A_G\rvert$ (resp. $\lvert P_G\rvert$).

\subsubsection{$G$-skeletons}
If $X$ is an analytic space with a $\bfZ_H$-PL subspace $P$ then the embedding $i\:P\into X$ is continuous with respect to the $G$-topologies, see \cite[Theorem~6.3.1]{bercontr2}. Since the functor that associates to a sober topological space the lattice of its open subsets is fully faithful, this implies that $i$ extends to a continuous embedding $i_G\:P_G\into X_G$ and we say that $P_G$ is a {\em $\bfZ_H$-PL subspace} of $X_G$.

\begin{rem}\label{PGrem}
(i) The main advantage of working with $P_G$ is that it is a honest topological space, so one can use local arguments. One has to describe the new points but, as we will see, this is simple: points of $P_G$ correspond to valuations on abelian groups, and points of $i_G(P_G)$ correspond to $\ut$-monomial valuations.

(ii) It seems that the use of model theory in \cite{skeletons} is mainly needed for the same aim. One interprets $P_G$ in terms of definable sets and types in the theory of ordered groups, and Deligne's theorem on points of locally coherent sites is replaced with G\"odel's completeness theorem. This is not so surprising, since it is known that G\"odel's theorem and Deligne's theorem are equivalent, when appropriately translated (for example, see \cite{Frot}).
\end{rem}

\subsubsection{Monomiality}
Notions of $\ut$-monomial and generalized Gauss valuations naturally extend to general valuations. Namely, let $L/l$ be an extension of valued fields and let $(t_1\..t_n)$ be a tuple of elements of $L$. We say that the valuation on $l(\ut)$ is a generalized Gauss valuation (with respect to $l$) if for any polynomial $a=\sum_{i\in\bfN^n} a_i\ut^i\in l[\ut]$ the equality $\lvert a\rvert =\max_i\lvert a_i\ut^i\rvert$ holds. Such a valuation on $l(\ut)$ is uniquely determined by its restrictions onto $l$ and the monoid $\ut^\bfZ:=\prod_{j=1}^n t_j^\bfZ$. If, in addition, $L$ is finite over the closure of $l(\ut)$ in $L$ then we say that $L$ and its valuation are {\em $\ut$-monomial}. The following result is proved in \cite[Proposition~1.8.3]{skeletons}.

\begin{lem}\label{monomlem}
Assume that $X$ is an analytic space, $f\:U\to\bfG^n_m$ is a monomial chart given by $t_1\..t_n\in\calO_{X_G}^\times(U)$, and $x\in X_G$ is a point whose maximal generization $y$ is contained in the $\bfZ_H$-PL subspace $P=S(f)$. Let $l$ denote the field $\wHy$ provided with the valuation $\tilnu_x$ (see Remark~\ref{Ocircrem}(i)), and let $d=\trdeg(l/\tilk)$. Assume that $\lvert t_i\rvert_x=1$ for $1\le i\le d$, and set $s_i=\tilt_i$ for $1\le i\le d$. Then $x\in P_G$ if and only if the extension $l/\tilk$ is $s$-monomial.
\end{lem}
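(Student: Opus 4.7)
The plan is to convert both sides of the asserted equivalence into conditions on a single graded $\tilk_H$-valuation of $\wHy_H$, using the descriptions of $X_G$ and $P_G$ via (graded and combinatorial) valuations developed in Sections~\ref{Gsec} and~\ref{Psec}.

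First I would use Theorem~\ref{redth} and Remark~\ref{Ocircrem}(i) to identify specializations of $y$ in $X_G$ with graded valuation $\tilk_H$-subrings $A\subseteq\wHy_H$; the point $x$ then corresponds to a subring $A_x$ whose degree-one component is $\calOcirc_{X_G,x}/m_{G,x}$ and whose associated ungraded residue valuation on the field $\wHy$ is precisely $\tilnu_x$. Because $x$ is a specialization of $y$, Lemma~\ref{Gstalklem}(ii) gives $\lvert t_i\rvert_x = \lvert t_i\rvert_y =: r_i$, so the hypothesis records that $\tilt_i$ is a homogeneous element of degree $1$ in $\wHy_H$ for $i\le d$ and of degree $r_i\neq 1$ for $i>d$ (the latter being forced by monomiality of $y$ combined with $d=\trdeg(l/\tilk)=F_y$, so that $E_y=n-d$).

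Next, by Corollary~\ref{fibcor} and the monomiality of $y$ (Lemma~\ref{monchartlem}), $s_1,\dots,s_d$ is a transcendence basis of $\wHy/\tilk$, the values $r_{d+1},\dots,r_n$ form a $\bfQ$-basis of $\lvert\calH(y)^\times\rvert/\lvert k^\times\rvert\otimes\bfQ$, and $\wHy_H$ is a finite graded extension of the graded Laurent subring
\[
B\;:=\;\tilk_H\bigl[\tilt_1^{\pm1},\dots,\tilt_n^{\pm1}\bigr],
\]
whose graded fraction field is $\tilk_H(\tilt_1,\dots,\tilt_n)$. Applying the PL analog of Theorem~\ref{redth} promised in Section~\ref{Psec}, specializations of $y$ in $P_G$ correspond bijectively to graded $\tilk_H$-valuation subrings of $B$ with the prescribed degrees $r_i$ on the $\tilt_i$, i.e.\ to combinatorial valuations on the character lattice $\tilt^{\bfZ^n}$. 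Under $i_G\colon P_G\hookrightarrow X_G$, such a combinatorial valuation lifts, via the finite graded field extension $\wHy_H/\tilk_H(\tilt)$, to a graded $\tilk_H$-valuation of $\wHy_H$; hence $x\in i_G(P_G)$ iff the graded valuation induced by $A_x$ on $\tilk_H(\tilt_1,\dots,\tilt_n)$ is a generalized Gauss valuation with respect to $\tilt$.

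Finally, since the graded degrees of $\tilt_{d+1},\dots,\tilt_n$ are already fixed by the real valuation on $\calH(y)=\calH(x)$, the Gauss condition on $\tilk_H(\tilt_1,\dots,\tilt_n)$ is equivalent to the Gauss condition on the degree-one subfield $\tilk(s_1,\dots,s_d)$, which in turn says exactly that $\tilnu_x$ restricts to a generalized Gauss valuation on $\tilk(s_1,\dots,s_d)$ over $\tilk$; combined with the finiteness of $\wHy/\tilk(s_1,\dots,s_d)$ established above, this is the statement that $l/\tilk$ is $s$-monomial. The main obstacle will be the PL analog of Theorem~\ref{redth} invoked in the penultimate step: one must verify that points of $P_G$ specializing $y$ are in bijection with graded $\tilk_H$-valuation subrings of $B$ having the prescribed degrees on $\tilt_i$, and that $i_G$ realizes them as graded valuation subrings of $\wHy_H$ dominating the corresponding subrings of $B$. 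The purely combinatorial half of this is essentially \cite[\S1.8]{skeletons}; the remaining task is to match this description cleanly with the graded valuation on $\wHy_H$ produced by Remark~\ref{Ocircrem}(i).
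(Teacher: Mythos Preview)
The paper does not give its own proof of this lemma: it simply quotes the result from \cite[Proposition~1.8.3]{skeletons}. So there is no in-paper argument to compare your proposal against; what you have written is an attempt to supply a proof where the paper chose to outsource one.

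Your strategy is the right one and matches the philosophy of Sections~\ref{Gsec}--\ref{Psec}: translate ``$x\in P_G$'' into a condition on the graded valuation $A_x\subseteq\wHy_H$ of Remark~\ref{Ocircrem}(i), translate points of $P_G$ specializing $y$ into combinatorial valuations on the character lattice, and then reduce the graded Gauss condition to an ungraded one on $\tilk(s_1,\dots,s_d)$ by peeling off the degrees $r_{d+1},\dots,r_n$. Two remarks on the details. First, the claim that $r_i\neq 1$ for $i>d$ is not part of the hypotheses and is not automatic; what you actually need (and what the Gauss property of $y$ on $k[\ut]$ gives you) is that $\tilt_1,\dots,\tilt_d$ are algebraically independent over $\tilk$ and that $\wHy_H$ is finite over the graded subfield generated by $\tilk_H$ and $\tilt_1,\dots,\tilt_n$, regardless of whether some $r_i$ with $i>d$ happen to equal $1$. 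Second, the step you flag as the ``main obstacle'' --- the description of specializations of $y$ in $P_G$ as combinatorial/graded valuations compatible with $i_G$ --- is exactly the content of \cite[\S1.8]{skeletons} that the paper is citing. The valuative description of $A_G$ (Theorem~\ref{PGth}) appears \emph{after} this lemma and is explicitly marked as ``not used in the sequel'', so within the logical structure of the paper you cannot invoke it here; you would have to reprove it, together with its compatibility with $i_G$, which is precisely what the cited reference does.

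In short: your outline is correct and is essentially a reconstruction of the argument in \cite{skeletons}, but as written it still leans on that reference at the crucial point, so it does not yet constitute a self-contained proof independent of the paper's citation.
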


Consider a monomial chart $f\:U\to\bfG^n_m$ given by $u_1\..u_n\in\calO_{X_G}^\times(U)$. Let $g\:U\to\bfG^n_m$ be given by $t_i=c_i\prod_{j=1}^nu_i^{l_{ij}}$, where $c_i\in k^\times$ and $(l_{ij})$ is an $n$-by-$n$ integer matrix with non-zero determinant. Then it is easy to see that $g$ is another monomial chart and $S(f)=S(g)$. Note also that if $x\in U_G$ is a point whose maximal generization $y$ is monomial then choosing monomials $t_i$ appropriately we can achieve that $\lvert t_i\rvert_x=1$ for $1\le i\le F_y=\trdeg(\wHx/\tilk)$ (in terms of \cite[Section~1.8]{skeletons}, $t_i$ are well presented at $x$). Therefore, Lemma~\ref{monomlem} implies the following corollary.

\begin{cor}\label{monomcor}
Assume that $P$ is a $\bfZ_H$-PL subspace of $X$ and $x\in P_G$ is a point, and let $l$ denote the residue field $\wHx$ with the valuation $\tilnu_x$. Then the extension $l/\tilk$ is Abhyankar.
\end{cor}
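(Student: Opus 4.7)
The plan is to reduce the statement to Lemma~\ref{monomlem} by exhibiting $P$ locally as the skeleton of a single monomial chart, and then to extract the Abhyankar property from the structure of $s$-monomial extensions.

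First, by the definition in \S\ref{plsec}, the $\bfZ_H$-PL subspace $P$ is of the form $P=\cup_{i=1}^m S(f_i)$ for finitely many monomial charts $f_i$, and this is an admissible covering in the $\bfZ_H$-PL $G$-topology. Passing to $G$-topological realizations, the embeddings $(S(f_i))_G\into P_G$ form a cover in $P_G$, so $x\in (S(f))_G$ for some $f=f_i$. Replacing $P$ with $S(f)$, I may therefore assume that $P$ itself is the skeleton of a single monomial chart $f\:U\to\bfG_m^n$ given by $t_1{,\dots ,}t_n\in\calO^\times_{X_G}(U)$.

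Second, let $y$ be the maximal generization of $x$ in $X$; then $y\in P$ is monomial, and by Lemma~\ref{genlem} we have $\wHy=\wHx=l$. Set $d=\trdeg(l/\tilk)=F_y$. As in the discussion following Lemma~\ref{monomlem}, by composing $f$ with an automorphism of $\bfG_m^n$ of the form $t_i\mapsto c_i\prod_j u_j^{l_{ij}}$ (which preserves the skeleton $S(f)$), I may arrange $\lvert t_i\rvert_x=1$ for $1\le i\le d$. Lemma~\ref{monomlem} then says that $l/\tilk$ is $s$-monomial with $s_i=\tilt_i$ for $1\le i\le d$.

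Third, I deduce the Abhyankar condition from $s$-monomiality. Set $M=\tilk(s_1{,\dots ,}s_d)\subseteq l$ with its induced valuation; by definition of $s$-monomial, $l$ is finite over the closure of $M$ in $l$, and the valuation on $M$ is generalized Gauss with $r_i=\lvert s_i\rvert$. An elementary computation shows $E_{M/\tilk}+F_{M/\tilk}=d$: choosing a maximal $\bfQ$-linearly independent subset of the images of $r_1{,\dots ,}r_d$ in $\lvert l^\times\rvert/\lvert\tilk^\times\rvert\otimes\bfQ$ produces $E_{M/\tilk}$ of the $s_i$'s, while the remaining ones, after suitable rescaling into $\lvert\tilk^\times\rvert$, reduce to algebraically independent elements of $\tilde M$, accounting for $F_{M/\tilk}$. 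Since $l$ is finite over $M$ (or over its closure, which has the same invariants), $\lvert l^\times\rvert/\lvert M^\times\rvert$ is torsion and $\tilde l/\tilde M$ is algebraic, so $E_{l/\tilk}=E_{M/\tilk}$ and $F_{l/\tilk}=F_{M/\tilk}$. Combining, $E_{l/\tilk}+F_{l/\tilk}=d=\trdeg(l/\tilk)$, which is exactly the Abhyankar condition.

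The main subtlety is in the first step: one has to be sure that the admissible covering of $P$ by skeletons of monomial charts genuinely becomes an open cover of the $G$-topological realization $P_G$, so that the abstract point $x\in P_G$ is visible in some $(S(f))_G$. This relies on the compatibility between the PL and analytic $G$-topologies developed in Section~\ref{Psec}; once that reduction is in hand, Lemma~\ref{monomlem} combined with the standard valuation-theoretic bookkeeping for generalized Gauss valuations closes the argument.
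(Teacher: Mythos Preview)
Your proof is correct and follows precisely the route the paper intends: the paper states the corollary as an immediate consequence of Lemma~\ref{monomlem} together with the coordinate-change discussion preceding it, and you have simply unpacked those two steps and added the (standard) verification that an $s$-monomial extension of transcendence degree $d$ is Abhyankar. One minor inaccuracy: the coordinate change $t_i=c_i\prod_j u_j^{l_{ij}}$ with $\det(l_{ij})\neq 0$ is in general only an isogeny of $\bfG_m^n$, not an automorphism, but this does not affect the equality $S(f)=S(g)$ that you use.
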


\subsubsection{Structure sheaf of $A$}
In the remaining part of Section \ref{Psec} we provide the promised valuation-theoretic description of the points of $P_G$. This will not be used in the sequel, so the reader can skip to Section~\ref{unrammonom}.

Until the end of Section \ref{Psec}, we fix arbitrary $R$ and $S$, and $L=S\oplus(\oplus_{i=1}^nt_i^R)$ denotes the group of $R_S$-monomial functions on $A$. We provide $A$ with the sheaf $\calOcirc_A$ such that if $P$ is an $R_S$-polyhedron then $\calOcirc_A(P)$ is the monoid of $R_S$-PL functions with values in $(0,1]$.

\subsubsection{Combinatorial valuations}
By an {\em $R_S$-valuation} on $L$ we mean any homomorphism $\lvert \ \rvert \:L\to\Gamma$ to an ordered group such that if $r\in R_+=R\cap\bfR_{\ge 0}$ and $x\in L$ with $\lvert x\rvert \le 1$ then $\lvert x^r\rvert \le 1$, and the restriction of $\lvert \ \rvert$ onto $S\subseteq L$ is equivalent to the embedding $S\into\bfR_{>0}$. We say that a valuation is {\em bounded} if for any $x\in L$ there exists $s\in S$ with $\lvert x\rvert \le \lvert s\rvert$.

\subsubsection{Valuation monoids}
Analogously to ring valuations, the valuation is determined up to an equivalence by the {\em valuation monoid} $\Lcirc$ consisting of all elements $x\in L$ with $\lvert x\rvert \le 1$. It is bounded if and only if $S\Lcirc=L$. In addition, $(\Lcirc)^\gp=L$ and $\Lcirc$ is an $(R_S)^\circ$-monoid, i.e. it contains $\Scirc=S\cap(0,1]$ and is closed under the action of $R_+$. In this case, we say that $\Lcirc$ is a {\em bounded valuation $R_+$-monoid of $L$}.

\subsubsection{Valuative interpretation of points}
Similarly to the points of $X_G$, points of $P_G$ admit a simple valuative-theoretic interpretation.

\begin{theor}\label{PGth}
Let $A$ and $L$ be as above. Then for any point $x\in A_G$ the stalk $\calOcirc_{A_G,x}$ is a bounded valuation $R_+$-monoid of $L$, and this establishes a one-to-one correspondence between points of $P_G$ and bounded $R_S$-valuations on $L$.
\end{theor}
\begin{proof}
Choose an $R_S$-polytope $P$ with $x\in P_G$. Clearly, $M=\calOcirc_{A_G,x}$ is an $R_+$-monoid. Also, $M^\gp=L=SM$ because these equalities hold already for the monoid $M'=\calOcirc_{A_G}(P)$. To prove that $M$ is a valuation monoid it suffices to show that if $a\in L$ then either $a\in M$ or $a^{-1}\in M$, but this follows from the fact that $P=P\{a\le 1\}\cup P\{a^{-1}\le 1\}$.

It remains to show that any bounded valuation $R_+$-monoid $\Lcirc$ of $L$ equals to $\calOcirc_{A_G,x}$ for a unique point $x$. Consider the set $\calF$ of all $R_S$-polytopes given by finitely many inequalities $a_i\le 1$ with $a_i\in\Lcirc$. Then $\calF$ is a completely prime filter and the stalk at the corresponding point $x$ is $\Lcirc$. Uniqueness of $\calF$ is also clear from the construction.
\end{proof}

\subsubsection{G-skeleton of the torus}
Now let us assume that $R_S=\bfZ_H$ and so $L=H\oplus\ut^\bfZ$. Set $\bfT=\bfG_m^n$, and consider the embedding $i\:A\into\bfT$ sending $\us$ to the generalized Gauss valuation $\lvert \ \rvert_\us$ and the retraction $r\:\bfT\to A$ from Remark~\ref{modelrem}(ii). Both are continuous in the G-topology, see \cite[Theorems 6.3.1 and 6.4.1]{bercontr2}, hence extend to continuous maps $i_G\:A_G\into \bfT_G$ and $r_G\:\bfT_G\to A_G$. Furthermore, the description of the maps $i$ and $r$ can be naturally extended to $i_G$ and $r_G$. For simplicity, we explain this only in the case when $H=\lvert k^\times\rvert^\bfQ$, and the reduction is ungraded.

Given a point $x\in\bfT_G$ consider its maximal generization $y\in\bfT$ and provide $\calH(x)$ with the valuation $\nu_x$, see Remark~\ref{Ocircrem}. Then the restriction of $\nu_x$ onto $\lvert k^\times\rvert \oplus\ut^\bfZ$ is a bounded $\bfZ_H$-valuation, so we obtain a map $r_G$. Conversely, given a bounded $\bfZ_H$-valuation $\mu\:L\to\Gamma$, we extend it to a generalized Gauss valuation on $k[\ut]$ by the max formula $\lvert \sum_i a_i\ut^i\rvert_\mu=\max_i\lvert a_i\rvert \mu(\ut^i)$. Since $\lvert \ \rvert_\mu$ is composed from a real valuation and a valuation on its residue field, we obtain a point of $\bfT_G$. Clearly, $r_G\circ i_G=\Id$, so $r_G$ is a retraction onto $S(\bfT)_G:=i_G(A)$.

\subsection{Residually unramified monomial charts}\label{unrammonom}
Our last goal is to prove a theorem on existence of residually unramified monomial charts that was used earlier in the paper.

\subsubsection{Residual unramifiedness at non-analytic points}\label{unramtamesec}
Assume that $f\:Y\to X$ is a morphism of $k$-analytic spaces, $y\in Y_G$, $x=f(y)$ and provide $\calH(y)$ and $\calH(x)$ with the valuations $\nu_y$ and $\nu_x$, respectively (see \S\ref{nusec}). We say that $f$ is {\em residually unramified at} $y$ (resp. {\em residually tame at} $y$) if the extension of valued fields $(\calH(y),\nu_y)/(\calH(x),\nu_x)$ is finite and unramified (resp. tame). This extends the analogous notion from the case of analytic points to the whole $Y_G$.

\begin{lem}\label{composlem}
Keep the above notation. Then $f$ is residually unramified at $y$ if and only if the extension of the real-valued fields $(\calH(y),\lvert \ \rvert _y)/(\calH(x),\lvert \ \rvert _x)$ and the extension of the valued fields $(\tilcalH(y),\tilnu_y)/(\tilcalH(x),\tilnu_x)$ are unramified.
\end{lem}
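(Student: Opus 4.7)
The plan is to reduce the claim to standard bookkeeping with composed valuations. By \S\ref{nusec} and Remark~\ref{Ocircrem}(i), the valuation $\nu_y$ on $\calH(y)$ is composed from the real valuation $\lvert \ \rvert_y$ and the valuation $\tilnu_y$ on $\tilcalH(y)=\wHy$: the valuation ring of $\nu_y$ is the preimage of $\wHy^\circ_{\tilnu_y}$ under the reduction $\calH(y)^\circ \to \wHy$, and analogously for $\nu_x$. This composition structure yields, for each point, a short exact sequence of value groups
\[
0 \to \tilnu_y(\wHy^\times)\to \nu_y(\calH(y)^\times)\to \lvert \calH(y)^\times\rvert \to 0
\]
together with a canonical identification of the residue field of $\nu_y$ with the residue field of $\tilnu_y$, and similarly for $x$.

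Assuming $[\calH(y):\calH(x)]<\infty$, functoriality in the extension gives multiplicativity of the ramification index
\[
e(\nu_y/\nu_x)=e(\lvert \ \rvert_y/\lvert \ \rvert_x)\cdot e(\tilnu_y/\tilnu_x),
\]
and identifies the residue extension $\kappa(\nu_y)/\kappa(\nu_x)$ with $\kappa(\tilnu_y)/\kappa(\tilnu_x)$; in particular $f(\nu_y/\nu_x)=f(\tilnu_y/\tilnu_x)$. The fundamental inequality $[L:K]\ge e\cdot f$ applied to each of $\lvert \ \rvert_y/\lvert \ \rvert_x$ and $\tilnu_y/\tilnu_x$ provides
\[
[\calH(y):\calH(x)]\ \ge\ e(\lvert \ \rvert)\cdot [\wHy:\wHx]\ \ge\ [\wHy:\wHx]\ \ge\ e(\tilnu)\cdot[\kappa(\tilnu_y):\kappa(\tilnu_x)].
\]

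For the implication ``$\Leftarrow$'' one notes that unramifiedness of both factor extensions gives $e(\lvert \ \rvert)=e(\tilnu)=1$, equalities of degrees throughout the chain above, and separability of $\kappa(\tilnu_y)/\kappa(\tilnu_x)=\kappa(\nu_y)/\kappa(\nu_x)$; hence $\nu_y/\nu_x$ is finite, with $e=1$, separable residue extension, and residue degree $[\calH(y):\calH(x)]$, i.e. finite unramified. Conversely, if $\nu_y/\nu_x$ is finite unramified, then $e(\nu)=1$ forces $e(\lvert \ \rvert)=e(\tilnu)=1$, while $f(\nu)=[\calH(y):\calH(x)]$ together with the displayed chain forces all three integers $[\calH(y):\calH(x)]$, $[\wHy:\wHx]$, $[\kappa(\tilnu_y):\kappa(\tilnu_x)]$ to coincide. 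The middle equality together with separability of the composed residue extension gives unramifiedness of $\lvert \ \rvert_y/\lvert \ \rvert_x$, and the right-hand equality together with the same separability gives unramifiedness of $\tilnu_y/\tilnu_x$. The main (very minor) obstacle is a careful verification that separability passes correctly between the composed residue extension and the intermediate residue extension $\wHy/\wHx$, which is automatic once all ramification indices equal $1$ and all degrees match.
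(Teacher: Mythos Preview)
Your argument is correct and is essentially what the paper's cited reference \cite[Proposition~2.2.2]{temst} contains: the paper's own proof is a one-line appeal to that proposition, and you have unpacked it by doing the $e$--$f$ bookkeeping for composed valuations directly. So the approaches coincide; you have simply written out what the citation says.

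One small sharpening of your last paragraph: the reason separability of $\wHy/\wHx$ is ``automatic'' is not really that the degrees match, but that you have already established that $(\wHy,\tilnu_y)/(\wHx,\tilnu_x)$ is unramified. Any unramified extension of valued fields has separable underlying field extension, since base-changing the \'etale map $\wHy^\circ/\wHx^\circ$ to the fraction field $\wHx$ gives an \'etale (hence separable) extension $\wHy/\wHx$. Stating it this way closes the loop cleanly and avoids the appearance of circularity.
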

\begin{proof}
This follows from a criterion for an extension of composed valued fields to be unramified, see \cite[Proposition~2.2.2]{temst}.
\end{proof}

\subsubsection{Generators of unramified extensions}
Assume that $L/K$ is a finite unramified extension of valued fields. We say that $u\in L$ is an {\em integral generator} of $L$ over $K$ if $\Lcirc$ is a localization of $\Kcirc[u]$. Note that in this case $g'(u)$ is invertible in $\Lcirc$, where $g(T)$ is the minimal polynomial of $u$ over $K$.

\begin{lem}\label{unramlem}
Assume that $L/K$ is a finite extension of valued fields. Then,

(i) If $L/K$ is unramified then it possesses an integral generator.

(ii) An element $u\in L$ is an integral generator if and only if $L=K[u]$, $u\in\Lcirc$ and $g'(u)\in(\Lcirc)^\times$, where $g$ is the minimal polynomial of $u$ over $K$.
\end{lem}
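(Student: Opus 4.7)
My plan is to deduce both parts from the classical Euler/different formula: for an integrally closed domain $A$ with fraction field $K$, a finite separable extension $L=K(u)$ with minimal polynomial $g\in A[T]$ of the integral element $u$, and $B$ the integral closure of $A$ in $L$, one has $g'(u)\cdot B\subseteq A[u]$. The standard proof writes $b=\sum_{i=0}^{d-1}c_iu^i\in B$ and identifies $g'(u)c_i$ with $\mathrm{Tr}_{L/K}(b\beta_i)$, where the $\beta_i\in A[u]$ are the numerators of the dual basis to $1,u,\ldots,u^{d-1}$ under the trace form; the right-hand side is a trace of an integral element and hence lies in $A$. In our setting I take $A=\Kcirc$ (automatically integrally closed, being a valuation ring) and $B=\Lcirc$ (which coincides with the integral closure of $\Kcirc$ in $L$ for any algebraic extension of valued fields), so the formula reads $g'(u)\Lcirc\subseteq\Kcirc[u]$.

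With this in hand, the ``$\Leftarrow$'' direction of (ii) is immediate: $g'(u)\in(\Lcirc)^\times$ forces $g'(u)\ne 0$ and hence separability of $L/K$, and then Euler's formula yields $\Lcirc\subseteq\Kcirc[u]_{g'(u)}$, while the opposite inclusion is automatic from $u\in\Lcirc$ and $g'(u)\in(\Lcirc)^\times$, giving $\Lcirc=\Kcirc[u]_{g'(u)}$. For the ``$\Rightarrow$'' direction (which also justifies the remark preceding the lemma), suppose $\Lcirc=\Kcirc[u]_S$. Locality of $\Lcirc$ forces this to be the localization at the preimage $\mathfrak{p}\subset\Kcirc[u]$ of the maximal ideal; since $\Kcirc[u]=\Kcirc[T]/(g)$ is a finite $\Kcirc$-algebra, $\Kcirc[u]/\mathfrak{p}$ is a finite integral domain over $\tilK$, hence a field, and coincides with the residue field $\tilL$ of $\Lcirc$. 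Thus $\tilL=\tilK(\bar u)$ is generated by $\bar u$. Unramifiedness gives $[\tilL:\tilK]=[L:K]=\deg\bar g$, and combined with $\bar g(\bar u)=0$ this forces $\bar g$ to be the minimal polynomial of $\bar u$; separability of $\tilL/\tilK$ then yields $\bar g'(\bar u)\ne 0$, i.e.\ $g'(u)\in(\Lcirc)^\times$.

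For (i), I would construct an integral generator by lifting a primitive element of the residue field. Since $\tilL/\tilK$ is finite separable by unramifiedness, the primitive element theorem furnishes $\bar v\in\tilL$ with $\tilK(\bar v)=\tilL$; let $v\in\Lcirc$ be any lift and $f\in K[T]$ its minimal polynomial. Integral closedness of $\Kcirc$ gives $f\in\Kcirc[T]$, and the degree squeeze
\[
[\tilL:\tilK]\le\deg\bar f\le\deg f=[K(v):K]\le[L:K]=[\tilL:\tilK]
\]
forces equalities throughout; hence $L=K(v)$, and $\bar f$ is the separable minimal polynomial of $\bar v$, so $f'(v)\in(\Lcirc)^\times$. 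Applying the ``$\Leftarrow$'' of (ii) exhibits $v$ as an integral generator.

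The main obstacle will be justifying Euler's formula over a valuation ring rather than the familiar Dedekind setting; however, the standard trace-pairing proof carries over verbatim, since it uses only that $\Kcirc$ is integrally closed and that traces of integral elements lie in $\Kcirc$.
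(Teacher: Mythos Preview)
Your argument has a genuine gap: the identification of $\Lcirc$ with the integral closure of $\Kcirc$ in $L$ is false when $\Kcirc$ is not henselian. For a concrete counterexample, take $K=\bfQ$ with the $5$-adic valuation and $L=\bfQ(\sqrt{101})$; since $101\equiv 1\pmod 5$ the prime $5$ splits, so there are two extensions of the valuation to $L$, and for either one $\Lcirc$ is the localization of the integral closure $B=\bfZ_{(5)}[\sqrt{101}]$ at a single maximal ideal, strictly larger than $B$. For instance $(1-\sqrt{101})/10\in\Lcirc$ has minimal polynomial $T^2-\frac{1}{5}T-1\notin\Kcirc[T]$. Euler's formula therefore only yields $g'(u)B\subseteq\Kcirc[u]$, which in this example (with $u=\sqrt{101}$, $g'(u)=2\sqrt{101}$ a unit) is the triviality $B\subseteq B$; the containment $\Lcirc\subseteq\Kcirc[u]_{g'(u)}$ that you need is false. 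The same non-henselian phenomenon breaks your proof of (i): here $\tilL=\tilK=\bfF_5$, so lifting a primitive element of the (trivial) residue extension produces an element of $K$, not a generator of $L$, and the equality $[L:K]=[\tilL:\tilK]$ on which your degree squeeze rests simply fails.

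The paper avoids this by a different mechanism. For the ``$\Leftarrow$'' direction of (ii) it observes that $A=\Kcirc[u][g'(u)^{-1}]$ is \'etale over the valuation ring $\Kcirc$ and hence a semilocal Pr\"ufer domain, so \emph{every} overring of $A$ in its fraction field $L$---in particular $\Lcirc$---is a localization of $A$ and therefore of $\Kcirc[u]$. For (i) the paper invokes Chevalley's theorem on the standard-\'etale local structure of \'etale morphisms, which produces an integral generator without any henselian hypothesis. Your arguments do go through verbatim once $\Kcirc$ is henselian (then $\Lcirc$ is finite over $\Kcirc$ and coincides with the integral closure), but the lemma is stated and applied in the paper without that restriction: the valued fields $(\calH(x),\nu_x)$ arising from non-analytic points of $X_G$ carry higher-rank valuations whose residue components need not be henselian.
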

\begin{proof}
The first claim is a (simple) special case of Chevalley's theorem \cite[$\rm IV_4$, 18.4.6]{ega}. Only the inverse implication needs a proof in (ii), so let us establish it. Consider the subring $A=\Kcirc[u,g'(u)]$ of $\Lcirc$. By \cite[$\rm IV_4$, 18.4.2(ii)]{ega}, $A$ is \'etale over $\Kcirc$. Therefore, $A$ is a semilocal {\em Pr\"ufer} ring (i.e. all its localizations are valuation rings) and it follows that any intermediate ring $A\subseteq R\subseteq\Frac(A)$ is a localization of $A$. In particular, $\Lcirc$ is a localization of $A$.
\end{proof}

\subsubsection{Residually unramified locus}
Given a morphism $f\:Y\to X$, by the {\em residually unramified locus} of $f$ we mean the set of all points $y\in\lvert Y_G\rvert $ such that $f$ is residually unramified at $y$.

\begin{theor}\label{Gopenlem}
Assume that $f\:Y\to X$ is a quasi-\'etale morphism. Then the residually unramified locus of $f$ is open in $\lvert Y_G\rvert $.
\end{theor}
\begin{proof}
Assume that $f$ is residually unramified at $y\in Y_G$. We should prove that $f$ is residually unramified in a neighborhood of $y$. Set $x=f(y)$, $L=(\calH(y),\nu_y)$ and $K=(\calH(x),\nu_x)$. Also, let $y_0\in Y$ and $x_0\in X$ be the maximal generizations of $y$ and $x$, respectively, and consider the real-valued fields $L_0=\calH(y_0)$ and $K_0=\calH(x_0)$.

Step 1. {\it We can assume that $X=\calM(\calA)$ is affinoid and $f$ is finite \'etale.} The question is $G$-local at $x$ and $y$ hence we can assume that $X$ and $Y$ are affinoid. It follows easily from \cite[Theorem~3.4.1]{berihes} that replacing $X$ and $Y$ with neighborhoods of $y_0$ and $x_0$, we can achieve that $f$ factors as $Y\into\oY\to X$, where $Y$ is an affinoid domain in $\oY$ and $\oY\to X$ is finite \'etale (see \cite[3.12]{skeletons}). So, replacing $Y$ with $\oY$ we can assume that $f$ is finite \'etale.

Note that $X$ and $Y$ are good. We will use the usual local rings $\calO_{X,x_0}$, $\calO_{Y,y_0}$ and the residue fields $\kappa(x_0)$, $\kappa(y_0)$ in the sequel. Also, we will freely replace $X$ with a neighborhood of $x_0$ when needed.

Step 2. {\it We can assume that $Y=\calM(\calB)$, where $\calB=\calA[u]$ and $u(y)$ is an integral generator of $\Lcirc$ over $\Kcirc$.} By our assumption, $\Lcirc/\Kcirc$ is \'etale, hence by Lemma~\ref{unramlem}(i) there exists an integral generator $v$ of $L$ over $K$. We claim that using Lemma~\ref{unramlem}(ii) one can slightly move $v$ achieving that $v\in\kappa(y_0)$. Indeed, $L_0/K_)$ is separable hence $K_0[v]$ is preserved under small deformations of $u$ by Krasner's lemma applied to $L_0/K_0$. Similarly, if $g_v$ is the minimal polynomial of $v$ then $g'_v(v)$ changes slightly under small deformations of $v$. In particular, a slight change of $v$ preserves the reduction $\wt{g'_v(v)}$ and hence also the equality $\tilnu_y(\wt{g'_v(v)})=1$, which is equivalent to the inclusion $g'_v(v)\in(\Lcirc)^\times$.

In the sequel, $v\in\kappa(y_0)$. Since $\kappa(x_0)$ is henselian by \cite[Theorem~2.3.3]{berihes}, it is separably closed in $\calH(x)$ and therefore the minimal polynomial $g_v(T)$ of $v$ lies in $\kappa(x_0)[T]$. Choose a monic lifting $G(T)\in\calO_{X,x_0}[T]$ and shrink $X$ around $x_0$ so that $G(T)\in\calA[T]$.

Set $\calB=\calA[T]/(G(T))$ and $Y'=\calM(\calB)$. Then $Y'\to X$ is a finite map of degree $d=\deg(G)$ and the preimage of $x_0$ is a single point $y'_0$ such that $\calH(y'_0)/\calH(x_0)$ is the extension $L_0/K_0$. Therefore, $Y'\to X$ is \'etale at $y'_0$ and the maps of germs $(Y',y'_0)\to(X,x_0)$ and $(Y,y_0)\to(X,x_0)$ are isomorphic by \cite[Theorem~3.4.1]{berihes}. In particular, after shrinking $X$ around $x_0$ the morphism $Y'\to X$ becomes \'etale, and then we can replace $Y$ with $Y'$. Then the image $u\in\calB$ of $T$ is as required since $u(y)=v$.

Step 3. {\it The domain $U=Y\{G'(u)^{-1}\}$ is as required.} Clearly $y\in U$, so we should only check that for any $z\in U$ with $F=\calH(z)$ and $E=\calH(f(z))$, the extension $F/E$ is unramified. Set $w=u(z)$, then $F=E[w]$ and the minimal polynomial $h(T)$ of $w$ over $E$ is a factor of $\oG(T)$, where $\oG=G(z)$ is obtained from $G$ by evaluating its coefficients at $z$. Since $h(w)=0$ and $\oG'(w)$ is invertible, we obtain that $h'(w)$ is invertible. By Lemma~\ref{unramlem}(ii), $w$ is an integral generator of $F$ over $E$, in particular, $F/E$ is unramified.
\end{proof}

\subsubsection{Construction of charts}
Now we are in a position to prove the main result of Section~\ref{unrammonom}.

\begin{theor}\label{tamechart}
Assume that $k$ is algebraically closed. Let $X$ be a $k$-analytic space and $P$ a compact $\bfZ_H$-PL subspace of $X$. Then there exist finitely many residually unramified monomial charts $f_i\:U_i\to\bfG_m^{n_i}$ such that $P=\cup_i S({f_i})$.
\end{theor}
\begin{proof}
First, we observe that it suffices to show that for any point $x\in P_G$ there exists a residually unramified monomial chart $f\:U\to\bfG_m^n$ such that $x\in S(f)_G$. Indeed, intersecting this chart with a chart $g$ such that $x\in S(g)_G$ and $S(g)\subseteq P$ we can also achieve that $S(f)\subseteq P$, and then the assertion follows from the quasi-compactness of $P_G$.

By Lemma~\ref{tamechartlem} below, there exists a monomial chart such that $x\in S(f)_G$ and $f$ is residually unramified at $x$. Applying Theorem~\ref{Gopenlem} we can find an analytic domain $V\subseteq U$ such that $x\in V_G$ and the map $f\vert_V$ is residually unramified. Hence $f\vert_V$ is a required monomial chart and we are done.
\end{proof}

\begin{lem}\label{tamechartlem}
Assume that $X$ is an analytic space and $x\in X_G$ is a monomial point. Then there exists a monomial chart $f\:U\to\bfG_m^n$ such that $x\in S(f)_G$ and $f$ is residually monomial at $x$.
\end{lem}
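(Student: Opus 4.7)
The plan is to mimic Corollary~\ref{monchartcor} ``one level deeper'', now applied not only to the analytic point $y = \gtr_X(x)$ but also to the residual valuation $\tilnu_x$ on $\wHy$ determined by $x$ (see \S\ref{nusec} and Remark~\ref{Ocircrem}(i)). Concretely, write $L = \calH(y)$ and let $l = \wHy$ be equipped with the valuation $\tilnu_x$. Set $d = F_y = \trdeg(\tilL/\tilk)$ and $e = E_y$, so $n = d + e = \dim_y(X)$ by monomiality of $y$. By Corollary~\ref{monomcor} the extension $(l,\tilnu_x)/\tilk$ is Abhyankar; denote its invariants by $\tilF$ and $\tilE$, so that $\tilF + \tilE = d$.

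First I would choose coordinates on the ``lower level''. Since $k=k^a$, the residue field $\tilk$ is algebraically closed, so $\tilk$ is perfect and $|k^\times|$ is divisible. Hence the argument of Corollary~\ref{monchartcor} applies verbatim to the valued-field extension $(l,\tilnu_x)/\tilk$: one picks $s_1,\dots,s_{\tilF}\in l^\times$ whose reductions in $\til l$ form a separable transcendence basis of $\til l/\tilk$, and $s_{\tilF+1},\dots,s_d\in l^\times$ whose $\tilnu_x$-values form a $\bfZ$-basis of $|l^\times|_{\tilnu_x}$. By the same stability argument used there, the extension $(l,\tilnu_x)/\tilk(\us)$ is then unramified. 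Next, lift each $s_i$ to $t_i\in\calOcirc_{X_G,y}\setminus m_{G,y}$ with $\tilt_i=s_i$; in particular $|t_i|_y=1$. Finally, using Corollary~\ref{fibcor}(2) together with divisibility of $|k^\times|$, choose $t_{d+1},\dots,t_n\in\calO_{X_G,y}^\times$ whose absolute values at $y$ form a $\bfZ$-basis of $|L^\times|/|k^\times|\cong\bfZ^e$. Shrinking $X$ around $y$ so that all $t_i$ are sections of $\calO_X^\times$, they define a morphism $f\:U\to\bfG_m^n$; by the same dimension argument as in Lemma~\ref{monchartlem} (invoking \cite[Theorem~4.9]{Ducros}), we can further shrink $U$ so that $f$ has zero-dimensional fibers, making $f$ a monomial chart.

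It remains to verify the three properties at $x$. By construction $t_1,\dots,t_d$ have $|\cdot|_y = 1$ and their reductions give $s$-monomiality of $(l,\tilnu_x)/\tilk$, so Lemma~\ref{monomlem} gives $x\in S(f)_G$. Residual unramifiedness at $y$ is exactly the conclusion of Corollary~\ref{monchartcor}, using that $\tilt_1,\dots,\tilt_d$ are a separable transcendence basis of $\tilL/\tilk$ (they contain the $s_i$ among their reductions) and that $|t_{d+1}|_y,\dots,|t_n|_y$ are a $\bfZ$-basis of $|L^\times|/|k^\times|$, combined with stability of $L$ (Corollary~\ref{fibcor}(3)). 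To descend this to $x$, I apply Lemma~\ref{composlem}: it suffices to check that the residual extension $(\wHy,\tilnu_x)/(\wt{\calH(y')},\tilnu_{x'})$ is unramified, where $y'=f(y)$ and $x'=f_G(x)$. But $y'$ is the generalized Gauss point defined by $|t_{d+1}|_y,\dots,|t_n|_y$, so $\wt{\calH(y')}=\tilk(\tilt_1,\dots,\tilt_d)=\tilk(\us)$, while $\tilnu_{x'}$ is the restriction of $\tilnu_x$; the extension is therefore unramified by our choice of the $s_i$ in the first step.

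The main technical point---and the step that really forces the whole two-level construction---is the compatibility between the two valuation layers in Step~5: one must simultaneously arrange that the real-valued extension $L/\wh{k(\ut)}$ is unramified (which controls residual unramifiedness at the analytic point $y$) and that the residual-valued extension $(l,\tilnu_x)/(\tilk(\us),\text{induced})$ is unramified (which controls residual unramifiedness at the non-analytic point $x$). The fact that these two demands decouple---one being imposed on $\{t_i\mid|t_i|_y\neq 1\}$ and the other on the reductions of $\{t_i\mid|t_i|_y=1\}$---is what makes the construction possible, and uses in an essential way that $k=k^a$ (so that both $\tilk$ is algebraically closed and $|k^\times|$ is divisible, hence all relevant extensions are automatically tame).
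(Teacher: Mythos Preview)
Your argument is essentially the same two-level construction as the paper's proof: choose $t_{d+1},\dots,t_n$ to trivialize $|L^\times|/|k^\times|$, and choose $t_1,\dots,t_d$ so that their reductions $s_1,\dots,s_d$ make $(l,\tilnu_x)/\tilk(\us)$ unramified, then lift and apply Lemma~\ref{monomlem} and Lemma~\ref{composlem}. The paper proceeds identically, except that it phrases the residual step as ``choose a transcendence basis $b_1,\dots,b_F$ of $l/\tilk$ such that $l/\tilk(b)$ is unramified'' and cites \cite[Theorem~1.3]{Kuhlmann} for this.

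There is one genuine imprecision. When you write ``by the same stability argument used there, the extension $(l,\tilnu_x)/\tilk(\us)$ is unramified'', you are pointing to Corollary~\ref{monchartcor}, whose stability input is Corollary~\ref{fibcor}(3). But that result is about the \emph{analytic} field $\calH(x)$ of a monomial point and ultimately rests on the stability theorem for affinoid algebras. The field $(l,\tilnu_x)$ is not of this form: it is the residue field $\wHy$ equipped with a general (not real-valued) Abhyankar valuation over the trivially valued field $\tilk$. Stability here---and hence defectlessness of $l/\tilk(\us)$, which is what actually yields unramifiedness once you have arranged $e=1$ and separable residue extension---requires Kuhlmann's stability theorem for Abhyankar valuations (\cite[Theorem~1.1]{Kuhlmann}). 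The paper flags exactly this point in the Remark inside its proof, calling it ``the difficult theorem that $l$ is stable''. Once you replace your appeal to Corollary~\ref{fibcor}(3) by this citation, your proof is complete and coincides with the paper's.
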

\begin{proof}
The argument is similar to that in Corollary~\ref{monchartcor}, but this time we should choose the transcendence basis of the residue field more carefully. Let $x_0\in X$ be the maximal generization of $x$ and consider the real-valued field $L_0=(\calH(x_0),\lvert \ \rvert _x)$ and the valued fields $L=(\calH(x),\nu_x)$ and $l=(\tilL_0,\tilnu_x)$. Then $x_0$ is monomial and $l/\tilk$ is Abhyankar by Corollary~\ref{monomcor}. Set $E=E_{x_0}$ and $F=F_{x_0}$, and choose elements $a_{F+1}\..a_{F+E}$ such that their images in $\lvert L_0^\times\rvert /\lvert k^\times\rvert$ form a basis. Next, choose a transcendence basis $b_1\..b_F$ of $l/\tilk$ such that $l/\tilk(b_1\..b_F)$ is unramified; this is possible by \cite[Theorem~1.3]{Kuhlmann}.

\begin{rem}
Existence of such a basis is the valuation-theoretic ingredient of local uniformization of Abhyankar valuations. It is an immediate consequence of the difficult theorem that $l$ is stable, see  \cite[Theorem~1.1]{Kuhlmann} or \cite[Remark~2.1.3]{insepunif}. In fact, one can take $b_1\..b_F$ to be any basis such that $b_1\..b_\tilE$ is mapped to a basis of $\lvert l^\times\rvert \otimes\bfQ$, where $\tilE=E_{l/\tilk}$, and $b_{\tilE+1}\..b_F$ is mapped to a separable transcendence basis of $\till/\tilk$.
\end{rem}

Choose any lifts $a_1\..a_F\in\kappa_G(x)$ of $b_1\..b_F$. We obtain elements $a_1\..a_n$, where $n=E+F=\dim_{x_0}(X)$, and let $t_1\..t_n\in\calO_{X_G,x}$ be any lifts of $a_1\..a_n$. Let $U$ be such that $t_i$ are defined on $U$ and $x\in U_G$. Then $\ut$ induces a morphism $f\:U\to\bfG_m^n$ and the fiber $f^{-1}(f(x_0))$ is zero-dimensional at $x_0$ by \cite[Corollary~8.4.3]{flatness}. It then follows from \cite[Theorem~4.9]{Ducros} that replacing $U$ by a Zariski open neighborhood of $x_0$ we can achieve that $f$ has zero-dimensional fibers and hence is a monomial chart. By our construction, the induced valuations on $K=k(a_1\..a_n)$ and $\tilK=\tilk(b_1\..b_F)$ are generalized Gauss valuations. The first implies that $x_0\in S(f)$, hence the second implies that $x\in S(f)_G$ by Lemma~\ref{monomlem}.

The residue field extension $l/\tilk(b_1\..b_F)$ is unramified and hence separable. In addition, $L_0$ is unramified over $\calH(f(x_0))=\wh{k(a_1\..a_F)}$ because the fields are stable and have the same group of values and the extension of the residue fields is separable. Hence $f$ is residually unramified at $x$ by Lemma~\ref{composlem}, and we are done.
\end{proof}

\bibliographystyle{amsalpha}
\bibliography{topform}

\end{document}